\documentclass[a4paper,final]{article}

\usepackage[T1]{fontenc}
\usepackage[utf8]{inputenc}
\usepackage[francais]{layout}
\usepackage{csquotes}
\usepackage[french,english]{babel}

\usepackage{amsmath,amssymb,amsfonts,mathabx,stmaryrd}
\usepackage{amsthm}
\usepackage{xargs}
\usepackage{mdwlist}
\usepackage[poly, all]{xy}
\usepackage{ifthen}

\usepackage{soul}
\setuldepth{a}

\usepackage{graphicx}
\usepackage{enumitem}

\usepackage{ltugcomn}

\usepackage{ifdraft}
\usepackage{comment}

\usepackage[final]{hyperref}

\makeatletter
\hypersetup{
    unicode=true,
    pdftoolbar=true,
    pdfmenubar=true,
    pdffitwindow=false,
    pdfstartview={FitH},
    pdftitle={\@title},
    pdfauthor={\@author},
    pdfsubject={},
    pdfcreator={},
    pdfproducer={},
    pdfkeywords={},
    pdfnewwindow=true,
    colorlinks,
    linkcolor=black,
    citecolor=black,
    filecolor=black,
    urlcolor=black
}
\makeatother

\newcommand{\thmlevel}{subsection}
\usepackage{aliascnt}
\def\NewTheorem#1{%
  \newaliascnt{#1}{thm}
  \newtheorem{#1}[#1]{\csname #1Name\endcsname}
  \aliascntresetthe{#1}
  \expandafter\def\csname #1autorefname\endcsname{\csname #1Name\endcsname}
  \expandafter\def\csname #1Autorefname\endcsname{\csname #1Name\endcsname}
}

\newcommand{\theoremName}{\iflanguage{francais}{Th\'eor\`eme}{Theorem}}

\newcommand{\pbName}{\iflanguage{francais}{Probl\`eme}{Problem}}

\newcommand{\dfName}{\iflanguage{francais}{D\'efinition}{Definition}}

\ifthenelse{\equal{\thmlevel}{}}{\newtheorem{thm}{\theoremName}}{\newtheorem{thm}{\theoremName}[\thmlevel]}

\newtheorem{thmintro}{\theoremName}

\NewTheorem{lem}
\NewTheorem{sublem}
\NewTheorem{prop}
\NewTheorem{cor}
\NewTheorem{conj}
\NewTheorem{pb}

\theoremstyle{definition}
\NewTheorem{df}
\NewTheorem{exo}
\NewTheorem{ex}
\NewTheorem{rmq}

\newtheorem*{df*}{Definition}

\makeatletter
\renewenvironment{proof}[1][]{\par
  \pushQED{\qed}%
  \normalfont \topsep6\p@\@plus6\p@\relax
  \trivlist
  \item[\hskip\labelsep
        \bfseries
    \proofname\ifthenelse{\equal{#1}{}}{}{\textmd{ (#1)}}\@addpunct{.}]\ignorespaces
}{%
  \popQED\endtrivlist\@endpefalse
}
\makeatother

\makeatletter
\@addtoreset{thm}{section}
\makeatother
\let\originalleft\left
\let\originalright\right
\renewcommand{\left}{\mathopen{}\mathclose\bgroup\originalleft}
\renewcommand{\right}{\aftergroup\egroup\originalright}

\makeatletter
\def\DeclareMathBinOp{\@ifstar{\declaremathbinop@star}{\declaremathbinop@nostar}}
\def\declaremathbinop@star#1#2{\def#1{\test@subnexp@star#2}}
\def\test@subnexp@star#1{\@ifnextchar_{\isol@subnexp@star#1}{\test@exp@star#1}}
\def\test@exp@star#1{\@ifnextchar^{\isol@expnsub@star#1}{\mathbin{#1}}}
\def\isol@subnexp@star#1_#2{\@ifnextchar^{\eval@subnexp@star#1_#2}{\mathbin{\operatorname*{#1}_{#2}}}}
\def\eval@subnexp@star#1_#2^#3{\mathbin{\operatorname*{#1}_{#2}^{#3}}}
\def\isol@expnsub@star#1^#2{\@ifnextchar_{\eval@expnsub@star#1^#2}{\mathbin{\operatorname*{#1}^{#2}}}}
\def\eval@expnsub@star#1^#2_#3{\mathbin{\operatorname*{#1}_{#3}^{#2}}}
\def\declaremathbinop@nostar#1#2{\def#1{\test@subnexp@nostar#2}}
\def\test@subnexp@nostar#1{\@ifnextchar_{\isol@subnexp@nostar#1}{\test@exp@nostar#1}}
\def\test@exp@nostar#1{\@ifnextchar^{\isol@expnsub@nostar#1}{\mathbin{#1}}}
\def\isol@subnexp@nostar#1_#2{\@ifnextchar^{\eval@subnexp@nostar#1_#2}{\mathbin{\underset{#2}{#1}}}}
\def\eval@subnexp@nostar#1_#2^#3{\mathbin{\overset{#3}{\underset{#2}{#1}}}}
\def\isol@expnsub@nostar#1^#2{\@ifnextchar_{\eval@expnsub@nostar#1^#2}{\mathbin{\overset{#2}{#1}}}}
\def\eval@expnsub@nostar#1^#2_#3{\mathbin{\overset{#2}{\underset{#3}{#1}}}}
\makeatother

\let\OLDtimes\times
\DeclareMathBinOp*{\times}{\OLDtimes}
\let\OLDamalg\amalg
\DeclareMathBinOp*{\amalg}{\OLDamalg}
\let\OLDotimes\otimes
\DeclareMathBinOp*{\otimes}{\OLDotimes}
\let\OLDwedge\wedge
\DeclareMathBinOp*{\wedge}{\OLDwedge}

\makeatletter
\def\DeclareArrow#1#2{\def#1{\test@subnexp#2}}
\def\test@subnexp#1{\@ifnextchar_{\isol@subnexp#1}{\test@exp#1}}
\def\test@exp#1{\@ifnextchar^{\isol@expnsub#1}{#1}}
\def\isol@subnexp#1_#2{\@ifnextchar^{\eval@subnexp#1_#2}{\underset{#2}{#1}}}
\def\eval@subnexp#1_#2^#3{\underset{#2}{\overset{#3}{#1}}}
\def\isol@expnsub#1^#2{\@ifnextchar_{\eval@expnsub#1^#2}{\overset{#2}{#1}}}
\def\eval@expnsub#1^#2_#3{\overset{#2}{\underset{#3}{#1}}}
\makeatother

\let\OLDto\to
\DeclareArrow{\to}{\OLDto}
\DeclareArrow{\from}{\leftarrow}
\DeclareArrow{\lra}{\longrightarrow}
\DeclareArrow{\lla}{\longleftarrow}
\newcommand{\Indext}{\operatorname{\underline{\mathbf{Ind}}}^\mathbb U}
\newcommand{\bubblespace}{\operatorname{\mathfrak{B}}}
\newcommand{\kaploop}{\mathcal L}
\newcommand{\A}{\mathbb{A}}
\newcommand{\Lcot}{\mathbb{L}}
\newcommand{\T}{\mathbb{T}}
\newcommand{\ev}{\operatorname{ev}}
\newcommand{\pr}{\operatorname{pr}}
\newcommand{\op}{^{\mathrm{op}}}
\newcommand{\loccit}{\emph{loc. cit.} }
\newcommand{\Cc}{\mathcal{C}}
\newcommand{\Dd}{\mathcal{D}}
\newcommand{\Oo}{\mathcal{O}}
\newcommand{\sSets}{\mathbf{sSets}}
\newcommand{\presh}{\operatorname{\mathcal{P}}}
\newcommand{\dSt}{\mathbf{dSt}}
\newcommand{\Map}{\operatorname{Map}}
\newcommand{\Fct}{\operatorname{Fct}}
\newcommand{\Ind}{\operatorname{\mathbf{Ind}}}
\newcommand{\Pro}{\operatorname{\mathbf{Pro}}}
\newcommand{\Tate}{\operatorname{\mathbf{Tate}}}
\newcommand{\Indu}[1]{\Ind^{\mathbb{#1}}}
\newcommand{\Prou}[1]{\Pro^{\mathbb{#1}}}
\newcommand{\inftyCatu}[1]{\inftyCat^{\mathbb{#1}}}
\newcommand{\PresLeftu}[1]{\mathbf{Pr}^{\mathrm{L,}\mathbb{#1}}_\infty}
\newcommand{\homol}{\mathrm H}
\newcommand{\Spec}{\operatorname{Spec}}
\newcommand{\Gm}{\mathbb{G}_m}
\newcommandx*{\el}[4][2=\eldebutpardefaut,4={,}]{#1_{#2}#4\dots#4#1_{#3}}
\newcommand{\quot}[2]{\ensuremath \mathchoice {\displaystyle #1 \raisebox{-2pt}{$\displaystyle \hspace{-1pt}{/} $} \raisebox{-4pt}{$\displaystyle \hspace{-1pt}{#2}$}}{\textstyle #1 \raisebox{-1pt}{$\textstyle \hspace{-1pt}{/} $} \raisebox{-2pt}{$\textstyle \hspace{-1pt}{#2}$}}{\scriptstyle #1 \raisebox{-1pt}{$\scriptstyle \hspace{-1pt}{/} $} \raisebox{-2pt}{$\scriptstyle \hspace{-1pt}{#2}$}}{\scriptscriptstyle #1 \raisebox{-1pt}{$\scriptscriptstyle \hspace{-1pt}{/} $} \raisebox{-2pt}{$\scriptscriptstyle \hspace{-1pt}{#2}$}}}
\newcommand{\mymatrix}{\shorthandoff{;:!?} \xymatrix}
\newcommand{\N}{\mathbb{N}}
\newcommand{\R}{\mathbb{R}}
\newcommand{\pt}{{*}}
\newcommand{\id}{\operatorname{id}}
\newcommand{\dual}[1]{{#1}^{\vee}}
\newcommand{\inftyCat}{\mathbf{Cat}_\infty}
\newcommand{\PresLeft}{\mathbf{Pr}^{\mathrm{L}}_\infty}
\newcommand{\Qcoh}{\mathbf{Qcoh}}
\newcommand{\dAff}{\mathbf{dAff}}
\newcommand{\cdgaunbounded}{\mathbf{cdga}}
\newcommand{\cdga}{\cdgaunbounded^{\leq 0}}
\newcommand{\dgMod}{\mathbf{dgMod}}
\newcommand{\Perf}{\mathbf{Perf}}
\newcommand{\Mapstack}{\operatorname{\underline{Ma}p}}
\newcommand{\Homint}{\operatorname{\underline{Hom}}}
\newcommand{\RHomint}{\operatorname{\mathbb{R}\underline{Hom}}}
\DeclareMathOperator*{\colim}{colim}
\newcommand{\B}{\mathrm{B}}
\newcommand{\K}{\operatorname{K}}
\newcommand{\ptfin}{\mathrm{Fin}^\pt}
\newcommand{\Tateu}[1]{\Tate^{\mathbb{#1}}}
\newcommand{\PresRightu}[1]{\mathbf{Pr}^{\mathrm{R,}\mathbb{#1}}_\infty}
\newcommand{\monoidalinftyCatu}[1]{\inftyCat^{\otimes, \mathbb{#1}}}
\newcommand{\dStArt}{\dSt^{\mathrm{Art}}}
\newcommand{\dStArtlfp}{\dSt^{\mathrm{Art,lfp}}}
\newcommand{\Sym}{\operatorname{Sym}}
\newcommand{\Der}{\operatorname{Der}}
\newcommand{\eldebutpardefaut}{1}
\newcommandx*{\iel}[5][2=i,3=\eldebutpardefaut,5={,}]{#1_{#2_{#3}}#5\dots#5#1_{#2_{#4}}}
\newcommand{\app}[4]{\begin{array}{c@{\hskip 2pt}c@{\hskip 2pt}c} #1 & \to & #2 \\ #3 & \mapsto & #4 \end{array}}
\newcommand{\comma}[2]{\ensuremath \mathchoice {\raisebox{4pt}{$\displaystyle #1 $} \raisebox{2pt}{$\displaystyle / $} \displaystyle \hspace{-1pt}{#2}}{\raisebox{2pt}{$\textstyle #1 $} \raisebox{1pt}{$\textstyle / $} \textstyle \hspace{-1pt}{#2}}{\raisebox{2pt}{$\scriptstyle #1 $} \raisebox{1pt}{$\scriptstyle / $} \scriptstyle \hspace{-1pt}{#2}}{\raisebox{2pt}{$\scriptscriptstyle #1 $} \raisebox{1pt}{$\scriptscriptstyle / $} \scriptscriptstyle \hspace{-1pt}{#2}}}
\newcommandx*{\dcell}[6][1,2,3,4,5={=>},6={1pc},usedefault]{\ar@/^#6/[#1]^{#2}_{}="UP" \ar@/_#6/[#1]_{#3}^{}="DOWN" \ar @{#5} "UP";"DOWN" ^{#4} }
\newcommandx*{\cart}[3][1=1,2=5,3=10,usedefault]{\ar@{-}[]+D+<#3pt,#1pt>+<#2pt,0pt>;[]+D+<#3pt,-#3pt>+<#2pt,#1pt> \ar@{-}[]+D+<0pt,-#3pt>+<#2pt,#1pt>;[]+D+<#3pt,-#3pt>+<#2pt,#1pt>}
\newcommandx*{\cocart}[3][1=-1,2=8,3=10,usedefault]{\ar@{-}[]+U+<-#2pt,-#1pt>;[]+U+<-#2pt,-#1pt>+<0pt,#3pt> \ar@{-}[]+U+<-#2pt,-#1pt>;[]+U+<-#2pt,-#1pt>+<-#3pt,0pt>}
\newcommandx*{\timesunder}[5][1={},2={},3=-2pt,4=0pt,5=0mm,usedefault]{\times_{\makebox[#5]{\raisebox{#3}{\ensuremath{\scriptstyle #1}}}}^{\makebox[#5]{\raisebox{#4}{\ensuremath{\scriptstyle #2}}}}}
\newcommand{\IPP}{\operatorname{\mathbf{IPP}}}
\newcommand{\IP}{\mathbf{IP}}
\newcommand{\PI}{\mathbf{PI}}
\newcommand{\PIQ}{\mathbf{PIQ}}
\newcommand{\PIPerf}{\operatorname{\mathbf{PIPerf}}}
\newcommand{\IPPerf}{\operatorname{\mathbf{IPPerf}}}
\newcommand{\IPerf}{\operatorname{\mathbf{IPerf}}}
\newcommand{\PPerf}{\operatorname{\mathbf{PPerf}}}
\newcommand{\PIQcoh}{\operatorname{\mathbf{PIQcoh}}}
\newcommand{\cotangent}{\lambda}
\newcommand{\Proext}{\operatorname{\underline{\mathbf{Pro}}}^\mathbb U}
\newcommand{\Indextu}[1]{\operatorname{\underline{\mathbf{Ind}}}^\mathbb{#1}}
\newcommand{\Tateextu}[1]{\operatorname{\underline{\mathbf{Tate}}}^\mathbb{#1}}
\newcommand{\Proextu}[1]{\operatorname{\underline{\mathbf{Pro}}}^\mathbb{#1}}
\newcommand{\overcat}{\operatorname{O}}  
\newcommand{\btw}{\operatorname{\mathrm{B}}}
\newcommand{\bubblestack}{\operatorname{\underline{\mathfrak{B}}}}
\newcommand{\formalsphere}{{\hat{\mathrm{S}}}}
\newcommand{\kaplooppre}{\tilde{\mathcal L}}
\newcommand{\Tatestack}{\dSt^{\mathrm{Tate}}}
\newcommand{\IQcoh}{\operatorname{\mathbf{IQcoh}}}
\newcommand{\shybounded}{\mathbf{IP} \dSt^{\mathrm{shy,b}}}
\newcommand{\closedforms}[1]{\mathbf A^{#1\mathrm{,cl}}}
\newcommand{\forms}[1]{\mathbf A^{#1}}
\newcommand{\IPclosedforms}[1]{\closedforms{#1}_{\IP}}
\newcommand{\IPforms}[1]{\forms{#1}_{\IP}}


\usepackage{geometry}
\geometry{inner=3cm , outer=2.5cm, bottom=4cm}

\setlist[enumerate]{label=\emph{(\roman*)},ref=\emph{(\roman*)}}

\entrymodifiers={!!<0pt,0.7ex>+}

\usepackage[notref,notcite]{showkeys}

\title{Higher dimensional formal loop spaces}
\author{Benjamin Hennion}
\date{2015}

\newlist{assertions}{enumerate}{1}
\setlist[assertions]{label={(\alph*)}, ref={assertion (\alph*)}}
\newlist{disjunction}{enumerate}{1}
\setlist[disjunction]{label={(\arabic*)}, ref={case (\arabic*)}}

\setlength{\marginparwidth}{2cm}

\begin{document}

\selectlanguage{english}
\maketitle

\begin{abstract}
If $M$ is a symplectic manifold then the space of smooth loops $\mathrm C^{\infty}(\mathrm S^1,M)$ inherits of a quasi-symplectic form. We will focus in this article on an algebraic analogue of that result.
In their article \cite{kapranovvasserot:loop1}, Kapranov and Vasserot introduced and studied the formal loop space of a scheme $X$.

We generalize their construction to higher dimensional loops. To any scheme $X$ -- not necessarily smooth -- we associate $\kaploop^d(X)$, the space of loops of dimension $d$. We prove it has a structure of (derived) Tate scheme -- ie its tangent is a Tate module: it is infinite dimensional but behaves nicely enough regarding duality.
We also define the bubble space $\bubblespace^d(X)$, a variation of the loop space.
We prove that $\bubblespace^d(X)$ is endowed with a natural symplectic form as soon as $X$ has one (in the sense of \cite{ptvv:dersymp}).

Throughout this paper, we will use the tools of $(\infty,1)$-categories and symplectic derived algebraic geometry.
\end{abstract}

\selectlanguage{french}
\begin{abstract}
{\bf Espaces des lacets formels de dimension supérieure : }L'espace des lacets $\mathrm C^{\infty}(\mathrm S^1,M)$ associé à une variété symplectique $M$ se voit doté d'une structure (quasi-)symplectique induite par celle de $M$.
Nous traiterons dans cet article d'un analogue algébrique de cet énoncé.
Dans leur article \cite{kapranovvasserot:loop1}, Kapranov et Vasserot ont introduit l'espace des lacets formels associé à un schéma.

Nous généralisons leur construction à des lacets de dimension supérieure. Nous associons à tout schéma $X$ -- pas forcément lisse -- l'espace $\kaploop^d(X)$ de ses lacets formels de dimension $d$.
Nous démontrerons que ce dernier admet une structure de schéma (dérivé) de Tate : son espace tangent est de Tate : de dimension infinie mais suffisamment structuré pour se soumettre à la dualité.
Nous définirons également l'espace $\bubblespace^d(X)$ des bulles de $X$, une variante de l'espace des lacets, et nous montrerons que le cas échéant, il hérite de la structure symplectique de $X$.
\end{abstract}
\selectlanguage{english}

\tableofcontents

\section*{Introduction}
\addcontentsline{toc}{section}{Introduction}%

Considering a differential manifold $M$, one can build the space of smooth loops $\operatorname{L}(M)$ in $M$. It is a central object of string theory. Moreover, if $M$ is symplectic then so is $\operatorname{L}(M)$ -- more precisely quasi-symplectic since it is not of finite dimension -- see for instance \cite{munozpresas:symp}.
We will be interested here in an algebraic analogue of that result.

The first question is then the following: what is an algebraic analogue of the space of smooth loops? 
An answer appeared in 1994 in Carlos Contou-Carrère's work (see \cite{contoucarrere:jacobienne}). He studies there $\Gm(\mathbb{C}(\!(t)\!))$, some sort of holomorphic functions in the multiplicative group scheme, and defines the famous Contou-Carrère symbol.
This is the first occurrence of a \emph{formal loop space} known by the author.
This idea was then generalised to algebraic groups as the affine Grassmannian $\mathfrak{Gr}_G = \quot{G(\mathbb C (\!(t)\!))}{G(\mathbb C[\![t]\!])}$ showed up and got involved in the geometric Langlands program.
In their paper \cite{kapranovvasserot:loop1}, Mikhail Kapranov and Éric Vasserot introduced and studied the formal loop space of a smooth scheme $X$. It is an ind-scheme $\kaploop(X)$ which we can think of as the space of maps $\Spec \mathbb C(\!(t)\!) \to X$. This construction strongly inspired the one presented in this article.

There are at least two ways to build higher dimensional formal loops. The most studied one consists in using higher dimensional local fields $k(\!( t_1 )\!) \dots (\!( t_d )\!)$ and is linked to Beilinson's adèles. There is also a generalisation of Contou-Carrère symbol in higher dimensions using those higher dimensional local fields -- see \cite{osipovzhu:contoucarrere} and \cite{bgw:contoucarrere}.
If we had adopted this angle, we would have considered maps from some torus\footnote{The variable $\el{t}{d}$ are actually ordered. The author likes to think of $\Spec(k(\!( t_1 )\!) \dots (\!( t_d )\!))$ as a formal torus equipped with a flag representing this order.} $\Spec(k(\!( t_1 )\!) \dots (\!( t_d )\!))$ to $X$.

The approach we will follow in this work is different.
We generalize here the definition of Kapranov and Vasserot to higher dimensional loops in the following way.
For $X$ a scheme of finite presentation, not necessarily smooth, we define $\kaploop^d(X)$, the space of formal loops of dimension $d$ in $X$.
We define $\kaploop^d_V(X)$ the space of maps from the formal neighbourhood of $0$ in $\A^d$ to $X$. This is a higher dimensional version of the space of germs of arcs as studied by Jan Denef and François Loeser in \cite{denefloeser:germs}.
Let also $\kaploop_U^d(X)$ denote the space of maps from a \emph{punctured} formal neighbourhood of $0$ in $\A^d$ to $X$. The formal loop space $\kaploop^d(X)$ is the formal completion of $\kaploop_V^d(X)$ in $\kaploop_U^d(X)$.
Understanding those three items is the main goal of this work.
The problem is mainly to give a meaningful definition of the punctured formal neighbourhood of dimension $d$. We can describe what its cohomology should be: 
\[
\homol^n(\hat \A^d\smallsetminus \{0\}) = \begin{cases}
k[\![\el{X}{d}]\!] & \text{ if } n = 0 \\ (\el{X}{d}[])^{-1} k[\el{X^{-1}}{d}] & \text{ if } n=d-1 \\ 0 & \text{ otherwise}
\end{cases}
\]
but defining this punctured formal neighbourhood with all its structure is actually not an easy task. Nevertheless, we can describe what maps out of it are, hence the definition of $\kaploop^d_U(X)$ and the formal loop space.
This geometric object is of infinite dimension, and part of this study is aimed at identifying some structure. Here comes the first result in that direction.
\begin{thmintro}[see \autoref{L-indpro}]\label{intro-kaploop}
The formal loop space of dimension $d$ in a scheme $X$ is represented by a derived ind-pro-scheme.
Moreover, the functor $X \mapsto \kaploop^d(X)$ satisfies the étale descent condition.
\end{thmintro}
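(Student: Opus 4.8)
The plan is to analyze separately the three functors out of which $\kaploop^d(X)$ is built, following the definition of $\kaploop^d(X)$ as the formal completion of $\kaploop_V^d(X)$ inside $\kaploop_U^d(X)$. I would first establish representability for $X$ affine of finite presentation, reducing the general case to this one by a closed embedding $X \hookrightarrow \A^N$ together with the gluing afforded by the étale descent proven in the second part (whose proof, being a statement at the level of the functor, does not itself require representability).

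First I would treat $\kaploop_V^d(X) = \Map(\hat{\A}^d, X)$, where $\hat\A^d = \Spec k[\![\el x d]\!]$ is the formal neighbourhood of the origin, viewed as the ind-scheme $\colim_n \Spec(k[\el x d]/\mathfrak m^n)$. Since mapping stacks send this colimit of sources to a limit of targets, $\kaploop_V^d(X) \simeq \lim_n \Map(\Spec(k[\el x d]/\mathfrak m^n), X)$, and each term is the $n$-th jet scheme (a Weil restriction along a finite scheme), which is derived affine of finite presentation because $X$ is. Hence $\kaploop_V^d(X)$ is a derived pro-scheme; the pro-part already appears here and is harmless.

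The crux is $\kaploop_U^d(X)$. The idea is to cover the punctured neighbourhood by the $d$ principal opens $D_i$ where $x_i$ is invertible; because an intersection of more than $d$ of them is empty, the associated \v{C}ech diagram is finite of length $d$ --- which is exactly what the vanishing of $\homol^n(\hat\A^d\smallsetminus\{0\})$ outside $n\in\{0,d-1\}$ records. Mapping out of the cover turns it into a totalization, so $\kaploop_U^d(X) \simeq \lim_{\sigma}\Map(D_\sigma, X)$ over this finite cosimplicial diagram. Each $D_\sigma$ is affine, with algebra of functions a Tate (ind-pro) object obtained from $k[\![\el x d]\!]$ by inverting coordinates, so each $\Map(D_\sigma, X)$ is an ind-pro-scheme. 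I would then invoke stability of derived ind-pro-schemes under finite limits to conclude that $\kaploop_U^d(X)$ is ind-pro. I expect this stability --- together with checking that the localized pieces genuinely stay ind-pro rather than worse --- to be the main obstacle: ind-pro objects are \emph{not} closed under infinite limits, and it is the finiteness of the cover (only $d$ opens) that makes the argument go through. Finally, $\kaploop^d(X)$ is the formal completion of the pro-scheme $\kaploop_V^d(X)$ inside the ind-pro-scheme $\kaploop_U^d(X)$; proving that this completion is computed levelwise and preserves the ind-pro property finishes the first assertion.

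For étale descent I would not argue formally, since $\Map(T,-)$ for a fixed source $T$ does not commute with colimits. Instead the key is that étale maps are formally étale: for an étale morphism $Z \to Y$ the infinitesimal lifting property yields the base-change squares $\mathrm J_n(X\times_Y Z)\simeq \mathrm J_n(X)\times_{\mathrm J_n(Y)}\mathrm J_n(Z)$, so $\kaploop_V^d$ carries étale-Cartesian squares to Cartesian squares and hence satisfies descent after passing to the limit over $n$. I would then transport this through the finite totalization to $\kaploop_U^d$ and through the formal completion to $\kaploop^d$, using that a limit of étale sheaves is an étale sheaf. The delicate point is verifying the lifting property on the punctured pieces $D_\sigma$, whose function rings are no longer local; here I would use that étale descent is itself an étale-local condition, so it suffices to check it after the localizations defining the $D_\sigma$.
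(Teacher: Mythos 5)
Your outline coincides with the paper's: $\kaploop^d_V$ as a pro-object of jet spaces, $\kaploop^d_U$ via the finite cover of the punctured formal neighbourhood by the loci where the $x_i$ are invertible (the paper writes $U^d_A$ as the corresponding finite colimit of affines, so that mapping out gives a finite limit of explicitly ind-pro-affine pieces), reduction of a finitely presented affine $X$ to finite limits of copies of $\A^1$, and étale descent for general $X$. But the step you yourself flag as the main obstacle --- ``invoke stability of derived ind-pro-schemes under finite limits'' --- is precisely where the paper's real work lies, and your proposal contains no idea that would close it. The difficulty is not that ind-pro-objects fail to be closed under finite limits (they are, formally), but that the realisation functor $\IP\dAff_k \to \dSt_k$ mixes limits and colimits and has no reason to commute with the finite limit computed in $\dSt_k$; without that, the assertion ``$\kaploop^d_U(X)$ is represented by an ind-pro-scheme'' is not even reduced to the case of $\A^1$. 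The paper resolves this with its central technical theorem (\autoref{ff-realisation} together with \autoref{shydaff-limits}): on the full subcategory $\shybounded_k$ of ind-pro-stacks presented by ``shy'' $\N$-indexed diagrams with uniformly bounded cotangent tor-amplitude, cohomological dimension and Artin degree, the realisation functor is \emph{fully faithful} and preserves finite limits. Its proof is a genuine argument (obstruction theory giving connectivity estimates uniform in the diagram, Postnikov towers, $\lim^1$-sequences to commute the relevant limits and colimits), and one must then check that the explicit presentations of the pieces $\Map(D_\sigma,\A^1)$ and of the formal completion (via the colimit over nilpotent ideals of $\homol^0(A)$) land in this subcategory. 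Nothing in your proposal substitutes for this.

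Two further points. For descent, your argument for $\kaploop^d_V$ via base change of jet spaces along étale maps is fine, but your treatment of the punctured pieces does not work as stated: ``étale descent is itself an étale-local condition, so it suffices to check it after the localizations'' is circular. What is actually needed (and what the paper proves in \autoref{LU-fetale}) is that for $f\colon X \to Y$ étale the square comparing $\kaplooppre^d_U(X)(A)$ with its value on $\quot{\homol^0(A)}{I}$ base-changed over $Y$ is Cartesian; this uses that inverting variables preserves the nilpotence of $\homol^0(A) \to \quot{\homol^0(A)}{I}$, followed by an induction up the Postnikov tower of the localised ring together with nilcompleteness of $X$ and $Y$. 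Moreover, the surjectivity of $\kaploop^d(T) \to \kaploop^d(X)$ for an atlas $T \to X$ requires lifting a compatible family of truncated jets to an honest map from $\Spec(B[\![\el{X}{d}]\!])$, which uses algebraisability of the diagonal of $X$ --- harmless for schemes, but a genuine hypothesis you do not mention. Finally, note that in the paper descent is not merely used to glue representing objects: the ind-pro-object for general $X$ is \emph{defined} by left Kan extension from affines, and the content of \autoref{L-indpro} is that its realisation recovers $\kaploop^d(X)$.
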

We use here methods from derived algebraic geometry as developed by Bertrand Toën and Gabriele Vezzosi in \cite{toen:hagii}. The author would like to emphasize here that the derived structure is necessary since, when $X$ is a scheme, the underlying schemes of $\kaploop^d(X)$, $\kaploop^d_U(X)$ and $\kaploop^d_V(X)$ are isomorphic as soon as $d \geq 2$.
Let us also note that derived algebraic geometry allowed us to define $\kaploop^d(X)$ for more general $X$'s, namely any derived stack.
In this case, the formal loop space $\kaploop^d(X)$ is no longer a derived ind-pro-scheme but an ind-pro-stack.

The case $d = 1$ and $X$ is a smooth scheme gives a derived enhancement of Kapranov and Vasserot's definition. This derived enhancement is conjectured to be trivial when $X$ is a smooth affine scheme in  \cite[9.2.10]{gaitsgoryrozenblyum:dgindschemes}. Gaitsgory and Rozenblyum also prove in \loccit their conjecture holds when $X$ is an algebraic group.

The proof of \autoref{intro-kaploop} is based on an important lemma. We identify a full sub-category $\Cc$ of the category of ind-pro-stacks such that the realisation functor $\Cc \to \dSt_k$ is fully faithful. We then prove that whenever $X$ is a derived affine scheme, the stack $\kaploop^d(X)$ is in the essential image of $\Cc$ and is thus endowed with an \emph{essentially unique} ind-pro-structure satisfying some properties. 
The generalisation to any $X$ is made using a descent argument.
Note that for general $X$'s, the ind-pro-structure is not known to satisfy nice properties one could want to have, for instance on the transition maps of the diagrams.

We then focus on the following problem: can we build a symplectic form on $\kaploop^d(X)$ when $X$ is symplectic?
Again, this question requires the tools of derived algebraic geometry and \emph{shifted symplectic structures} as in \cite{ptvv:dersymp}.
A key feature of derived algebraic geometry is the cotangent complex $\Lcot_X$ of any geometric object $X$. A ($n$-shifted) symplectic structure on $X$ is a closed $2$-form $\Oo_X[-n] \to \Lcot_X \wedge \Lcot_X$ which is non degenerate -- ie induces an equivalence
\[
\T_X \to \Lcot_X[n]
\]
Because $\kaploop^d(X)$ is not finite, linking its cotangent to its dual -- through an alleged symplectic form -- requires to identify once more some structure. We already know that it is an ind-pro-scheme but the proper context seems to be what we call Tate stacks.

Before saying what a Tate stack is, let us talk about Tate modules. They define a convenient context for infinite dimensional vector spaces. They where studied by Lefschetz, Beilinson and Drinfeld, among others, and more recently by Bräunling, Gröchenig and Wolfson \cite{bgw:tate}.
We will use here the notion of Tate objects in the context of stable $(\infty,1)$-categories as developed in  \cite{hennion:tate}. If $\Cc$ is a stable $(\infty,1)$-category -- playing the role of the category of finite dimensional vector spaces, the category $\Tate(\Cc)$ is the full subcategory of the $(\infty,1)$-category of pro-ind-objects $\Pro \Ind(\Cc)$ in $\Cc$ containing both $\Ind(\Cc)$ and $\Pro(\Cc)$ and stable by extensions and retracts.

We will define the derived category of Tate modules on a scheme -- and more generally on a derived ind-pro-stack.
An Artin ind-pro-stack $X$ -- meaning an ind-pro-object in derived Artin stacks -- is then gifted with a cotangent complex $\Lcot_X$. This cotangent complex inherits a natural structure of pro-ind-module on $X$. This allows us to define a Tate stack as an Artin ind-pro-stack whose cotangent complex is a Tate module.
The formal loop space $\kaploop^d(X)$ is then a Tate stack as soon as $X$ is a finitely presented derived affine scheme. For a more general $X$, what precedes makes $\kaploop^d(X)$ some kind of \emph{locally} Tate stack. This structure suffices to define a determinantal anomaly
\[
\left[\mathrm{Det}_{\kaploop^d(X)}\right] \in \homol^2\left(\kaploop^d(X), \Oo_{\kaploop^d(X)}^{\times}\right)
\]
for any quasi-compact quasi-separated (derived) scheme $X$ -- this construction also works for slightly more general $X$'s, namely Deligne-Mumford stacks with algebraisable diagonal, see \autoref{determinantalanomaly}.
Kapranov and Vasserot proved in \cite{kapranovvasserot:loop4} that in dimension $1$, the determinantal anomaly governs the existence of sheaves of chiral differential operators on $X$.
One could expect to have a similar result in higher dimensions, with higher dimensional analogues of chiral operators and vertex algebras. The author plans on studying this in a future work.

Another feature of Tate modules is duality. It makes perfect sense and behaves properly. Using the theory of symplectic derived stacks developed by Pantev, Toën, Vaquié and Vezzosi in \cite{ptvv:dersymp}, we are then able to build a notion of symplectic Tate stack: a Tate stack $Z$ equipped with a ($n$-shifted) closed $2$-form which induces an equivalence
\[
\T_Z \to^\sim \Lcot_Z[n]
\]
of Tate modules over $Z$ between the tangent and (shifted) cotangent complexes of $Z$.

To make a step toward proving that $\kaploop^d(X)$ is a symplectic Tate stack, we actually study the bubble space $\bubblespace^d(X)$ -- see \autoref{dfbubble}. When $X$ is affine, we get an equivalence
\[
\bubblespace^d(X) \simeq \kaploop_V^d(X) \times_{\kaploop_U^d(X)} \kaploop^d_V(X)
\]
Note that the fibre product above is a \emph{derived} intersection. We then prove the following result
\begin{thmintro}[see \autoref{B-symplectic}]\label{intro-bubble}
If $X$ is an $n$-shifted symplectic stack then the bubble space $\bubblespace^d(X)$ is endowed with a structure of $(n-d)$-shifted symplectic Tate stack.
\end{thmintro}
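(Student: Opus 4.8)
The plan is to realise the bubble space as a mapping stack and then to run a Tate-enriched version of the transgression (AKSZ) construction of Pantev--Toën--Vaquié--Vezzosi \cite{ptvv:dersymp}. First I would rewrite the derived intersection as a mapping stack out of a formal sphere. Since $\kaploop_V^d(X)$ and $\kaploop_U^d(X)$ are the stacks of maps out of the formal neighbourhood $\hat{\A}^d$ and its punctured version $\hat{\A}^d \smallsetminus \{0\}$ respectively, and since $\Map(-,X)$ turns colimits of sources into limits, the fibre product
\[
\bubblespace^d(X) \simeq \kaploop_V^d(X) \times_{\kaploop_U^d(X)} \kaploop_V^d(X)
\]
is the mapping stack $\Map(\formalsphere^d, X)$ out of the pushout $\formalsphere^d \simeq \hat{\A}^d \cup_{\hat{\A}^d \smallsetminus \{0\}} \hat{\A}^d$, the formal sphere of dimension $d$. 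That $\bubblespace^d(X)$ carries a Tate stack structure — i.e. that its cotangent complex is a Tate module over it — I would deduce from the analysis of $\kaploop^d_V$ and $\kaploop^d_U$ underlying \autoref{intro-kaploop}, the point being that Tate modules are stable under the finite limit computing the intersection.

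Next I would set up the transgression. Write $\pr \colon \formalsphere^d \times \bubblespace^d(X) \to \bubblespace^d(X)$ and $\ev \colon \formalsphere^d \times \bubblespace^d(X) \to X$ for the projection and the evaluation. The $n$-shifted symplectic form $\omega$ on $X$ pulls back to a closed $2$-form $\ev^* \omega$ of degree $n$, and integrating it along the fibres of $\pr$ produces the candidate closed $2$-form
\[
\int_{\formalsphere^d} \ev^* \omega
\]
of degree $n-d$ on the bubble space. For this fibre integration to be defined one needs $\formalsphere^d$ to be $\Oo$-compact together with a $d$-orientation, i.e. a trace map $\homol^d(\formalsphere^d, \Oo) \to k$. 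This is exactly where the cohomology computation of the introduction enters: the punctured neighbourhood has cohomology in degrees $0$ and $d-1$, so by Mayer--Vietoris the sphere $\formalsphere^d$ has cohomology concentrated in degrees $0$ and $d$, and the top piece supplies the orientation via a residue map.

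It then remains to prove that the transgressed form is nondegenerate as a pairing of Tate modules, i.e. that it induces an equivalence $\T_{\bubblespace^d(X)} \to \Lcot_{\bubblespace^d(X)}[n-d]$. The tangent complex of the mapping stack is $\pr_* \ev^* \T_X$; the transgressed pairing is obtained by applying $\pr_* \ev^*$ to the nondegeneracy equivalence $\T_X \simeq \Lcot_X[n]$ of $\omega$ and then using the orientation-induced duality on $\formalsphere^d$ to identify the outcome with $\Lcot_{\bubblespace^d(X)}[n-d]$. Composing the two yields the desired equivalence, provided the orientation induces a perfect duality on $\formalsphere^d$.

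The hard part is precisely this last point in the Tate setting. Unlike in the finite-dimensional PTVV framework, $\formalsphere^d$ is an ind-pro-object and the groups $\homol^0$ and $\homol^d$ are genuinely infinite-dimensional — morally $k[\![\el{X}{d}]\!]$ against its continuous dual — so "Poincaré duality" here is a Tate residue pairing, and both the orientation and its perfectness must be formulated in the category of Tate modules rather than of perfect complexes. I expect the main technical work to be twofold: (a) making the fibre-integration $\int_{\formalsphere^d}$ a well-defined operation on Tate-valued forms that is compatible with the closedness structure, and (b) verifying that the induced pairing on $\pr_* \ev^* \T_X$ is a \emph{perfect} pairing of Tate modules, so that nondegeneracy of $\omega$ transgresses to nondegeneracy of $\int_{\formalsphere^d} \ev^* \omega$. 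Once this Tate Poincaré duality for $\formalsphere^d$ is in place, the symplectic Tate structure on the bubble space follows formally.
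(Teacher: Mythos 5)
Your overall strategy is the paper's: realise $\bubblespace^d(X)$ as a mapping ind-pro-stack out of a formal sphere, transgress the symplectic form by a PTVV-style fibre integration against a residue orientation $\Oo_{\formalsphere^d} \to k[-d]$ (this is \autoref{ipdst-form}), and then check non-degeneracy of the resulting pairing of Tate modules. The gap is that the decisive step --- non-degeneracy --- is only named in your write-up, and the mechanism the paper uses to close it is \emph{not} a global ``Tate Poincar\'e duality for $\formalsphere^d$'' from which everything follows formally. The paper models the sphere as $\lim_n \colim_{p\geq n}\Spec(A_p \oplus A_n[-d])$ with $A_n = k[\el{x}{d}]/(\el{x^n}{d})$, i.e.\ by trivial square-zero extensions, and the non-degeneracy argument leans on three features of this model. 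First, by \autoref{prop-formal-tate} the cotangent complex sits in an exact sequence
\[
s_m^* r^* \Lcot_{\bubblestack^d(X)_0} \to s_m^* \Lcot_{\bubblestack^d(X)} \to s_m^* \Lcot_{\bubblestack^d(X)/\bubblestack^d(X)_0}
\]
with ind-perfect kernel and pro-perfect cokernel. Second, the transgressed pairing vanishes identically on $\T_{\bubblestack^d(X)/\bubblestack^d(X)_0}\otimes \T_{\bubblestack^d(X)/\bubblestack^d(X)_0}$, precisely because the multiplication on the augmentation ideal $A_n[-d]$ of a trivial square-zero extension is zero; this vanishing is what makes the pairing respect the filtration and produces a morphism of exact sequences from the tangent sequence to the shifted dual cotangent sequence. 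Third, non-degeneracy then reduces to a single map $\tau_m \colon s_m^*\T_{\bubblestack^d(X)/\bubblestack^d(X)_0}\to s_m^* r^* \Lcot_{\bubblestack^d(X)_0}[d-n]$ being an equivalence, which is checked at each finite level by the explicit perfect residue pairing $A_n\otimes_{A_1}A_n \to A_n \to A_1$ of \autoref{dual-over-k}, a Koszul-complex computation. Without the vanishing in the second step you do not even obtain a map between the two exact sequences, so ``the structure follows formally once the duality is in place'' overstates what an orientation alone gives you.

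Two smaller points. Deriving the mapping-stack description by writing $\formalsphere^d$ as a pushout $\hat{\A}{}^d \cup_{\hat{\A}{}^d\smallsetminus\{0\}} \hat{\A}{}^d$ and invoking that $\Map(-,X)$ sends colimits to limits runs in the opposite direction from the paper and is delicate: the pushout would have to be formed in pro-ind-stacks, and the punctured formal neighbourhood is exactly the object that is \emph{not} known to carry a pro-ind-scheme structure. The paper instead defines $\formalsphere^d$ by the explicit square-zero model and proves the identification with the derived intersection as \autoref{B-and-L-IP}, by a direct computation reducing to $X=\A^1$. Finally, your route to the Tate property (stability under finite limits, \autoref{tate-limits}, combined with the Tateness of the formal loop spaces) does work and the paper acknowledges it, but it deliberately proves the sharper exact-sequence statement above because that decomposition is precisely the input to the non-degeneracy argument.
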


The proof of this result is based on a classical method. The bubble space is in fact, as an ind-pro-stack, the mapping stack from what we call the formal sphere $\hat S^d$ of dimension $d$ to $X$.
There are therefore two maps
\[
\mymatrix{
\bubblespace^d(X) & \bubblespace^d(X) \times \hat S^d \ar[r]^-{\ev} \ar[l]_-{\pr} & X
}
\]
The symplectic form on $\bubblespace^d(X)$ is then $\int_{\hat S^d} \ev^* \omega_X$, where $\omega_X$ is the symplectic form on $X$.
The key argument is the construction of this integration on the formal sphere, ie on an oriented pro-ind-stack of dimension $d$.
The orientation is given by a residue map. On the level of cohomology, it is the morphism 
\[
\homol^d(\hat S^d) \simeq (\el{X}{d}[])^{-1} k[\el{X^{-1}}{d}] \to k
\]
mapping $(\el{X}{d}[])^{-1}$ to $1$.

This integration method would not work on $\kaploop^d(X)$, since the punctured formal neighbourhood does not have as much structure as the formal sphere: it is not known to be a pro-ind-scheme. Nevertheless, \autoref{intro-bubble} is a first step toward proving that $\kaploop^d(X)$ is symplectic. We can consider the nerve $Z_\bullet$ of the map $\kaploop^d_V(X) \to \kaploop^d_U(X)$. It is a groupoid object in ind-pro-stacks whose space of maps is $\bubblespace^d(X)$. The author expects that this groupoid is compatible in some sense with the symplectic structure so that $\kaploop^d_U(X)$ would inherit a symplectic form from realising this groupoid.
One the other hand, if $\kaploop^d_U(X)$ was proven to be symplectic, then the fibre product defining $\bubblespace^d(X)$ should be a Lagrangian intersection. The bubble space would then inherit a symplectic structure from that on $\kaploop^d(X)$.

\subsubsection*{Techniques and conventions}
Throughout this work, we will use the techniques of $(\infty,1)$-category theory. We will once in a while use explicitly the model of quasi-categories developed by Joyal and Lurie (see \cite{lurie:htt}). That being said, the results should be true with any equivalent model.
Let us fix now two universes $\mathbb U \in \mathbb V$ to deal with size issues. Every algebra, module or so will implicitly be $\mathbb U$-small.
The first part will consist of reminders about $(\infty,1)$-categories. We will fix there some notations.
Note that we will often refer to \cite{hennion:these} for some $(\infty,1)$-categorical results needed in this article.

We will also use derived algebraic geometry, as introduced in \cite{toen:hagii}. We refer to \cite{toen:dagems} for a recent survey of this theory.
We will denote by $k$ a base field and by $\dSt_k$ the $(\infty,1)$-category of ($\mathbb U$-small) derived stacks over $k$.
In the first section, we will dedicate a few page to introduce derived algebraic geometry.

\subsubsection*{Outline}
This article begins with a few paragraphs, recalling some notions we will use. Among them are $(\infty,1)$-categories and derived algebraic geometry.
In \autoref{chapterIP}, we set up a theory of geometric ind-pro-stacks.
We then define in \autoref{chapterSymptate} symplectic Tate stacks and give a few properties, including the construction of the determinantal anomaly (see \autoref{determinantalanomaly}).
Comes \autoref{chapterfloops} where we finally define higher dimensional loop spaces and prove \autoref{intro-kaploop} (see \autoref{L-indpro}). We finally introduce the bubble space and prove \autoref{intro-bubble} (see \autoref{B-symplectic}) in \autoref{chapterBubbles}.

\subsubsection*{Aknowledgements}
I would like to thank Bertrand Toën, Damien Calaque and Marco Robalo for the many discussions we had about the content of this work.
I am grateful to Mikhail Kapranov, James Wallbridge and Giovanni Faonte for inviting me at the IPMU. My stay there was very fruitful and the discussions we had were very interesting.
I learned after writing down this article that Kapranov had an unpublished document in which higher dimensional formal loops are studied. I am very grateful to Kapranov for letting me read those notes, both inspired and inspiring.

This work is extracted from my PhD thesis \cite{hennion:these} under the advisement of Bertrand Toën. I am very grateful to him for those amazing few years.
\section*{Preliminaries}%
\addcontentsline{toc}{section}{Preliminaries}%
In this part, we recall some results and definitions from $(\infty,1)$-category theory and derived algebraic geometry.

\subsection{A few tools from higher category theory}

In the last decades, theory of $(\infty,1)$-categories has tremendously grown.
The core idea is to consider categories enriched over spaces, so that every object or morphism is considered up to higher homotopy.
The typical example of such a category is the category of topological spaces itself: for any topological spaces $X$ and $Y$, the set of maps $X \to Y$ inherits a topology.
It is often useful to talk about topological spaces up to homotopy equivalences. Doing so, one must also consider maps up to homotopy.
To do so, one can of course formally invert every homotopy equivalence and get a set of morphisms $[X,Y]$. This process loses information and mathematicians tried to keep trace of the space of morphisms.

The first fully equipped theory handy enough to work with such examples, called model categories, was introduced by Quillen.
A model category is a category with three collections of maps -- weak equivalences (typically homotopy equivalences), fibrations and cofibrations -- satisfying a bunch of conditions.
The datum of such collections allows us to compute limits and colimits up to homotopy. We refer to \cite{hovey:modcats} for a comprehensive review of the subject.

Using model categories, several mathematicians developed theories of $(\infty,1)$-categories. Let us name here Joyal's quasi-categories, complete Segal spaces or simplicial categories.
Each one of those theories is actually a model category and they are all equivalent one to another -- see \cite{bergner:oneinfty} for a review.

In \cite{lurie:htt}, Lurie developed the theory of quasi-categories. In this book, he builds everything necessary so that we can think of $(\infty,1)$-categories as we do usual categories. To prove something in this context still requires extra care though. We will use throughout this work the language as developed by Lurie, but we will try to keep in mind the $1$-categorical intuition.

In this section, we will fix a few notations and recall some results to which we will often refer.

\paragraph*{Notations:}
Let us first fix a few notations, borrowed from \cite{lurie:htt}.
\begin{itemize}
\item We will denote by $\inftyCatu U$ the $(\infty,1)$-category of $\mathbb U$-small $(\infty,1)$-categories -- see \cite[3.0.0.1]{lurie:htt};
\item Let $\PresLeftu U$ denote the $(\infty,1)$-category of $\mathbb U$-presentable (and thus $\mathbb V$-small) $(\infty,1)$-categories with left adjoint functors -- see \cite[5.5.3.1]{lurie:htt};
\item The symbol $\sSets$ will denote the $(\infty,1)$-category of $\mathbb U$-small simplicial sets up to homotopy equivalences (this is equivalent to the category of (nice) topological spaces up to homotopy);
\item For any $(\infty,1)$-categories $\Cc$ and $\Dd$ we will write $\Fct(\Cc,\Dd)$ for the $(\infty,1)$-category of functors from $\Cc$ to $\Dd$ (see \cite[1.2.7.3]{lurie:htt}). The category of presheaves will be denoted $\presh(\Cc) = \Fct(\Cc\op, \sSets)$.
\item For any $(\infty,1)$-category $\Cc$ and any objects $c$ and $d$ in $\Cc$, we will denote by $\Map_{\Cc}(c,d)$ the space of maps from $c$ to $d$.
\item For any simplicial set $K$, we will denote by $K^{\triangleright}$ the simplicial set obtained from $K$ by formally adding a final object. This final object will be called the cone point of $K^\triangleright$.
\end{itemize}
The following theorem is a concatenation of results from Lurie.

\begin{thm}[Lurie]\label{indu-thm}
Let $\Cc$ be a $\mathbb V$-small $(\infty,1)$-category.
There is an $(\infty,1)$-category $\Indu U(\Cc)$ and a functor $j \colon \Cc \to \Indu U(\Cc)$ such that
\begin{enumerate}
\item The $(\infty,1)$-category $\Indu U(\Cc)$ is $\mathbb V$-small;
\item The $(\infty,1)$-category $\Indu U(\Cc)$ admits $\mathbb U$-small filtered colimits and is generated by $\mathbb U$-small filtered colimits of objects in $j(\Cc)$;
\item The functor $j$ is fully faithful and preserves finite limits and finite colimits which exist in $\Cc$;
\item For any $c \in \Cc$, its image $j(c)$ is $\mathbb U$-small compact in $\Indu U (\Cc)$;
\item For every $(\infty,1)$-category $\Dd$ with every $\mathbb U$-small filtered colimits, the functor $j$ induces an equivalence
\[
 \Fct^{\mathbb U\mathrm{-c}}(\Indu U(\Cc), \Dd) \to^\sim \Fct(\Cc,\Dd)
\]
where $\Fct^{\mathbb U \mathrm{-c}}(\Indu U(\Cc), \Dd)$ denote the full subcategory of $\Fct(\Indu U(\Cc),\Dd)$ spanned by functors preserving $\mathbb U$-small filtered colimits.
\item If $\Cc$ is $\mathbb U$-small and admits all finite colimits then $\Indu U(\Cc)$ is $\mathbb U$-presentable;
\item If $\Cc$ is endowed with a symmetric monoidal structure then there exists such a structure on $\Indu U(\Cc)$ such that the monoidal product preserves $\mathbb U$-small filtered colimits in each variable.\label{indmonoidal}
\end{enumerate}
\end{thm}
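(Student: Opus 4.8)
The plan is to realise $\Indu U(\Cc)$ concretely inside presheaves and then to import the corresponding statements from \cite{lurie:htt}, the only genuine work being the universe bookkeeping. First I would take $j$ to be the Yoneda embedding $\Cc \to \presh(\Cc) = \Fct(\Cc\op, \sSets)$ and define $\Indu U(\Cc)$ to be the smallest full subcategory of $\presh(\Cc)$ containing the essential image of $j$ and stable under $\mathbb U$-small filtered colimits, equivalently the objects arising as such colimits of representables. Since $\Cc$ is $\mathbb V$-small and $\sSets$ is $\mathbb V$-small, the category $\presh(\Cc)$ and hence its full subcategory $\Indu U(\Cc)$ are $\mathbb V$-small, which gives (i); closure under $\mathbb U$-small filtered colimits and generation by $j(\Cc)$ under them is then (ii) by construction.

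For the remaining formal properties I would quote the Ind-completion results of \loccit, specialised to the case where the relevant regular cardinal is $\omega$, so that the colimits at play are exactly the filtered ones and $\omega$-small means finite. The standard features of the Ind-completion — fully faithfulness of $j$, preservation of finite limits and of whatever finite colimits happen to exist in $\Cc$, and compactness of the representables $j(c)$ — give (iii) and (iv); the universal property, asserting that restriction along $j$ is an equivalence onto the full subcategory of functors preserving $\mathbb U$-small filtered colimits, gives (v), which is the heart of the statement. For (vi), if $\Cc$ is moreover $\mathbb U$-small with all finite colimits, then $\Indu U(\Cc)$ is simultaneously accessible and cocomplete, hence $\mathbb U$-presentable.

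The one point requiring assembly rather than a bare citation is the monoidal statement (vii). Here the plan is to transport a symmetric monoidal structure on $\Cc$ to $\presh(\Cc)$ by Day convolution, for which the resulting tensor product preserves $\mathbb U$-small colimits separately in each variable, and then to check that the full subcategory $\Indu U(\Cc)$ is stable under this Day tensor product. For stability it suffices to observe that the Day convolution extends $\otimes$ along $j$, so that the tensor product of two representables is again representable, and that $\otimes$ commutes with the $\mathbb U$-small filtered colimits that generate $\Indu U(\Cc)$. Restricting the Day structure then produces the desired symmetric monoidal structure on $\Indu U(\Cc)$ whose product preserves $\mathbb U$-small filtered colimits in each variable.

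I expect the main obstacle to be one of size discipline rather than of mathematical content: the statements of \loccit are phrased for a single universe with a small/large dichotomy, so the real task is to reinterpret \enquote{small}, \enquote{accessible} and \enquote{presentable} consistently relative to the pair $\mathbb U \in \mathbb V$ throughout all seven items, and to ensure that the Day convolution argument for (vii) never leaves the realm of $\mathbb V$-small categories. Once this bookkeeping is fixed, none of the individual steps is difficult.
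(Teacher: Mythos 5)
Your proposal matches the paper's proof in substance: the paper likewise realises $\Indu U(\Cc)$ as the subcategory $\presh^{\mathcal K}_{\emptyset}(\Cc)$ of presheaves generated by the representables under $\mathbb U$-small filtered colimits (HTT 5.3.6.2) and imports items \emph{(i)}--\emph{(vi)} from Lurie's ind-completion results, the only content being the $\mathbb U \in \mathbb V$ size bookkeeping you identify. The sole divergence is item \emph{(vii)}, where the paper simply cites \cite[6.3.1.10]{lurie:halg} rather than running your Day convolution argument; both routes are standard and yield the same structure.
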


\begin{proof}
Let us use the notations of \cite[5.3.6.2]{lurie:htt}. Let $\mathcal K$ denote the collection of $\mathbb U$-small filtered simplicial sets. We then set $\Indu U (\Cc) = \presh^\mathcal K _\emptyset(\Cc)$.
It satisfies the required properties because of \loccit 5.3.6.2 and 5.5.1.1. We also need tiny modifications of the proofs of \loccit 5.3.5.14 and 5.3.5.5. The last item is proved in \cite[6.3.1.10]{lurie:halg}.
\end{proof}

\begin{rmq}
Note that when $\Cc$ admits finite colimits then the category $\Indu U(\Cc)$ embeds in the $\mathbb V$-presentable category $\Indu V(\Cc)$.
\end{rmq}

\begin{df}
Let $\Cc$ be a $\mathbb V$-small $\infty$-category. We define $\Prou U(\Cc)$ as the $(\infty,1)$-category
\[
\Prou U(\Cc) = \left( \Indu U(\Cc\op) \right) \op
\]
It satisfies properties dual to those of $\Indu U(\Cc)$.
\end{df}

\begin{df} \label{indext}
Let $\Cc$ be a $\mathbb V$-small $(\infty,1)$-category.
Let 
\[
i \colon \Fct(\Cc, \inftyCatu V) \to \Fct(\Indu U(\Cc), \inftyCatu V)
\]
denote the left Kan extension functor.
We will denote by $\Indext_\Cc$ the composite functor
\[
\mymatrix{
\Fct(\Cc, \inftyCatu V) \ar[r]^-i & \Fct(\Indu U(\Cc), \inftyCatu V) \ar[rr]^-{\Indu U \circ -} && \Fct(\Indu U(\Cc), \inftyCatu V)
}
\]
We will denote by $\Proext_\Cc$ the composite functor
\[
\mymatrix{
\Fct(\Cc, \inftyCatu V) \ar[rr]^{\Prou U \circ -} && \Fct(\Cc, \inftyCatu V) \ar[r] & \Fct(\Prou U(\Cc), \inftyCatu V)
}
\]
We define the same way
\begin{align*}
&\Indextu V_C \colon \Fct(\Cc, \inftyCatu V) \to \Fct(\Indu V(\Cc), \inftyCat) \\
&\Proextu V_C \colon \Fct(\Cc, \inftyCatu V) \to \Fct(\Prou V(\Cc), \inftyCat) \\
\end{align*}
\end{df}

\begin{rmq}
The \autoref{indext} can be expanded as follows. To any functor $f \colon \Cc \to \inftyCatu V$ and any ind-object $c$ colimit of a diagram
\[
\mymatrix{
K \ar[r]^-{\bar c} & \Cc \ar[r] & \Indu U(\Cc)
}
\]
we construct an $(\infty,1)$-category
\[
\Indext_\Cc(f)(c) \simeq \Indu U\left(\colim f(\bar c)\right)
\]
To any pro-object $d$ limit of a diagram
\[
\mymatrix{
K\op \ar[r]^-{\bar d} & \Cc \ar[r] & \Prou U(\Cc)
}
\]
we associate an $(\infty,1)$-category
\[
\Proext_\Cc(f)(d) \simeq \lim \Prou U(f(\bar d))
\]
\end{rmq}

\begin{df}
Let $\inftyCat^{\mathbb V\mathrm{,st}}$ denote the subcategory of $\inftyCatu V$ spanned by stable categories with exact functors between them -- see \cite[1.1.4]{lurie:halg}.
Let $\inftyCat^{\mathbb V\mathrm{,st,id}}$ denote the full subcategory of $\inftyCat^{\mathbb V\mathrm{,st}}$ spanned by idempotent complete stable categories.
\end{df}

\begin{rmq}
It follows from \cite[1.1.4.6, 1.1.3.6, 1.1.1.13 and 1.1.4.4]{lurie:halg} that the functors $\Indext_\Cc$ and $\Proext_\Cc$ restricts to functors
\begin{align*}
& \Indext_\Cc \colon \Fct(\Cc, \inftyCat^{\mathbb V\mathrm{,st}}) \to \Fct(\Indu U (\Cc), \inftyCat^{\mathbb V\mathrm{,st}}) \\
& \Proext_\Cc \colon \Fct(\Cc, \inftyCat^{\mathbb V\mathrm{,st}}) \to \Fct(\Prou U (\Cc), \inftyCat^{\mathbb V\mathrm{,st}})
\end{align*}
\end{rmq}

\paragraph{Symmetric monoidal $(\infty,1)$-categories:}
We will make use in the last chapter of the theory of symmetric monoidal $(\infty,1)$-categories as developed in \cite{lurie:halg}. Let us give a (very) quick review of those objects.

\begin{df}\label{ptfin}
Let $\ptfin$ denote the category of pointed finite sets. For any $n \in \N$, we will denote by $\langle n \rangle$ the set $\{\pt, 1, \dots ,n\}$ pointed at $\pt$.
For any $n$ and $i \leq n$, the Segal map $\delta^i \colon \langle n \rangle \to \langle 1 \rangle$ is defined by $\delta^i(j) = 1$ if $j = i$ and $\delta^i(j) = \pt$ otherwise.
\end{df}
\begin{df}(see \cite[2.0.0.7]{lurie:halg})\label{monoidalcats}
Let $\Cc$ be an $(\infty,1)$-category. A symmetric monoidal structure on $\Cc$ is the datum of a coCartesian fibration $p \colon \Cc^{\otimes} \to \ptfin$ such that
\begin{itemize}
\item The fibre category $\Cc^{\otimes}_{\langle 1 \rangle}$ is equivalent to $\Cc$ and
\item For any $n$, the Segal maps induce an equivalence $\Cc^{\otimes}_{\langle n \rangle} \to (\Cc^{\otimes}_{\langle 1 \rangle})^n \simeq \Cc^n$.
\end{itemize}
where $\Cc^{\otimes}_{\langle n \rangle}$ denote the fibre of $p$ at $\langle n \rangle$.
We will denote by $\monoidalinftyCatu V$ the $(\infty,1)$-category of $\mathbb V$-small symmetric monoidal $(\infty,1)$-categories -- see \cite[2.1.4.13]{lurie:halg}.
\end{df}
Such a coCartesian fibration is classified by a functor $\phi \colon \ptfin \to \inftyCatu V$ -- see \cite[3.3.2.2]{lurie:htt} -- such that $\phi(\langle n \rangle) \simeq \Cc^n$.
The tensor product on $\Cc$ is induced by the map of pointed finite sets $\mu \colon \langle 2 \rangle \to \langle 1 \rangle$ mapping both $1$ and $2$ to $1$
\[
\otimes = \phi(\mu) \colon \Cc^2 \to \Cc
\]

\begin{rmq}\label{indext-monoidal}
The forgetful functor $\monoidalinftyCatu V \to \inftyCatu V$ preserves all limits as well as filtered colimits -- see \cite[3.2.2.4 and 3.2.3.2]{lurie:halg}.
Moreover, it follows from \autoref{indu-thm} - \ref{indmonoidal} that the functor $\Indu U$ induces a functor
\[
\Indu U \colon \monoidalinftyCatu V \to \monoidalinftyCatu V
\]
The same holds for $\Prou U$.
The constructions $\Indext$ and $\Proext$ therefore restrict to
\begin{align*}
&\Indext_\Cc \colon \Fct(\Cc,\monoidalinftyCatu V) \to \Fct(\Indu U(\Cc), \monoidalinftyCatu V) \\
&\Proext_\Cc \colon \Fct(\Cc,\monoidalinftyCatu V) \to \Fct(\Prou U(\Cc), \monoidalinftyCatu V)
\end{align*}
\end{rmq}

\paragraph{Tate objects:}
We now recall the definition and a few properties of Tate objects in a stable and idempotent complete $(\infty,1)$-category. The content of this paragraph comes from \cite{hennion:tate}. See also \cite{hennion:these}.
\begin{df}
Let $\Cc$ be a stable and idempotent complete $(\infty,1)$-category.
Let $\Tateu U(\Cc)$ denote the smallest full subcategory of $\Prou U \Indu U(\Cc)$ containing $\Indu U(\Cc)$ and $\Prou U(\Cc)$, and both stable and idempotent complete.
\end{df}

The category $\Tateu U(\Cc)$ naturally embeds into $\Indu U\Prou U(\Cc)$ as well.

\begin{prop}\label{tate-dual}
If moreover $\Cc$ is endowed with a duality equivalence $\Cc\op \to^\sim \Cc$ then the induced functor
\[
\Prou U \Indu U(\Cc) \to \left(\Prou U \Indu U(\Cc)\right)\op \simeq \Indu U\Prou U(\Cc)
\]
preserves Tate objects and induces an equivalence $\Tateu U(\Cc) \simeq \Tateu U(\Cc)\op$.
\end{prop}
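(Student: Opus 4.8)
The plan is to produce the functor of the statement as the \emph{dualization} attached to $D$ and to control it through the minimal generation of $\Tateu U(\Cc)$ by $\Indu U(\Cc)$ and $\Prou U(\Cc)$.

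First I would record the two identities that follow at once from the definition $\Prou U(\mathcal A) = (\Indu U(\mathcal A\op))\op$, namely $(\Indu U \mathcal A)\op \simeq \Prou U(\mathcal A\op)$ and $(\Prou U \mathcal A)\op \simeq \Indu U(\mathcal A\op)$, both natural in $\mathcal A$. Iterating them gives the canonical identification $(\Prou U \Indu U(\Cc))\op \simeq \Indu U \Prou U(\Cc\op)$ appearing in the statement, which $\Indu U \Prou U(D)$ then turns into $\Indu U \Prou U(\Cc)$. Applying the (covariant) functors $\Indu U$ and $\Prou U$ to the equivalence $D$ and inserting these identities produces the dualization: a pro-ind object, written as a cofiltered limit of filtered colimits of objects of $\Cc$, is sent to the filtered colimit of cofiltered limits of the $D$-duals, with all arrows reversed. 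Since $D$, $\Indu U$, $\Prou U$ and $(-)\op$ are equivalences, this dualization is an equivalence onto its (opposite) target.

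The key structural point is then that the dualization exchanges the two generating subcategories: it carries $\Indu U(\Cc)$ into the opposite of $\Prou U(\Cc)$ (a filtered colimit of objects of $\Cc$ dualizes to a cofiltered limit of their duals) and, symmetrically, $\Prou U(\Cc)$ into the opposite of $\Indu U(\Cc)$. Moreover every functor entering the construction is exact and preserves retracts: $D$ is an equivalence of stable idempotent complete categories, and by the remark following the definition of $\inftyCat^{\mathbb V\mathrm{,st,id}}$ the operations $\Indu U$ and $\Prou U$ take exact functors to exact functors and preserve idempotent completeness, while $(-)\op$ does as well. Hence the dualization is an exact equivalence of stable, idempotent complete categories swapping $\Indu U(\Cc)$ and $\Prou U(\Cc)$ up to opposites.

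I would conclude using the minimality defining $\Tateu U(\Cc)$. An exact equivalence $\Phi$ of stable idempotent complete categories carries the smallest stable idempotent complete subcategory generated by a family $S$ onto the one generated by $\Phi(S)$. Applied to the dualization, whose source generators are $\Indu U(\Cc)$ and $\Prou U(\Cc)$ and whose images are $(\Prou U(\Cc))\op$ and $(\Indu U(\Cc))\op$, this shows that $\Tateu U(\Cc)$ is sent onto the smallest stable idempotent complete subcategory of $(\Prou U \Indu U(\Cc))\op \simeq \Indu U \Prou U(\Cc)$ containing $\Prou U(\Cc)$ and $\Indu U(\Cc)$. By the remark made just after the definition of $\Tateu U(\Cc)$, this last subcategory is precisely the natural copy of $\Tateu U(\Cc)$ inside $\Indu U \Prou U(\Cc)$. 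This proves at once that the functor preserves Tate objects and that it restricts to an equivalence $\Tateu U(\Cc) \to^\sim \Tateu U(\Cc)\op$, a quasi-inverse being obtained from the dualization attached to $D^{-1}$.

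The step demanding the most care is the opposite-category bookkeeping, i.e. verifying that the two copies of $\Tateu U(\Cc)$ -- the defining one inside $\Prou U \Indu U(\Cc)$ and the one inside $\Indu U \Prou U(\Cc)$ supplied by the remark -- are matched by the identification $(\Prou U \Indu U(\Cc))\op \simeq \Indu U \Prou U(\Cc)$ compatibly with the generating subcategories $\Indu U(\Cc)$ and $\Prou U(\Cc)$. Once this compatibility is pinned down, the remainder is the formal naturality of the two op-identities together with the preservation of exactness and of retracts already recorded; these are exactly what guarantee that the inductive (stable, idempotent complete) generation of the Tate category is respected by dualization.
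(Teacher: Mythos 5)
The paper gives no proof of this proposition: it is imported verbatim from \cite{hennion:tate} (``The content of this paragraph comes from \cite{hennion:tate}''), so there is no internal argument to compare yours against. Your reconstruction is correct and is essentially the standard proof from that reference: the op-identities $(\Indu U\mathcal A)\op\simeq\Prou U(\mathcal A\op)$ and $(\Prou U\mathcal A)\op\simeq\Indu U(\mathcal A\op)$, composed with $\Indu U\Prou U(D)$, yield an exact equivalence $\Prou U\Indu U(\Cc)\to(\Prou U\Indu U(\Cc))\op\simeq\Indu U\Prou U(\Cc)$ that swaps the two generating subcategories, and an exact equivalence of stable idempotent complete categories carries the minimal stable idempotent complete subcategory generated by a family onto the one generated by its image. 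The single point that genuinely leans on the cited reference rather than on the definitions in this paper is the one you yourself flag: the essential image of the embedding $\Tateu U(\Cc)\hookrightarrow\Indu U\Prou U(\Cc)$ asserted in the remark must coincide with the smallest stable idempotent complete subcategory of $\Indu U\Prou U(\Cc)$ containing $\Indu U(\Cc)$ and $\Prou U(\Cc)$; granting that (which is the symmetric characterisation of Tate objects proved in \cite{hennion:tate}), your argument is complete.
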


\begin{df}
Let $\Cc$ be a $\mathbb V$-small $(\infty,1)$-category.
We define the functor
\[
\Tateextu U \colon \mymatrix@1{\Fct(\Cc,\inftyCat^{\mathbb V\mathrm{,st}}) \ar[r]^-i & \Fct(\Indu U(\Cc),\inftyCat^{\mathbb V\mathrm{,st}}) \ar[rr]^-{\Tateu U \circ -} && \Fct(\Indu U(\Cc),\inftyCat^{\mathbb V\mathrm{,st,id}})}
\]
\end{df}

\subsection{Derived algebraic geometry}

We present here some background results about derived algebraic geometry.
Let us assume $k$ is a field of characteristic $0$.
First introduced by Toën and Vezzosi in \cite{toen:hagii}, derived algebraic geometry is a generalisation of algebraic geometry in which we replace commutative algebras over $k$ by commutative differential graded algebras (or cdga's).
We refer to \cite{toen:dagems} for a recent survey of this theory.

\paragraph{Generalities on derived stacks:}We will denote by $\cdga_k$ the $(\infty,1)$-category of cdga's over $k$ concentrated in non-positive cohomological degree. It is the $(\infty,1)$-localisation of a model category along weak equivalences. 
Let us denote $\dAff_k$ the opposite $(\infty,1)$-category of $\cdga_k$. It is the category of derived affine schemes over $k$.
In this work, we will adopt a cohomological convention for cdga's.

A derived prestack is a presheaf $\dAff_k\op \simeq \cdga_k \to \sSets$. We will thus write $\presh(\dAff_k)$ for the $(\infty,1)$-category of derived prestacks.
A derived stack is a prestack satisfying the étale descent condition. We will denote by $\dSt_k$ the $(\infty,1)$-category of derived stacks.
It comes with an adjunction
\[
(-)^+ \colon \presh(\dAff_k) \rightleftarrows \dSt_k
\]
where the left adjoint $(-)^+$ is called the stackification functor.
\begin{rmq}
The categories of varieties, schemes or (non derived) stacks embed into $\dSt_k$.
\end{rmq}

\begin{df}
The $(\infty,1)$-category of derived stacks admits an internal hom $\Mapstack(X,Y)$ between two stacks $X$ and $Y$. It is the functor $\cdga_k \to \sSets$ defined by
\[
A \mapsto \Map_{\dSt_k}(X \times \Spec A, Y)
\]
We will call it the mapping stack from $X$ to $Y$.
\end{df}

There is a derived version of Artin stacks of which we first give a recursive definition.
\begin{df}(see for instance \cite[5.2.2]{toenmoerdijk:crm})\label{artinstacks}
Let $X$ be a derived stack.
\begin{itemize}
\item We say that $X$ is a derived $0$-Artin stack if it is a derived affine scheme ;
\item We say that $X$ is a derived $n$-Artin stack if there is a family $(T_\alpha)$ of derived affine schemes and a smooth atlas
\[
u \colon \coprod T_\alpha \to X
\]
such that the nerve of $u$ has values in derived $(n-1)$-Artin stacks ;
\item We say that $X$ is a derived Artin (or algebraic) stack if it is an $n$-Artin stack for some $n$.
\item We say that $X$ is locally of finite presentation if there exists a smooth atlas $\bigsqcup T_\alpha \to X$ as above, such that the derived affine schemes $T_\alpha$ are all finitely presented (ie their cdga of functions is finitely presented, or equivalently compact is the category of cdga's).
We also say that $X$ is finitely presented if there is such an atlas with a finite number of $T_\alpha$'s.
\end{itemize}
We will denote by $\dStArt_k$ the full subcategory of $\dSt_k$ spanned by derived Artin stacks.
\end{df}

\begin{df}
A morphism $X \to Y$ of derived stacks is called algebraic if for any cdga $A$ and any map $\Spec A \to Y$, the derived intersection $X \times_Y \Spec A$ is an algebraic stack.
\end{df}

To any cdga $A$ we associate the category $\dgMod_A$ of dg-modules over $A$.
Similarly, to any derived stack $X$ we can associate a derived category $\Qcoh(X)$ of quasicoherent sheaves. It is a $\mathbb U$-presentable $(\infty,1)$-category given by the formula
\[
\Qcoh(X) \simeq \lim_{\Spec A \to X} \dgMod_A
\]
Moreover, for any map $f \colon X \to Y$, there is a natural pull back functor $f^* \colon \Qcoh(Y) \to \Qcoh(X)$. This functor admits a right adjoint, which we will denote by $f_*$.
This construction is actually a functor of $(\infty,1)$-categories.
\begin{df}
Let us denote by $\Qcoh$ the functor
\[
\Qcoh \colon \dSt_k\op \to \PresLeftu U
\]
For any $X$ we can identify a full subcategory $\Perf(X) \subset \Qcoh(X)$ of perfect complexes. This defines a functor
\[
\Perf \colon \dSt_k\op \to \inftyCatu U
\]
\end{df}

\begin{rmq}
For any derived stack $X$ the categories $\Qcoh(X)$ and $\Perf(X)$ are actually stable and idempotent complete $(\infty,1)$-categories. The inclusion $\Perf(X) \to \Qcoh(X)$ is exact. Moreover, for any map $f \colon X \to Y$ the pull back functor $f^*$ preserves perfect modules and is also exact. 
\end{rmq}

\begin{df}[see for instance the appendix of \cite{halpernleistern-preygel:properness}]
Let $X$ be a derived stack and let $\pi \colon X \to \pt$ denote the projection. We say that $X$ is of finite cohomological dimension if there is a non-negative integer $d$ such that the complex $\pi_* \Oo_X = \R \Gamma(X,\Oo_X) \in \dgMod_k$ is concentrated in degree lower or equal to $d$.
\end{df}

\begin{ex}
Any derived affine scheme is of finite cohomological dimension (take $d = 0$). Any quasi-compact quasi-separated derived stack (ie a finite colimit of derived affine schemes) is of finite cohomological dimension.
\end{ex}

Any derived Artin stack $X$ over a basis $S$ admits a cotangent complex $\Lcot_{X/S} \in \Qcoh(X)$. If $X$ is locally of finite presentation, then the its cotangent complex is perfect
\[
\Lcot_{X/S} \in \Perf(X)
\]

\paragraph{Symplectic structures:}
Following \cite{ptvv:dersymp}, to any derived stack $X$ we associate two complexes $\forms p(X)$ and $\closedforms p(X)$ in $\dgMod_k$, respectively of $p$-forms and closed $p$-forms on $X$.
They come with a natural morphism $\closedforms p(X) \to \forms p(X)$ forgetting the lock closing the forms\footnote{This lock is a structure on the form: being closed in not a property in this context.}.
This actually glues into a natural transformation
\[
\mymatrix{
\dSt_k \dcell[r][\closedforms p][\forms p] & \dgMod_k
}
\]
Let us emphasize that, as soon as $X$ is Artin, the complex $\forms 2(X)$ is canonically equivalent to the global sections complex of $\Lcot_X \wedge \Lcot_X$. In particular, any $n$-shifted $2$-forms $k[-n] \to \forms p(X)$ induces a morphism $\Oo_X[-n] \to \Lcot_X \wedge \Lcot_X$ in $\Qcoh(X)$.
If $X$ is locally of finite presentation, the cotangent $\Lcot_X$ is perfect and we then get a map
\[
\T_X[-n] \to \Lcot_X
\]
\begin{df}
Let $X$ be a derived Artin stack locally of finite presentation.
\begin{itemize}
\item An $n$-shifted $2$-form $\omega_X \colon k[-n] \to \forms 2(X)$ is called non-degenerated if the induced morphism $\T_X[-n] \to \Lcot_X$ is an equivalence;
\item An $n$-shifted symplectic form on $X$ is a $n$-shifted closed $2$-form $\omega_X \colon k[-n] \to \closedforms 2(X)$ such that underlying $2$-form $k[-n] \to \closedforms 2(X) \to \forms 2(X)$ is non degenerate.
\end{itemize}
\end{df}

\paragraph{Obstruction theory:}
Let $A \in \cdga_k$ and let $M \in \dgMod_A^{\leq -1}$ be an $A$-module concentrated in negative cohomological degrees. Let $d$ be a derivation $A \to A \oplus M$ and $s \colon A \to A \oplus M$ be the trivial derivation.
The square zero extension of $A$ by $M[-1]$ twisted by $d$ is the fibre product
\[
\mymatrix{
A \oplus_d M[-1] \cart \ar[r]^-p \ar[d] & A \ar[d]^d \\ A \ar[r]^-s & A \oplus M
}
\]
Let now $X$ be a derived stack and $M \in \Qcoh(X)^{\leq -1}$. We will denote by $X[M]$ the trivial square zero extension of $X$ by $M$.
Let also $d \colon X[M] \to X$ be a derivation -- ie a retract of the natural map $X \to X[M]$. We define the square zero extension of $X$ by $M[-1]$ twisted by $d$ as the colimit
\[
X_d[M[-1]] = \colim_{f \colon \Spec A \to X} \Spec(A \oplus_{f^*d} f^* M[-1])
\]
It is endowed with a natural morphism $X \to X_d[M[-1]]$ induced by the projections $p$ as above.
\begin{prop}[Obstruction theory on stacks]\label{obstruction}
Let $F \to G$ be an algebraic morphism of derived stacks. Let $X$ be a derived stack and let $M \in \Qcoh(X)^{\leq -1}$. Let $d$ be a derivation
\[
d \in \Map_{X/-}(X[M], X)
\]
We consider the map of simplicial sets
\[
\psi \colon \Map(X_d[M[-1]],F) \to \Map(X,F) \times_{\Map(X,G)} \Map(X_d[M[-1]],G)
\]
Let $y \in \Map(X,F) \times_{\Map(X,G)} \Map(X_d[M[-1]],G)$ and let $x \in \Map(X,F)$ be the induced map.
There exists a point $\alpha(y) \in \Map(x^* \Lcot_{F/G}, M)$ such that the fibre $\psi_y$ of $\psi$ at $y$ is equivalent to the space of paths from $0$ to $\alpha(y)$ in $\Map(x^* \Lcot_{F/G}, M)$
\[
\psi_y \simeq \Omega_{0,\alpha(y)}\Map(x^* \Lcot_{F/G}, M)
\]
\end{prop}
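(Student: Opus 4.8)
The plan is to reduce to the case where $X = \Spec A$ is affine and then read off the answer from the universal property of the relative cotangent complex $\Lcot_{F/G}$, which is available because $F \to G$ is algebraic (cf. \cite{toen:hagii}).

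First I would reduce to the affine case. By construction $X_d[M[-1]] = \colim_{f\colon \Spec A \to X}\Spec(A \oplus_{f^*d} f^* M[-1])$, a colimit over the maps $\Spec A \to X$. Applying $\Map(-,F)$ and $\Map(-,G)$ turns this colimit into a limit, and since limits commute with fibre products and with taking fibres, $\psi_y$ is the limit over $f$ of the fibres $\psi_{y_f,f}$ of the corresponding affine maps. On the other side, because $\Qcoh(X) \simeq \lim_{\Spec A \to X}\dgMod_A$, one has $\Map(x^*\Lcot_{F/G}, M) \simeq \lim_f \Map_A((xf)^*\Lcot_{F/G}, f^*M)$, and path spaces commute with limits; so it is enough to treat $X = \Spec A$, naturally in $f$, and to assemble $\alpha(y)$ as the induced compatible system. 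The hypothesis $M \in \Qcoh(X)^{\leq -1}$ is used here to ensure that $A \oplus M$ and $A\oplus_d M[-1]$ are connective, hence genuine objects of $\cdga_k$.

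In the affine case the defining pullback of cdgas $A\oplus_d M[-1] \simeq A \times_{A\oplus M} A$ presents $X_d[M[-1]]$, after applying $\Spec$, as the pushout of the span $X \xleftarrow{\sigma_0} X[M] \xrightarrow{\delta} X$, where $\sigma_0,\delta\colon X[M]\to X$ are two retractions of the zero section $\zeta\colon X\to X[M]$: the map $\sigma_0$ is the bundle projection (the trivial derivation) and $\delta = \Spec d$ encodes the twisting. Mapping out of this pushout converts $\Map(-,F)$ and $\Map(-,G)$ into pullbacks of mapping spaces, and the natural morphism $\iota\colon X\to X_d[M[-1]]$ is one of the two legs. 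Unwinding $\psi$ in these coordinates, a point of $\psi_y$ amounts to a map $u\colon X\to F$ together with a homotopy $h\colon u\,\sigma_0 \simeq x\,\delta$ in $\Map(X[M],F)$ lying over the fixed $G$-data in $y$; restricting $h$ along $\zeta$ forces $u\simeq x$, so in either case $\psi_y$ is identified with the space of homotopies between $x\,\sigma_0$ and $x\,\delta$. Now let $\mathcal E$ be the space of extensions of $x$ along the trivial square-zero extension $\zeta$, taken relative to $G$. The universal property of $\Lcot_{F/G}$ gives a pointed equivalence $\mathcal E \simeq \Map_A(x^*\Lcot_{F/G}, M)$ sending the trivial extension $x\,\sigma_0$ to $0$; since $\sigma_0\zeta=\delta\zeta=\id_X$, both $x\,\sigma_0$ and $x\,\delta$ lie in $\mathcal E$, and setting $\alpha(y):=[x\,\delta]$ identifies $\psi_y$ with $\Omega_{0,\alpha(y)}\Map(x^*\Lcot_{F/G},M)$. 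As a sanity check, when $d$ is the trivial derivation one has $\delta=\sigma_0$, whence $\alpha(y)=0$ and $\psi_y \simeq \Map_A(x^*\Lcot_{F/G}, M[-1])$, the expected space of infinitesimal deformations.

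The routine parts are the reduction to the affine case and the citation of the universal property of $\Lcot_{F/G}$. The hard part is the bookkeeping in the affine computation: one must carefully track all the homotopies over $G$ inside the iterated fibre products, verify that the two legs $\sigma_0$ and $\delta$ of the pushout correspond exactly to the two endpoints $0$ and $\alpha(y)$ of the path space, and check that the auxiliary map $u$ is pinned to $x$ up to contractible choice. Making this identification natural in $f\colon \Spec A \to X$ — so that the affine statements glue under the limit of the first step and produce a well-defined global class $\alpha(y)$ — is where the argument demands the most care.
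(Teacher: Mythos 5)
Your overall strategy is the same as the paper's --- reduce to the affine case and run the argument of \cite[1.4.2.6]{toen:hagii}, with $\alpha(y)$ the class of $x \circ d$ in $\Map_{X/-/G}(X[M],F) \simeq \Map(x^* \Lcot_{F/G},M)$ --- but you organize the reduction in the opposite order, and this creates exactly the difficulty you flag at the end. The paper first builds everything globally: the commutative square relating $X[M]$, $X$ and $X_d[M[-1]]$ yields a map
\[
\alpha \colon \Map(X,F)\times_{\Map(X,G)}\Map(X_d[M[-1]],G) \to \Map_{X/-/G}(X[M],F)\simeq \Map(x^*\Lcot_{F/G},M)
\]
for an arbitrary derived stack $X$; the nullhomotopy of $\alpha\psi$ then produces a comparison map $f\colon \Omega_{0,\alpha(y)}\Map(x^*\Lcot_{F/G},M) \to \psi_y$, and only at that point does one pass to affines, where $f$ being an equivalence is precisely \cite[1.4.2.6]{toen:hagii}; since both source and target of $f$ send colimits in $X$ to limits, nothing has to be glued. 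In your version the class $\alpha(y)$ must instead be assembled as a point of $\lim_f \Map_A((xf)^*\Lcot_{F/G}, f^*M)$ from its affine shadows, which requires exhibiting all the coherence data of a compatible family --- the bookkeeping you yourself identify as the hard part. Defining $\alpha$ globally first costs one commutative square and makes that gluing step disappear entirely.

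One statement in your affine computation is false as written: $\Spec$ does \emph{not} carry the pullback $A\oplus_d M[-1] \simeq A\times_{A\oplus M}A$ to a pushout $X \amalg_{X[M]} X$ in $\dSt_k$. What you actually need is that $\Map(-,F)$ and $\Map(-,G)$ carry this square of affines to cartesian squares of spaces; this is the infinitesimally cartesian property of the algebraic morphism $F \to G$, a theorem of \cite{toen:hagii} and not a formal consequence of mapping out of a colimit (for a general stack $F$ it fails). Once you invoke that property explicitly --- or simply cite \cite[1.4.2.6]{toen:hagii} for the whole affine statement, as the paper does --- the remainder of your computation, identifying $\psi_y$ with the space of homotopies from $x\sigma_0$ to $x\delta$ in $\Map_{X/-/G}(X[M],F)$ and hence with the asserted path space, is correct.
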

\begin{proof}
This is a simple generalisation of \cite[1.4.2.6]{toen:hagii}. The proof is very similar.
We have a natural commutative square
\[
\mymatrix{
X[M] \ar[r]^-d \ar[d] & X \ar[d] \\ X \ar[r] & X_d[M[-1]]
}
\]
It induces a map
\[
\alpha \colon \Map(X,F) \times_{\Map(X,G)} \Map(X_d[M[-1]],G) \to \Map_{X/-/G}(X[M],F) \simeq \Map(x^* \Lcot_{F/G},M)
\]
Let $\Omega_{0,\alpha(y)}\Map_{X/-/G}(X[M],F)$ denote the space of paths from $0$ to $\alpha(y)$. It is the fibre product
\[
\mymatrix{
\Omega_{0,\alpha(y)}\Map_{X/-/G}(X[M],F) \cart[][20] \ar[r] \ar[d] & \pt \ar[d]^{\alpha(y)} \\
\pt \ar[r]^-0 & \Map_{X/-/G}(X[M],F)
}
\]
The composite map $\alpha \psi$ is by definition homotopic to the $0$ map. This defines a morphism
\[
f \colon \Omega_{0,\alpha(y)}\Map_{X/-/G}(X[M],F) \to \psi_y
\]
It now suffices to see that the category of $X$'s for which $f$ is an equivalence contains every derived affine scheme and is stable by colimits.
The first assertion is exactly \cite[1.4.2.6]{toen:hagii} and the second one is trivial.
\end{proof}

\paragraph{Postnikov towers:}
To any cdga $A$, one can associate its $n$-truncation $A_{\leq n}$ for some $n$. It is, by definition, the universal cdga with vanishing cohomology $\homol^p(A_{\leq n})$ for $p < -n$ associated to $A$.
The truncation comes with a canonical map $A \to A_{\leq n}$ so that one can form the diagram
\[
A_{\leq 0} \from A_{\leq 1} \from \dots
\]
This induces a canonical morphism $A \to \lim_n A_{\leq n}$ which is an equivalence.

This phenomenon has a counterpart when dealing with derived stacks. We denote by $\cdgaunbounded_k^{[-n,0]}$ the category of cdga's with cohomology concentrated in degrees $-n$ to $0$. It comes with the fully faithful embedding $i_n \colon \cdgaunbounded_k^{[-n,0]} \to \cdga_k$.

For any prestack $X \colon \cdga_k \to \sSets$, we define its truncation $\tau_{\leq n} X$ as the restriction of $X$ to the category $\cdgaunbounded_k^{[-n,0]}$. We will abuse notations and also denote by $\tau_{\leq n} X$ the functor obtained after left Kan extending along $i_n$.
The prestack $\tau_{\leq n} X$ comes with a canonical morphism $\tau_{\leq n} X \to X$ which, as $n$ varies, assembles to define a canonical map
\[
\colim_n \tau_{\leq n} X \to X
\]
Remark that this morphism is not necessarily an equivalence. We will study it in \autoref{coconnective}.

\paragraph{Algebraisable stacks:}
Let $X$ be a derived stack and $A$ be a cdga. Let $a = (\el{a}{p})$ be a sequence of elements of $A^0$ forming a regular sequence in $\homol^0(A)$.
Let $\quot{A}{\el{a^n}{p}}$ denote the Kozsul complex associated with the regular sequence $(\el{a^n}{p})$. It is endowed with a cdga structure.
There is a canonical map 
\[
\psi(A)_a \colon \colim_n X\left( \textstyle \quot{A}{\el{a^n}{p}} \right) \to X\left(\lim_n \textstyle \quot{A}{\el{a^n}{p}} \right)
\]
This map is usually not an equivalence.
\begin{df}\label{alg-diag}
A derived stack $X$ is called algebraisable if for any $A$ and any regular sequence $a$ the map $\psi(A)_a$ is an equivalence.

A map $f \colon X \to Y$ is called algebraisable if for any derived affine scheme $T$ and any map $T \to Y$, the fibre product $X \times_Y T$ is algebraisable.

We will say that a derived stack $X$ has algebraisable diagonal if the diagonal morphism $X \to X \times X$ is algebraisable.
\end{df}

\begin{rmq}
A derived stack $X$ has algebraisable diagonal if for any $A$ and $a$ the map $\psi(A)_a$ is fully faithful.
One could also rephrase the definition of being algebraisable as follows. A stack is algebraisable if it does not detect the difference between
\[
\colim_n \Spec\left( \textstyle \quot{A}{\el{a^n}{p}} \right) \text{~~~and~~~} \Spec\left( \lim_n \textstyle \quot{A}{\el{a^n}{p}} \right)
\]
\end{rmq}

\begin{ex}
Any derived affine scheme is algebraisable. Another important example of algebraisable stack is the stack of perfect complexes. 
In \cite{bhatt:algebraisable}, Bhargav Bhatt gives some more examples of algebraisable (non-derived) stacks -- although our definition slightly differs from his.
He proves that any quasi-compact quasi-separated algebraic space is algebraisable and also provides with examples of non-algebraisable stacks. Let us name $\mathrm K(\Gm,2)$ -- the Eilenberg-Maclane classifying stack of $\Gm$ -- as an example of non-algebraisable stack.
Algebraisability of Deligne-Mumford stacks is also look at in \cite{lurie:dagxii}.
\end{ex}

\section{Ind-pro-stacks}\label{chapterIP}%

Throughout this section, we will denote by $S$ a derived stack over some base field $k$ and by $\dSt_S$ the category of derived stack over the base $S$.

\subsection{Cotangent complex of a pro-stack}
\begin{df}
A pro-stack over $S$ an object of $\Prou U \dSt_S$.
\end{df}

\begin{rmq}
Note that the category $\Prou U \dSt_S$ is equivalent to the category of pro-stacks over $k$ with a morphism to $S$.
\end{rmq}

\begin{df}\label{iperf-df}
Let $\Perf \colon \dSt_S\op \to \inftyCatu U$ denote the functor mapping a stack to its category of perfect complexes.
We will denote by $\IPerf$ the functor
\[
\IPerf = \Indext_{\dSt_S\op}(\Perf) \colon (\Prou U\dSt_S)\op \to \PresLeft
\]
where $\Indext$ was defined in \autoref{indext}.
Whenever $X$ is a pro-stack, we will call $\IPerf(X)$ the derived category of ind-complexes on $X$. It is $\mathbb U$-presentable.
If $f \colon X \to Y$ is a map of pro-stacks, then the functor
\[
\IPerf(f) \colon \IPerf(Y) \to \IPerf(X)
\]
admits a right adjoint (as both the involved categories are presentable and the functor preserves all colimits). We will denote $f_\mathbf{I}^* = \IPerf(f)$ and $f^\mathbf{I}_*$ its right adjoint.
\end{df}

\begin{rmq}
Let $X$ be a pro-stack and let $\bar X \colon K\op \to \dSt_S$ denote a $\mathbb U$-small cofiltered diagram of whom $X$ is a limit in $\Prou U\dSt_S$.
The derived category of ind-perfect complexes on $X$ is by definition the category
\[
\IPerf(X) = \Indu U(\colim \Perf(\bar X))
\]
It thus follows from \cite[1.1.4.6 and 1.1.3.6]{lurie:halg} that $\IPerf(X)$ is stable.
Note that it is also equivalent to the colimit
\[
\IPerf(X) = \colim \IPerf(\bar X) \in \PresLeftu V
\]
It is therefore equivalent to the limit of the diagram
\[
\IPerf_*(\bar X) \colon K \to \dSt_S\op \to \PresLeftu V \simeq (\PresRightu V)\op
\]
An object $E$ in $\IPerf(X)$ is therefore the datum of an object $E_k$ of $\IPerf(\bar X(k))$ for each $k \in K$ and of some compatibilities between them. We will then have $E_k \simeq {p_k}_* E$ where $p_k \colon X \to \bar X(k)$ is the natural projection. 
\end{rmq}

\begin{df}
Let $X$ be a pro-stack. We define its derived category of pro-perfect complexes
\[
\PPerf(X) = \left( \IPerf(X) \right) \op
\]
Recall that perfect complexes are precisely the dualizable objects in the category of quasi-coherent complexes. They therefore come with a duality equivalence $\Perf(-) \to^\sim (\Perf(-))\op$. This gives rise to the equivalence
\[
\PPerf(X) \simeq \Prou U(\colim \Perf(\bar X))
\]
whenever $\bar X \colon \K\op \to \dSt_S$ is a cofiltered diagram of whom $X$ is a limit in $\Prou U\dSt_S$.
\end{df}

\begin{df}
Let us define the functor $\Tateu U_\mathbf P \colon (\Prou U \dSt_S)\op \to \inftyCat^{\mathbb V\mathrm{,st,id}}$
\[
\Tateu U_\mathbf P = \Tateextu U_{\dSt_S\op}(\Perf)
\]
\end{df}

\begin{rmq}
The functor $\Tateu U_\mathbf P$ maps a pro-stack $X$ given by a diagram $\bar X \colon K\op \to \dSt_S$ to the stable $(\infty,1)$-category
\[
\Tateu U_\mathbf P(X) = \Tateu U(\colim \Perf(\bar X))
\]
There is a canonical fully faithful natural transformation
\[
\Tateu U_\mathbf P \to \Prou U \circ \IPerf
\]
We also get a fully faithful
\[
\Tateu U_\mathbf P \to \Indu U \circ \PPerf
\]
\end{rmq}

\begin{df}
Let $\Qcoh \colon \dSt_S\op \to \inftyCatu V$ denote the functor mapping a derived stack to its derived category of quasi-coherent sheaves. It maps a morphism between stacks to the appropriate pullback functor.
We will denote by $\IQcoh$ the functor
\[
\IQcoh = \Indext_{\dSt_S\op}(\Qcoh) \colon (\Prou U\dSt_S)\op \to \inftyCatu V
\]
If $f \colon X \to Y$ is a map of pro-stacks, we will denote by $f^*_\mathbf{I}$ the functor $\IQcoh(f)$.
We also define
\[
\IQcoh^{\leq 0} = \Indext_{\dSt_S\op}(\Qcoh^{\leq 0})
\]
the functor of connective modules.
\end{df}

\begin{rmq}
There is a fully faithful natural transformation $\IPerf \to \IQcoh$ ; for any map $f \colon X \to Y$ of pro-stacks, there is therefore a commutative diagram
\[
\mymatrix{
\IPerf(Y) \ar[r] \ar[d]_{f_\mathbf I^*} & \IQcoh(Y) \ar[d]^{f_\mathbf I^*}\\
\IPerf(X) \ar[r] & \IQcoh(X)
}
\]
The two functors denoted by $f_\mathbf I^*$ are thus compatible.
Let us also say that the functor 
\[
f_\mathbf I^* \colon \IQcoh(Y) \to \IQcoh(X)
\]
does \emph{not} need to have a right adjoint. We next show that it sometimes has one.
\end{rmq}
\begin{prop}\label{prop-iqcoh}
Let $f \colon X \to Y$ be a map of pro-stacks. If $Y$ is actually a stack then the functor $f_\mathbf I^* \colon \IQcoh(Y) \to \IQcoh(X)$ admits a right adjoint.
\end{prop}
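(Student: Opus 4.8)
The plan is to write $f_{\mathbf I}^*$ as a composite of two functors each of which manifestly admits a right adjoint; the hypothesis that $Y$ is an honest stack will enter through the factorisation of $f$ through a single stage of the pro-system. Present $X$ as a cofiltered limit $X \simeq \lim_{k} \bar X(k)$ of derived stacks, with projections $p_k \colon X \to \bar X(k)$, so that $\IQcoh(X) = \Indu U(\colim_{k} \Qcoh(\bar X(k)))$ by definition. First I would record, exactly as for $\IPerf$ in the remark preceding the statement, that this presentation exhibits $\IQcoh(X)$ as the colimit $\colim_{k} \IQcoh(\bar X(k))$ taken in $\PresLeftu V$, with cocone maps the functors $\IQcoh(p_k)$; in particular $\IQcoh(X)$ is $\mathbb V$-presentable and each $\IQcoh(p_k)$, being a morphism of $\PresLeftu V$, is a left adjoint.

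Next I would exploit that $Y$ is a stack. Mapping out of a pro-object into a constant one is a filtered colimit, $\Map_{\Prou U \dSt_S}(X, Y) \simeq \colim_{k} \Map_{\dSt_S}(\bar X(k), Y)$, so $f$ factors, up to equivalence, as the composite $X \to \bar X(k_0) \to Y$ of the projection $p_{k_0}$ with a morphism of stacks $f_{k_0} \colon \bar X(k_0) \to Y$, for some index $k_0$. Functoriality of $\IQcoh$ then yields $f_{\mathbf I}^* \simeq \IQcoh(p_{k_0}) \circ \IQcoh(f_{k_0})$.

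It remains to check that both factors admit right adjoints. On a morphism of honest stacks $\IQcoh$ is $\Indu U$ of the usual pullback, so $\IQcoh(f_{k_0}) = \Indu U(f_{k_0}^*)$; since $f_{k_0}^*$ admits the right adjoint $f_{k_0 *}$ and $\Indu U$, being a functor of $(\infty,1)$-categories, preserves adjunctions, the functor $\Indu U(f_{k_0}^*)$ admits the right adjoint $\Indu U(f_{k_0 *})$. The second factor $\IQcoh(p_{k_0})$ is a cocone map of the $\PresLeftu V$-colimit above, hence a left adjoint. Thus $f_{\mathbf I}^*$ is a composite of functors each admitting a right adjoint, and therefore admits one itself.

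The main obstacle is the first step. The construction $\Indext$ produces the colimit defining $\IQcoh(X)$ in $\inftyCatu V$, and one must promote it to a colimit in $\PresLeftu V$ whose cocone maps are left adjoints. This is the precise analogue of the claim made for $\IPerf$ in the preceding remark, and reduces to checking that $\Indu U$ carries the filtered $\inftyCatu V$-colimit $\colim_{k} \Qcoh(\bar X(k))$ of presentable categories along cocontinuous functors to the corresponding $\PresLeftu V$-colimit. For this I would combine the compatibility of filtered colimits of presentable categories with the forgetful functor to $\inftyCatu V$ with the preservation of filtered colimits by $\Indu U$, referring to \cite{hennion:these} for the details. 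Everything else is formal.
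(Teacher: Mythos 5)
Your overall skeleton is the paper's: factor $f$ through a finite stage $X \to \bar X(k_0) \to Y$ using $\Map_{\Prou U\dSt_S}(X,Y) \simeq \colim_k \Map(\bar X(k),Y)$, handle the stack-level factor by applying $\Indu U$ to the adjunction $f_{k_0}^* \dashv f_{k_0 *}$, and then deal with the projection. The gap is in the step you yourself flag as the main obstacle, and it cannot be closed along the lines you propose. The claim that the presentation exhibits $\IQcoh(X)$ as a colimit in $\PresLeftu V$, so that ``in particular $\IQcoh(X)$ is $\mathbb V$-presentable'', is false: for a stack $T$ the category $\IQcoh(T) = \Indu U(\Qcoh(T))$ is the $\mathbb U$-ind-completion of a category which is $\mathbb V$-small but not essentially $\mathbb U$-small. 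Every object of $\Qcoh(T)$ becomes $\mathbb U$-compact in $\Indu U(\Qcoh(T))$, so this category has too many compact objects to be $\mathbb U$-accessible, and by construction it only acquires $\mathbb U$-small (filtered, hence arbitrary $\mathbb U$-small) colimits, so it is not $\mathbb V$-presentable either. This is exactly where the analogy with $\IPerf$ breaks: $\Perf(T)$ is essentially $\mathbb U$-small, which is why $\IPerf$ takes values in $\PresLeft$ while $\IQcoh$ is only defined with values in $\inftyCatu V$, and why the remark just before the proposition warns that $f_\mathbf I^*$ need not admit a right adjoint for a general map of pro-stacks. If your presentability claim held, an adjoint functor theorem argument would produce the right adjoint for every $f$, with essentially no use of the hypothesis on $Y$.

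The way to finish, and the paper's route, is to construct the right adjoint to $\IQcoh(p_{k_0}) = \Indu U(\iota_{k_0})$ explicitly rather than by presentability. By the universal property of $\Indu U$ it suffices to define a filtered-colimit-preserving functor on objects of $\colim_l \Qcoh(\bar X(l))$: an object $E$ of $\Qcoh(\bar X(l))$ (for $l \geq k_0$) is sent to the ind-object $\colim_{l \to l'} g_{l'*}\, h_{l'}^* E$ of $\Qcoh(\bar X(k_0))$, where $h_{l'} \colon \bar X(l') \to \bar X(l)$ and $g_{l'} \colon \bar X(l') \to \bar X(k_0)$ are the transition maps, and one then checks the adjunction identities levelwise. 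This is the content of the formula $\lim_{k \to l} \bar f(l)_*$ in the paper's sketch and of the computation $(p_k)_*^{\IQ}\Oo_X \simeq \colim_{k \to l} f_* \Oo_{X_l}$ in the example that follows it, with the details delegated to the thesis.
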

\begin{proof}[sketch of]
For a complete proof, we refer to \cite[1.2.0.8]{hennion:these}.
Let us denote by $\bar X \colon K\op \to \dSt_S$ a cofiltered diagram of whom $X$ is a limit in $\Prou U \dSt_S$. The map $X \to Y$ factors through the projection $X \to \bar X(k)$ for some $k \in K$. The right adjoint of $f_\mathbf I^*$ is then (informally) given by the limit
\[
\lim_{k \to l} \bar f(l)_*
\]
where $\bar f(l)_*$ is the right adjoint to the induced functor $\bar f(l)^* \colon \IQcoh(Y) \to \IQcoh(\bar X(l))$.
\end{proof}

\newcommand{\IQ}{\mathbf{IQ}}
\begin{df}
Let $f \colon X \to Y$ be a map of pro-stacks. We will denote by $f_*^\IQ$ the right adjoint to $f_\mathbf I^* \colon \IQcoh(Y) \to \IQcoh(X)$ \emph{if it exists}.
\end{df}

\begin{rmq}
In the situation of \autoref{prop-iqcoh}, there is a natural transformation
\[
\mymatrix{
\IPerf(X) \ar[r] \ar[d]_{f_*^\mathbf I} & \IQcoh(X) \ar[d]^{f_*^\IQ} \\ \IPerf(Y) \ar@{=>}[ur] \ar[r] & \IQcoh(Y)
}
\]
It does not need to be an equivalence.
\end{rmq}

\begin{df}
Let $X$ be a pro-stack over $S$. The structural sheaf $\Oo_X$ of $X$ is the pull-back of $\Oo_S$ along the structural map $X \to S$.
\end{df}

\begin{ex}
Let $X$ be a pro-stack over $S$ and $\bar X \colon K\op \to \dSt_S$ be a $\mathbb U$-small cofiltered diagram of whom $X$ is a limit in $\Prou U\dSt_S$.
Let $k$ be a vertex of $K$, let $X_k$ denote $\bar X(k)$ and let $p_k$ denote the induced map of pro-stacks $X \to X_k$. If $f \colon k \to l$ is an arrow in $K$, we will also denote by $f$ the map of stacks $\bar X(f)$.
We have
\[
(p_k)_*^\IQ (\Oo_X) \simeq \colim_{f \colon k \to l} f_* \Oo_{X_l}
\]
One can see this using \cite[1.2.0.7]{hennion:these}.
\end{ex}

\begin{df}
For any category $\Cc$ with finite colimits, we will denote by $\btw^{\amalg}_\Cc$ the functor $\Cc^{\Delta^1} \to \inftyCatu V$ mapping a morphism $\phi \colon c \to d$ to the category of factorizations $c \to e \to d$ of $\phi$.
For a formal definition in the context of $(\infty,1)$-categories, we refer to \cite[1.3.0.14]{hennion:these}.
\end{df}

\newcommand{\sqzext}{\operatorname{Ex}}
\begin{df}\label{derivation-pdst}
Let $T$ be a stack over $S$. Let us consider the functor 
\[
\Qcoh(T)^{\leq 0} \to \btw_{\dSt_S\op}^{\amalg}(\id_T) \simeq \left(\comma{T}{\dSt_T}\right)\op
\]
mapping a quasi-coherent sheaf $E$ to the square zero extension $T \to T[E] \to T$. This construction is functorial in $T$ and actually comes from a natural transformation
\[
\sqzext \colon \Qcoh^{\leq 0} \to \btw_{\dSt_S\op}^{\amalg}(\id_-)
\]
of functors $\dSt_S\op \to \inftyCatu V$.
We will denote by $\sqzext^{\Pro}$ the natural transformation
\[
\sqzext^{\Pro} = \Indext_{\dSt_S\op}(\sqzext) \colon \IQcoh^{\leq 0} \to \Indextu U_{\dSt_S\op}(\btw_{\dSt_S\op}^{\amalg}(\id_-)) \simeq \btw_{(\Prou 
U\dSt_S)\op}^{\amalg}(\id_-)
\]
between functors $(\Prou U\dSt_S)\op \to \inftyCat$. The equivalence on the right is the one from \cite[1.3.0.18]{hennion:these}.
If $X$ is a pro-stack and $E \in \IQcoh(X)^{\leq 0}$ then we will denote by $X \to X[E] \to X$ the image of $E$ by the functor $\sqzext^{\Pro}(X)$.
\end{df}
\begin{rmq}
Let us give a description of this functor.
Let $X$ be a pro-stack and let $\bar X \colon K\op \to \dSt_S$ denote a $\mathbb U$-small cofiltered diagram of whom $X$ is a limit in $\Prou U\dSt_S$. For every $k \in K$ we can compose the functor mentioned above with the base change functor
\[
\mymatrix{
(\Qcoh(X_k))\op \ar[r]^-{X_k[-]} & \comma{X_k}{\dSt_{X_k}} \ar[r]^-{- \times_{X_k} X} & \comma{X}{\Prou U\dSt_X}
}
\]
This is functorial in $k$ and we get a functor $\left(\colim \Qcoh(\bar X) \right)\op \to \comma{X}{\Prou U\dSt_X}$ which we extend and obtain a more explicit description of the square zero extension functor
\[
X[-] \colon (\IQcoh(X))\op \to \comma{X}{\Prou U\dSt_X}
\]
\end{rmq}

\begin{df}
Let $X$ be a pro-stack. 
\begin{itemize}
\item We finally define the functor of derivations over $X$ :
\[
\Der(X,-) = \Map_{X/-/S}(X[-],X) \colon \IQcoh(X)^{\leq 0} \to \sSets
\]
\item We say that $X$ admits a cotangent complex if the functor $\Der(X,-)$ is corepresentable -- ie there exists a $\Lcot_{X/S} \in \IQcoh(X)$ such that
for any $E \in \IQcoh(X)^{\leq 0}$
\[
\Der(X,E) \simeq \Map(\Lcot_{X/S},E)
\]
\end{itemize}
\end{df}

\begin{df}
Let $\dStArt_S$ be the full sub-category of $\dSt_S$ spanned by derived Artin stacks over $S$. 
An Artin pro-stack is an object of $\Prou U \dStArt_S$.
Let $\dStArtlfp_S$ be the full sub-category of $\dStArt_S$ spanned by derived Artin stacks locally of finite presentation over $S$.
An Artin pro-stack locally of finite presentation is an object of $\Prou U\dStArtlfp_S$
\end{df}

\begin{prop}\label{cotangent-pdst}
Any Artin pro-stack $X$ over $S$ admits a cotangent complex $\Lcot_{X/S}$.
Let us assume that $\bar X \colon K\op \to \dStArt_S$ is a $\mathbb U$-small cofiltered diagram of whom $X$ is a limit in $\Prou U\dStArt_S$. When $k$ is a vertex of $K$, let us denote by $X_k$ the derived Artin stack $\bar X(k)$. If $f \colon k \to l$ is an arrow in $K$, we will also denote by $f \colon X_l \to X_k$ the map of stacks $\bar X(f)$.
The cotangent complex is given by the formula
\[
\Lcot_{X/S} = \colim_k p_k^* \Lcot_{X_k/S} \in \Indu U\left(\colim \Qcoh(\bar X) \right) \simeq \IQcoh(X)
\]
where $p_k$ is the canonical map $X \to X_k$.
The following formula stands
\[
{p_k}_*^\IQ \Lcot_{X/S} \simeq \colim_{f \colon k \to l} f_* \Lcot_{X_l/S}
\]
If $X$ is moreover locally of finite presentation over $S$, then its cotangent complex belongs to $\IPerf(X)$.
\end{prop}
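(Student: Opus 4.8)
The plan is to exhibit the object $\colim_k p_k^{*}\Lcot_{X_k/S}$ as a corepresenting object for the derivation functor $\Der(X,-)$; this simultaneously proves that $X$ admits a cotangent complex and establishes the displayed formula. Recall the three ingredients we are allowed to use: first, $\IQcoh(X)\simeq\Indu U(\colim\Qcoh(\bar X))$, with $p_k^{*}$ the structural pullbacks; second, each stage $X_k$ is a \emph{genuine} Artin stack, so by \autoref{prop-iqcoh} the functor $p_k^{*}\colon\IQcoh(X_k)\to\IQcoh(X)$ admits a right adjoint ${p_k}_*^{\IQ}$; and third, each $\Lcot_{X_k/S}\in\Qcoh(X_k)$ corepresents $\Der(X_k,-)$ because $X_k$ is Artin. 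The whole argument then amounts to commuting a limit past the defining mapping spaces.

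Fix $E\in\IQcoh(X)^{\leq 0}$. Since $X\simeq\lim_k X_k$ in $\Prou U\dStArt_S$ and the forgetful functor from the slice $\comma{X}{(\Prou U\dSt_S)}_{/S}$ creates limits, the structural maps $p_k$ exhibit $X$ as $\lim_k X_k$ compatibly with the ``under $X$, over $S$'' data, and contravariant continuity of mapping spaces gives
\[
\Der(X,E)=\Map_{X/-/S}(X[E],X)\simeq\lim_k\Map_{X/-/S}(X[E],X_k).
\]
The second, and essential, step identifies each term: a map $X[E]\to X_k$ under $X$ (via $p_k$) and over $S$, where $X[E]$ is the trivial square-zero extension produced by $\sqzext^{\Pro}$, is a derivation of the Artin stack $X_k$ relative to $S$ read off along $p_k$, so that
\[
\Map_{X/-/S}(X[E],X_k)\simeq\Map_{\IQcoh(X)}(p_k^{*}\Lcot_{X_k/S},E).
\]
Granting this, using once more that $\Map$ turns colimits into limits yields
\[
\Der(X,E)\simeq\lim_k\Map_{\IQcoh(X)}(p_k^{*}\Lcot_{X_k/S},E)\simeq\Map_{\IQcoh(X)}\bigl(\colim_k p_k^{*}\Lcot_{X_k/S},\,E\bigr),
\]
so $\Lcot_{X/S}=\colim_k p_k^{*}\Lcot_{X_k/S}$ corepresents $\Der(X,-)$. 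The hard part is the second identification: I would prove it by reducing to finite-stage deformation theory through the construction $\sqzext^{\Pro}=\Indext_{\dSt_S\op}(\sqzext)$. Because $X[E]$ is assembled level-wise from the square-zero extensions $X_l[f^{*}E_k]$ of the stages, maps out of it into the \emph{fixed} stack $X_k$ unwind, after expanding the pro- and ind-mapping spaces, into the ordinary derivation spaces of the $X_l$, each corepresented by $\Lcot_{X_l/S}$; one then passes to the relevant (co)limit and invokes the adjunctions $(p_l^{*},{p_l}_*^{\IQ})$. Equivalently, one can deduce it from the obstruction theory of \autoref{obstruction}, whose colimit-over-affines description of twisted square-zero extensions is precisely compatible with $\sqzext^{\Pro}$. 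The delicate point throughout is carrying the ``under $X$, over $S$'' structure through these manipulations and justifying the interchange of the ambient limit with the mapping spaces.

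For the second formula, apply ${p_k}_*^{\IQ}$ to $\Lcot_{X/S}=\colim_l p_l^{*}\Lcot_{X_l/S}$. The indexing colimit is filtered, so it may be restricted to the cofinal subcategory $K_{k/}$ of objects $l$ under $k$; for such $l$, writing $f\colon X_l\to X_k$ for $\bar X(f)$ one has $p_k=f\circ p_l$. Running the exact same computation as the one that gives ${p_k}_*^{\IQ}\Oo_X\simeq\colim_{f\colon k\to l}f_*\Oo_{X_l}$ in the Example above (see \cite[1.2.0.7]{hennion:these}), now with $\Lcot_{X_l/S}$ in place of $\Oo_{X_l}$, produces
\[
{p_k}_*^{\IQ}\Lcot_{X/S}\simeq\colim_{f\colon k\to l}f_*\Lcot_{X_l/S}.
\]

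Finally, suppose $X$ is locally of finite presentation over $S$, so the diagram may be chosen with each $X_k$ Artin and locally of finite presentation. Then each $\Lcot_{X_k/S}\in\Perf(X_k)$, whence $p_k^{*}\Lcot_{X_k/S}$ is the image of a perfect complex and lies in $\IPerf(X)=\Indu U(\colim\Perf(\bar X))$. As $K$ is filtered (being opposite to the cofiltered $K\op$) and $\IPerf(X)$, an $\Ind$-category, is closed under filtered colimits, the colimit $\colim_k p_k^{*}\Lcot_{X_k/S}$ is computed inside $\IPerf(X)$; it agrees with the colimit in $\IQcoh(X)$ through the fully faithful, filtered-colimit-preserving inclusion $\IPerf(X)\hookrightarrow\IQcoh(X)$. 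Hence $\Lcot_{X/S}\in\IPerf(X)$, completing the proof.
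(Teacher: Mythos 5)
Your proof follows essentially the same route as the paper: define $\Lcot_{X/S}=\colim_k p_k^*\Lcot_{X_k/S}$, write $\Der(X,-)$ as the limit over $k$ of $\Map_{X/-/S}(X[-],X_k)$, and identify each term with $\Map(p_k^*\Lcot_{X_k/S},-)$ by reducing, via filtered-colimit preservation, to the levelwise universal property of the usual cotangent complexes of the $X_l$. The step you single out as ``the hard part'' is handled in the paper exactly as you sketch it (and recorded afterwards as \autoref{cotangent-prostacks}), and your treatment of the push-forward formula and of the finite-presentation case likewise matches the paper's.
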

Before proving this proposition, let us fix the following notation
\begin{df}
Let $\Cc$ be a full sub-category of an $\infty$-category $\Dd$. There is a natural transformation from $\overcat_\Dd \colon d \mapsto \quot{\Dd}{d}$ to the constant functor $\Dd \colon \Dd \to \inftyCat$. We denote by $\overcat_\Dd^\Cc$ the fiber product
\[
\overcat_\Dd^\Cc = \overcat_\Dd \times_\Dd \Cc \colon \Dd \to \inftyCat
\]
\end{df}
\begin{rmq}
The functor $\overcat_\Dd^\Cc \colon \Dd \to \inftyCat$ maps an object $d \in \Dd$ to the comma category of objects in $\Cc$ over $d$
\[
\quot{\Cc}{d} = (\Cc \times \{d\} ) \timesunder[\Dd \times \Dd] \Dd^{\Delta^1}
\]
\end{rmq}
\begin{proof}[of the proposition]
The cotangent complex defines a natural transformation
\[
\cotangent \colon \overcat_{\dSt_S\op}^{(\dStArt_S)\op}  \to \Qcoh(-)
\]
To any stack $T$ and any Artin stack $U$ over $S$ with a map $f \colon T \to U$, it associates the quasi-coherent complex $f^* \Lcot_{U/S}$ on $T$.
Applying the functor $\Indext_{\dSt_S\op}$ we get a natural transformation $\cotangent^{\Pro}$
\[
\cotangent^{\Pro} = \Indext_{\dSt_S\op}(\cotangent) \colon \overcat^{(\Prou U\dStArt_S)\op}_{(\Prou U\dSt_S)\op} \to \IQcoh(-)
\]
Specifying it to $X$ we get a functor
\[
\cotangent^{\Pro}_X \colon \left( \comma{X}{\Prou U\dStArt_S} \right)\op \to \IQcoh(X)
\]
Let us set $\Lcot_{X/S} =\lambda^{\Pro}_X(X) \in \IQcoh(X)$. We have by definition the equivalence
\[
\Lcot_{X/S} \simeq \colim_k p_k^* \Lcot_{X_k/S}
\]
Let us now check that it satisfies the required universal property.
The functor $\Der(X,-)$ is the limit of the diagram $K\op \to \Fct(\IQcoh(X)^{\leq 0}, \sSets)$
\[
\Map_{X/-/S}(X[-], \bar X)
\]
Fixing $k \in K$, the functor $\Map_{X/-/S}(X[-], X_k) \colon \IQcoh(X)^{\leq 0} \simeq \Indu U(\colim \Qcoh(\bar X)^{\leq 0}) \to \sSets$ preserves filtered colimits. It is hence induced by its restriction to $\colim \Qcoh(\bar X)^{\leq 0}$. It follows that the diagram $\Map_{X/-/S}(X[-], \bar X)$ factors through a diagram
\[
\delta \colon K\op \to \Fct\left(\colim \Qcoh(\bar X)^{\leq 0}, \sSets\right) \simeq \lim \Fct(\Qcoh(\bar X)^{\leq 0}, \sSets)
\]
Similarly, the functor $\Map(\Lcot_{X/S},-)$ is the limit of a diagram
\[
\mymatrix{K\op \ar[r]^-\mu & \lim \Fct(\Qcoh(\bar X)^{\leq 0}, \sSets) \ar[r] & \Fct(\IQcoh(X)^{\leq 0}, \sSets)}
\]
The universal property of the usual cotangent complex defines an equivalence between $\delta$ and $\mu$.

To get the formula for ${p_k}_*^\IQ \Lcot_{X/S}$, one uses \cite[1.2.0.7]{hennion:these} and the last statement is obvious.
\end{proof}

\begin{rmq}
The definition of the derived category of ind-quasi-coherent modules on a pro-stack is build for the above proposition and remark to hold.
\end{rmq}

\begin{rmq}\label{cotangent-prostacks}
We have actually proven that for any pro-stack $X$, the two functors 
\[
\IQcoh(X)^{\leq 0} \times \comma{X}{\dStArt_S} \to \sSets
\]
defined by
\begin{align*}
(E,Y) &\mapsto \Map_{X/-/S}(X[E], Y) \\
(E,Y) &\mapsto \Map_{\IQcoh(X)}(\lambda_X^{\Pro}(Y), E)
\end{align*}
are equivalent.
\end{rmq}

\subsection{Cotangent complex of an ind-pro-stack}

\begin{df}
An ind-pro-stack is an object of the category
\[
\IP\dSt_S = \Indu U \Prou U\dSt_S
\]
\end{df}

\begin{df}
Let us define the functor $\PIPerf \colon (\IP\dSt_S)\op \to \inftyCatu V$ as
\[
\PIPerf = \Proext_{(\Prou U\dSt_S)\op}(\IPerf)
\]
where $\Proext$ was defined in \autoref{indext}.
Whenever we have a morphism $f \colon X \to Y$ of ind-pro-stacks, we will denote by $f^*_\PI$ the functor
\[f^*_\PI = \PIPerf(f) \colon \PIPerf(Y) \to \PIPerf(X)
\]
\end{df}

\begin{rmq}
Let $X$ be an ind-pro-stack. Let $\bar X \colon K \to \Prou U\dSt_S$ denote a $\mathbb U$-small filtered diagram of whom $X$ is a colimit in $\IP\dSt_S$.
We have by definition
\[
\PIPerf(X) \simeq \lim \Prou U(\IPerf(\bar X))
\]
\end{rmq}

\begin{prop}\label{prop-piperf-right-adjoint}
Let $f \colon X \to Y$ be a map of ind-pro-stacks. If $Y$ is a pro-stack then the functor $f^*_\PI \colon \PIPerf(Y) \to \PIPerf(X)$ admits a right adjoint.
\end{prop}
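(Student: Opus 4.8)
The plan is to mirror the proof of \autoref{prop-iqcoh}, one categorical level higher. Choose a $\mathbb U$-small filtered diagram $\bar X \colon K \to \Prou U \dSt_S$ of which $X$ is a colimit in $\IP\dSt_S = \Indu U \Prou U \dSt_S$. Since $Y$ is a pro-stack, it is the image of an object $Y_0 \in \Prou U \dSt_S$ under the functor $j$, which is fully faithful by \autoref{indu-thm}; hence maps out of the colimit $X$ give
\[
\Map_{\IP\dSt_S}(X, Y) \simeq \lim_{k \in K} \Map_{\Prou U \dSt_S}(\bar X(k), Y_0),
\]
so that $f$ amounts to a compatible family of maps $f_k \colon \bar X(k) \to Y_0$ of pro-stacks. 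This is the step where the hypothesis that $Y$ is a pro-stack — rather than a general ind-pro-stack — is essential: all the $f_k$ have the \emph{same} target $Y_0$, so the adjoints constructed below will all land in one fixed category.

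By the description recalled after \autoref{indext} we have $\PIPerf(X) \simeq \lim_{K\op} \Cc_\bullet$ with $\Cc_k := \Prou U \IPerf(\bar X(k))$, and $\PIPerf(Y) \simeq \Prou U \IPerf(Y_0) =: \Dd$. Under these identifications, the composite $\pi_k \circ f^*_\PI$ with the $k$-th projection is the functor $F_k := \Prou U(\IPerf(f_k)) \colon \Dd \to \Cc_k$. By \autoref{iperf-df} each $\IPerf(f_k)$ admits a right adjoint $(f_k)^{\mathbf I}_*$; since $\Prou U$ preserves adjunctions, $F_k$ admits the right adjoint $G_k := \Prou U((f_k)^{\mathbf I}_*) \colon \Cc_k \to \Dd$. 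For the same reason every transition functor $\Cc(\beta) \colon \Cc_l \to \Cc_k$ of the diagram $\Cc_\bullet$, being of the form $\Prou U(\IPerf(-))$, admits a right adjoint $\Cc(\beta)^R$.

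I would then prove corepresentability directly. Fix $E = (E_k)_k \in \PIPerf(X)$. Because $\Fun(\Delta^1,-)$ preserves limits of $(\infty,1)$-categories, mapping spaces commute with the limit $\PIPerf(X) = \lim_{K\op}\Cc_\bullet$, so for any $d \in \Dd$,
\[
\Map_{\PIPerf(X)}(f^*_\PI d, E) \simeq \lim_{k \in K\op} \Map_{\Cc_k}(F_k d, E_k) \simeq \lim_{k \in K\op} \Map_{\Dd}(d, G_k E_k).
\]
From the identities $F_k \simeq \Cc(\beta) \circ F_l$ (for $\beta \colon l \to k$ in $K\op$) one obtains $G_k \simeq G_l \circ \Cc(\beta)^R$, and the mates of these equivalences together with the compatibilities $E_k \simeq \Cc(\beta)(E_l)$ organise the objects $G_k E_k$ into a diagram $K\op \to \Dd$. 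As $K$ is filtered, $K\op$ is cofiltered, and $\Dd = \Prou U \IPerf(Y_0)$ admits all $\mathbb U$-small cofiltered limits (dual to \autoref{indu-thm}); hence $\lim_{k \in K\op} G_k E_k$ exists in $\Dd$ and
\[
\Map_{\PIPerf(X)}(f^*_\PI d, E) \simeq \Map_{\Dd}\left(d, \lim_{k \in K\op} G_k E_k\right).
\]
Thus $d \mapsto \Map(f^*_\PI d, E)$ is corepresentable for every $E$, and the criterion for existence of adjoints \cite[5.2.4.2]{lurie:htt} yields the right adjoint.

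The main obstacle is the coherence hidden in the third paragraph: promoting the object-wise adjoints $G_k E_k$ to an honest $K\op$-indexed diagram in $\Dd$ requires the Beck--Chevalley $2$-cells (the mates of $F_k \simeq \Cc(\beta)\circ F_l$) to be coherent, not merely present object-wise, and the manipulation $\lim \Map \simeq \Map \lim$ must be performed functorially in both $d$ and $E$. This is exactly the bookkeeping packaged by the general statement on right adjoints in cofiltered limits of categories in \cite{hennion:these}; I would invoke that result — the analogue of the lemma cited in the proof of \autoref{prop-iqcoh} — to conclude, rather than reconstructing the coherences by hand.
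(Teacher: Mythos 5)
Your proposal is correct and follows essentially the same route as the paper: first produce the right adjoint in the pro-stack-to-pro-stack case by applying $\Prou U$ to the adjunction $\IPerf(f_k) \dashv (f_k)^{\mathbf I}_*$, then assemble the right adjoint over the filtered diagram for $X$ as the limit $\lim_k G_k$, deferring the coherence bookkeeping to the general result in \cite{hennion:these} (the paper cites 1.2.0.5 there for exactly this). Your extra paragraph spelling out the corepresentability computation is a reasonable expansion of what the paper leaves implicit.
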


\begin{df}
Let $f \colon X \to Y$ be a map of ind-pro-stacks. If the functor
\[
f^*_\PI \colon \PIPerf(Y) \to \PIPerf(X)
\]
admits a right adjoint, we will denote it by $f^\PI_*$.
\end{df}

\begin{proof}[of the proposition]
If both $X$ and $Y$ are pro-stacks, then $f^\PI_* = \Prou U(f^\mathbf I_*)$ is right adjoint to $f_\PI^* = \Prou U(f_\mathbf I^*)$.
Let now $X$ be an ind-pro-stack and let $\bar X \colon K \to \Prou U\dSt_S$ denote a $\mathbb U$-small filtered diagram of whom $X$ is a colimit in $\IP\dSt_S$. We then have
\[
f_\PI^* \colon \PIPerf(Y) \to \PIPerf(X) \simeq \lim \PIPerf(\bar X) 
\]
The right adjoint is the informally given by the formula
\[
f^\PI_* = \lim_k \bar f(k)^\PI_*
\]
where $\bar f(k)$ is the induced map $\bar X(k) \to Y$. For a formal proof, we refer to \cite[1.2.0.5]{hennion:these}.
\end{proof}

\begin{df}
Let $X \in \IP\dSt_S$. We define $\IPPerf(X) = (\PIPerf(X))\op$.
If $X$ is the colimit in $\IP\dSt_S$ of a filtered diagram $K \to \Prou U\dSt_S$ then we have
\[
\IPPerf(X) \simeq \lim (\Indu U \circ \PPerf \circ \bar X)
\]
We will denote by $\dual{(-)} \colon \IPPerf(X) \to (\PIPerf(X))\op$ the duality functor.
\end{df}

\begin{df}
Let us define the functor $\Tateu U_\IP \colon (\IP\dSt_S)\op \to \inftyCat^{\mathbb V\mathrm{,st,id}}$ as the right Kan extension of $\Tateu U_\mathbf P$ along the inclusion $(\Prou U \dSt_S)\op \to (\IP\dSt_S)\op$.
It is by definition endowed with a canonical fully faithful natural transformation
\[
\Tateu U_\IP \to \PIPerf
\]
For any $X \in \IP\dSt_S$, an object of $\Tateu U_\IP(X)$ will be called a Tate module on $X$.
\end{df}

\begin{rmq}
We can characterise Tate objects: 
a module $E \in \PIPerf(X)$ is a Tate module if and only if for any pro-stack $U$ and any morphism $f \colon U \to X \in \IP\dSt_S$, the pullback $f^*_\IP(E)$ is in $\Tateu U_\mathbf P(U)$.

Let us also remark here that 
\end{rmq}

\begin{lem}\label{ipdst-tate-in-ipp}
Let $X$ be an ind-pro-stack over $S$. The fully faithful functors
\[
\mymatrix{
\Tateu U_\IP(X) \ar[r] & \PIPerf(X) \ar@{=}[r]^-{\dual{(-)}}& (\IPPerf(X))\op & \left(\Tateu U_\IP(X)\right)\op \ar[l]
}
\]
have the same essential image. We thus have an equivalence
\[
\dual{(-)} \colon \Tateu U_\IP(X) \simeq \left(\Tateu U_\IP(X)\right)\op
\]
\end{lem}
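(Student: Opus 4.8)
The statement is local in the ind-pro direction, so the plan is to reduce it to the case of a pro-stack, where it is exactly \autoref{tate-dual}, and then to glue the resulting equivalences along the defining diagram. Write $X \simeq \colim_{k} \bar X(k)$ as a $\mathbb U$-small filtered colimit in $\IP\dSt_S$ of pro-stacks $\bar X(k) \in \Prou U\dSt_S$. Since $\Tateu U_\IP$ is the right Kan extension of $\Tateu U_\mathbf P$ along $(\Prou U\dSt_S)\op \to (\IP\dSt_S)\op$, and the latter inclusion exhibits $(\IP\dSt_S)\op$ as $\Prou U$ of $(\Prou U\dSt_S)\op$, evaluating the Kan extension at the pro-object $X\op$ presents it as the limit over the defining diagram
\[
\Tateu U_\IP(X) \simeq \lim_{k} \Tateu U_\mathbf P(\bar X(k)),
\]
compatibly with the canonical fully faithful functor to $\PIPerf(X) \simeq \lim_{k} \PIPerf(\bar X(k))$, which is the limit of the pointwise inclusions $\Tateu U_\mathbf P(\bar X(k)) \to \PIPerf(\bar X(k))$; likewise on the $\IPPerf$ side. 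Thus it suffices to establish the claim pointwise and check that the pointwise equivalences are natural in $k$.

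First I would treat a single pro-stack $U = \bar X(k)$. The stable, idempotent complete category $\Cc_k \simeq \colim \Perf(\bar U)$ from which $\PIPerf(U) \simeq \Prou U\Indu U(\Cc_k)$ is built carries a duality equivalence $\Cc_k\op \simeq \Cc_k$, induced by the self-duality of perfect complexes (which is exactly what is used to define $\PPerf = (\IPerf)\op$). Then \autoref{tate-dual} applies verbatim: the induced functor $\PIPerf(U) \to (\PIPerf(U))\op = \IPPerf(U)$ preserves Tate objects and restricts to an equivalence $\Tateu U_\mathbf P(U) \simeq (\Tateu U_\mathbf P(U))\op$. Equivalently, the essential image of $\Tateu U_\mathbf P(U)$ in $\PIPerf(U)$ coincides, under $\dual{(-)}$, with the essential image of $(\Tateu U_\mathbf P(U))\op$ in $(\IPPerf(U))\op \simeq \PIPerf(U)$.

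Next I would check naturality in $k \in K$. The functor $\dual{(-)}$ is built from the self-duality of $\Perf$, which is compatible with pullback (the dual of a pullback is the pullback of the dual, since pullback is symmetric monoidal and preserves dualizable objects). Hence $\dual{(-)}$ is a natural transformation of the relevant functors on the diagram, and the pointwise equivalences of the previous step commute with the transition (pullback) functors. Passing to the limit over $K$ then assembles them into an equivalence $\dual{(-)} \colon \Tateu U_\IP(X) \simeq (\Tateu U_\IP(X))\op$, compatible with the two embeddings into $\PIPerf(X)$; this yields at once the equality of essential images and the asserted self-duality. One may also phrase the argument through the pullback characterization recalled above: an object of $\PIPerf(X)$ is a Tate module iff all its pro-stack pullbacks are, and $\dual{(-)}$ exchanges this condition with the corresponding one in $\IPPerf(X)$ by the pro-stack case.

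The main obstacle is precisely this compatibility step: one must verify that the self-duality equivalences furnished pointwise by \autoref{tate-dual} genuinely glue over $K$, i.e. that $\dual{(-)}$ is natural with respect to the pullback functors $f^*_\PI$ and that the \emph{same essential image} property is preserved when taking the limit. This reduces to the naturality of perfect-complex duality under pullback, together with the bookkeeping identifying the right Kan extension $\Tateu U_\IP(X)$ with $\lim_{k} \Tateu U_\mathbf P(\bar X(k))$ and its inclusion into $\PIPerf(X)$ with the limit of the pointwise inclusions. Everything else is a formal consequence of \autoref{tate-dual}.
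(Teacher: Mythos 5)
Your proposal is correct and follows the same route as the paper, which simply records the lemma as a corollary of \autoref{tate-dual}; you have filled in the (routine) reduction to the pro-stack case via the identification $\Tateu U_\IP(X) \simeq \lim_k \Tateu U_\mathbf P(\bar X(k))$ and the compatibility of perfect-complex duality with pullback. No gaps.
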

\begin{proof}
This is a corollary of \autoref{tate-dual}.
\end{proof}

\begin{df}
Let us define $\PIQcoh \colon (\IP\dSt_S)\op \to \inftyCatu V$ to be the functor
\[
\PIQcoh = \Proext_{(\Prou U\dSt_S)\op}(\IQcoh)
\]
From \autoref{indext-monoidal}, for any ind-pro-stack $X$, the category $\PIQcoh(X)$ admits a natural monoidal structure.
We also define the subfunctor
\[
\PIQcoh^{\leq 0} = \Proext_{(\Prou U\dSt_S)\op}(\IQcoh^{\leq 0})
\]
\end{df}

\begin{rmq}
Let us give an informal description of the above definition. To an ind-pro-stack $X = \colim_\alpha \lim_\beta X_{\alpha\beta}$ we associate the category
\[
\PIQcoh(X) = \lim_\alpha \Prou U \Indu U\left(\colim_\beta \Perf(X_{\alpha\beta})\right)
\]
\end{rmq}

\begin{df}
Let $f \colon X \to Y$ be a map of ind-pro-stacks. We will denote by $f_\PI^*$ the functor $\PIQcoh(f)$. Whenever it exists, we will denote by $f^\PIQ_*$ the right adjoint to $f_\PI^*$.
\end{df}

\begin{prop}\label{prop-piqcoh-right-adjoint}
Let $f \colon X \to Y$ be a map of ind-pro-stacks. If $Y$ is actually a stack, then the induced functor $f^*_\PI$ admits a right adjoint.
\end{prop}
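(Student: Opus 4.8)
The plan is to follow the same two-step pattern as the proof of \autoref{prop-piperf-right-adjoint}, replacing $\IPerf$ by $\IQcoh$ and using \autoref{prop-iqcoh} as the base case. The hypothesis that $Y$ is a stack is exactly what feeds that base case, so it must be used at the very bottom of the argument and then propagated through the (co)limit presentations of $\PIQcoh$.

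First I would treat the case where $X$ is itself a pro-stack. Then, by construction of $\PIQcoh = \Proext_{(\Prou U\dSt_S)\op}(\IQcoh)$, one has $\PIQcoh(X) \simeq \Prou U(\IQcoh(X))$ and likewise $\PIQcoh(Y) \simeq \Prou U(\IQcoh(Y))$, and the pullback is $f^*_\PI \simeq \Prou U(f^*_\mathbf I)$. Since $Y$ is a stack, \autoref{prop-iqcoh} provides a right adjoint $f^\IQ_*$ to $f^*_\mathbf I \colon \IQcoh(Y) \to \IQcoh(X)$. As $\Prou U$ is a $2$-functor it carries the adjunction $f^*_\mathbf I \dashv f^\IQ_*$ to an adjunction $\Prou U(f^*_\mathbf I) \dashv \Prou U(f^\IQ_*)$, exactly as in the first line of the proof of \autoref{prop-piperf-right-adjoint}; hence $\Prou U(f^\IQ_*)$ is the desired right adjoint of $f^*_\PI$ in the pro-stack case.

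For the general case I would write $X$ as a $\mathbb U$-small filtered colimit $X \simeq \colim_\alpha X_\alpha$ of pro-stacks $X_\alpha$ in $\IP\dSt_S$, so that $\PIQcoh(X) \simeq \lim_\alpha \PIQcoh(X_\alpha)$ and $f^*_\PI \simeq \lim_\alpha (f_\alpha)^*_\PI$, where $f_\alpha \colon X_\alpha \to Y$ is the composite $X_\alpha \to X \to Y$. Each $f_\alpha$ is a map from a pro-stack to the stack $Y$, so the previous step yields a right adjoint $(f_\alpha)^\PIQ_*$. The candidate right adjoint is then $f^\PIQ_* \simeq \lim_\alpha (f_\alpha)^\PIQ_*$; explicitly, for a family $E \in \lim_\alpha \PIQcoh(X_\alpha)$ with components $E_\alpha$ one sets $f^\PIQ_*(E) = \lim_\alpha (f_\alpha)^\PIQ_*(E_\alpha)$. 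This limit is indexed by the cofiltered category opposite to the filtered indexing of the colimit, and it lands in $\PIQcoh(Y) \simeq \Prou U(\IQcoh(Y))$, which admits all $\mathbb U$-small cofiltered limits because pro-categories are closed under them; this is precisely what makes the formula well posed.

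The one genuinely delicate point---and the step I expect to be the main obstacle---is the coherence of this assembly: promoting the objectwise right adjoints $(f_\alpha)^\PIQ_*$ to an actual right adjoint of the functor $f^*_\PI$ into the limit $\lim_\alpha \PIQcoh(X_\alpha)$ requires the Beck--Chevalley compatibility of these adjoints with the transition functors of the diagram, i.e.\ the usual $(\infty,1)$-categorical bookkeeping behind the slogan ``the right adjoint of a limit of functors is computed as a limit of right adjoints''. Rather than reproduce it, I would invoke the same mechanism already used for \autoref{prop-piperf-right-adjoint}, namely \cite[1.2.0.5]{hennion:these}, applied here to the diagram $\alpha \mapsto \PIQcoh(X_\alpha)$ and the functors $(f_\alpha)^*_\PI$. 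The verification that its hypotheses hold reduces, through the pro-stack base case, to the existence statement of \autoref{prop-iqcoh}, so that no input beyond ``$Y$ is a stack'' is needed.
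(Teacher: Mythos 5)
Your proposal is correct and follows exactly the route the paper intends: the paper's proof of this proposition consists of the single sentence that it is ``very similar to the proof of \autoref{prop-piperf-right-adjoint} but using \autoref{prop-iqcoh}'', and your two-step argument (applying $\Prou U$ to the adjunction from \autoref{prop-iqcoh} for the pro-stack case, then assembling the right adjoints over the filtered presentation of $X$ via \cite[1.2.0.5]{hennion:these}) is precisely the unpacking of that sentence.
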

\begin{proof}
This is very similar to the proof of \autoref{prop-piperf-right-adjoint} but using \autoref{prop-iqcoh}.
\end{proof}

\begin{rmq}
There is a fully faithful natural transformation $\PIPerf \to \PIQcoh$. Using the same notation $f_\PI^*$ for the images of a map $f \colon X \to Y$ is therefore only a small abuse.
Moreover, for any such map $f \colon X \to Y$, for which the right adjoints drawn below exist, there is a natural tranformation
\[
\mymatrix{
\PIPerf(Y) \ar[r] \ar[d]_{f^\PI_*} & \PIQcoh(Y) \ar[d]^{f^\PIQ_*} \\ \PIPerf(X) \ar[r] \ar@{=>}[ur] & \PIQcoh(X)
}
\]
It is generally not an equivalence.
\end{rmq}

\begin{df}
Let $\sqzext^{\IP}$ denote the natural transformation $\Proext_{(\Prou U\dSt_S)\op}(\sqzext^{\Pro})$
\[
\sqzext^{\IP} \colon \PIQcoh^{\leq 0} \to \Proext_{(\Prou U\dSt_S)\op} \left( \btw_{(\Prou U\dSt_S)\op}^{\amalg}(\id_-) \right) \simeq \btw_{(\IP\dSt_S)\op}^{\amalg}(\id_-)
\]
of functors $(\IP\dSt_S)\op \to \inftyCat$.
If $X$ is an ind-pro-stack and $E \in \PIQcoh(X)^{\leq 0}$ then we will denote by $X \to X[E] \to X$ the image of $E$ by the functor 
\[
\sqzext^{\IP}(X) \colon \PIQcoh(X)^{\leq 0} \to \left(\comma{X}{\IP\dSt_X}\right)\op
\]
\end{df}

\begin{rmq}
Let us decipher the above definition. Let $X = \colim_\alpha \lim_\beta X_{\alpha\beta}$ be an ind-pro-stack and let $E$ be a pro-ind-module over it. By definition $E$ is the datum, for every $\alpha$, of a pro-ind-object $E^\alpha$ in the category $\Prou U \Indu U (\colim_\beta \Qcoh^{\leq 0}(X_{\alpha\beta}))$.
Let us denote $E^\alpha = \lim_\gamma \colim_\delta E^\alpha_{\gamma\delta}$.
For any $\gamma$ and $\delta$, there is a $\beta_0(\gamma,\delta)$ such that $E^\alpha_{\gamma\delta}$ is in the essential image of $\Qcoh^{\leq 0} (X_{\alpha\beta_0(\gamma,\delta)})$.
We then have
\[
X[E] = \colim_{\alpha,\gamma} \lim_{\delta} \lim_{\beta \geq \beta_0(\gamma,\delta)} X_{\alpha\beta}[E_{\gamma\delta}] \in \IP\dSt_S
\]
\end{rmq}

\begin{df}\label{derivation-ipdst}
Let $X$ be an ind-pro-stack.
\begin{itemize}
\item We define the functor of derivations on $X$ 
\[
\Der(X,-) = \Map_{X/-/S}(X[-],X)
\]
\item We say that $X$ admits a cotangent complex if there exists $\Lcot_{X/S} \in \PIQcoh(X)$ such that for any $E \in \PIQcoh(X)^{\leq 0}$
\[
\Der(X,E) \simeq \Map(\Lcot_{X/S},E)
\]
\item Let us assume that $f \colon X \to Y$ is a map of ind-pro-stacks and that $Y$ admits a cotangent complex. We say that $f$ is formally étale if $X$ admits a cotangent complex and the natural map $f^* \Lcot_{Y/S} \to \Lcot_{X/S}$ is an equivalence.
\end{itemize}
\end{df}

\begin{df}
An Artin ind-pro-stack over $S$ is an object in the category
\[
\IP\dStArt_S = \Indu U \Prou U\dStArt_S
\]
An Artin ind-pro-stack locally of finite presentation is an object of
\[
\IP\dStArtlfp_S = \Indu U \Prou U\dStArtlfp_S
\]
\end{df}

\begin{prop}\label{ipcotangent}
Any Artin ind-pro-stack $X$ admits a cotangent complex
\[
\Lcot_{X/S} \in \PIQcoh(X)
\]
Let us assume that $\bar X \colon K \to \Pro\dStArt_S$ is a $\mathbb U$-small filtered diagram of whom $X$ is a colimit in $\IP\dStArt_S$.
For any vertex $k \in K$ we will denote by $X_k$ the pro-stack $\bar X(k)$ and by $i_k$ the structural map $X_k \to X$.
For any $f \colon k \to l$ in $K$, let us also denote by $f$ the induced map $X_k \to X_l$.
We have for all $k \in K$
\[
i_{k,\PI}^* \Lcot_{X/S} \simeq \lim_{f \colon k \to l} f^*_\mathbf I \Lcot_{X_l/S} \in \PIQcoh(X_k)
\]
If moreover $X$ is locally of finite presentation then $\Lcot_{X/S}$ belongs to $\PIPerf(X)$.
\end{prop}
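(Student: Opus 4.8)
The plan is to mimic the proof of \autoref{cotangent-pdst}, replacing the functor $\Indext_{\dSt_S\op}$ by $\Proext_{(\Prou U\dSt_S)\op}$, and to reduce the verification of the universal property to the already-established pro-stack case.

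First I would produce the candidate cotangent complex functorially. By \autoref{cotangent-pdst} together with \autoref{cotangent-prostacks}, the assignment of a cotangent complex to a pro-stack is encoded by a natural transformation
\[
\cotangent^{\Pro} \colon \overcat^{(\Prou U\dStArt_S)\op}_{(\Prou U\dSt_S)\op} \to \IQcoh(-)
\]
of functors $(\Prou U\dSt_S)\op \to \inftyCat$, sending a pro-stack $T$ equipped with a map $f \colon T \to U$ to an Artin pro-stack to $f^*_{\mathbf I}\Lcot_{U/S}$. Applying the functor $\Proext_{(\Prou U\dSt_S)\op}$ of \autoref{indext} to this natural transformation yields
\[
\cotangent^{\IP} = \Proext_{(\Prou U\dSt_S)\op}(\cotangent^{\Pro}) \colon \overcat^{(\IP\dStArt_S)\op}_{(\IP\dSt_S)\op} \to \PIQcoh(-),
\]
where the source is identified with the relevant over-category functor and the target with $\PIQcoh$ exactly as in the construction of $\sqzext^{\IP}$, using the equivalence \cite[1.3.0.18]{hennion:these}. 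Specialising at $X$ gives $\cotangent^{\IP}_X \colon (\comma{X}{\IP\dStArt_S})\op \to \PIQcoh(X)$, and I set $\Lcot_{X/S} = \cotangent^{\IP}_X(\id_X)$. The explicit description of $\Proext$ recalled after \autoref{indext}, applied to $X$ viewed as a limit of the diagram dual to $\bar X$, together with naturality along the projections $i_k$, yields at once the announced formula $i_{k,\PI}^*\Lcot_{X/S} \simeq \lim_{f\colon k\to l} f^*_{\mathbf I}\Lcot_{X_l/S}$ in $\PIQcoh(X_k) \simeq \Prou U\IQcoh(X_k)$.

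Next I would check the universal property, namely that $\Der(X,-) = \Map_{X/-/S}(X[-],X)$ is corepresented by $\Lcot_{X/S}$ on $\PIQcoh(X)^{\leq 0}$. Since $X = \colim_k X_k$ in $\IP\dSt_S$, the category $\PIQcoh(X)$ is the limit $\lim_k \PIQcoh(X_k)$ along the functors $i_{k,\PI}^*$, so a connective module $E$ is a compatible family $(i_{k,\PI}^*E)_k$ and, for any object $F$,
\[
\Map_{\PIQcoh(X)}(F,E) \simeq \lim_k \Map_{\PIQcoh(X_k)}(i_{k,\PI}^*F, i_{k,\PI}^*E).
\]
Applying this to $F = \Lcot_{X/S}$ reduces the target side to the pro-stacks $X_k$. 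On the other side, using the levelwise description of $X[E]$ recalled after the definition of $\sqzext^{\IP}$, I would express $\Der(X,-)$ as the corresponding limit over $K$ of derivation functors computed after pulling back to the $X_k$; as in the pro-stack proof, each term preserves the relevant filtered colimits and is therefore determined by its restriction. The pro-stack universal property (\autoref{cotangent-pdst}, in the form of \autoref{cotangent-prostacks}) then identifies each factor with $\Map_{\PIQcoh(X_k)}(i_{k,\PI}^*\Lcot_{X/S}, i_{k,\PI}^*E)$, and assembling over $K$ yields the desired equivalence $\Der(X,E) \simeq \Map(\Lcot_{X/S},E)$.

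The main obstacle is precisely this last reduction: because $X$ occurs simultaneously as base and target in $\Map_{X/-/S}(X[E],X)$ while being presented as a filtered colimit of pro-stacks, matching derivations of $X$ with compatible families of derivations of the $X_k$ requires a careful cofinality and reindexing analysis of mapping spaces out of ind-pro-stacks and of the levelwise structure of $X[E]$ — the technical content underlying the $\Indext$/$\Proext$ formalism, for which I would invoke the mapping-space computations of \cite{hennion:these}. Finally, if $X$ is locally of finite presentation then each $X_l$ is, so $\Lcot_{X_l/S} \in \IPerf(X_l)$ by the last assertion of \autoref{cotangent-pdst}; since pullbacks preserve ind-perfect complexes and $\PIPerf \to \PIQcoh$ is fully faithful, the formula for $i_{k,\PI}^*\Lcot_{X/S}$ shows that $\Lcot_{X/S} \in \PIPerf(X)$, concluding the proof.
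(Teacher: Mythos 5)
Your proposal follows essentially the same route as the paper: you construct $\cotangent^{\IP}$ by applying $\Proext_{(\Prou U\dSt_S)\op}$ to the natural transformation $\cotangent^{\Pro}$ from the pro-stack case, evaluate at $\id_X$, read off the formula for $i_{k,\PI}^*\Lcot_{X/S}$, and verify the universal property by reducing to each $X_k$, to the restriction along $\IQcoh(X_k)^{\leq 0} \to \PIQcoh(X_k)^{\leq 0}$, and finally to \autoref{cotangent-prostacks} applied termwise to the filtered diagram $l \mapsto X_l$ under $X_k$. This is exactly the paper's argument, including the final observation that local finite presentation of the $X_l$ places $\Lcot_{X/S}$ in $\PIPerf(X)$.
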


\begin{proof}
Let us recall the natural transformation $\cotangent^{\Pro}$ from the proof of \autoref{cotangent-pdst}
\[
\cotangent^{\Pro} = \Indext_{\dSt_S\op}(\cotangent) \colon \overcat^{(\Prou U\dStArt_S)\op}_{(\Prou U\dSt_S)\op} \to \IQcoh(-)
\]
of functors $(\Prou U\dSt_S)\op \to \inftyCat$. Applying the functor $\Proext_{(\Prou U\dSt_S)\op}$ we define the natural transformation $\cotangent^{\IP}$
\[
\cotangent^{\IP} = \Proext_{(\Prou U\dSt_S)\op} \left( \cotangent^{\Pro} \right) \colon \overcat_{(\IP\dSt_S)\op}^{(\IP\dStArt_S)\op} \to \PIQcoh(-)
\]
between functors $(\IP\dSt_S)\op \to \inftyCat$.
Specifying to $X$ we get a functor
\[
\cotangent^{\IP}_X \colon \left(\comma{X}{\IP\dStArt_S}\right) \op \to \PIQcoh(X)
\]
We now define $\Lcot_{X/S} = \cotangent^{\IP}_X(X)$.
By definition we have
\[
i_{k,\PI}^* \Lcot_{X/S} \simeq \lim \cotangent^{\Pro}_{X_k}(\bar X) \simeq \lim_{f \colon k \to l} f^*_\mathbf I \Lcot_{X_l/S}
\]
for every $k \in K$.
Let us now prove that it satisfies the expected universal property. It suffices to compare for every $k \in K$ the functors
\[
\Map_{X_k/-/S}(X_k[-], X) \hspace{1cm} \text{and} \hspace{1cm} \Map_{\PIQcoh(X_k)}(i_{k,\PI}^*\Lcot_{X/S}, -)
\]
defined on $\PIQcoh(X_k)^{\leq 0}$. They are both pro-extensions to $\PIQcoh(X_k)^{\leq 0}$ of their restrictions $\IQcoh(X_k)^{\leq 0} \to \sSets$.
The restricted functor $\Map_{X_k/-/S}(X_k[-], X)$ is a colimit of the diagram
\[
\Map_{X_k/-/S}(X_k[-], \bar X) \colon \left(\comma{k}{K}\right)\op \to \Fct(\IQcoh(X_k)^{\leq 0}, \sSets)
\]
while $\Map_{\PIQcoh(X_k)}(i_{k,\PI}^*\Lcot_{X/S}, -)$ is a colimit to the diagram
\[
\Map_{\IQcoh(X_k)}(\cotangent^{\Pro}_{X_k}(\bar X), -) \colon \left(\comma{k}{K}\right)\op \to \Fct(\IQcoh(X_k)^{\leq 0},\sSets)
\]
We finish the proof with \autoref{cotangent-prostacks}.
\end{proof}

Let us record here a technical result we will use later on. For a proof, we refer to \cite[2.1.2.20]{hennion:these}.
\begin{prop}\label{IPcotangent-underlying}
Let $X \in \IP\dStArt_S$.
Let us denote by $\pi \colon X \to S$ the structural map.
Let also $\tilde \Lcot^\IP$ denote the functor
\[
\left(\IP\dStArt_S\right)\op \to \Prou U \Indu U \Qcoh(S)
\]
obtained by extending the functor $(\dStArt_S)\op \to \Qcoh(S)$ mapping $f \colon T \to S$ to $f_* \Lcot_{T/S}$.
Then we have $\pi_*^\PIQ \Lcot_{X/S} \simeq \tilde \Lcot^{\IP} (X)$
\end{prop}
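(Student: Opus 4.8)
The plan is to propagate the identity through the two stages $\IP\dStArt_S = \Indu U\Prou U\dStArt_S$ used to build $\Lcot_{X/S}$ in \autoref{ipcotangent}, where $\cotangent^{\IP} = \Proext_{(\Prou U\dSt_S)\op}(\cotangent^{\Pro})$ and $\cotangent^{\Pro} = \Indext_{\dSt_S\op}(\cotangent)$. Let $\tilde\Lcot^{\mathbf P}$ denote the $\Indext_{\dSt_S\op}$-extension of $(f\colon T\to S)\mapsto f_*\Lcot_{T/S}$ to pro-stacks, so that $\tilde\Lcot^{\IP} = \Proext_{(\Prou U\dSt_S)\op}(\tilde\Lcot^{\mathbf P})$ is exactly the two-stage extension defining it. The statement then splits into a pro-stack identity $\pi_*^{\IQ}\Lcot_{Y/S}\simeq\tilde\Lcot^{\mathbf P}(Y)$ and a reduction of the ind-pro case to it. On plain Artin stacks both sides are $\pi_*\Lcot_{T/S}$ by definition, so there is nothing to prove at the bottom.

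For an Artin pro-stack $Y = \lim_\beta Y_\beta$ (with $\bar Y\colon K\op\to\dStArt_S$), I would factor the structural map as $\pi = \pi_\beta\circ p_\beta$ with $p_\beta\colon Y\to Y_\beta$ and $\pi_\beta\colon Y_\beta\to S$, so that $\pi_*^{\IQ} = (\pi_\beta)_*^{\IQ}\circ(p_\beta)_*^{\IQ}$ (both right adjoints exist by \autoref{prop-iqcoh}). By \autoref{cotangent-pdst}, $(p_\beta)_*^{\IQ}\Lcot_{Y/S}\simeq\colim_{f\colon\beta\to\beta'}f_*\Lcot_{Y_{\beta'}/S}$, a formal filtered colimit in $\IQcoh(Y_\beta)\simeq\Indu U\Qcoh(Y_\beta)$. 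The one substantive point is that $(\pi_\beta)_*^{\IQ}$ preserves this colimit: since the generators are compact (\autoref{indu-thm}), the adjunction $\pi_\beta^*\dashv\pi_{\beta*}$ extends to $\Indu U(\pi_\beta^*)\dashv\Indu U(\pi_{\beta*})$, identifying $(\pi_\beta)_*^{\IQ}$ with $\Indu U(\pi_{\beta*})$, which is filtered-colimit-preserving by construction. Commuting it inside and using $\pi_\beta\circ f = \pi_{\beta'}$ together with cofinality of $\beta/K\hookrightarrow K$ gives $\pi_*^{\IQ}\Lcot_{Y/S}\simeq\colim_{\beta'}(\pi_{\beta'})_*\Lcot_{Y_{\beta'}/S} = \tilde\Lcot^{\mathbf P}(Y)$.

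For the general case I would write $X = \colim_\alpha Y_\alpha$ as a filtered colimit of Artin pro-stacks, with $i_\alpha\colon Y_\alpha\to X$ and $\pi_\alpha = \pi\circ i_\alpha$. From $\PIQcoh(X)\simeq\lim_\alpha\PIQcoh(Y_\alpha)$ and the computation of right adjoints in a limit of categories (as in \autoref{prop-piqcoh-right-adjoint}), one gets $\pi_*^{\PIQ}\simeq\lim_\alpha\Prou U((\pi_\alpha)_*^{\IQ})\circ i_{\alpha,\PI}^*$. Substituting the componentwise formula $i_{\alpha,\PI}^*\Lcot_{X/S}\simeq\lim_{f\colon\alpha\to\alpha'}f^*_\mathbf I\Lcot_{Y_{\alpha'}/S}$ of \autoref{ipcotangent}, and using that $\Prou U((\pi_\alpha)_*^{\IQ})$ preserves formal pro-limits, yields
\[
\pi_*^{\PIQ}\Lcot_{X/S}\simeq\lim_\alpha\lim_{f\colon\alpha\to\alpha'}(\pi_\alpha)_*^{\IQ}\,f^*_\mathbf I\Lcot_{Y_{\alpha'}/S}.
\]
This nested formal limit is a limit over the arrow category $\operatorname{Arr}(K) = \Fct(\Delta^1,K)$ of the functor $G(\alpha\to\alpha') = (\pi_\alpha)_*^{\IQ}f^*_\mathbf I\Lcot_{Y_{\alpha'}/S}$. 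I would then observe that the diagonal $\delta\colon K\to\operatorname{Arr}(K)$, $\alpha\mapsto\id_\alpha$, is initial: for each arrow $f$ the comma category $\delta_{/f}$ is the slice $K_{/s(f)}$ over the source, which has a terminal object and is thus contractible. Hence the limit collapses to the diagonal, where $G(\id_\alpha) = (\pi_\alpha)_*^{\IQ}\Lcot_{Y_\alpha/S}\simeq\tilde\Lcot^{\mathbf P}(Y_\alpha)$ by the pro-stack step. Therefore $\pi_*^{\PIQ}\Lcot_{X/S}\simeq\lim_\alpha\tilde\Lcot^{\mathbf P}(Y_\alpha) = \Proext_{(\Prou U\dSt_S)\op}(\tilde\Lcot^{\mathbf P})(X) = \tilde\Lcot^{\IP}(X)$.

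The hard part will be purely the coherence bookkeeping hidden in the two informal steps above: justifying $\pi_*^{\PIQ}\simeq\lim_\alpha\Prou U((\pi_\alpha)_*^{\IQ})\circ i_{\alpha,\PI}^*$ and the Fubini reorganisation of the nested formal limit into $\lim_{\operatorname{Arr}(K)}G$, both to be carried out inside $\Prou U\Indu U\Qcoh(S)$ with attention to which (co)filtered index underlies each formal (co)limit; this is where I would lean on \cite[1.2.0.5, 1.2.0.7]{hennion:these}. By contrast, the only genuinely non-formal ingredient is the colimit-preservation in the pro-stack step, while the collapse to the diagonal and everything else is adjunction and cofinality.
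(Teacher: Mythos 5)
The paper does not prove this proposition in the text: it defers to \cite[2.1.2.20]{hennion:these}, so there is no in-paper argument to compare against line by line. That said, your proof is correct and is the natural (and, given how the paper sets things up, essentially forced) route: you unwind the two-stage extension $\cotangent^{\IP}=\Proext(\Indext(\cotangent))$ against the two-stage description of $\pi_*^{\PIQ}$ from \autoref{prop-piperf-right-adjoint}/\autoref{prop-iqcoh}, using the formula ${p_k}_*^{\IQ}\Lcot_{X/S}\simeq\colim_f f_*\Lcot_{X_l/S}$ of \autoref{cotangent-pdst} at the pro level, and then collapse the resulting $\operatorname{Arr}(K)$-indexed formal limit onto the diagonal. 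All the ingredients you invoke are available: compactness of the generators (\autoref{indu-thm}) does identify $(\pi_\beta)_*^{\IQ}$ with $\Indu U(\pi_{\beta*})$, and cofinality of the coslice $\comma{\beta}{K}\to K$ finishes the pro-stack step.

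One small bookkeeping remark: in the cofinality step for the arrow category, the variance of $G(\alpha\to\alpha')=(\pi_\alpha)_*^{\IQ}f_{\mathbf I}^*\Lcot_{Y_{\alpha'}/S}$ (covariant in the source via the unit $\id\to f_*f^*$, contravariant in the target via pullback of cotangent complexes) means the relevant comma category comes out as the coslice under the target rather than the slice over the source, depending on which opposite you work in. Both are weakly contractible (one has a terminal object, the other is filtered), so the collapse to the diagonal holds either way; just make sure the convention you fix for $\operatorname{Arr}(K)$ versus $\operatorname{Arr}(K)\op$ matches the direction of the structure maps when you write the Fubini identification out in full.
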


\begin{df}
Let $X$ by an Artin ind-pro-stack locally of finite presentation over $S$. We will call the tangent complex of $X$ the ind-pro-perfect complex on $X$
\[
\T_{X/S} = \dual \Lcot_{X/S} \in \IPPerf(X)
\]
\end{df}

\subsection{Uniqueness of pro-structure}

\begin{lem}\label{coconnective}
Let $Y$ and $Z$ be derived Artin stacks. The following is true
\begin{enumerate}
\item The canonical map 
\[
\Map(Z,Y) \to \lim_n \Map(\tau_{\leq n} Z,Y)
\]
is an equivalence;
\item If $Y$ is $q$-Artin and $Z$ is $m$-truncated then the mapping space $\Map(Z,Y)$ is $(m + q)$-truncated.
\end{enumerate}
\end{lem}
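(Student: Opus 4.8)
The plan is to reduce both statements to the case where $Z=\Spec A$ is a derived affine scheme and then read them off as properties of the functor $A\mapsto Y(A)$. The reduction rests on one computation: if $Z=\Spec A$, then the restriction of $\Spec A$ to $\cdgaunbounded_k^{[-n,0]}$ is corepresented by the truncation $A_{\leq n}$, so the left Kan extension--restriction adjunction together with Yoneda gives $\Map(\tau_{\leq n}\Spec A, Y)\simeq Y(A_{\leq n})$, while $\Map(\Spec A, Y)\simeq Y(A)$. To pass from affines to a general derived Artin stack $Z$, I would write $Z$ as a colimit of derived affine schemes in $\dSt_k$; since $\Map(-,Y)$ turns colimits into limits, $\tau_{\leq n}$ commutes with colimits (being a restriction followed by a left Kan extension), and limits commute with the outer $\lim_n$, the equivalence for affine $Z$ propagates to all $Z$ -- the one point needing care being that $\tau_{\leq n}$ is a priori a construction on prestacks, so I would check compatibility with the stackification using that $Y$ satisfies étale descent.

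With this reduction, part (i) becomes exactly the assertion that $Y$ is convergent: since $A\simeq\lim_n A_{\leq n}$, I must show $Y(A)\simeq\lim_n Y(A_{\leq n})$. I would prove this by induction on the geometric level $q$ of $Y$. When $Y=\Spec B$ is affine it is immediate, as $\Map_{\cdgaunbounded_k}(B,-)$ preserves the limit $A\simeq\lim_n A_{\leq n}$. For the inductive step I would use that each map $A_{\leq n}\to A_{\leq n-1}$ in the Postnikov tower is a (twisted) square-zero extension by $\homol^{-n}(A)$, and that the obstruction theory of \autoref{obstruction} (applied to $Y\to\pt$) describes the fibres of $Y(A_{\leq n})\to Y(A_{\leq n-1})$ in terms of $\Lcot_Y$; this exhibits $\{Y(A_{\leq n})\}_n$ as a tower of principal fibrations whose limit is $Y(\lim_n A_{\leq n})$. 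The higher homotopy groups are handled separately and more cheaply: for $i\geq 1$, $\pi_i(Y(A),y)$ is computed by the $A$-points of the based loop stack $\Omega_y Y$, which is $(q-1)$-Artin (a pullback of the diagonal of $Y$), so they already match by induction; the genuine content is the bijection on $\pi_0$, which is where obstruction theory and a smooth atlas of $Y$ are really needed.

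For part (ii), after the same reduction I must show that $Y(A)$ is $(m+q)$-truncated whenever $A$ is $m$-truncated and $Y$ is $q$-Artin. I would again induct on $q$. The base case $Y=\Spec B$ gives $Y(A)=\Map_{\cdgaunbounded_k}(B,A)$, which is $m$-truncated because, computing on generators of $B$, its homotopy is read off from the underlying space of the $m$-truncated cdga $A$. For the inductive step, rather than fighting with the fact that a smooth atlas $U\to Y$ need not be surjective on $A$-points, I would pass to the diagonal: for any $y\in Y(A)$ and $i\geq 1$ one has $\pi_i(Y(A),y)\simeq\pi_{i-1}\big((\Omega_y Y)(A)\big)$ with $\Omega_y Y$ a pullback of the diagonal of $Y$, hence $(q-1)$-Artin. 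The inductive hypothesis makes $(\Omega_y Y)(A)$ an $(m+q-1)$-truncated space, so $\pi_i(Y(A),y)=0$ for $i>m+q$; since this holds at every basepoint $y$ and no condition is imposed on $\pi_0$, it shows $Y(A)$ is $(m+q)$-truncated. This loop-stack argument applies uniformly to all components of $Y(A)$ and so neatly avoids the effective-epimorphism subtlety.

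The main obstacle I anticipate is entirely in part (i): establishing the convergence $Y(A)\simeq\lim_n Y(A_{\leq n})$ for higher Artin stacks. Its higher-homotopy content reduces, as above, to lower geometric level, but the bijectivity on $\pi_0$ -- that a compatible system of maps $\Spec A_{\leq n}\to Y$ integrates to a unique map $\Spec A\to Y$ -- is exactly the nilcompleteness of Artin stacks, and carrying out the induction cleanly requires combining the square-zero structure of the Postnikov tower with \autoref{obstruction} and a smooth atlas of $Y$. The secondary, more bookkeeping, difficulty is making the reduction to the affine case compatible with stackification.
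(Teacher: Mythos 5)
Your strategy is sound, but it takes a genuinely different route from the paper. The paper inducts on the Artin degree of the \emph{source} $Z$: the base case ($Z$ affine, $Y$ an arbitrary Artin stack) is simply cited from To\"en--Vezzosi [C.0.10 and 2.2.4.6], and the inductive step chooses a smooth atlas $u\colon U\to Z$, observes that $\tau_{\leq k}u$ is again an atlas with nerve $\tau_{\leq k}(U_\bullet)$ (because truncation commutes with flat pullback), and then swaps the limit over $\Delta$ with the limit over $k$; item \emph{(ii)} falls out of the same induction since a limit of $(m+q)$-truncated spaces is $(m+q)$-truncated. You instead reduce immediately to affine $Z$ via a colimit presentation and then induct on the Artin degree of the \emph{target} $Y$: for \emph{(i)} this amounts to re-proving the convergence statement $Y(A)\simeq\lim_n Y(A_{\leq n})$ that the paper outsources to the literature (your Postnikov-tower/obstruction-theory sketch is exactly the standard proof of that cited result, so nothing is wrong, only redundant), and for \emph{(ii)} your loop-stack argument $\pi_i(Y(A),y)\simeq\pi_{i-1}\bigl(\Omega_y(Y(A))\bigr)$ with $\Omega_yY$ of lower geometric level is correct and arguably cleaner than the nerve argument. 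The place where you owe the most detail is the reduction itself: the canonical colimit of $Z$ by affines is a colimit in stacks, indexed differently from the colimit presenting $\tau_{\leq n}Z$ (which runs over \emph{truncated} points), so commuting $\tau_{\leq n}$ past the presentation requires a cofinality/stackification check that you only flag; the paper sidesteps this entirely by working with the one specific colimit given by the nerve of an atlas, where the compatibility $\tau_{\leq k}U\simeq U\times_Z\tau_{\leq k}Z$ is verified by hand. In short: your approach is self-contained where the paper's is not, at the cost of re-deriving the cited affine case and of a more delicate descent/cofinality verification in the reduction step.
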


\begin{proof}
We prove both items recursively on the Artin degree of $Z$. 
The case of $Z$ affine is proved in \cite[C.0.10 and 2.2.4.6]{toen:hagii}. We assume that the result is true for $n$-Artin stacks. Let $Z$ be $(n+1)$-Artin. There is an atlas $u \colon U \to Z$.
Let us remark that for $k \in\N$ the truncation $\tau_{\leq k} u \colon \tau_{\leq k} U \to \tau_{\leq k} Z$ is also a smooth atlas --- indeed we have $\tau_{\leq k} U \simeq U \times_Z \tau_{\leq k} Z$. Let us denote by $U_\bullet$ the nerve of $u$ and by $\tau_{\leq k}U_\bullet$ the nerve of $\tau_{\leq k} u$. Because $k$-truncated stacks are stable by flat pullbacks, the groupoid $\tau_{\leq k}U_\bullet$ is equivalent to $\tau_{\leq k}(U_\bullet)$. We have
\[
\Map(Z,Y) \simeq \lim_{[p] \in \Delta} \Map(U_p,Y) \simeq \lim_{[p] \in \Delta} \lim_k \Map(\tau_{\leq k}U_p,Y) \simeq \lim_k \Map(\tau_{\leq k} Z,Y)
\]
That proves item \emph{(i)}.
If moreover $Z$ is $m$-truncated, then we can replace $U$ by $\tau_{\leq m} U$. If follows that $\Map(Z,Y)$ is a limit of $(m + q)$-truncated spaces. This finishes the proof of \emph{(ii)}.
\end{proof}

We will use this well known lemma:
\begin{lem}[see {\cite[Chapter XI]{bousfieldkan:holim}}]\label{finite-groupoids}
Let $S \colon \Delta \to \sSets$ be a cosimplicial object in simplicial sets. Let us assume that for any $[p] \in \Delta$ the simplicial set $S_p$ is $n$-coconnective. Then the natural morphism
\[
\lim_{[p] \in \Delta} S_p \to \underset{p \leq n+1}{\lim_{[p] \in \Delta}} S_p
\]
is an equivalence.
\end{lem}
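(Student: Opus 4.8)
The plan is to identify $\lim_{[p]\in\Delta}S_p$ with the totalization $\mathrm{Tot}(S)$ and to show that its tower of partial totalizations stabilizes at level $n+1$. Recall that $\mathrm{Tot}(S)\simeq\lim_s\mathrm{Tot}_s(S)$, where the partial totalization $\mathrm{Tot}_s(S)$ is the homotopy limit restricted to the full subcategory $\Delta_{\leq s}\subset\Delta$ spanned by $[0],\dots,[s]$; in particular $\mathrm{Tot}_{n+1}(S)$ is exactly the right-hand side of the asserted equivalence. It therefore suffices to prove that the canonical map $\mathrm{Tot}_s(S)\to\mathrm{Tot}_{s-1}(S)$ is an equivalence for every $s\geq n+2$, so that the tower is constant past level $n+1$ and its limit is computed by $\mathrm{Tot}_{n+1}(S)$.

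For this I would invoke the standard description of the layers of the Tot tower, as in \cite[X.6]{bousfieldkan:holim}: after a Reedy fibrant replacement the maps $\mathrm{Tot}_s(S)\to\mathrm{Tot}_{s-1}(S)$ assemble into a tower of fibrations whose fibre over any basepoint is $\Omega^s N^s(S)$, where $N^s(S)$ is the $s$-th normalization, i.e. the homotopy fibre of the $s$-th matching map, whose target is a finite homotopy limit of the $S_p$ with $p<s$. The whole argument then rests on a connectivity estimate for $N^s(S)$. Since the $n$-truncated (that is, $n$-coconnective) spaces form a full subcategory of $\sSets$ stable under all limits, that matching object is $n$-truncated; being the homotopy fibre of a map between $n$-truncated spaces, $N^s(S)$ is $n$-truncated as well, as one reads off the long exact sequence of homotopy groups. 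Consequently $\pi_k\bigl(\Omega^s N^s(S)\bigr)\simeq\pi_{k+s}\bigl(N^s(S)\bigr)$ vanishes whenever $k+s>n$; for $s\geq n+1$ this holds for every $k\geq 0$, so $\Omega^s N^s(S)$ is contractible. Each layer of the tower above level $n$ is thus a fibration with contractible fibres, hence a weak equivalence, the tower is constant from level $n$ on, and $\mathrm{Tot}(S)\simeq\mathrm{Tot}_{n+1}(S)=\lim_{[p]\in\Delta,\,p\leq n+1}S_p$, as claimed.

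The only place where real care is needed, and hence the main obstacle, is the passage from contractibility of the fibres $\Omega^s N^s(S)$ to the statement that $\mathrm{Tot}_s(S)\to\mathrm{Tot}_{s-1}(S)$ is a genuine equivalence: one must know these maps to be fibrations with contractible fibres over \emph{every} component of the base, in particular surjective on $\pi_0$. This is exactly the principal-fibration structure of the Tot tower recorded in \loccit, which is why it is cleanest to cite Bousfield--Kan for the identification of the layers rather than to reprove it by hand; once that input is granted, the truncation estimate above finishes the proof immediately.
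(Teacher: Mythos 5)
The paper offers no argument for this lemma beyond the citation of Bousfield--Kan, so the only available comparison is with the reference itself; the route you take (the $\mathrm{Tot}$ tower, the identification of its layers with $\Omega^s N^s(S)$, and a connectivity estimate on $N^s(S)$) is exactly the argument recorded there, and your reduction of the statement to the vanishing of the layers for $s\geq n+2$, as well as your observation that $N^s(S)$ is $n$-truncated as the fibre of a map of $n$-truncated spaces, are both correct.

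There is, however, a genuine problem in the middle of your argument, and it sits precisely at the step you flag and then outsource. Your estimate shows $\Omega^s N^s(S)$ is weakly contractible already for $s\geq n+1$, and you conclude that ``each layer of the tower above level $n$ is a fibration with contractible fibres, hence a weak equivalence'' and that the tower is constant from level $n$ on. If that were true the lemma would hold with $p\leq n$ in place of $p\leq n+1$, and it does not: take $n=0$ and $S$ a cosimplicial discrete set, so that $\lim_{[p]\in\Delta}S_p$ is the equalizer of the two cofaces $S_0\rightrightarrows S_1$, which in general differs from $\mathrm{Tot}_0(S)=S_0$. The point is that $\mathrm{Tot}_s\to\mathrm{Tot}_{s-1}$ is a fibration whose fibres over points in its image are $\Omega^s N^s(S)$, but it need not be surjective on $\pi_0$; a fibration with contractible nonempty fibres that misses components is not an equivalence. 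The surjectivity is not a formality to be delegated --- it is where one degree of looping is lost. The principal-fibration structure in Bousfield--Kan exhibits $\mathrm{Tot}_s\to\mathrm{Tot}_{s-1}$, up to homotopy, as the fibre of a map from $\mathrm{Tot}_{s-1}$ into a space whose loop space is $\Omega^sN^s(S)$, i.e.\ essentially into $\Omega^{s-1}N^s(S)$; the map is therefore an equivalence once $\pi_{k+s-1}\left(N^s(S)\right)=0$ for all $k\geq 0$, which by $n$-truncatedness of $N^s(S)$ requires $s-1>n$, i.e.\ $s\geq n+2$. With that correction the tower stabilizes at level $n+1$, not $n$, which is exactly the bound in the statement. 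So your proof is repairable by making the principal-fibration input explicit, but as written the quantitative conclusion you draw from the fibre computation is false, and the discrepancy shows the $\pi_0$ issue cannot be waved through.
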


\begin{lem}\label{truncatedcompact}
Let $\bar X \colon \N\op \to \dSt_S$ be a diagram such that
\begin{enumerate}
\item There are two integers $m$ and $n$ such that for any $k \in \N$ the stack $\bar X(k)$ is $n$-Artin, $m$-truncated and of finite presentation;
\item There exists a diagram $\bar u \colon \N\op \times \Delta^1 \to \dSt_S$ such that the restriction of $\bar u$ to $\N\op \times \{1\}$ is equivalent to $\bar X$, every map $\bar u(k) \colon \bar u(k)(0) \to \bar u(k)(1) \simeq \bar X(k)$ is a smooth atlas and the limit $\lim_k \bar u(k)$ is an epimorphism.
\label{truncatedcompact:atlas}
\end{enumerate}
If $Y$ is an algebraic derived stack of finite presentation then the canonical morphism
\[
\colim \Map\left( \bar X,Y\right) \to \Map\left(\lim \bar X, Y \right)
\]
is an equivalence.
\end{lem}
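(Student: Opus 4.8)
The plan is to argue by induction on the integer $n$ bounding the Artin degree of the stacks $\bar X(k)$. At each stage I would resolve $\bar X(k)$ through the Čech nerve of the atlas supplied by hypothesis \ref{truncatedcompact:atlas}, pass to $(n-1)$-Artin nerve terms, and control the resulting cosimplicial limits by combining \autoref{coconnective} and \autoref{finite-groupoids}: the point is that these make the cosimplicial limit \emph{finite}, and finite limits commute with the filtered colimit $\colim_k$. This commutation is the whole mechanism; the truncation and finite-presentation hypotheses exist precisely to enable it.

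First I would dispose of the base case $n = 0$, where every $\bar X(k) = \Spec A_k$ is affine and finitely presented. Since $\Spec$ carries colimits of cdga's to limits of affine schemes and the inclusion $\dAff_S \hookrightarrow \dSt_S$ preserves limits, the limit $\lim \bar X \simeq \Spec(\colim_k A_k)$ is again affine, so $\Map(\lim \bar X, Y) \simeq Y(\colim_k A_k)$ whereas $\colim_k \Map(\bar X(k), Y) \simeq \colim_k Y(A_k)$. As $\colim_k A_k$ is a filtered colimit of cdga's and $Y$ is of finite presentation, hence commutes with filtered colimits of cdga's, the comparison map is an equivalence. Note that neither the truncation bound nor \ref{truncatedcompact:atlas} is needed here.

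For the inductive step I would write $U_k = \bar u(k)(0)$ for the (affine) atlas of $\bar X(k)$ and let $U_{k,\bullet}$ be its Čech nerve, so that $\bar X(k) \simeq \colim_{\Delta\op} U_{k,\bullet}$ with each $U_{k,p}$ for $p \ge 1$ being $(n-1)$-Artin, $m$-truncated and finitely presented. Fixing $q$ with $Y$ being $q$-Artin and setting $N = m + q + 1$, I would use
\[
\Map(\bar X(k), Y) \simeq \lim_{[p]\in\Delta} \Map(U_{k,p}, Y) \simeq \lim_{[p]\in\Delta_{\le N}} \Map(U_{k,p}, Y),
\]
the second equivalence holding because each $\Map(U_{k,p}, Y)$ is $(m+q)$-truncated by \autoref{coconnective} and \autoref{finite-groupoids} then truncates the limit to the finite category $\Delta_{\le N}$. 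This finite limit commutes with $\colim_k$, so $\colim_k \Map(\bar X(k),Y) \simeq \lim_{[p]\in\Delta_{\le N}} \colim_k \Map(U_{k,p},Y)$. Hypothesis \ref{truncatedcompact:atlas} makes $\lim_k U_k \to \lim \bar X$ an effective epimorphism, whence $\lim \bar X$ is the realization of its Čech nerve $U_{\infty,\bullet}$, and since cofiltered limits commute with the finite fibre products defining the nerve one has $U_{\infty,p} \simeq \lim_k U_{k,p}$. Feeding the induction hypothesis $\colim_k \Map(U_{k,p},Y) \simeq \Map(U_{\infty,p},Y)$ into the display and applying \autoref{finite-groupoids} once more (the cosimplicial space $\Map(U_{\infty,\bullet},Y)$ is again uniformly $(m+q)$-truncated, being a filtered colimit of such) identifies the result with $\lim_{[p]\in\Delta}\Map(U_{\infty,p},Y) \simeq \Map(\lim \bar X, Y)$.

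The delicate point, and the one I expect to be the main obstacle, is applying the induction hypothesis to the diagrams $k \mapsto U_{k,p}$: one must check they again satisfy \ref{truncatedcompact:atlas}, i.e. equip each $U_{k,p}$ with an \emph{affine} atlas, natural in $k$, whose limit over $k$ stays an epimorphism. The obvious candidate $U_{k,p+1} \to U_{k,p}$ — a base change of $U_k \to \bar X(k)$, hence a smooth atlas, and epimorphic in the limit since effective epimorphisms are stable under base change — has source $U_{k,p+1}$ which is only $(n-1)$-Artin rather than affine, so it fails to lower the Artin degree and would make the recursion circular. One therefore has to produce genuine affine atlases of the $U_{k,p}$, compatibly in $k$ and remaining epimorphic after passing to the limit, which amounts to building a compatible affine hypercover of the whole tower $\bar X$. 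I would handle this either by strengthening the inductive hypothesis to carry such a compatible affine hypercover from the start, or by invoking a functorial affine resolution of finitely presented Artin stacks together with the base-change stability of effective epimorphisms to propagate the epimorphism-in-the-limit condition down each level of the Čech resolution.
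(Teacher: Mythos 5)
Your proof is essentially the paper's: induction on the Artin degree, the nerve of the atlas, coconnectivity of the mapping spaces (\autoref{coconnective} together with \autoref{finite-groupoids}) to make the cosimplicial limit finite so that it commutes with the filtered colimit in $k$, and the epimorphism hypothesis to identify $\lim \bar X$ with the realization of the limit of the nerves. The delicate point you flag in your last paragraph --- that applying the induction hypothesis to the diagrams $k \mapsto U_{k,p}$ requires re-establishing hypothesis \emph{(ii)} for them --- is genuine but is passed over silently in the paper's own proof; in the intended application (\autoref{prop-cocompact}) the transition maps of the diagram are affine, so an atlas of $\bar X(0)$ pulls back to compatible affine atlases of the whole tower and of each level of the nerve, and the epimorphism-after-limit condition survives because it is then a base change of $\bar u(0)$.
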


\begin{proof}
Let us prove the statement recursively on the Artin degree $n$. If $n$ equals $0$, this is a simple reformulation of the finite presentation of $Y$.
Let us assume that the statement at hand is true for some $n$ and let $\bar X(0)$ be $(n+1)$-Artin.
Considering the nerves of the epimorphisms $\bar u(k)$, we get a diagram
\[
\bar Z \colon \N\op \times \Delta\op \to \dSt_S
\]
Note that $\bar Z$ has values in $n$-Artin stacks.
We observe that the diagram $\lim_k \bar Z(k)_\bullet \colon \Delta\op \to \dSt_S$ is the nerve of the map $\lim_k \bar u(k)$. Since $\lim_k \bar u(k)$ is by assumption an epimorphism (whose target is $\lim \bar X$), the natural map
\[
\colim_{[p] \in \Delta} \lim_{k \in \N} \bar Z(k)_p \to \lim \bar X  \simeq \lim_{k \in \N} \colim_{[p] \in \Delta} \bar Z(k)_p
\]
is an equivalence.
We now write
\begin{align*}
\Map\left(\lim \bar X,Y \right)
&\simeq \Map\left(\colim_{[p] \in \Delta} \lim_{k \in \N} \bar Z(k)_p,Y \right) \\
&\simeq \lim_{[p] \in \Delta} \Map\left(\lim_{k \in \N} \bar Z(k)_p,Y \right) \\
&\simeq \lim_{[p] \in \Delta} \colim_{k \in \N} \Map\left(\bar Z(k)_p,Y \right)
\end{align*}
We also have
\[
\colim \Map\left( \bar X,Y \right) \simeq \colim_{k \in \N} \lim_{[p] \in \Delta} \Map\left( \bar Z(k)_p, Y \right)
\]
It thus suffices to prove that the canonical morphism of simplicial sets
\[
\colim_{k \in \N} \lim_{[p] \in \Delta} \Map\left( \bar Z(k)_p, Y \right) \to \lim_{[p] \in \Delta} \colim_{k \in \N} \Map\left(\bar Z(k)_p,Y \right)
\]
is an equivalence. Let us notice that each $\bar Z(k)_p$ is $m$-truncated. It is indeed a fibre product of $m$-truncated derived stacks along flat maps.
Let $q$ be an integer such that $Y$ is $q$-Artin. The simplicial set $\Map(\bar Z(k)_p,Y)$ is then $(m + q)$-coconnective (\autoref{coconnective}).
It follows from \autoref{finite-groupoids} that the limit at hand is in fact finite and we have the required equivalence.
\end{proof}

\begin{lem}\label{exact-seq}
Let $\bar M \colon \N\op \to \sSets$ be a diagram. For any $i \in \N$ and any point $x = (x_n) \in \lim \bar M$, we have the following exact sequence
\[
\mymatrix{
0 \ar[r] & \displaystyle {\lim_n}^1 \pi_{i+1}(\bar M(n),x_n) \ar[r] & \displaystyle \pi_i\left(\lim_n \bar M(n), x\right) \ar[r] & \displaystyle \lim_n \pi_i(\bar M(n), x_n) \ar[r] & 0
}
\]
\end{lem}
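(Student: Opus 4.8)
The statement is the classical Milnor $\lim^1$ exact sequence for the homotopy groups of a homotopy limit of a tower of spaces, and indeed it is exactly the kind of result recorded in \cite[Chapter IX]{bousfieldkan:holim}, which already underlies \autoref{finite-groupoids}. My plan is to reduce to a tower of Kan fibrations and then read the sequence off from the long exact sequence of a single auxiliary fibration.

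First I would replace $\bar M \colon \N\op \to \sSets$ by a levelwise weakly equivalent diagram in which each $\bar M(n)$ is a Kan complex and each transition map $\bar M(n+1) \to \bar M(n)$ is a Kan fibration. This is a standard fibrant replacement in the tower of fibrations, it does not change the point $x$ nor the homotopy type of the limit, and crucially it identifies $\lim_n \bar M(n)$ (the homotopy limit, computed in $\sSets$) with the strict $1$-categorical limit of the tower. From now on all $\pi_i(\bar M(n), x_n)$ may be computed naively.

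Next I would realise this strict limit as the fiber of the difference map
\[
d \colon \prod_n \bar M(n) \to \prod_n \bar M(n)
\]
sending a family $(x_n)$ to the family comparing $x_n$ with the image of $x_{n+1}$ under the transition map; because the transition maps are fibrations, $d$ is a fibration, and its fiber over the constant basepoint determined by $x$ is precisely $\lim_n \bar M(n)$. I would then write the long exact sequence of this fibration. Using $\pi_i\left(\prod_n \bar M(n), x\right) \simeq \prod_n \pi_i(\bar M(n), x_n)$ and the fact that $d$ induces on homotopy groups the map $\id - \sigma$, where $\sigma$ is the shift induced by the transition maps, the relevant portion reads
\[
\prod_n \pi_{i+1}(\bar M(n),x_n) \xrightarrow{\id - \sigma} \prod_n \pi_{i+1}(\bar M(n),x_n) \to \pi_i\left(\lim_n \bar M(n), x\right) \to \prod_n \pi_i(\bar M(n),x_n) \xrightarrow{\id - \sigma} \prod_n \pi_i(\bar M(n),x_n)
\]
By definition the kernel of $\id - \sigma$ is $\lim_n \pi_i(\bar M(n),x_n)$ and its cokernel is ${\lim_n}^1 \pi_{i+1}(\bar M(n),x_n)$; feeding these into the exact sequence gives surjectivity onto $\lim_n \pi_i$ with kernel $\operatorname{coker}(\id-\sigma$ in degree $i+1) = {\lim_n}^1 \pi_{i+1}$, which is exactly the asserted short exact sequence.

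The main obstacle is the non-abelian and pointed-set bookkeeping at low degrees: for $i = 0, 1$ the homotopy ``groups'' are only pointed sets or nonabelian groups, so the maps $\id - \sigma$, their kernels and cokernels, and the notion of exactness must all be interpreted in the pointed-set sense (with ${\lim}^1$ of a tower of nonabelian groups a pointed set acting on the middle term), and the dependence of the whole sequence on the chosen basepoint $x = (x_n)$ must be tracked throughout. This is precisely the delicate part carried out in \cite{bousfieldkan:holim}, and I would either adapt that bookkeeping or simply cite it for the exactness at the base-set level, the fibration-sequence argument above supplying the structure in all higher degrees.
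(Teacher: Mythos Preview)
Your proposal is correct and is essentially the standard argument; the paper itself does not prove this lemma at all but simply records that ``A proof of that lemma can be found for instance in \cite{hirschhorn:limfibrations}.'' What you have sketched---fibrant replacement to a tower of Kan fibrations, realising the limit as the fibre of the shift-difference map on the product, and reading off $\lim$ and ${\lim}^1$ as kernel and cokernel of $\id-\sigma$---is precisely the classical proof one finds in such references, with the appropriate caution about the pointed-set and nonabelian subtleties at $i=0,1$. One small bibliographic quibble: the paper's \autoref{finite-groupoids} is attributed to Chapter~XI of Bousfield--Kan, not Chapter~IX as you wrote, though this does not affect the mathematics.
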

A proof of that lemma can be found for instance in \cite{hirschhorn:limfibrations}.

\begin{lem}\label{colim-lim-swap}
Let $M \colon \N\op \times K \to \sSets$ denote a diagram, where $K$ is a filtered simplicial set. If for any $i \in \N$ there exists $N_i$ such that for any $n \geq N_i$ and any $k \in K$ the induced morphism $M(n,k) \to M(n-1,k)$ is an $i$-equivalence then the canonical map
\[
\phi \colon \colim_{k \in K} \lim_{n \in \N} M(n,k) \to \lim_{n \in \N} \colim_{k \in K} M(n,k)
\]
is an equivalence. We recall that an $i$-equivalence of simplicial sets is a morphism which induces isomorphisms on the homotopy groups of dimension lower or equal to $i$.
\end{lem}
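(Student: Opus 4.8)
The plan is to show that $\phi$ is a weak homotopy equivalence, i.e.\ that it induces a bijection on $\pi_0$ and an isomorphism on every $\pi_i$ at every basepoint. First I would record the two computational inputs. On the colimit side, filtered colimits of simplicial sets preserve homotopy groups, so $\pi_i(\colim_{k \in K} X_k) \simeq \colim_{k \in K} \pi_i(X_k)$ for any filtered diagram $X$, and filtered colimits preserve isomorphisms. On the limit side, for any $\N\op$-tower the homotopy groups of the limit are governed by the Milnor sequence of \autoref{exact-seq}, relating $\pi_i(\lim_n)$ to $\lim_n \pi_i$ and ${\lim_n}^1 \pi_{i+1}$.

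Fix $i$ and set $N = \max(N_i, N_{i+1})$. The crucial point is that this bound is uniform in $k$: by hypothesis, for $n \geq N$ the map $M(n,k) \to M(n-1,k)$ is an $(i+1)$-equivalence for every $k$, so for $j \leq i+1$ the transition maps of the tower $n \mapsto \pi_j(M(n,k))$ are isomorphisms once $n \geq N$, simultaneously for all $k$. An eventually constant tower is Mittag--Leffler, so ${\lim_n}^1 \pi_j(M(n,k)) = 0$ and $\lim_n \pi_j(M(n,k)) \simeq \pi_j(M(N,k))$. Since $\pi_j(\colim_k M(n,k)) \simeq \colim_k \pi_j(M(n,k))$ and filtered colimits preserve isomorphisms, the tower $n \mapsto \pi_j(\colim_k M(n,k))$ is eventually constant with the same bound $N$, so its ${\lim}^1$ vanishes as well.

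I would then compute both sides on $\pi_i$. Applying \autoref{exact-seq} to $n \mapsto M(n,k)$ and using ${\lim_n}^1 \pi_{i+1} = 0$ gives $\pi_i(\lim_n M(n,k)) \simeq \lim_n \pi_i(M(n,k)) \simeq \pi_i(M(N,k))$; taking the filtered colimit in $k$ identifies the source of $\phi$ with $\colim_k \pi_i(M(N,k))$. Applying \autoref{exact-seq} instead to $n \mapsto \colim_k M(n,k)$ gives $\pi_i(\lim_n \colim_k M(n,k)) \simeq \lim_n \colim_k \pi_i(M(n,k))$, which by the eventual constancy in $n$ (again with the uniform bound $N$) equals $\colim_k \pi_i(M(N,k))$ too. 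By naturality of the Milnor sequence and of the colimit-commutation isomorphisms, the map induced by $\phi$ on $\pi_i$ is exactly this common identification, hence an isomorphism. The main obstacle is bookkeeping rather than conceptual: one must carry basepoints through $\lim_n$ and handle the low-degree regime, where for $\pi_0$ and $\pi_1$ the term ${\lim}^1$ is a pointed set or a quotient of a possibly non-abelian group rather than an abelian group. In all these cases the uniform eventual constancy of the towers makes the Mittag--Leffler condition hold on the nose, so the ${\lim}^1$ obstruction still vanishes and the identification goes through uniformly across $i$ and all basepoints.
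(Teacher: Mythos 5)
Your proposal is correct and follows essentially the same route as the paper: both arguments combine the Milnor sequence of \autoref{exact-seq} with the fact that filtered colimits commute with homotopy groups, and both exploit that the uniform bound $N_i$ (independent of $k$) makes the towers $n \mapsto \pi_j(M(n,k))$ eventually constant, killing the ${\lim}^1$ terms and identifying $\lim_n\pi_j$ with the stable value on both sides. The paper packages the conclusion as a map of short exact sequences whose outer vertical arrows are isomorphisms (hence so is the middle one), whereas you identify both sides directly with $\colim_k \pi_i(M(N,k))$ — a cosmetic difference only.
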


\begin{proof}
We can assume that $K$ admits an initial object $k_0$.
Let us write $M_{nk}$ instead of $M(n,k)$. Let us fix $i \in \N$. If $i \geq 1$, we also fix a base point $x \in \lim_n M_{nk_0}$. Every homotopy group below is computed at $x$ or at the natural point induced by $x$. We will omit the reference to the base point. We have a morphism of short exact sequences
\[
\mymatrix{
0 \ar[r] &
\displaystyle \colim_k {\lim_{n}}^1 \pi_{i+1}(M_{nk}) \ar[r] \ar[d] &
\displaystyle \colim_k \pi_{i}\left(\lim_{n} M_{nk}\right) \ar[r] \ar[d] &
\displaystyle \colim_k \lim_{n} \pi_i(M_{nk}) \ar[d] \ar[r] & 0 \\
0 \ar[r] &
\displaystyle {\lim_{n}}^1 \colim_k \pi_{i+1}(M_{nk}) \ar[r] &
\displaystyle \pi_{i}\left(\lim_{n} \colim_k M_{nk}\right) \ar[r] &
\displaystyle \lim_{n} \colim_k \pi_i(M_{nk}) \ar[r] & 0 \\
}
\]
We can restrict every limit to $n \geq N_{i+1}$. Using the assumption we see that the limits on the right hand side are then constant and so are the $1$-limits on the left.  If follows that the vertical maps on the sides are isomorphisms, and so is the middle map.
This begin true for any $i$, we conclude that $\phi$ is an equivalence.
\end{proof}

\begin{df}\label{shy}
Let $\bar X \colon \N\op \to \dSt_S$ be a diagram. We say that $\bar X$ is a shy diagram if
\begin{enumerate}
\item For any $k \in \N$ the stack $\bar X(k)$ is algebraic and of finite presentation;
\item For any $k \in \N$ the map $\bar X(k \to k+1) \colon \bar X(k+1) \to \bar X(k)$ is affine;
\item The stack $\bar X(0)$ is of finite cohomological dimension.
\end{enumerate}
If $X$ is the limit of $\bar X$ in the category of prostacks, we will also say that $\bar X$ is a shy diagram for $X$.
\end{df}

\begin{prop}\label{prop-cocompact}
Let $\bar X \colon \N\op \to \dSt_S$ be a shy diagram.
If $Y$ is an algebraic derived stack of finite presentation then the canonical morphism
\[
\colim \Map\left( \bar X,Y\right) \to \Map\left(\lim \bar X, Y \right)
\]
is an equivalence.
\end{prop}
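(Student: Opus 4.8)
The plan is to reduce to the truncated statement \autoref{truncatedcompact} by passing to Postnikov truncations, and to control the resulting double limit with \autoref{colim-lim-swap}. Write $X_k = \bar X(k)$ and $X = \lim_k X_k$. First I would record that the diagram has uniformly bounded Artin degree and uniformly bounded cohomological dimension: by \autoref{shy} the stack $X_0$ is $n$-Artin of finite cohomological dimension $d$ for some $n,d$, and since each transition map $X_{k+1}\to X_k$ is affine, an easy induction (affine maps are $0$-representable and their pushforward is $t$-exact) shows every $X_k$ is $n$-Artin and of cohomological dimension $\le d$. Next, fixing a smooth atlas $u_0\colon U_0 \to X_0$ and setting $U_k = U_0 \times_{X_0} X_k$, I obtain a tower of smooth atlases $u_k\colon U_k \to X_k$ with affine transition maps, and $\lim_k u_k$ is the base change of $u_0$ along $X \to X_0$, hence again an epimorphism. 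This furnishes the diagram $\bar u$ demanded by the atlas hypothesis of \autoref{truncatedcompact}, both for $\bar X$ and, after applying $\tau_{\le m}$, for each of its truncations.

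By \autoref{coconnective}~\emph{(i)} applied to each $X_k$ and to $X$, there are equivalences $\Map(X_k,Y)\simeq \lim_m \Map(\tau_{\le m}X_k, Y)$ and $\Map(X,Y)\simeq\lim_m\Map(\tau_{\le m}X,Y)$. For a fixed $m$, the truncated diagram $(\tau_{\le m}X_k)_k$ is $n$-Artin, $m$-truncated and finitely presented (the truncation of a finitely presented Artin stack remains so), and the truncations $\tau_{\le m}u_k$ assemble into an admissible $\bar u$; hence \autoref{truncatedcompact} gives
\[
\colim_k \Map(\tau_{\le m}X_k,Y) \to^\sim \Map\bigl(\lim_k \tau_{\le m}X_k,\,Y\bigr).
\]
Since restriction to $\cdga_k^{[-m,0]}$ commutes with the cofiltered limit and $m$-truncated stacks are stable under limits, a short check with the defining adjunction of $\tau_{\le m}$ identifies the right-hand side with $\Map(\tau_{\le m}X,Y)$. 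Thus for every $m$ the natural map $\colim_k\Map(\tau_{\le m}X_k,Y)\to\Map(\tau_{\le m}X,Y)$ is an equivalence.

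It remains to interchange $\colim_k$ with $\lim_m$. I would apply \autoref{colim-lim-swap} to $M(m,k) = \Map(\tau_{\le m}X_k, Y)$, whose $m$-direction is an $\N\op$-tower and whose $k$-direction is filtered, so the task is to produce, for each $i$, an integer $N_i$ with $M(m,k) \to M(m-1,k)$ an $i$-equivalence for all $m \ge N_i$ and all $k$. The homotopy fibre of this map is governed by the obstruction theory of \autoref{obstruction}: the passage $\tau_{\le m-1}X_k \to \tau_{\le m}X_k$ is a square-zero extension by a module placed in a single cohomological degree of order $-m$, and the fibre is controlled by $\Map(x^*\Lcot_Y,-)$ evaluated on that module. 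Because $Y$ is finitely presented, $\Lcot_Y$ has bounded amplitude depending only on $Y$, and because all $X_k$ have cohomological dimension $\le d$, this mapping space is $(m-c)$-connected for a constant $c$ independent of $k$; hence $N_i = i+c$ works uniformly in $k$ and \autoref{colim-lim-swap} applies. Chaining the three displays then yields
\[
\colim_k \Map(X_k,Y) \simeq \lim_m \colim_k \Map(\tau_{\le m}X_k,Y) \simeq \lim_m \Map(\tau_{\le m}X,Y) \simeq \Map(X,Y).
\]

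The main obstacle is the uniform connectivity estimate of the previous paragraph: this is the only place where all three hypotheses defining a shy diagram are genuinely used. Finite presentation of $Y$ bounds the amplitude of $\Lcot_Y$, while affineness of the transition maps together with finite cohomological dimension of $\bar X(0)$ is exactly what makes the dimension bound $d$, and hence the constant $c$, independent of $k$ (the Milnor-type control of \autoref{exact-seq} entering through the proof of \autoref{colim-lim-swap}). Without this uniformity the interchange of $\colim_k$ and $\lim_m$ would fail, so securing it is the heart of the argument; everything else is the formal bookkeeping of truncations and the invocation of \autoref{truncatedcompact}.
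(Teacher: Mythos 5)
Your proposal follows essentially the same route as the paper's own proof: reduce to the truncated case via Postnikov towers and \autoref{coconnective}, apply \autoref{truncatedcompact} levelwise, and justify the interchange of $\colim_k$ with $\lim_m$ via \autoref{colim-lim-swap} using the obstruction-theoretic connectivity estimate $s = a + n - d$, uniform in $k$ thanks to the affineness of the transition maps and the finite cohomological dimension of $\bar X(0)$. Your explicit construction of the atlas tower $U_k = U_0 \times_{X_0} X_k$ fills in a hypothesis of \autoref{truncatedcompact} that the paper leaves implicit in the phrase ``$\tau_{\leq n}$ preserves shy diagrams,'' but the argument is otherwise identical.
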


\begin{proof}
Since for any $n$, the truncation functor $\tau_{\leq n}$ preserves shy diagrams, let us use \autoref{coconnective} and \autoref{truncatedcompact}
\begin{align*}
\Map(\lim \bar X,Y) \simeq & \lim_n \Map(\tau_{\leq n}(\lim \bar X),Y)\\ & \simeq \lim_n \Map(\lim \tau_{\leq n}\bar X,Y) \simeq \lim_n \colim \Map(\tau_{\leq n} \bar X,Y)
\end{align*}
On the other hand we have 
\[
\colim \Map(\bar X,Y) \simeq \colim \lim_n \Map(\tau_{\leq n} \bar X,Y)
\]
and we are to study the canonical map 
\[
\phi \colon \colim \lim_n \Map(\tau_{\leq n} \bar X,Y) \to \lim_n \colim \Map(\tau_{\leq n} \bar X,Y)
\]
Because of \autoref{colim-lim-swap}, it suffices to prove the assertion
\begin{enumerate}[label=(\arabic*)]
\item For any $i \in \N$ there exists $N_i \in \N$ such that for any $n \geq N_i$ and any $k \in \N$ the map
\[
p_{n,k} \colon \Map\left( \tau_{\leq n} \bar X(k), Y \right) \to \Map\left( \tau_{\leq n -1} \bar X(k), Y \right)
\]
induces an equivalence on the $\pi_j$'s for any $j \leq i$.
\end{enumerate}
For any map $f \colon \tau_{\leq n-1} \bar X(k) \to Y$ we will denote by $F_{n,k}(f)$ the fibre of $p_{n,k}$ at $f$. We have to prove that for any such $f$ the simplicial set $F_{n,k}(f)$ is $i$-connective.
Let thus $f$ be one of those maps. The derived stack $\tau_{\leq n} \bar X(k)$ is a square zero extension of $\tau_{\leq n-1} \bar X(k)$ by a module $M[n]$, where
\[
M = \ker \left(\Oo_{\tau_{\leq n} \bar X(k)} \to \Oo_{\tau_{\leq n-1} \bar X(k)} \right) [-n]
\]
Note that $M$ is concentrated in degree $0$. It follows from the obstruction theory of $Y$ --see \autoref{obstruction} -- that $F_{n,k}(f)$ is not empty if and only if the obstruction class 

\[
\alpha(f) \in G_{n,k}(f) = \Map_{\Oo_{\tau_{\leq n-1} \bar X(k)}}(f^* \Lcot_Y, M[n+1])
\]
of $f$ vanishes.
Moreover, if $\alpha(f)$ vanishes, then we have an equivalence
\[
F_{n,k}(f) \simeq \Map_{\Oo_{\tau_{\leq n-1} \bar X(k)}}(f^* \Lcot_Y, M[n])
\]
Using assumptions \emph{(iii)} and \emph{(ii)} we have that $\bar X(k)$ --- and therefore its truncation too --- is of finite cohomological dimension $d$.
Let us denote by $[a,b]$ the Tor-amplitude of $\Lcot_Y$.
We get that $G_{n,k}(f)$ is $(s+1)$-connective for $s = a + n - d$ and that $F_{n,k}(f)$ is $s$-connective if $\alpha(f)$ vanishes.

Let us remark here that $d$ and $a$ do not depend on either $k$ or $f$ and thus neither does $N_i = i + d - a$ (we set $N_i = 0$ if this quantity is negative). For any $n \geq N_i$ and any $f$ as above, the simplicial set $G_{n,k}(f)$ is at least $1$-connective. The obstruction class $\alpha(f)$ therefore vanishes and $F_{n,k}(f)$ is indeed $i$-connective. This proves (1) and concludes this proof.
\end{proof}

\newcommand{\shy}{\mathbf P\dSt^{\mathrm{shy}}}
\begin{df}\label{prochamps}
Let $\shy_S$ denote the full subcategory of $\Prou U\dSt_S$ spanned by the prostacks which admit shy diagrams.
Every object $X$ in $\shy_S$ is thus the limit of a shy diagram $\bar X \colon \N\op \to \dSt_S$.

We will say that $X$ is of cotangent tor-amplitude in $[a,b]$ if there exists a shy diagram $\bar X \colon \N\op \to \dSt_S$ for $X$ such that every cotangent $\Lcot_{\bar X(n)}$ is of tor-amplitude in $[a,b]$. We will also say that $X$ is of cohomological dimension at most $d$ if there is a shy diagram $\bar X$ with values in derived stacks of cohomological dimension at most $d$.
The pro-stack $X$ will be called $q$-Artin if there is a shy diagram for it, with values in $q$-Artin derived stacks.
Let us denote by $\Cc^{[a,b]}_{d,q}$ the full subcategory of $\shy_S$ spanned by objects of cotangent tor-amplitude in $[a,b]$, of cohomological dimension at most $d$ and $q$-Artin.
\end{df}

\begin{thm}
The limit functor $i_\mathrm{shy} \colon \shy_S \to \dSt_S$ is fully faithful and has values in Artin stacks.
\end{thm}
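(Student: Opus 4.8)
The plan is to prove the two assertions — that $i_\mathrm{shy}$ lands in Artin stacks and that it is fully faithful — separately. Since $\shy_S$ is by construction a full subcategory of $\Prou U \dSt_S$, the functor $i_\mathrm{shy}$ is just the restriction of the honest limit functor $\Prou U \dSt_S \to \dSt_S$, so I need not worry about dependence on a chosen shy diagram: I may pick one shy diagram $\bar X \colon \N\op \to \dSt_S$ per object and argue with it.

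For the \emph{values in Artin stacks} part, I would exploit only the affineness of the transition maps. Writing $X_k = \bar X(k)$, \autoref{shy} guarantees that each $X_{k+1} \to X_k$ is affine, hence each structural map $X_k \to X_0$ is affine and $X_0$ is algebraic. Because limits commute with the fibre products computing base change, for any $\Spec A \to X_0$ one has $\left(\lim_k X_k\right) \times_{X_0} \Spec A \simeq \lim_k \left( X_k \times_{X_0} \Spec A \right)$, a cofiltered limit of derived affine schemes, which is again affine (it is the relative spectrum of the colimit of the associated cdga's). Thus the base change of $\lim_k X_k \to X_0$ over any affine chart is affine, so by smooth descent along an atlas of $X_0$ the map $\lim_k X_k \to X_0$ is affine; as an affine morphism to an Artin stack has Artin source, $\lim_k X_k$ is Artin.

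For \emph{full faithfulness}, I would reduce directly to \autoref{prop-cocompact}. Given $X,Y \in \shy_S$ with shy diagrams indexed by $i,j \in \N\op$, the standard pro-object mapping formula (dual to \autoref{indu-thm}) reads $\Map_{\Prou U \dSt_S}(X,Y) \simeq \lim_j \colim_i \Map_{\dSt_S}(X_i, Y_j)$, whereas mapping into the actual limit gives $\Map_{\dSt_S}(\lim \bar X, \lim \bar Y) \simeq \lim_j \Map_{\dSt_S}(\lim \bar X, Y_j)$. Under these identifications the map induced by $i_\mathrm{shy}$ is, level-wise in $j$, the canonical comparison $\colim_i \Map_{\dSt_S}(X_i, Y_j) \to \Map_{\dSt_S}(\lim \bar X, Y_j)$. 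Since $\bar X$ is shy and, by \autoref{shy}, each $Y_j$ is an algebraic derived stack of finite presentation, \autoref{prop-cocompact} says exactly that this is an equivalence for every $j$; taking the limit over $j$ then yields full faithfulness.

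The substantive content is light once \autoref{prop-cocompact} is in hand, so the \emph{main obstacle} is bookkeeping rather than mathematics: I must be sure the two computations of the mapping spaces are identified by the canonical map induced by $i_\mathrm{shy}$ (so that \autoref{prop-cocompact} is applied to the correct comparison), and, for the first part, that the actual limit in $\dSt_S$ genuinely agrees with the relative spectrum over $X_0$ — i.e. that affine morphisms are stable under these $\N\op$-limits in the derived setting and that relative affineness over an Artin base is preserved. These are the points I would check most carefully.
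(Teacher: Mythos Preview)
Your proposal is correct and takes essentially the same approach as the paper: the paper's proof is the single line ``This follows directly from \autoref{prop-cocompact}'', and your full-faithfulness argument is precisely the unpacking of that reduction. Your treatment of the ``values in Artin stacks'' clause --- via affineness of the transition maps and affineness of $\lim_k X_k \to X_0$ --- is a correct elaboration of a point the paper leaves implicit.
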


\begin{proof}
This follows directly from \autoref{prop-cocompact}.
\end{proof}

\begin{df}
A map of pro-stacks $f \colon X \to Y$ if an open immersion if there exists a diagram
\[
\bar f \colon \N\op \times \Delta^1 \to \dSt_k
\]
such that 
\begin{itemize}
\item The limit of $\bar f$ in maps of pro-stacks is $f$;
\item The restriction $\N\op \times \{0\} \to \dSt_k$ of $\bar f$ is a shy diagram for $X$ and the restriction $\N\op \times \{1\} \to \dSt_k$ is a shy diagram for $Y$;
\item For any $n$, the induced map of stacks $\{n\} \times \Delta^1 \to \dSt_k$ is an open immersion.
\end{itemize}
\end{df}

\subsection{Uniqueness of ind-pro-structures}
\label{unique-ipstructure}

\begin{df}\label{indprochamps}
Let $\shybounded_S$ denote the full subcategory of $\Indu U(\shy_S)$ spanned by colimits of $\mathbb U$-small filtered diagrams $K \to \shy_S$  which factors through $\Cc^{[a,b]}_{d,q}$ for some 4-uplet $a,b,d,q$.
For any $X \in \shybounded_S$ we will say that $X$ is of cotangent tor-amplitude in $[a,b]$ and of cohomological dimension at most $d$ if it is the colimit (in $\Indu U(\shy_S)$) of a diagram $K \to \Cc^{[a,b]}_{d,q}$.
\end{df}

\begin{thm}\label{ff-realisation}
The colimit functor $\Indu U(\shy_S) \to \dSt_S$ restricts to a full faithful embedding $\shybounded_S \to \dSt_S$.
\end{thm}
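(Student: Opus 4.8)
The plan is to prove fully faithfulness by computing both sides of the comparison on mapping spaces and reducing everything to a single \emph{relative compactness} statement, which the boundedness built into \autoref{indprochamps} is precisely designed to supply. Write two objects of $\shybounded_S$ as filtered colimits $X = \colim_\alpha X_\alpha$ and $Y = \colim_\beta Y_\beta$ in $\Indu U(\shy_S)$, where both diagrams factor through a single $\Cc^{[a,b]}_{d,q}$. Denote by $\lvert - \rvert \colon \Indu U(\shy_S) \to \dSt_S$ the colimit (realisation) functor; it extends the limit functor $i_{\mathrm{shy}}$ along $\shy_S \hookrightarrow \Indu U(\shy_S)$. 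Using the universal property of $\Indu U$ together with the full faithfulness of $i_{\mathrm{shy}} \colon \shy_S \to \dSt_S$ established above, one has
\[
\Map_{\Indu U(\shy_S)}(X,Y) \simeq \lim_\alpha \colim_\beta \Map_{\dSt_S}(\lvert X_\alpha \rvert, \lvert Y_\beta \rvert),
\]
while on the target
\[
\Map_{\dSt_S}(\lvert X \rvert, \lvert Y \rvert) \simeq \lim_\alpha \Map_{\dSt_S}\Big(\lvert X_\alpha \rvert, \colim_\beta \lvert Y_\beta \rvert\Big).
\]
Since the outer limit over $\alpha$ is common to both, it suffices to show, for each fixed $\alpha$, that the realised pro-stack $\lvert X_\alpha \rvert$ is \emph{compact relative to the filtered system} $\{\lvert Y_\beta \rvert\}$, i.e. that the canonical map
\[
\colim_\beta \Map_{\dSt_S}(\lvert X_\alpha \rvert, \lvert Y_\beta \rvert) \to \Map_{\dSt_S}\Big(\lvert X_\alpha \rvert, \colim_\beta \lvert Y_\beta \rvert\Big)
\]
is an equivalence.

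To establish this I would first unfold every pro-structure and Postnikov tower. Fix shy diagrams $\bar X_\alpha, \bar Y_\beta \colon \N\op \to \dSt_S$. Each stage $\bar Y_\beta(m)$ is algebraic of finite presentation, so \autoref{prop-cocompact} applied to $\lvert X_\alpha \rvert = \lim_n \bar X_\alpha(n)$ yields $\Map(\lvert X_\alpha \rvert, \bar Y_\beta(m)) \simeq \colim_n \Map(\bar X_\alpha(n), \bar Y_\beta(m))$, whence
\[
\Map(\lvert X_\alpha \rvert, \lvert Y_\beta \rvert) \simeq \lim_m \colim_n \Map(\bar X_\alpha(n), \bar Y_\beta(m)).
\]
The remaining task is then to commute the filtered colimit over $\beta$ past the cofiltered pro-limit in $m$, and past the Postnikov limit produced by \autoref{coconnective} and \autoref{truncatedcompact}; once that interchange is licensed, both sides get rewritten as the same $\lim$-$\colim$ of mapping spaces between the finitely presented stages, and the comparison map becomes an identity.

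The engine for the interchange is \autoref{colim-lim-swap}: it swaps a filtered colimit with an $\N\op$-indexed limit provided the transition maps of the tower become $i$-equivalences beyond an index $N_i$ that is \emph{independent of the colimit variable}. Here the relevant transition maps are those induced by the affine maps $\bar Y_\beta(m) \to \bar Y_\beta(m-1)$ of the shy diagrams (and by the Postnikov maps), and their connectivity is controlled, through the obstruction theory of \autoref{obstruction} exactly as in the proof of \autoref{prop-cocompact}, by the cotangent tor-amplitude of the fibres and the cohomological dimension of the source. The hard part — and what I expect to be the main obstacle — is precisely the \emph{uniformity} of the resulting bound $N_i$ in both $\alpha$ and $\beta$: this is where the hypothesis that all $X_\alpha$ and $Y_\beta$ lie in one fixed $\Cc^{[a,b]}_{d,q}$ is indispensable, since it fixes the amplitude $[a,b]$ and the cohomological dimension $d$ once and for all. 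Without this boundedness the connectivity estimates would degrade with the index $\beta$ and \autoref{colim-lim-swap} could not be applied; with it, the interchange goes through for every $\alpha$, the relative compactness statement holds, and fully faithfulness of $\shybounded_S \to \dSt_S$ follows.
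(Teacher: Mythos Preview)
Your reduction to a ``relative compactness'' statement for each fixed $\alpha$ is the right first move, and the use of \autoref{prop-cocompact} to unfold the pro-structure of the source $\lvert X_\alpha \rvert$ is fine. The genuine gap is in the interchange you propose next.

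You want to commute $\colim_\beta$ past the cofiltered limit $\lim_m$ coming from the shy diagram of each $Y_\beta$, invoking \autoref{colim-lim-swap} with a connectivity estimate for the transition maps $\bar Y_\beta(m) \to \bar Y_\beta(m-1)$. But those are just affine maps between finitely presented Artin stacks; nothing about them forces $\Map(\lvert X_\alpha\rvert, \bar Y_\beta(m)) \to \Map(\lvert X_\alpha\rvert, \bar Y_\beta(m-1))$ to become an $i$-equivalence for large $m$. Take $\bar Y_\beta(m) = \A^m$ with the standard projections: the induced map on points is the linear projection $k^m \to k^{m-1}$, which is never even a $0$-equivalence. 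The obstruction-theoretic estimate in the proof of \autoref{prop-cocompact} controls Postnikov transitions $\tau_{\leq n} \to \tau_{\leq n-1}$ (where the relative cotangent is a single module in degree $-n$), not the transitions of a shy diagram. The bounds $[a,b]$, $d$, $q$ give you nothing here.

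The paper's proof never attempts this interchange. Instead it treats each $\lvert Y_\beta \rvert$ as a single Artin stack and runs an induction on the Artin degree $q$ of the \emph{source} $\lvert X_\alpha\rvert$, working in $\presh(\dAff_S)$ rather than $\dSt_S$. The base case (affine source) is then free because prestack colimits are pointwise; the inductive step resolves $\lvert X_\alpha\rvert$ by the nerve $Z_\bullet$ of an atlas and needs to commute $\colim_\beta$ only past the Postnikov limit (handled by \autoref{iequi} and \autoref{colim-lim-swap}, where the uniform bound genuinely comes from $a$, $d$) and past the simplicial limit $\lim_{[p]\in\Delta}$ (handled by \autoref{finite-groupoids}, using that the mapping spaces are uniformly truncated since all $\lvert Y_\beta\rvert$ are $q$-Artin). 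Working in prestacks also forces a second step you omit entirely: one must check that the prestack colimit $\colim_\beta \lvert Y_\beta\rvert$ is already a stack, which the paper does by the same Postnikov/simplicial interchange argument applied to an \'etale hypercover.
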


\begin{lem}\label{iequi}
Let $a,b,d,q$ be integers with $a \leq b$. Let $T \in \shy_S$ and $\bar X \colon K \to \Cc^{[a,b]}_{d,q}$ be a $\mathbb U$-small filtered diagram. For any $i \in \N$ there exists $N_i$ such that for any $n \geq N_i$ and any $k \in K$, the induced map
\[
\Map(\tau_{\leq n}T, \bar X(k)) \to \Map(\tau_{\leq n-1} T, \bar X(k))
\]
is an $i$-equivalence. We recall that an $i$-equivalence of simplicial sets is a morphism which induces isomorphisms on the homotopy groups of dimension lower or equal to $i$.
\end{lem}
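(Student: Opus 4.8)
The plan is to mirror closely the proof of \autoref{prop-cocompact}, replacing the fixed finitely presented target $Y$ by the varying $\bar X(k)$ and making sure that the connectivity estimates can be taken \emph{uniform in $k$}. First I would realise $T$ through the fully faithful limit functor, so that it becomes a derived Artin stack. Since $T \in \shy_S$ admits a shy diagram with affine transition maps and with $0$-th term of finite cohomological dimension, the same computation as in the example following the definition of $\Oo_X$ (using $(p_0)_*^\IQ \Oo_T \simeq \colim f_*\Oo$) shows that $T$, and each of its truncations $\tau_{\leq n}T$, is of finite cohomological dimension, say $\leq d_T$, a bound independent of $n$ (as in \autoref{prop-cocompact}). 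Next, choosing for each $k$ a shy diagram $\bar X(k) = \lim_m \bar X(k)_m$ with values in $\Cc^{[a,b]}_{d,q}$, each $\bar X(k)_m$ is a finitely presented derived Artin stack whose cotangent complex has tor-amplitude in $[a,b]$, with the lower bound $a$ independent of both $m$ and $k$; moreover, since $\Map(\tau_{\leq n}T,-)$ preserves limits, we have $\Map(\tau_{\leq n}T, \bar X(k)) \simeq \lim_m \Map(\tau_{\leq n}T, \bar X(k)_m)$.

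Fixing $m$, I would then analyse the map $p_{n,k,m} \colon \Map(\tau_{\leq n}T, \bar X(k)_m) \to \Map(\tau_{\leq n-1}T, \bar X(k)_m)$ by obstruction theory. Write $\tau_{\leq n}T$ as the twisted square zero extension $(\tau_{\leq n-1}T)_d[M_n[n]]$, where $M_n$ is concentrated in degree $0$. Applying \autoref{obstruction} to the algebraic morphism $\bar X(k)_m \to S$, for a point $x \colon \tau_{\leq n-1}T \to \bar X(k)_m$ the fibre of $p_{n,k,m}$ over $x$ is empty unless an obstruction class in
\[
G_{n,k,m}(x) = \Map_{\Oo_{\tau_{\leq n-1}T}}(x^*\Lcot_{\bar X(k)_m/S}, M_n[n+1])
\]
vanishes, in which case it is equivalent to $\Map_{\Oo_{\tau_{\leq n-1}T}}(x^*\Lcot_{\bar X(k)_m/S}, M_n[n])$. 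As $x^*\Lcot_{\bar X(k)_m/S}$ has tor-amplitude in $[a,b]$ and $\tau_{\leq n-1}T$ is of cohomological dimension $\leq d_T$, the space $G_{n,k,m}(x)$ is $(a+n+1-d_T)$-connective and the fibre, when non-empty, is $(a+n-d_T)$-connective, bounds which are uniform in $m$, $k$ and $x$. In particular, once $n \geq d_T - a$ the space $G_{n,k,m}(x)$ is connected, the obstruction vanishes automatically, and $p_{n,k,m}$ is an $i$-equivalence as soon as $a+n-d_T \geq i+1$.

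Finally I would pass to the limit over $m$. The fibre of $\lim_m p_{n,k,m}$ over a compatible family $(x_m)$ is $\lim_m F_{n,k,m}(x_m)$, and feeding the uniform $(a+n-d_T)$-connectivity of the $F_{n,k,m}(x_m)$ into the $\lim$--$\lim^1$ exact sequence of \autoref{exact-seq} shows this limit is $(a+n-d_T-1)$-connective. Hence setting $N_i = i + d_T - a + 2$ (and $N_i = 0$ if this quantity is negative), for every $n \geq N_i$ and every $k \in K$ the map $\Map(\tau_{\leq n}T, \bar X(k)) \to \Map(\tau_{\leq n-1}T, \bar X(k))$ induces isomorphisms on $\pi_j$ for all $j \leq i$, i.e. is an $i$-equivalence, which is the assertion.

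\textbf{The main obstacle} is producing a threshold $N_i$ genuinely independent of $k$: this is exactly what the restriction to $\Cc^{[a,b]}_{d,q}$ buys, since it fixes the lower tor-amplitude bound $a$ of the cotangent complexes of the targets once and for all, while the cohomological dimension entering the estimate is read off the \emph{fixed} source $T$. The secondary technical point is that mapping into the pro-stack $\bar X(k)$ (rather than a single stack) forces the limit over $m$, which erodes the connectivity by one degree; this is harmless and is absorbed into the definition of $N_i$.
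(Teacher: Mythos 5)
Your proof is correct and rests on the same engine as the paper's: obstruction theory gives fibres of $p_{n,k,\bullet}$ whose connectivity grows linearly in $n$ with slope $1$ and offset controlled by the lower tor-amplitude bound $a$ of the targets' cotangent complexes and the cohomological dimension of the \emph{source} $T$ (you are right that this, not the $d$ of $\Cc^{[a,b]}_{d,q}$, is what enters the estimate -- the paper silently recycles the letter $d$ for it), and the $\lim$--$\lim^1$ sequence of \autoref{exact-seq} absorbs the limit over the target's pro-structure at the cost of one degree. The one structural difference is that the paper also resolves the source: it writes $\Map(\tau_{\leq n}T,\bar Y_k(p))$ as $\colim_q \Map(\tau_{\leq n}\bar T(q),\bar Y_k(p))$ via \autoref{prop-cocompact}, factors the basepoint through a finite stage so that the fibre of the colimit is a filtered colimit of fibres, and only ever applies obstruction theory to the finitely presented terms $\tau_{\leq n}\bar T(q)$, where the Postnikov layers are visibly square-zero extensions by discrete modules. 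You instead apply \autoref{obstruction} with the realised pro-stack $\tau_{\leq n-1}T$ itself as source; this is legitimate ($T$ is Artin by the theorem following \autoref{prochamps}, and \autoref{obstruction} is stated for arbitrary derived stacks), but it tacitly uses that $\tau_{\leq n}T \to \tau_{\leq n-1}T$ is still a twisted square-zero extension by a module concentrated in degree $0$ and that the truncations of the infinite limit have uniformly bounded cohomological dimension -- both true (affineness of the transition maps in the shy diagram gives the second), but worth stating, since the paper's detour through the $\bar T(q)$ exists precisely to avoid asserting them. Your slightly larger threshold $N_i$ is harmless.
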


\begin{rmq}
For the proof of this lemma, we actually do not need the integer $q$.
\end{rmq}

\begin{proof}
Let us fix $i \in \N$. Let $k \in K$ and $\bar T \colon \N \to \dSt_S$ be a shy diagram for $T$.
We observe here that $\tau_{\leq n} \bar T$ is a shy diagram whose limit is $\tau_{\leq n} T$. Let also $\bar Y_k \colon \N \to \dSt_S$ be a shy diagram for $\bar X(k)$.
The map at hand 
\[
\psi_{nk} \colon \Map(\tau_{\leq n}T, \bar X(k)) \to \Map(\tau_{\leq n-1} T, \bar X(k))
\]
is then the limit of the colimits 
\[
\lim_{p \in \N} \colim_{q \in \N} \Map(\tau_{\leq n} \bar T(q), \bar Y_k(p)) \to 
\lim_{p \in \N} \colim_{q \in \N} \Map(\tau_{\leq n-1} \bar T(q), \bar Y_k(p))
\]
Let now $f$ be a map $\tau_{\leq n-1} T \to \bar X(k)$. It corresponds to a family of morphisms
\[
f_p \colon \pt \to \colim_{q \in \N} \Map(\tau_{\leq n-1} \bar T(q), \bar Y_k(p))
\]
For any $p$, let $F_{nk}^p(f)$ denote the fibre of the map
\[
\psi_{nk}^p \colon \colim_{q \in \N} \Map(\tau_{\leq n} \bar T(q), \bar Y_k(p)) \to 
\colim_{q \in \N} \Map(\tau_{\leq n-1} \bar T(q), \bar Y_k(p))
\]
over the point $f_p$.
We also set $F_{nk}(f) = \lim_p F_{nk}^p(f)$ and observe that $F_{nk}(f)$ is nothing but the fibre of $\psi_{nk}$ over $f$.

To prove the result, it suffices to show that for any such $f$, the fibre $F_{nk}(f)$ is $i$-connective. 
Using the exact sequence of \autoref{exact-seq}, it suffices to prove that $F_{nk}^p(f)$ is $(i+1)$-connective for any $f$ and any $p$.

Fixing such an $f$ and such a $p$, there exists $q_0 \in \N$ such that the map $f_p$ factors through the canonical map
\[
\Map(\tau_{\leq n-1} \bar T(q_0), \bar Y_k(p)) \to \colim_{q \in \N} \Map(\tau_{\leq n-1} \bar T(q), \bar Y_k(p))
\]
We deduce that $F_{nk}^p(f)$ is equivalent to the colimit
\[
F_{nk}^p(f) \simeq \colim_{q \geq q_0} G_{nk}^{pq}(f)
\]
where $G_{nk}^{pq}(f)$ is the fibre at the point induced by $f_p$ of the map
\[
\Map(\tau_{\leq n} \bar T(q), \bar Y_k(p)) \to \Map(\tau_{\leq n-1} \bar T(q), \bar Y_k(p))
\]
The interval $[a,b]$ contains the tor-amplitude of $\Lcot_{\bar Y_k(p)}$ and $d$ is an integer greater than the cohomological dimension of $\bar T(q)$. We saw in the proof of \autoref{prop-cocompact} that $G_{nk}^{pq}(f)$ is then $(a + n - d)$-connective.
We set $N_i = i + d - a +1$.
\end{proof}

\begin{proof}[of \autoref{ff-realisation}]
We will prove the sufficient following assertions
\begin{enumerate}[label=(\arabic*)]
\item The colimit functor $\Indu U(\shy_S) \to \presh(\dAff_S)$ restricts to a fully faithful functor
\[
\eta \colon \shybounded_S \to \presh(\dAff_S)
\]
\item The functor $\eta$ has values in the full subcategory of stacks.
\end{enumerate}
Let us focus on assertion (1) first. We consider two $\mathbb U$-small filtered diagrams $\bar X \colon K \to \shy_S$ and $\bar Y \colon L \to \shy_S$.
We have
\[
\Map_{\Indu U(\shy_S)}\left(\colim \bar X, \colim \bar Y\right) \simeq \lim_k \Map_{\Indu U(\shy_S)}(\bar X(k), \colim \bar Y)
\]
and
\[
\Map_{\presh(\dAff)}\left( \colim i_\mathrm{shy} \bar X, \colim i_\mathrm{shy} \bar Y \right) \simeq \lim_k \Map_{\presh(\dAff)}\left( i_\mathrm{shy} \bar X(k), \colim i_\mathrm{shy} \bar Y \right)
\]
We can thus replace the diagram $\bar X$ in $\shy_S$ by a simple object $X \in \shy_S$. We now assume that $\bar Y$ factors through $\Cc^{[a,b]}_{d,q}$ for some $a,b,d,q$. We have to prove that the following canonical morphism is an equivalence
\[
\phi \colon \colim_{l \in L} \Map(i_\mathrm{shy} X,i_\mathrm{shy} \bar Y(l)) \to \Map\left(i_\mathrm{shy} X, \colim i_\mathrm{shy} \bar Y\right)
\]
where the mapping spaces are computed in prestacks.
If $i_\mathrm{shy} X$ is affine then $\phi$ is an equivalence because colimits in $\presh(\dAff_S)$ are computed pointwise. Let us assume that $\phi$ is an equivalence whenever $i_\mathrm{shy} X$ is $(q-1)$-Artin and let us assume that $i_\mathrm{shy} X$ is $q$-Artin.
Let $u \colon U \to i_\mathrm{shy} X$ be an atlas of $i_\mathrm{shy} X$ and let $Z_\bullet$ be the nerve of $u$ in $\dSt_S$. We saw in the proof of \autoref{truncatedcompact} that $Z_\bullet$ factors through $\shy_S$.
The map $\phi$ is now equivalent to the natural map
\begin{align*}
\colim_{l \in L} \Map(i_\mathrm{shy} X,i_\mathrm{shy} \bar Y(l)) \to \lim_{[p]\in \Delta} & \colim_{l \in L} \Map(Z_p,i_\mathrm{shy} \bar Y(l)) \\ & \simeq \lim_{[p]\in \Delta} \Map\left(Z_p,\colim i_\mathrm{shy} \bar Y\right) \simeq \Map(i_\mathrm{shy} X,\colim i_\mathrm{shy} \bar Y)
\end{align*}
Remembering \autoref{coconnective}, it suffices to study the map
\[
\colim_{l \in L} \lim_n \Map(\tau_{\leq n} i_\mathrm{shy} X,i_\mathrm{shy} \bar Y(l)) \to \lim_{[p]\in \Delta} \colim_{l \in L} \lim_n \Map(\tau_{\leq n}Z_p,i_\mathrm{shy} \bar Y(l))
\]
Applying \autoref{iequi} and then \autoref{colim-lim-swap}, we see that $\phi$ is an equivalence if the natural morphism
\[
\lim_n \colim_{l \in L} \lim_{[p] \in \Delta} \Map(\tau_{\leq n}Z_p,i_\mathrm{shy} \bar Y(l)) \to \lim_n \lim_{[p] \in \Delta} \colim_{l \in L} \Map(\tau_{\leq n}Z_p, i_\mathrm{shy} \bar Y(l))
\]
is an equivalence. The stack $i_\mathrm{shy} \bar Y(l)$ is by assumption $q$-Artin, where $q$ does not depend on $l$. Now using \autoref{coconnective} and \autoref{finite-groupoids}, we conclude that $\phi$ is an equivalence. This proves (1). We now focus on assertion (2).
If suffices to see that the colimit in $\presh(\dAff_S)$ of the diagram $i_\mathrm{shy} \bar Y$ as above is actually a stack. Let $H_\bullet \colon \Delta\op \cup \{-1\} \to \dAff_S$ be an hypercovering of an affine $\Spec(A) = H_{-1}$. We have to prove the following equivalence
\[
\colim_l \lim_{[p] \in \Delta} \Map(H_p,i_\mathrm{shy} \bar Y(l)) \to \lim_{[p] \in \Delta} \colim_l \Map(H_p,i_\mathrm{shy} \bar Y(l))
\]
Using the same arguments as for the proof of (1), we have
\begin{align*}
\colim_l \lim_{[p] \in \Delta} \Map(H_p,i_\mathrm{shy} \bar Y(l))
&\simeq \colim_l \lim_{[p] \in \Delta} \lim_n \Map(\tau_{\leq n}H_p,i_\mathrm{shy} \bar Y(l)) \\
&\simeq \lim_n \colim_l \lim_{[p] \in \Delta} \Map(\tau_{\leq n}H_p,i_\mathrm{shy} \bar Y(l)) \\
&\simeq \lim_n \lim_{[p] \in \Delta} \colim_l \Map(\tau_{\leq n}H_p,i_\mathrm{shy} \bar Y(l)) \\
&\simeq \lim_{[p] \in \Delta} \colim_l \lim_n \Map(\tau_{\leq n}H_p,i_\mathrm{shy} \bar Y(l)) \\
&\simeq \lim_{[p] \in \Delta} \colim_l \Map(H_p,i_\mathrm{shy} \bar Y(l))
\end{align*}
\end{proof}

We will need one last lemma about that category $\shybounded_S$.
\begin{lem}\label{shydaff-limits}
The fully faithful functor $\shybounded_S \cap \IP\dAff_S \to \IP\dSt_S \to \dSt_S$ preserves finite limits.
\end{lem}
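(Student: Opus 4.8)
The plan is to reduce to the case of a pullback and then compute it levelwise. Finite limits are generated by the terminal object together with pullbacks; the terminal object of $\dSt_S$ is $S$ itself, which is the realisation of the constant ind-pro-diagram and lies in the subcategory whenever $S$ does, so it is preserved for free. By \autoref{ff-realisation} the composite is already known to be fully faithful, so the only remaining task is to identify, inside $\dSt_S$, the realisation of a pullback formed in $\shybounded_S \cap \IP\dAff_S$ with the pullback of the realisations.

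First I would bring a given cospan $X^1 \to X^0 \from X^2$ in $\shybounded_S \cap \IP\dAff_S$ into a convenient shape. Using the defining boundedness of $\shybounded_S$ and the reindexing results of \cite[§1.3]{hennion:these}, I would represent all three objects over a common $\mathbb U$-small filtered category $K$ factoring through some $\Cc^{[a,b]}_{0,0}$, and, for each $k$, by a common shy diagram $\N\op \to \dAff_S$, so that $X^i \simeq \colim_k \lim_n X^i_{k,n}$ with every $X^i_{k,n}$ derived affine over $S$ and all transition maps affine. Note that for pro-\emph{affine} objects the shy conditions simplify drastically: finite cohomological dimension and the $0$-Artin condition are automatic, so the only invariant that needs tracking is the cotangent tor-amplitude.

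Next I would form the levelwise fibre product $L_{k,n} = X^1_{k,n} \times_{X^0_{k,n}} X^2_{k,n}$. Being a fibre product of derived affine schemes over an affine base, each $L_{k,n}$ is again derived affine over $S$; its transition maps, obtained by base change from affine maps, stay affine; and its cotangent complex, controlled by the standard cofibre sequence for a fibre product, has tor-amplitude in some interval $[a',b']$ independent of $k$ and $n$. Hence $L = \colim_k \lim_n L_{k,n}$ lies in $\shybounded_S \cap \IP\dAff_S$, and since levelwise fibre products compute the pullback both in $\IP\dSt_S$ and in the subcategory, $L$ is the pullback there.

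It then remains to compare
\[
\mathrm{real}(L) \simeq \colim_k \lim_n L_{k,n}, \qquad \lim_I \mathrm{real}(X^i) \simeq \lim_I \colim_k \lim_n X^i_{k,n},
\]
both computed in $\dSt_S$. The finite limit $\lim_I$ commutes with the cofiltered $\lim_n$ for free, since limits commute with limits; the one genuine point is to commute $\lim_I$ past the filtered colimit $\colim_k$, i.e. the left-exactness of filtered colimits, which in the bounded setting at hand is exactly supplied by the connectivity estimates already used for \autoref{ff-realisation}, namely \autoref{iequi} and \autoref{colim-lim-swap} together with \autoref{coconnective} and \autoref{finite-groupoids}. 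The main obstacle is therefore not this formal commutation but the bookkeeping that makes it applicable: the simultaneous reindexing of the cospan over common ind- and pro-directions, and the verification that the levelwise pullback remains in $\shybounded_S \cap \IP\dAff_S$ with cotangent tor-amplitude bounded uniformly in $k$ and $n$. Once the uniform bounds $[a',b']$ (with $d=0$, $q=0$ automatic) are secured, the estimates from the proof of \autoref{ff-realisation} apply and yield the desired equivalence.
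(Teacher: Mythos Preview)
Your overall approach matches the paper's: reduce to pullbacks, present the cospan by a common filtered diagram of shy pro-affine diagrams, form the levelwise fibre product, check it stays in $\shybounded_S \cap \IP\dAff_S$ with a uniform cotangent bound (the paper records $[a-1,b+1]$), and conclude that the realisation of this levelwise fibre product is the fibre product of the realisations.

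The one place you overreach is the final commutation. Exchanging the filtered $\colim_k$ with the finite $\lim_I$ in $\dSt_S$ does not require any connectivity estimate: $\dSt_S$ is an $\infty$-topos, so filtered colimits are left exact there (this already holds pointwise in $\sSets$ and survives sheafification). The lemmas you cite, \autoref{iequi} and \autoref{colim-lim-swap}, as well as \autoref{coconnective} and \autoref{finite-groupoids}, are designed to commute a filtered colimit past an $\N$-indexed Postnikov limit or a cosimplicial totalisation; they are neither needed nor directly applicable to commuting a filtered colimit past a pullback. So the boundedness hypotheses play no role in this step. They are used exactly where you first identified them: to guarantee that the levelwise fibre product lands back in $\shybounded_S \cap \IP\dAff_S$, after which full faithfulness (\autoref{ff-realisation}) finishes the argument.
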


\begin{proof}
The case of an empty limit is obvious. Let then $X \to Y \from Z$ be a diagram in $\shybounded_S \cap \IP\dAff_S$. There exist $a$ and $b$ and a diagram 
\[
\sigma \colon K \to \Fct\left( \Lambda^2_1, \Cc^{[a,b]}_{0,0} \right)
\]
such that $K$ is a $\mathbb U$-small filtered simplicial set and the colimit in $\IP\dSt_S$ is $X \to Y \from Z$. We can moreover assume that $\sigma$ has values in $\Fct(\Lambda^2_1, \Prou U(\dAff_S)) \simeq \Prou U(\Fct(\Lambda^2_1, \dAff_S))$.
We deduce that the fibre product $X \times_Y Z$ is the realisation of the ind-pro-diagram in derived affine stacks with cotangent complex of tor amplitude in $[a-1,b+1]$. It follows that $X \times_Y Z$ is again in $\shybounded_S \cap \IP\dAff_S$. 
\end{proof}

\section{Symplectic Tate stacks}\label{chapterSymptate}%
\subsection{Tate stacks: definition and first properties}
\label{tatestacks}We can now define what a Tate stack is.
\begin{df}
A Tate stack is a derived Artin ind-pro-stack locally of finite presentation whose cotangent complex -- see \autoref{ipcotangent} -- is a Tate module.
Equivalently, an Artin ind-pro-stack locally of finite presentation is Tate if its tangent complex is a Tate module.
 We will denote by $\Tatestack_k$ the full subcategory of $\IP\dSt_k$ spanned by Tate stacks.
\end{df}

This notion has several good properties. For instance, using \autoref{ipdst-tate-in-ipp}, if a $X$ is a Tate stack then comparing its tangent $\T_X$ and its cotangent $\Lcot_X$ makes sense, in the category of Tate modules over $X$.
We will explore that path below, defining symplectic Tate stacks.

Another consequence of Tatity\footnote{or Tateness or Tatitude} is the existence of a determinantal anomaly as defined in \cite{kapranovvasserot:loop2}.
Let us consider the natural morphism of prestacks
\[
\theta \colon \Tateu U \to \mathrm K^{\Tate}
\]
where $\Tateu U$ denote the prestack $A \mapsto \Tateu U(\Perf(A))$ and $\mathrm K^{\Tate} \colon A \mapsto \K(\Tateu U(\Perf(A)))$ -- $\mathrm K$ denoting the connective $K$-theory functor.
From \cite[Section 5]{hennion:tate} we have a determinant
\[
\mathrm K^{\Tate} \to \mathrm K(\Gm,2)
\]
where $\mathrm K(\Gm,2)$ is the Eilenberg-Maclane classifying stack.
\begin{df}
We define the Tate determinantal map as the composite map
\[
\Tateu U \to \mathrm K(\Gm,2)
\]
To any derived stack $X$ with a Tate module $E$, we associate the determinantal anomaly $[\det_E] \in \homol^2(X,\Oo_X^{\times})$, image of $E$ by the morphism
\[
\Map(X,\Tateu U) \to \Map(X,\mathrm K(\Gm,2))
\]
\end{df}

Let now $X$ be an ind-pro-stack. Let also $R$ denote the realisation functor $\Prou U\dSt_k \to \dSt_k$ mapping a pro-stack to its limit in $\dSt_k$. Let finally $\bar X \colon K \to \Prou U\dSt_k$ denote a $\mathbb U$-small filtered diagram whose colimit in $\IP\dSt_k$ is $X$. We have a canonical functor
\[
F_X \colon \lim \Tateu U_\mathbf P(\bar X) \simeq \Tateu U_\IP(X) \to \lim \Tateu U(R \bar X)
\]
\begin{df}\label{determinantalanomaly}
Let $X$ be an ind-pro-stack and $E$ be a Tate module on $X$. Let $X'$ be the realisation of $X$ in $\Indu U\dSt_k$ and $X''$ be its image in $\dSt_k$.
We define the determinantal anomaly of $E$ the image of $F_X(E)$ by the map
\[
\mymatrix{
\Map_{\Indu U\dSt_k}(X',\Tateu U) \to \Map_{\Indu U\dSt_k}(X',\mathrm K(\Gm,2)) \simeq \Map_{\dSt_k}(X'',\mathrm K(\Gm,2))
}
\]
In particular if $X$ is a Tate stack, we will denote by $[\det_X] \in \homol^2(X'',\Oo_{X''}^{\times})$ the determinantal anomaly associated to its tangent $\T_X \in \Tateu U_\IP(X)$.
\end{df}
The author plans on studying more deeply this determinantal class in future work.
For now, let us conclude this section with following
\begin{lem}\label{tate-limits}
The inclusion $\Tatestack_k \to \IP\dSt_k$ preserves finite limits.
\end{lem}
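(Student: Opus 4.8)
The plan is to use that $\Tatestack_k$ is a full subcategory of $\IP\dSt_k$, so it suffices to prove that any finite limit computed in $\IP\dSt_k$ of a diagram of Tate stacks is again a Tate stack; such an object then automatically represents the limit in $\Tatestack_k$. Since finite limits are generated by the terminal object and pullbacks, I would treat those two cases. The terminal object of $\IP\dSt_k$ is $\Spec k$, a finitely presented derived affine scheme whose cotangent complex vanishes; as the zero module is Tate, $\Spec k$ is a Tate stack. The substance of the lemma is therefore to show that a pullback $W = X \times_Y Z$ of a diagram of Tate stacks is a Tate stack.

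First I would check that $W$ is an Artin ind-pro-stack locally of finite presentation, i.e. that $W$ belongs to $\IP\dStArtlfp_k = \Indu U \Prou U\dStArtlfp_k$. The subcategory $\dStArtlfp_k \subset \dSt_k$ is stable under finite limits, since a derived fibre product of Artin stacks locally of finite presentation over the locally finitely presented base $\Spec k$ is again of this kind. By \autoref{indu-thm}~(iii) and its dual, the inclusions $\dStArtlfp_k \to \Prou U\dStArtlfp_k$ and $\Prou U\dStArtlfp_k \to \Indu U \Prou U\dStArtlfp_k$ preserve finite limits, and each ambient $\Pro$- or $\Ind$-category admits finite limits computed cofiltered-- resp. filtered--levelwise. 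Hence $\IP\dStArtlfp_k$ is closed under finite limits in $\IP\dSt_k$, so $W$ lies in it. Concretely, $W$ is a filtered colimit of pro-stacks obtained as levelwise derived pullbacks, a description I would carry into the next step.

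Next I would compute the cotangent complex of $W$, writing $p \colon W \to X$, $q \colon W \to Z$ and $r \colon W \to Y$ for the projections. The functoriality of the ind-pro cotangent complex from \autoref{ipcotangent} (through the natural transformation $\cotangent^{\IP}$) supplies, exactly as in the pro-stack and underived settings, base change and the transitivity fibre sequences attached to the Cartesian square. Assembling them yields the standard fibre-product formula, namely a bicartesian square in the stable category $\PIPerf(W)$ with vertices $r^*_\PI \Lcot_Y$, $p^*_\PI \Lcot_X$, $q^*_\PI \Lcot_Z$, $\Lcot_W$, so that
\[
\Lcot_W \simeq \operatorname{cofib}\!\left( r^*_\PI \Lcot_Y \to p^*_\PI \Lcot_X \oplus q^*_\PI \Lcot_Z \right).
\]
Finally I would conclude that $\Lcot_W$ is a Tate module. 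Because $\Tateu U_\IP$ is a functor on $(\IP\dSt_k)\op$, each pullback $f^*_\PI$ restricts to $\Tateu U_\IP$; as $\Lcot_X$, $\Lcot_Y$ and $\Lcot_Z$ are Tate by hypothesis, the three terms $p^*_\PI \Lcot_X$, $q^*_\PI \Lcot_Z$ and $r^*_\PI \Lcot_Y$ are Tate modules on $W$. Since $\Tateu U_\IP(W)$ is a stable subcategory of $\PIPerf(W)$, closed under extensions and retracts by its very definition, it is closed under finite coproducts and cofibres; the cofibre above is therefore Tate, and $W$ is a Tate stack.

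The hard part will be the third step: one must make sure that the cotangent complex of the ind-pro fibre product genuinely obeys base change and transitivity, that is, that the formal constructions $\Indext$ and $\Proext$ applied to the ordinary cotangent functor transport these properties — this is exactly where the explicit levelwise description of $W$ produced in the second step is used. Once the fibre-product formula is granted, the Tateness of $\Lcot_W$ follows formally from the stability and duality properties of Tate modules recorded in \autoref{ipdst-tate-in-ipp} and \autoref{tate-dual}.
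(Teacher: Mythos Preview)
Your proposal is correct and follows essentially the same approach as the paper: reduce to the terminal object and to pullbacks, observe that the pullback remains an Artin ind-pro-stack locally of finite presentation, and use the (co)cartesian square relating the (co)tangent complex of the fibre product to the pullbacks of those of $X$, $Y$, $Z$, together with the stability of $\Tateu U_\IP(W)$. The only cosmetic difference is that the paper phrases the key square in terms of the tangent $\T_{X\times_Y Z}$ rather than the cotangent, which is of course dual to your formulation; the paper is also terser and simply asserts the cartesian square without discussing base change and transitivity for $\cotangent^{\IP}$, the point you rightly flag as needing justification.
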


\begin{proof}
Let us first notice that a finite limit of Artin ind-pro-stacks is again an Artin ind-pro-stack. Let now $X \to Y \from Z$ be a diagram of Tate stacks.
The fibre product
\[
\mymatrix{
X \times_Y Z \cart \ar[d]_{p_Z} \ar[r]^-{p_X} & X \ar[d]^g \\ Z \ar[r] & Y
}
\]
is an Artin ind-pro-stack. It thus suffices to test if its tangent $\T_{X \times_Y Z}$ is a Tate module. The following cartesian square concludes
\[
\mymatrix{
\T_{X \times_Y Z} \cart \ar[r] \ar[d] & p_X^* \T_X \ar[d] \\ p_Z^* \T_Z \ar[r] & p_X^* g^* \T_Y
}
\]
\end{proof}

\subsection{Shifted symplectic Tate stacks}

\newcommand{\deRham}{\operatorname{\mathbf{DR}}}
\newcommand{\NCw}{\operatorname{\mathrm{NC^w}}}
\newcommand{\globalcomplex}{\operatorname{C}}

We assume now that the basis $S$ is the spectrum of a ring $k$ of characteristic zero.
Recall from \cite{ptvv:dersymp} the stack in graded complexes $\deRham$ mapping a cdga over $k$ to its graded complex of forms.
It actually comes with a mixed structure induced by the de Rham differential.
The authors also defined there the stack in graded complexes $\NCw$ mapping a cdga to its graded complex of closed forms.
Those two stacks are linked by a morphism $\NCw \to \deRham$ forgetting the closure.

We will denote by $\forms p, \closedforms p \colon \cdga_k \to \dgMod_k$ the complexes of weight $p$ in $\deRham[-p]$ and $\NCw[-p]$ respectively.
The stack $\forms p$ will therefore map a cdga to its complexes of $p$-forms while $\closedforms p$ will map it to its closed $p$-forms.
For any cdga $A$, a cocycle of degree $n$ of $\forms p (A)$ is an $n$-shifted $p$-forms on $\Spec A$. 
The functors $\closedforms p$ and $\forms p$ extend to functors
\[
\closedforms p,~ \forms p \colon \dSt_k\op \to \dgMod_k
\]
\begin{df}
Let us denote by $\IPforms p$ and $\IPclosedforms p$ the extensions
\[
(\IP\dSt_k)\op \to \Prou U\Indu U\dgMod_k
\]
of $\forms p$ and $\closedforms p$, respectively.
They come with a natural projection $\IPclosedforms p \to \IPforms p$.

Let $X \in \IP\dSt_k$. An $n$-shifted (closed) $p$-form on $X$ is a morphism $k[-n] \to \IPforms p(X)$ (resp. $\IPclosedforms p(X)$).
For any closed form $\omega \colon k[-n] \to \IPclosedforms p(X)$, the induced map $k[-n] \to \IPclosedforms p(X) \to \IPforms p(X)$ is called the underlying form of $\omega$.
\end{df}

\begin{rmq}
In the above definition, we associate to any ind-pro-stack $X = \colim_\alpha \lim_\beta X_{\alpha\beta}$ its complex of forms
\[
\IPforms p(X) = \lim_\alpha \colim_\beta \forms p(X_{\alpha\beta}) \in \Prou U \Indu U \dgMod_k
\]
\end{rmq}

For any ind-pro-stack $X$, the derived category $\PIQcoh(X)$ is endowed with a canonical monoidal structure. In particular, one defines a symmetric product $E \mapsto \Sym^2_\PI(E)$ as well as an antisymmetric product
\[
E \wedge_\PI E = \Sym_\PI^2(E[-1])[2]
\]

\begin{thm}\label{prop-formsareforms}
Let $X$ be an Artin ind-pro-stack over $k$ and $\pi \colon X \to \pt$ the projection.
The push-forward functor
\[
\pi^\PIQ_* \colon \PIQcoh(X) \to \Prou V \Indu V(\dgMod_k)
\]
exists (see \autoref{prop-piqcoh-right-adjoint}) and maps $\Lcot_X \wedge_\PI \Lcot_X$ to $\IPforms 2(X)$.
In particular, any $2$-form $k[-n] \to \IPforms 2(X)$ corresponds to a morphism $\Oo_X[-n] \to \Lcot_X \wedge_\PI \Lcot_X$ in $\PIQcoh(X)$.
\end{thm}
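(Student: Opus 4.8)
The plan is to reduce the statement to the corresponding fact on honest derived Artin stacks and then transport it through the pro-ind extension machinery. Recall from the preliminaries that for a derived Artin stack $T$ with projection $\pi_T \colon T \to \pt$, the complex $\forms 2(T)$ is canonically equivalent to the global sections $\pi_{T,*}(\Lcot_T \wedge \Lcot_T)$, and that this equivalence is natural in $T$. Thus $\forms 2$ and the functor $T \mapsto \pi_{T,*}(\Lcot_T \wedge \Lcot_T)$ agree as functors $(\dStArt_k)\op \to \dgMod_k$. Since $\IPforms 2$ is by definition the extension $\Proext \circ \Indext$ of $\forms 2$, and since the extension functors carry equivalences of functors to equivalences, it suffices to identify $\pi^\PIQ_*(\Lcot_X \wedge_\PI \Lcot_X)$ with the extension of $T \mapsto \pi_{T,*}(\Lcot_T \wedge \Lcot_T)$.

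The existence of $\pi^\PIQ_*$ is granted by \autoref{prop-piqcoh-right-adjoint}, as $\pt$ is a stack. Writing $X = \colim_\alpha \lim_\beta X_{\alpha\beta}$, I would first identify $\Lcot_X \wedge_\PI \Lcot_X$ with the pro-ind object assembled from the level-wise wedges $\Lcot_{X_{\alpha\beta}} \wedge \Lcot_{X_{\alpha\beta}}$. By \autoref{ipcotangent} together with \autoref{cotangent-pdst}, $\Lcot_X$ is the pro-ind object built from the $\Lcot_{X_{\alpha\beta}}$; and $\wedge_\PI$ is induced by the monoidal structure of \autoref{indext-monoidal}, which by construction preserves $\mathbb U$-small filtered colimits (and the dual cofiltered limits) in each variable. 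Pairing $\Lcot_X$ with itself therefore produces an object doubly indexed over the diagram squared, and restriction along the diagonal is an equivalence by (co)finality of the diagonal in the (co)filtered indexing categories. This yields $\Lcot_X \wedge_\PI \Lcot_X \simeq \Proext\Indext\bigl(T \mapsto \Lcot_T \wedge \Lcot_T\bigr)(X)$.

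Next I would compute the pushforward. Although $\pi^\PIQ_*$ is defined abstractly as a right adjoint, the argument establishing \autoref{IPcotangent-underlying} applies once the input is of extension type: it shows that $\pi^\PIQ_*$ carries an object assembled from modules $M_{\alpha\beta} \in \Qcoh(X_{\alpha\beta})$ to $\lim_\alpha \colim_\beta (\pi_{\alpha\beta})_* M_{\alpha\beta}$, i.e.\ to the extension of the level-wise pushforward. Applying this to $M_{\alpha\beta} = \Lcot_{X_{\alpha\beta}} \wedge \Lcot_{X_{\alpha\beta}}$ and combining with the previous paragraph gives
\[
\pi^\PIQ_*(\Lcot_X \wedge_\PI \Lcot_X) \simeq \lim_\alpha \colim_\beta (\pi_{\alpha\beta})_*\left(\Lcot_{X_{\alpha\beta}} \wedge \Lcot_{X_{\alpha\beta}}\right) \simeq \lim_\alpha \colim_\beta \forms 2(X_{\alpha\beta}) = \IPforms 2(X),
\]
where the middle equivalence is the base case applied level-wise. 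The final clause is then formal: from the adjunction $\pi^*_\PI \dashv \pi^\PIQ_*$ and $\pi^*_\PI k = \Oo_X$, a $2$-form $k[-n] \to \IPforms 2(X) \simeq \pi^\PIQ_*(\Lcot_X \wedge_\PI \Lcot_X)$ transposes to a morphism $\Oo_X[-n] \to \Lcot_X \wedge_\PI \Lcot_X$ in $\PIQcoh(X)$.

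I expect the main obstacle to be the second paragraph, namely controlling the wedge square of a pro-ind object: the monoidal product a priori lives over the product of the two indexing diagrams, and one must check both that restriction to the diagonal is an equivalence and that it is compatible with the description of $\Lcot_X$ from \autoref{ipcotangent}. By contrast, the pushforward identification in the third paragraph is comparatively routine once one notes that the proof of \autoref{IPcotangent-underlying} used only that its input was of extension type, and no feature specific to the cotangent complex.
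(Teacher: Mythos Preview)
Your proposal is correct and follows essentially the same approach as the paper. The paper's proof is a single sentence citing \cite[1.14]{ptvv:dersymp} for the base case, the equivalence $\cotangent^\IP \wedge_\PI \cotangent^\IP \simeq \Proextu U \Indextu U(\cotangent \wedge \cotangent)$ for the compatibility of the wedge with the extension, and \autoref{IPcotangent-underlying} for the pushforward; you have unpacked exactly these three ingredients, correctly flagged the diagonal cofinality step behind the second one as the only non-formal point, and correctly noted that the proof of \autoref{IPcotangent-underlying} generalises from $\Lcot$ to any input of extension type.
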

\begin{proof}
This follows from \cite[1.14]{ptvv:dersymp}, from \autoref{IPcotangent-underlying} and from the equivalence
\[
\cotangent^\IP \wedge_\PI \cotangent^\IP = \Proextu U \Indextu U (\lambda) \wedge_\PI \Proextu U \Indextu U(\lambda) \simeq \Proextu U \Indextu U (\cotangent \wedge \cotangent)
\]
where $\cotangent^\IP$ is defined in the proof of \autoref{ipcotangent}.
\end{proof}

\begin{df}
Let $X$ be a Tate stack.
Let $\omega \colon k[-n] \to \IPforms 2(X)$ be an $n$-shifted $2$-form on $X$.
It induces a map in the category of Tate modules on $X$
\[
\underline \omega \colon \T_X \to \Lcot_X[n]
\]
We say that $\omega$ is non-degenerate if the map $\underline \omega$ is an equivalence.
A closed $2$-form is non-degenerate if the underlying form is.
\end{df}

\begin{df}\index{Symplectic Tate stack}
A symplectic form on a Tate stack is a non-degenerate closed $2$-form. A symplectic Tate stack is a Tate stack equipped with a symplectic form.
\end{df}

\subsection{Mapping stacks admit closed forms}

\newcommand{\EV}{\operatorname{EV}}

In this section, we will extend the proof from \cite{ptvv:dersymp} to ind-pro-stacks. Note that if $X$ is a pro-ind-stack and $Y$ is a stack, then $\Mapstack(X,Y)$ is an ind-pro-stack.
We will then need an evaluation functor $\Mapstack(X,Y) \times X \to Y$.
It appears that this evaluation map only lives in the category of ind-pro-ind-pro-stacks
\[
\colim_\alpha \lim_{\beta} \colim_{\xi} \lim_{\zeta} \Mapstack(X_{\alpha\zeta},Y) \times X_{\beta\xi} \to Y
\]

To use this map properly, we will need the following remark.
\begin{df}
Let $\Cc$ be a category.
There is one natural fully faithful functor
\[
\phi \colon \PI(\Cc) \to (\IP)^2(\Cc)
\]
but three $\IP(\Cc) \to (\IP)^2(\Cc)$. The first one is given by applying $\IP$ to the canonical embedding functor $\Cc \to \IP(\Cc)$. The second one by considering the canonical embedding functor $\Dd \to \IP(\Dd)$ for $\Dd = \IP(\Cc)$. In this work, we will only consider the third functor
\[
\psi \colon \IP(\Cc) \to (\IP)^2(\Cc)
\]
given by applying $\Indu U$ to the canonical embedding $\Dd \to \PI(\Dd)$ for $\Dd = \Prou U(\Cc)$.
Let us also denote by $\xi$ the natural fully faithful functor $\Cc \to (\IP)^2 (\Cc)$.
\end{df}

\begin{df}
Let $Y$ be a stack and $X$ be a pro-ind-stack.
Let us denote the evaluation map in $\IP^2\dSt_S$
\[
\ev^{X,Y} \colon \mymatrix@1{\displaystyle \psi \Mapstack_S(X,Y) \times_S \phi X  \ar[r] & \xi Y}
\]
For a formal definition of this map, we refer to \cite[2.2.3.2]{hennion:these}.
\end{df}

We assume now that $S = \Spec k$.
Let us recall the following definition from \cite[2.1]{ptvv:dersymp}
\begin{df}
A derived stack $X$ is $\Oo$-compact if for any derived affine scheme $T$ the following conditions hold
\begin{itemize}
\item The quasi-coherent sheaf $\Oo_{X \times T}$ is compact in $\Qcoh(X \times T)$ ;
\item Pushing forward along the projection $X \times T \to T$ preserves perfect complexes.
\end{itemize}
Let us denote by $\dSt_k^{\Oo}$ the full subcategory of $\dSt_k$ spanned by $\Oo$-compact derived stacks.
\end{df}

\begin{df}
An $\Oo$-compact pro-ind-stack is a pro-ind-object in the category of $\Oo$-compact derived stacks.
We will denote by $\PI\dSt^{\Oo}_k$ their category.
\end{df}

\begin{lem}
There is a functor
\[
\PI \dSt_k^\Oo \to \Fct\left( \IP\dSt_k \times \Delta^1 \times \Delta^1, (\IP)^2 (\dgMod_k)\op \right)
\]
defining for any $\Oo$-compact pro-ind-stack $X$ and any ind-pro-stack $F$ a commutative square
\[
\mymatrix{
\closedforms p_{\IP^2} (\psi F \times \phi X) \ar[r] \ar[d] & \IPclosedforms p(\psi F) \otimes_k \phi \Oo_X \ar[d] \\
\forms p_{\IP^2} (\psi F \times \phi X) \ar[r] & \IPforms p(\psi F) \otimes_k \phi \Oo_X
}
\]
where $\closedforms p_{\IP^2}$ and $\forms p_{\IP^2}$ are the extensions of $\IPclosedforms p$ and $\IPforms p$ to
\[
(\IP)^2\dSt_k \to (\IP)^2 (\dgMod_k\op)
\]
\end{lem}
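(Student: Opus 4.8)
The plan is to obtain the functor by extending, along the various ind- and pro-completions and in both variables at once, the corresponding construction for honest derived stacks from \cite{ptvv:dersymp}. First I would isolate the building block over $\dSt_k$. For an $\Oo$-compact stack $X$ and an arbitrary stack $F$, the cotangent complex of the product splits as $\pr_F^* \Lcot_F \oplus \pr_X^* \Lcot_X$, so that $\Lcot_{F \times X}^{\wedge p}$ decomposes into weight $(i,j)$ summands with $i + j = p$. Projecting onto the top summand $\pr_F^*(\Lcot_F^{\wedge p}) \otimes \pr_X^* \Oo_X$ and pushing forward along $F \times X \to F$ -- which preserves the relevant complexes by $\Oo$-compactness -- yields a natural map $\forms p(F \times X) \to \forms p(F) \otimes_k \R\Gamma(X, \Oo_X)$, compatible with the de Rham differential and hence with closed forms. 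Packaging this together with its closed counterpart as a commutative square indexed by $\Delta^1 \times \Delta^1$ gives a functor
\[
G \colon \dSt_k^\Oo \to \Fct\left(\dSt_k \times \Delta^1 \times \Delta^1, \dgMod_k\op\right),
\]
natural in both $X$ and $F$. This is exactly the $(\IP)^2$-analogue, for products, of the cotangent-to-forms comparison of \autoref{prop-formsareforms}.

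Next I would extend $G$ in each variable by the formal ind- and pro-completions of \autoref{indext}, precisely as $\IPforms p$ and $\IPclosedforms p$ were obtained from $\forms p$ and $\closedforms p$. In the variable $F$, extending $\dSt_k$ to $\Prou U \dSt_k$ and then to $\IP\dSt_k$ turns the outer two ind-pro layers of the target into a $\Prou U \Indu U$-object (contravariance of $\forms p$, $\closedforms p$ swaps ind and pro); this makes the right-hand column of the square into $\IPclosedforms p(\psi F) \otimes_k \Oo_X \to \IPforms p(\psi F) \otimes_k \Oo_X$ and the left-hand column into the forms of $\psi F$ against a constant $X$. Independently, extending the variable $X$ from $\dSt_k^\Oo$ to $\PI\dSt_k^\Oo$ places the resulting co/limits in the inner two layers, producing $\phi \Oo_X$ and completing the product $\psi F \times \phi X$; the arrangement of the variances is routine bookkeeping that I would carry out at this stage. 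The embeddings $\psi$ and $\phi$ record exactly this assignment of $F$ to the outer and of $X$ to the inner ind-pro directions of $(\IP)^2$, which is why, among the three candidate functors $\IP(\Cc) \to (\IP)^2(\Cc)$, it is $\psi$ that must be used here.

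Finally I would check that the two extensions commute, so that they assemble into the single functor
\[
\PI\dSt_k^\Oo \to \Fct\left(\IP\dSt_k \times \Delta^1 \times \Delta^1, (\IP)^2(\dgMod_k)\op\right)
\]
of the statement, and that for $X = \lim_\beta \colim_\xi X_{\beta\xi}$ and $F = \colim_\alpha \lim_\zeta F_{\alpha\zeta}$ the term $\forms p_{\IP^2}(\psi F \times \phi X)$ is genuinely computed as $\lim_\alpha \colim_\zeta \colim_\beta \lim_\xi \forms p(F_{\alpha\zeta} \times X_{\beta\xi})$, with the horizontal maps obtained by applying $G$ levelwise. The main obstacle is this last compatibility: the building block intertwines the two variables through the tensor $\forms p(F) \otimes_k \R\Gamma(X,\Oo_X)$, so identifying the forms of the mixed object $\psi F \times \phi X$ forces one to commute the filtered colimits coming from the ind-directions of both $F$ and $X$ past the cofiltered limits coming from their pro-directions. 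This is precisely where $\Oo$-compactness is indispensable: compactness of $\Oo_{F_{\alpha\zeta} \times X_{\beta\xi}}$ makes $\forms p(- \times X_{\beta\xi})$ commute with the filtered colimits along the ind-directions, while perfectness of $\R\Gamma(X_{\beta\xi}, \Oo)$ makes $- \otimes_k \R\Gamma(X,\Oo_X)$ exact and continuous, so that the Künneth map stays natural across all four ind-pro directions simultaneously and the square remains commutative after extension.
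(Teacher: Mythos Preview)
Your overall strategy is the same as the paper's: start from the PTVV square for ordinary stacks and extend formally in both variables. The paper cites the PTVV diagram for $\NCw$ and $\deRham$, takes the weight $p$ part, and obtains exactly your functor $G \colon \dSt_k^\Oo \to \Fct(\dSt_k \times \Delta^1 \times \Delta^1, \dgMod_k\op)$.

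Where you diverge is in the execution of the extension, and this is where a genuine confusion creeps in. You propose to extend fully in $F$ first, then fully in $X$, and then argue that the two extensions ``commute'' by invoking $\Oo$-compactness to interchange filtered colimits with cofiltered limits. This is not how the construction works, and it is not needed. The target is the \emph{formal} category $(\IP)^2(\dgMod_k\op)$: nothing is ever realised, so there is nothing to commute. The paper obtains the correct layering by \emph{interleaving} the extensions rather than doing them sequentially: first Pro-extend in $F$ to reach $\Fct(\Pro\dSt_k \times \Delta^1 \times \Delta^1,\Pro(\dgMod_k\op))$; then apply $\PI$ to the source (the $X$-variable) and use the tautological map $\PI\Fct(\mathcal A,\mathcal B)\to\Fct(\mathcal A,\PI\mathcal B)$; finally Ind-extend in $F$. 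This places the four ind/pro layers in the order dictated by $\psi$ and $\phi$, and the resulting object is literally
\[
\forms p_{\IP^2}(\psi F\times\phi X)\;=\;\lim_\alpha\colim_\beta\lim_\xi\colim_\zeta\forms p(F_{\alpha\zeta}\times X_{\beta\xi}),
\]
not your $\lim_\alpha\colim_\zeta\colim_\beta\lim_\xi$; your formula has the $F$-pro and $X$-pro layers transposed, which is why you then feel forced to ``commute'' them.

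Concretely: your claimed use of $\Oo$-compactness at the extension stage is misplaced. The compactness and perfectness hypotheses are consumed entirely at the base level, in producing the PTVV map $\forms p(F\times X)\to\forms p(F)\otimes_k\pi_*\Oo_X$; after that the passage to $(\IP)^2$ is pure bookkeeping with $\Indext$ and $\Proext$. If you redo the extension in the interleaved order above, the ``main obstacle'' you describe simply disappears.
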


\begin{proof}
Recall in \cite[part 2.1]{ptvv:dersymp} the construction for any $\Oo$-compact stack $X$ and any stack $F$ of a commutative diagram (of graded complexes):
\[
\mymatrix{
\NCw(F \times X) \ar[r] \ar[d] & \NCw(F) \otimes_k \pi_* \Oo_X \ar[d] \\
\deRham(F \times X) \ar[r] & \deRham(F) \otimes_k \pi_* \Oo_X
}
\]
where $\pi \colon X \to \pt$.
Taking the part of weight $p$ and shifting, we get
\[
\mymatrix{
\closedforms p(F \times X) \ar[r] \ar[d] & \closedforms p(F) \otimes_k \pi_* \Oo_X \ar[d] \\
\forms p(F \times X) \ar[r] & \forms p(F) \otimes_k \pi_* \Oo_X
}
\]
This construction is functorial in both $F$ and $X$ so it corresponds to a functor
\[
\dSt^\Oo_k \to \Fct(\dSt_k \times \Delta^1 \times \Delta^1, \dgMod_k\op)
\]
We can now form the functor
\begin{align*}
\PI\dSt^\Oo_k
&\to \PI \Fct\left(\Pro \dSt_k \times \Delta^1 \times \Delta^1, \Pro (\dgMod_k\op) \right) \\
&\to \Fct\left(\Pro \dSt_k \times \Delta^1 \times \Delta^1, \PI \Pro (\dgMod_k\op) \right) \\
&\to \Fct\left(\IP \dSt_k \times \Delta^1 \times \Delta^1, (\IP)^2 (\dgMod_k\op) \right) \\
\end{align*}
By construction, for any ind-pro-stack $F$ and any $\Oo$-compact pro-ind-stack, it induces the commutative diagram
\[
\mymatrix{
\closedforms p_{\IP^2}(\psi F \times \phi X) \ar[r] \ar[d] & \psi\IPclosedforms p(F) \otimes_k \phi\Oo_X \ar[d] \\
\forms p_{\IP^2}(\psi F \times \phi X) \ar[r] & \psi\IPforms p(F) \otimes_k \phi\Oo_X
}
\]
\end{proof}
\begin{rmq}
Let us remark that we can informally describe the horizontal maps using the maps from \cite{ptvv:dersymp}:
\begin{align*}
\Theta_{\IP^2}(\psi F \times \phi X) = \lim_\alpha & \colim_\beta \lim_\gamma \colim_\delta \Theta(F_{\alpha\delta} \times X_{\beta\gamma})\\
& \to \lim_\alpha \colim_\beta \lim_\gamma \colim_\delta \Theta(F_{\alpha\delta}) \otimes (\Oo_{X_{\beta\gamma}}) = \psi \Theta_{\IP}(F) \otimes \phi \Oo_X \\
\end{align*}
where $\Theta$ is either $\closedforms p$ or $\forms p$.
\end{rmq}

\begin{df}
Let $F$ be an ind-pro-stack and let $X$ be an $\Oo$-compact pro-ind-stack. Let $\eta \colon \Oo_X \to k[-d]$ be a map of ind-pro-$k$-modules. Let finally $\Theta$ be either $\closedforms p$ or $\forms p$. We define the integration map
\[
\int_\eta \colon \mymatrix@1{\Theta_{\IP^2}(\psi F \times \phi X) \ar[r] & \psi \Theta_{\IP}(F) \otimes \phi \Oo_X \ar[r]^-{\id \otimes \phi \eta} & \psi\Theta_{\IP}(F)[-d]}
\]
\end{df}

\begin{thm}\label{ipdst-form}
Let $Y$ be a derived stack and $\omega_Y$ be an $n$-shifted closed $2$-form on $Y$. Let $X$ be an $\Oo$-compact pro-ind-stack, let $\pi \colon X \to \pt$ be the projecton, and let $\eta \colon \pi_* \Oo_X \to k[-d]$ be a map. The mapping ind-pro-stack $\Mapstack(X,Y)$ admits an $(n-d)$-shifted closed $2$-form.
\end{thm}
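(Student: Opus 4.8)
The plan is to transpose the transgression construction of \cite[2.1]{ptvv:dersymp} to the present ind-pro framework, relying on the evaluation map and the integration map introduced above. Write $F = \Mapstack(X,Y)$; it is an ind-pro-stack, and the goal is to produce a morphism $k[-(n-d)] \to \IPclosedforms{2}(F)$, i.e.\ an $(n-d)$-shifted closed $2$-form. The recipe is the usual one: pull $\omega_Y$ back along the evaluation map, transfer it through the commutative square of the preceding lemma, and integrate over $X$ against $\eta$.

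First I would apply the contravariant functor $\closedforms{2}_{\IP^2}$ to the evaluation map $\ev^{X,Y} \colon \psi F \times \phi X \to \xi Y$. Since $\xi$ is the canonical embedding $\dSt_k \to (\IP)^2\dSt_k$ and the extension $\closedforms{2}_{\IP^2}$ is compatible with it, there is an equivalence $\closedforms{2}_{\IP^2}(\xi Y) \simeq \xi \closedforms{2}(Y)$. Hence $\omega_Y \colon k[-n] \to \closedforms{2}(Y)$ produces, after pullback, a morphism
\[
\xi(k)[-n] \to \closedforms{2}_{\IP^2}(\psi F \times \phi X)
\]
in $(\IP)^2\dgMod_k$. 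Composing with the upper horizontal map of the lemma's square (the one landing in the closed forms) and then with the integration map $\int_\eta$ determined by $\eta \colon \pi_* \Oo_X \to k[-d]$, I obtain the composite
\[
\xi(k)[-n] \to \closedforms{2}_{\IP^2}(\psi F \times \phi X) \to \psi \IPclosedforms{2}(F) \otimes_k \phi \Oo_X \to \psi \IPclosedforms{2}(F)[-d].
\]

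Finally I would descend this composite to a form on $F$ itself using full faithfulness of $\psi$. By construction $\xi = \psi \circ j$, where $j \colon \dgMod_k \to \IP\dgMod_k$ is the canonical embedding, so the source $\xi(k)[-n] \simeq \psi(j(k)[-n])$ and the target $\psi \IPclosedforms{2}(F)[-d]$ both lie in the essential image of $\psi$. Full faithfulness then yields a unique morphism $k[-n] \to \IPclosedforms{2}(F)[-d]$ in $\IP\dgMod_k$, that is, an $(n-d)$-shifted closed $2$-form on $\Mapstack(X,Y)$.

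The main obstacle is the bookkeeping across the nested ind-pro categories $(\IP)^2$: one must check that the evaluation map interacts correctly with the extended forms functors, that the identity $\xi = \psi \circ j$ holds so that the descent by full faithfulness of $\psi$ is legitimate, and that the lemma's square is natural in both variables so that the same recipe applied to the bottom row produces the underlying $2$-form of the closed form constructed above (guaranteeing that one genuinely obtains a closed form together with a well-defined underlying form). Once these compatibilities are in place, the statement is a formal consequence of the results already established; the non-degeneracy that would upgrade it to a symplectic structure is not claimed here and is addressed separately for the bubble space.
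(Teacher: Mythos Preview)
Your proof is correct and follows essentially the same route as the paper: pull back $\omega_Y$ along the evaluation map to $(\IP)^2$, apply the integration map $\int_\eta$, and then use full faithfulness of $\psi$ (together with the factorisation $\xi = \psi \circ j$, which is the paper's $\chi$) to descend to a map $k[-n] \to \IPclosedforms{2}(F)[-d]$. The only cosmetic difference is that you spell out the intermediate step through $\psi\IPclosedforms{2}(F)\otimes_k \phi\Oo_X$, which the paper absorbs into the definition of $\int_\eta$; your remarks on the required compatibilities are exactly the bookkeeping the paper takes for granted.
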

\begin{proof}
Let us denote by $Z$ the mapping ind-pro-stack $\Map(X,Y)$.
We consider the diagram
\[
\mymatrix@1{
\chi k[-n] \ar[r]^-{\omega_Y} & \chi \closedforms 2 (Y) \ar[r]^-{\ev^*} & \closedforms 2_{\IP^2} (X \times Z) \ar[r]^-{\int_\eta} & \psi\closedforms 2_{\IP} (Z)[-d]
}
\]
where $\chi \colon \dgMod_k\op \to^\xi \IP(\dgMod_k\op) \to^\psi (\IP)^2(\dgMod_k\op)$ is the canonical inclusion.
Note that since the functor $\psi$ is fully faithful, this induces a map in $\IP(\dgMod_k\op)$
\[
\mymatrix@1{
\xi k \ar[r] & \IPclosedforms 2 (Z)[n-d]}
\]
and therefore a $(n-d)$-shifted closed $2$-form on $Z = \Map(X,Y)$. The underlying form is given by the composition
\[
\mymatrix@1{
\chi k[-n] \ar[r]^-{\omega_Y} & \chi \closedforms 2 (Y) \ar[r] & \chi \forms 2 (Y) \ar[r]^-{\ev^*} & \forms 2_{\IP^2} (X \times Z) \ar[r]^-{\int_\eta} & \psi\forms 2_{\IP} (Z)[-d]
}
\]
\end{proof}
\begin{rmq} \label{describe-form}
Let us describe the form issued by \autoref{ipdst-form}.
We set the notations $X = \lim_\alpha \colim_\beta X_{\alpha\beta}$ and $Z_{\alpha\beta} = \Map(X_{\alpha\beta},Y)$. By assumption, we have a map
\[
\eta \colon \colim_\alpha \lim_\beta \Oo_{X_{\alpha\beta}} \to k[-d]
\]
For any $\alpha$, there exists therefore $\beta(\alpha)$ and a map $\eta_{\alpha\beta(\alpha)} \colon \Oo_{X_{\alpha\beta(\alpha)}} \to k[-d]$ in $\dgMod(k)$.
Unwinding the definitions, we see that the induced form $\int_\eta \omega_Y$
\[
\mymatrix@1{
\xi k \ar[r] & \IPforms 2 (\Mapstack(X,Y))[n-d]  \simeq \lim_\alpha \colim_\beta \forms 2 (Z_{\alpha\beta})[n-d]
}
\]
is the universal map obtained from the maps
\[
\mymatrix@1{
k \ar[r]^-{\omega_{\alpha\beta(\alpha)}} & \forms 2 (Z_{\alpha\beta(\alpha)})[n-d] \ar[r] & \colim_\beta \forms 2 (Z_{\alpha\beta})[n-d]
}
\]
where $\omega_{\alpha\beta(\alpha)}$ is built using $\eta_{\alpha\beta(\alpha)}$ and the procedure of \cite{ptvv:dersymp}. Note that $\omega_{\alpha\beta(\alpha)}$ can be seen as a map $\T_{X_{\alpha\beta(\alpha)}} \otimes \T_{X_{\alpha\beta(\alpha)}} \to \Oo_{X_{\alpha\beta(\alpha)}}$.
We also know from \autoref{prop-formsareforms} that the form $\int_\eta \omega_Y$ induces a map
\[
\T_Z \otimes \T_Z \to \Oo_Z[n-d]
\]
in $\IPP(Z)$. Let us fix $\alpha_0$ and pull back the map above to $Z_{\alpha_0}$. We get
\[
\colim_{\alpha \geq \alpha_0} \lim_\beta g_{\alpha_0\alpha}^* p_{\alpha\beta}^* ( \T_{Z_{\alpha\beta}} \otimes \T_{Z_{\alpha\beta}}) \simeq i_{\alpha_0}^* (\T_Z \otimes \T_Z) \to \Oo_{Z_{\alpha_0}}[n-d]
\]
This map is the universal map obtained from the maps
\begin{align*}
\lim_\beta g_{\alpha_0\alpha}^* p_{\alpha\beta}^* ( \T_{Z_{\alpha\beta}} \otimes \T_{Z_{\alpha\beta}}) \to{} & g_{\alpha_0\alpha}^* p_{\alpha\beta(\alpha)}^* ( \T_{Z_{\alpha\beta(\alpha)}} \otimes \T_{Z_{\alpha\beta(\alpha)}})
\\ &\to g_{\alpha_0\alpha}^* p_{\alpha\beta(\alpha)}^* (\Oo_{X_{\alpha\beta(\alpha)}})[n-d] \simeq \Oo_{X_{\alpha_0}}[n-d]
\end{align*}
where $g_{\alpha_0\alpha}$ is the structural map $Z_{\alpha_0} \to Z_\alpha$ and $p_{\alpha\beta}$ is the projection $Z_\alpha = \lim_\beta Z_{\alpha\beta} \to Z_{\alpha\beta}$.
\end{rmq}

\subsection{Mapping stacks have a Tate structure}
\newcommand{\coker}{\operatorname{coker}}
\begin{df}\label{map-cotate}
Let $S$ be an $\Oo$-compact pro-ind-stack. We say that $S$ is an $\Oo$-Tate stack if
there exist a poset $K$ and a diagram $\bar S \colon K\op \to \Indu U \dSt_k$ such that
\begin{enumerate}
\item The limit of $\bar S$ in $\PI\dSt_k$ is equivalent to $S$ ;\label{map-diagproj}
\item For any $i \leq j \in K$ the pro-module over $\bar S(i)$ \label{map-structuralring}
\[
\coker\left(\Oo_{\bar S(i)} \to \bar S(i \leq j)_* \Oo_{\bar S(j)} \right)
\]
is trivial in the pro-direction -- ie belong to $\Qcoh(\bar S(i))$.
\item For any $i \leq j \in K$ the induced map $\bar S(i \leq j)$ is represented by a diagram
\[
\bar f \colon L \times \Delta^1 \to \dSt_k
\]
such that
\begin{itemize}
\item For any $l \in L$ the projections $\bar f(l,0) \to \pt$ and $\bar f(l,1) \to \pt$ satisfy the base change formula ;
\item For any $l \in L$ the map $\bar f(l)$ satisfies the base change and projection formulae ;
\item For any $m \leq l \in L$ the induced map $\bar f(m \leq l, 0)$ satisfies the base change and projection formulae.
\end{itemize}
\end{enumerate}
\end{df}

\begin{rmq}
We will usually work with pro-ind-stacks $S$ given by an explicit diagram already satisfying those assumptions.
\end{rmq}

\begin{prop}\label{map-tate}
Let us assume that $Y$ is a derived Artin stack locally of finite presentation.
Let $S$ be an $\mathcal O$-compact pro-ind-stack.
If $S$ is an $\Oo$-Tate stack then the ind-pro-stack $\Mapstack(S,Y)$ is a Tate stack.
\end{prop}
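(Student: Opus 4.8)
The plan is to check the two conditions defining a Tate stack for $Z := \Mapstack(S,Y)$: that it is an Artin ind-pro-stack locally of finite presentation, and that its cotangent complex is a Tate module. I would write $S$ as the pro-ind-object $S = \lim_i \colim_\alpha S_{i\alpha}$ in $\Oo$-compact derived stacks supplied by the diagram $\bar S \colon K\op \to \Indu U\dSt_k$ of \autoref{map-cotate}, with $\bar S(i) = \colim_\alpha S_{i\alpha}$. Since $\Mapstack(-,Y)$ turns colimits into limits, mapping out of the ind-direction of $S$ produces a pro-direction and mapping out of the pro-direction produces an ind-direction, so I would first record the identification
\[
Z \simeq \colim_i \lim_\alpha \Mapstack(S_{i\alpha},Y) =: \colim_i \lim_\alpha Z_{i\alpha},
\]
which exhibits $Z$ as an ind-pro-stack, with pro-stack pieces $Z_i := \lim_\alpha Z_{i\alpha} = \Mapstack(\bar S(i),Y)$.

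For the first condition, each $S_{i\alpha}$ is $\Oo$-compact and $Y$ is derived Artin locally of finite presentation, so by the representability theorem for mapping stacks (see \cite{ptvv:dersymp}) each $Z_{i\alpha}$ is a derived Artin stack locally of finite presentation. Hence $Z$ lies in $\IP\dStArtlfp_k$, and \autoref{ipcotangent} endows it with a cotangent complex $\Lcot_Z \in \PIPerf(Z)$.

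For the second condition, by the self-duality of Tate modules (\autoref{ipdst-tate-in-ipp}) it suffices to prove that $\Lcot_Z$ is a Tate module, and by the characterization of Tate modules (every pro-stack mapping to $Z$ factors through some $Z_i$) it is enough to show that $i_{i,\PI}^*\Lcot_Z \in \Tateu U_\mathbf P(Z_i)$ for every $i$. By \autoref{ipcotangent} this pullback is the cofiltered limit $i_{i,\PI}^*\Lcot_Z \simeq \lim_{i\to j} f^*_\mathbf I\Lcot_{Z_j}$, while by \autoref{cotangent-pdst} each $\Lcot_{Z_j} = \colim_\alpha p_\alpha^*\Lcot_{Z_{j\alpha}}$ is an ind-perfect complex, i.e.\ lies in the ind-part $\Indu U(\colim_\alpha \Perf(Z_{j\alpha})) = \IPerf(Z_j)$ of the Tate category. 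The crux is therefore to show that this cofiltered limit of ind-perfect complexes is again Tate, and this is exactly what conditions \emph{(ii)} and \emph{(iii)} of \autoref{map-cotate} are designed to provide: the base change and projection formulae of condition \emph{(iii)} let me compute the pushforwards defining $\Lcot_{Z_j}$ compatibly along the diagram and identify the fibre of each transition map $f^*_\mathbf I\Lcot_{Z_j} \to f'^*_\mathbf I\Lcot_{Z_{j'}}$ (for $i \le j' \le j$) with the global sections of $\coker(\Oo_{\bar S(j')} \to \bar S(j'\le j)_*\Oo_{\bar S(j)})$ paired with the evaluation-pullback of $\T_Y$; condition \emph{(ii)} then guarantees that these cokernels are honest quasi-coherent objects, so the transition fibres are perfect. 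A cofiltered limit of ind-perfect complexes whose transition maps have perfect fibres is an extension of a pro-perfect object by an ind-perfect one, hence a Tate object by the closure properties established in \cite{hennion:tate}. This yields $i_{i,\PI}^*\Lcot_Z \in \Tateu U_\mathbf P(Z_i)$ for all $i$, so $\Lcot_Z$ is Tate and $Z$ is a Tate stack.

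The main obstacle is this last paragraph: correctly identifying the fibres of the transition maps of the cotangent system and verifying that condition \emph{(ii)} keeps them perfect, so that the cofiltered limit stays within the Tate subcategory rather than producing a general pro-ind-perfect complex.
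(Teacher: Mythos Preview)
Your proposal is correct and follows essentially the same approach as the paper: reduce to showing $s_i^*\Lcot_Z$ is Tate on each pro-stack piece $Z_i$, fit it into a fibre sequence with $\Lcot_{Z_i}$, and show the kernel $\phi_i$ is a diagram of \emph{perfect} (not merely ind-perfect) complexes by using the base-change and projection formulae of condition \emph{(iii)} to identify each $\phi_i(j)$ with a term controlled by the cokernel in condition \emph{(ii)}. The paper carries out precisely the computation you flag as ``the main obstacle'', showing that the ind-diagram $\psi_{ij}$ computing $\phi_i(j)$ is essentially constant; so your outline is accurate, and the only gap is the detailed diagram chase you already anticipated.
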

\begin{proof}
Let $Z = \Map(S,Y)$ as an ind-pro-stack.
Let $\bar S \colon K\op \to \Indu U \dSt_k$ be as in \autoref{map-cotate}.
We will denote by $\bar Z \colon K \to \Prou U\dSt_k$ the induced diagram and for any $i \in K$ by $s_i \colon \bar Z(i) \to \bar Z$ the induced map.

Let us first remark that $Z$ is an Artin ind-pro-stack locally of finite presentation.
It suffices to prove that $s_i^* \Lcot_{Z}$ is a Tate module on $\bar Z(i)$, for any $i \in K$. Let us fix such an $i$ and denote by $Z_i$ the pro-stack $\bar Z(i)$.

We consider the differential map
\[
s_i^* \Lcot_{Z} \to \Lcot_{Z_i}
\]
It is by definition equivalent to the natural map
\[
\lim \cotangent^{\Pro}_{Z_i}(\bar Z|_{K^{\geq i}}) \to^f \cotangent^{\Pro}_{Z_i}(Z_i)
\]
where $K^{\geq i}$ is the comma category $\comma{i}{K}$ and $\bar Z|_{K^{\geq i}}$ is the induced diagram
\[
K^{\geq i} \to \comma{Z_i}{\Prou U\dSt_S}
\]
Let $\phi_i$ denote the diagram
\[
\phi_i \colon \left(K^{\geq i}\right)\op \to \IPerf(Z_i)
\]
obtained as the kernel of $f$. It is now enough to prove that $\phi_i$ factors through $\Perf(Z_i)$.

Let $j \geq i$ in $K$ and let us denote by $g_{ij}$ the induced map $Z_i \to Z_j$ of pro-stacks.
Let $\bar f \colon L \times \Delta^1 \to \dSt_k$ represents the map $\bar S(i \leq j) \colon \bar S(j) \to \bar S(i) \in \Indu U \dSt_k$ as in assumption \ref{map-diagproj} in \autoref{map-cotate}.
Up to a change of $L$ through a cofinal map, we can assume that the induced diagram
\[
\coker\left(\Oo_{\bar S(i)} \to \bar S(i \leq j)_* \Oo_{\bar S(j)}\right)
\]
is essentially constant -- see assumption \ref{map-structuralring}.
We denote by $\bar h \colon L\op \times \Delta^1 \to \dSt_k$ the induced diagram, so that $g_{ij}$ is the limit of $\bar h$ in $\Prou U \dSt_k$.
For any $l \in L$ we will denote by $h_l \colon Z_{il} \to Z_{jl}$ the map $\bar h(l)$.
Let us denote by $\bar Z_i$ the induced diagram $l \mapsto Z_{il}$ and by $\bar Z_j$ the diagram $l \mapsto Z_{jl}$.
Let also $p_l$ denote the projection $Z_i \to Z_{il}$

We have an exact sequence
\[
\phi_i(j) \to \colim_l p_l^* h_l^* \Lcot_{Z_{jl}} \to \colim_l p_l^* \Lcot_{Z_{il}}
\]
Let us denote by $\psi_{ij}$ the diagram obtained as the kernel
\[
\psi_{ij} \to \cotangent^{\Pro}_{Z_i}(\bar Z_j) \to \cotangent^{\Pro}_{Z_i}(\bar Z_i)
\]
so that $\phi_i(j)$ is the colimit $\colim \psi_{ij}$ in $\IPerf(Z_i)$.
It suffices to prove that the diagram $\psi_{ij} \colon L \to \Perf(Z_i)$ is essentially constant (up to a cofinal change of posets).
By definition, we have
\[
\psi_{ij}(l) \simeq p_l^* \Lcot_{Z_{il}/Z_{jl}} [-1]
\]
Let $m \to l$ be a map in $L$ and $t$ the induced map $Z_{il} \to Z_{im}$. The map $\psi_{ij}(m \to l)$ is equivalent to the map $p_l^* \xi$ where $\xi$ fits in the fibre sequence in $\Perf(Z_{il})$
\[
\mymatrix{
t^* \Lcot_{Z_{im}/Z_{jm}} [-1] \ar[r] \ar[d]_\xi & t^* h_m^* \Lcot_{Z_{jm}} \ar[d] \ar[r] & t^* \Lcot_{Z_{im}} \ar[d] \\
\Lcot_{Z_{il}/Z_{jl}} [-1] \ar[r] & h_l^* \Lcot_{Z_{jl}} \ar[r] & \Lcot_{Z_{il}}
}
\]
We consider the dual diagram
\[
\mymatrix{
t^* \T_{Z_{im}/Z_{jm}} [1] \ar@{<-}[r] \ar@{<-}[d] & t^* h_m^* \T_{Z_{jm}} \ar@{<-}[d] \ar@{<-}[r] & t^* \T_{Z_{im}} \ar@{<-}[d] \\
\T_{Z_{il}/Z_{jl}} [1] \ar@{<-}[r] & h_l^* \T_{Z_{jl}} \ar@{<-}[r] & \T_{Z_{il}} \ar@{}[ul]|{(\sigma)}
}
\]
Using base change along the maps from $S_{im}$, $S_{jm}$ and $S_{jl}$ to the point, we get that the square $(\sigma)$ is equivalent to
\[
\mymatrix{
\pi_* (\id \times s f_m)_* (\id \times s f_m)^* E  & \ar[l] \pi_* (\id \times s)_* (\id \times s)^* E \\
\pi_* (\id \times f_l)_* (\id \times f_l)^* E \ar[u] & \pi_* E \ar[l] \ar[u]
}
\]
where $\pi \colon Z_{il} \times S_{il} \to Z_{il}$ is the projection, where $s \colon S_{im} \to S_{il}$ is the map induced by $m \to l$ and where $E \simeq \ev^* \T_Y$ with $\ev \colon Z_{il} \times S_{il} \to Y$ the evaluation map.
Note that we use here the well known fact $\T_{\Map(X,Y)} \simeq \pr_* \ev^* \T_Y$ where
\[
\mymatrix{
\Map(X,Y) & \Map(X,Y) \times X \ar[r]^-\ev \ar[l]_\pr & Y
}
\]
are the canonical maps.

Now using the projection and base change formulae along the morphisms $s$, $f_l$ and $f_m$ we get that $(\sigma)$ is equivalent to the image by $\pi_*$ of the square
\[
\mymatrix{
E \otimes p^* s_* {f_m}_* \Oo_{S_{jm}} & E \otimes p^* s_* \Oo_{S_{im}} \ar[l] \\
E \otimes p^* {f_l}_* \Oo_{S_{jl}} \ar[u] & E \otimes p^* \Oo_{S_{il}} \ar[u] \ar[l]  
}
\]
We therefore focus on the diagram
\[
\mymatrix{
s_* {f_{m}}_* \Oo_{S_{jm}} & s_* \Oo_{S_{im}} \ar[l] \\
{f_l}_* \Oo_{S_{jl}} \ar[u] & \Oo_{S_{il}} \ar[l] \ar[u]
}
\]
The map induced between the cofibres is an equivalence, using assumption \ref{map-structuralring}.
It follows that the diagram $\psi_{ij}$ is essentially constant, and thus that $Z$ is a Tate stack.
\end{proof}

\section{Formal loops}\label{chapterfloops}
In this part, we will at last define and study the higher dimensional formal loop spaces. We will prove it admits a local Tate structure.

\subsection{Dehydrated algebras and de Rham stacks}
In this part, we define a refinement of the reduced algebra associated to a cdga. This allows us to define a well behaved de Rham stack associated to an infinite stack. Indeed, without any noetherian assumption, the nilradical of a ring -- the ideal of nilpotent elements -- is a priori not nilpotent itself.
The construction below gives an alternative definition of the reduced algebra -- which we call the dehydrated algebra -- associated to any cdga $A$, so that $A$ is, in some sense, a nilpotent extension of its dehydrated algebra.
Whenever $A$ is finitely presented, this construction coincides with the usual reduced algebra. 
\begin{df}
Let $A \in \cdga_k$.
We define its dehydrated algebra as the ind-algebra $A_\mathrm{deh} = \colim_{I} \quot{\homol^0(A)}{I}$ where the colimit is taken over the filtered poset of nilpotent ideals of $\homol^0(A)$. The case $I = 0$ gives a canonical map $A \to A_\mathrm{deh}$ in ind-cdga's.
This construction is functorial in $A$.
\end{df}

\begin{rmq}
Whenever $A$ is of finite presentation, then $A_\mathrm{deh}$ is equivalent to the reduced algebra associated to $A$. In that case, the nilradical $\sqrt{A}$ of $A$ is nilpotent.
Moreover, if $A$ is any cdga, it is a filtered colimits of cdga's $A_\alpha$ of finite presentation. We then have $A_\mathrm{deh} \simeq \colim (A_\alpha)_\mathrm{red}$ in ind-algebras.
\end{rmq}

\begin{lem}
The colimit $B$ of the ind-algebra $A_\mathrm{deh}$ in the category of algebras is equivalent to the reduced algebra $A_\mathrm{red}$.
\end{lem}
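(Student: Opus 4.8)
The plan is to reduce everything to an elementary computation with the ordinary ring $R = \homol^0(A)$ and its ideals. First I would record that the poset of nilpotent ideals of $R$ is filtered: if $I^m = 0$ and $J^n = 0$, then $(I+J)^{m+n} = 0$, so $I + J$ is again a nilpotent ideal containing both $I$ and $J$. Hence $A_\mathrm{deh}$ is genuinely a filtered diagram $I \mapsto \quot{R}{I}$ of discrete algebras, whose transition maps are the quotient projections $\quot{R}{I} \to \quot{R}{J}$ attached to inclusions $I \subseteq J$.

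Next I would compute the colimit $B$. Whether one takes it in ordinary commutative $k$-algebras or in $\cdga_k$, the answer is the same discrete ring: a filtered colimit of complexes concentrated in degree $0$ is again concentrated in degree $0$, so the realisation is the naive $1$-categorical filtered colimit of the $\quot{R}{I}$. A filtered colimit of quotients of $R$ along an increasing family of ideals is $\quot{R}{J}$, where $J = \bigcup_I I$ is the union (equivalently the sum) of all nilpotent ideals of $R$.

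The heart of the argument is then the identification $J = \sqrt{A}$ with the nilradical of $R$. On one hand, every element $x$ of a nilpotent ideal $I$ with $I^n = 0$ satisfies $x^n \in I^n = 0$, so $x$ is nilpotent, giving $J \subseteq \sqrt{A}$. On the other hand, if $x \in \sqrt{A}$ with $x^n = 0$, then the principal ideal $(x)$ is nilpotent, since $(x)^n = (x^n) = 0$; thus $x \in (x) \subseteq J$, giving $\sqrt{A} \subseteq J$. Combining the two inclusions yields $J = \sqrt{A}$, and therefore $B \simeq \quot{R}{\sqrt{A}} = A_\mathrm{red}$.

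I expect the only genuinely delicate point to be the interchange in the second step: verifying that the colimit taken in the $(\infty,1)$-categorical sense agrees with the naive colimit of discrete rings, so that no higher homotopy is created. This is exactly the assertion that filtered colimits are $t$-exact on $\cdga_k$, which holds because filtered colimits commute with the cohomology functors $\homol^p$; once this is granted, everything else is routine commutative algebra, and the case where $A$ is finitely presented recovers the fact recorded in the preceding remark that $\sqrt{A}$ is then nilpotent.
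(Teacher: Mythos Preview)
Your proof is correct and takes essentially the same route as the paper: both hinge on the two elementary facts that a single nilpotent element generates a nilpotent principal ideal, and that the nilradical is the union of all nilpotent ideals. The paper packages this as ``$B$ is reduced, hence receives a map from $A_\mathrm{red}$, and the kernels agree'', while you compute the colimit directly as $R/J$ and identify $J$; you are also more explicit about why the $(\infty,1)$-categorical colimit agrees with the naive one, a point the paper leaves implicit.
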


\begin{proof}
Let us first remark that $B$ is reduced. Indeed any nilpotent element $x$ of $B$ comes from a nilpotent element of $A$. It therefore belongs to a nilpotent ideal $(x)$.
This define a natural map of algebras $A_\mathrm{red} \to B$. To see that it is an isomorphism, it suffices to say that $\sqrt{A}$ is the union of all nilpotent ideals.
\end{proof}

\begin{df}
Let $X$ be a prestack. We define its de Rham prestack $X_\mathrm{dR}$ as the composition
\[
\mymatrix{
\cdga_k \ar[r]^-{(-)_\mathrm{deh}} & \Indu U(\cdga_k) \ar[r]^-{\Indu U(X)} & \Indu U(\sSets) \ar[r]^-{\colim} & \sSets
}
\]
This defines an endofunctor of $(\infty,1)$-category $\presh(\dAff_k)$.
We have by definition
\[
X_\mathrm{dR}(A) = \colim_{I} X\left( \quot{\homol^0(A)}{I} \right)
\]
\end{df}

\begin{rmq}
If $X$ is a stack of finite presentation, then it is determined by the images of the cdga's of finite presentation. The prestack $X_\mathrm{dR}$ is then the left Kan extension of the functor
\[
\app{\cdgaunbounded_k^{\leq 0\mathrm{,fp}}}{\sSets}{A}{X(A_\mathrm{red})}
\]
\end{rmq}

\begin{df}
Let $f \colon X \to Y$ be a functor of prestacks. We define the formal completion $\hat X_Y$ of $X$ in $Y$ as the fibre product
\[
\mymatrix{
\hat X_Y \cart \ar[r] \ar[d] & X_\mathrm{dR} \ar[d] \\ Y \ar[r] & Y_\mathrm{dR}
}
\]
This construction obviously defines a functor $\mathrm{FC} \colon \presh(\dAff_k)^{\Delta^1} \to \presh(\dAff_k)$.
\end{df}

\begin{rmq}
The natural map $\hat X_Y \to Y$ is formally étale, in the sense that for any $A \in \cdga_k$ and any nilpotent ideal $I \subset \homol^0(A)$ the morphism
\[
\hat X_Y (A) \to \hat X_Y \left(\textstyle \quot{\homol^0(A)}{I} \right) \timesunder[Y\left(\quot{\homol^0(A)}{I} \right)][][-4pt] Y(A)
\]
is an equivalence.
\end{rmq}

\subsection{Higher dimensional formal loop spaces}
Here we finally define the higher dimensional formal loop spaces.
To any cdga $A$ we associate the formal completion $V_A^d$ of $0$ in $\A^d_A$. We see it as a derived affine scheme whose ring of functions $A[\![X_{1\dots d}]\!]$ is the algebra of formal series in $d$ variables $\el{X}{d}$. Let us denote by $U_A^d$ the open subscheme of $V_A^d$ complementary of the point $0$.
We then consider the functors $\dSt_k \times \cdga_k \to \sSets$
\begin{align*}
& \kaplooppre^d_V \colon (X,A) \mapsto \Map_{\dSt_k}(V_A^d, X) \\
& \kaplooppre^d_U \colon (X,A) \mapsto \Map_{\dSt_k}(U_A^d, X)
\end{align*}

\begin{df}
Let us consider the functors $\kaplooppre_U^d$ and $\kaplooppre_V^d$ as functors $\dSt_k \to \presh(\dAff)$. They come with a natural morphism $\kaplooppre_V^d \to \kaplooppre_U^d$.
We define $\kaplooppre^d$ to be the pointwise formal completion of $\kaplooppre_V^d$ into $\kaplooppre_U^d$ :
\[
\kaplooppre^d(X) = \mathrm{FC}\left(\kaplooppre^d_V(X) \to \kaplooppre^d_U(X)\right)
\]
We also define $\kaploop^d$, $\kaploop^d_U$ and $\kaploop^d_V$ as the stackified version of $\kaplooppre^d$, $\kaplooppre^d_U$ and $\kaplooppre^d_V$ respectively.
We will call $\kaploop^d(X)$ the formal loop stack in $X$.
\end{df}

\begin{rmq}
The stack $\kaploop^d_V(X)$ is a higher dimensional analogue to the stack of germs in $X$, as studied for instance by Denef and Loeser in \cite{denefloeser:germs}.
\end{rmq}

\begin{rmq}
By definition, the derived scheme $U_A^d$ is the (finite) colimit in derived stacks
\[
U_A^d = \colim_q \colim_{\el{i}{q}} \Spec\left( A[\![X_{1\dots d}]\!][X^{-1}_{i_1\dots i_q}] \right)
\]
where $A[\![X_{1\dots d}]\!][X^{-1}_{i_1\dots i_q}]$ denote the algebra of formal series localized at the generators $\iel{X^{-1}}{q}$.
It follows that the space of $A$-points of $\kaploop^d(X)$ is equivalent to the simplicial set
\[
\kaploop^d(X)(A) \simeq \colim _{I\subset \homol^0(A)} \lim_q \lim_{\el{i}{q}} \Map\left( \Spec\left(A[\![X_{1\dots d}]\!][X^{-1}_{i_1\dots i_q}]^{\sqrt{I}}\right), X \right)
\]
where $A[\![X_{1 \dots d}]\!][X^{-1}_{i_1\dots i_q}]^{\sqrt{I}}$ is the sub-cdga of $A[\![X_{1 \dots d}]\!][X^{-1}_{i_1\dots i_q}]$ consisting of series
\[
\sum_{\el{n}{d}} a_{\el{n}{d}} X_1^{n_1} \dots X_d^{n_d}
\]
where $a_{\el{n}{d}}$ is in the kernel of the map $A \to \quot{\homol^0(A)}{I}$ as soon as at least one of the $n_i$'s is negative. Recall that in the colimit above, the symbol $I$ denotes a nilpotent ideal of $\homol^0(A)$.
\end{rmq}

\begin{lem}\label{LV-epi}
Let $X$ be a derived Artin stack of finite presentation with algebraisable diagonal (see \autoref{alg-diag}) and let $t \colon T = \Spec(A) \to X$ be a smooth atlas. The induced map $\kaploop_V^d(T) \to \kaploop_V^d(X)$ is an epimorphism of stacks.
\end{lem}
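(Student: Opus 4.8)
The plan is to verify the epimorphism condition affine-locally on the target: it suffices to show that for every derived affine $\Spec B$ and every map $\Spec B \to \kaploop_V^d(X)$ there is an étale cover $\Spec B' \to \Spec B$ over which the map factors through $\kaploop_V^d(T)$. Since $\kaploop_V^d(X)$ is by definition the stackification of the prestack $A \mapsto \Map_{\dSt_k}(V_A^d, X)$ and the unit of étale stackification is a local epimorphism, after replacing $\Spec B$ by an étale cover I may assume the given point comes from an honest map $f \colon V_B^d \to X$, i.e. a map $\Spec(B[\![X_{1\dots d}]\!]) \to X$. The goal is then to produce, étale-locally on $\Spec B$, a map $\tilde g \colon V_{B'}^d \to T$ with $t \circ \tilde g = f$.

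First I would restrict $f$ along the origin section $\Spec B \hookrightarrow V_B^d$ (the point $X_1 = \dots = X_d = 0$), obtaining $\bar f \colon \Spec B \to X$. Because $t \colon T \to X$ is a smooth atlas, hence a smooth surjection, there is an étale cover $\Spec B' \to \Spec B$ and a lift $g_0 \colon \Spec B' \to T$ of $\bar f|_{B'}$ over $X$. From now on I work over $B'$ and write $\mathfrak m = (X_{1\dots d})$. Next I would propagate this lift over the infinitesimal thickenings: writing $V_{B'}^d = \Spec(B'[\![X_{1\dots d}]\!]) = \lim_n \Spec(B'[X_{1\dots d}]/\mathfrak m^n)$, each inclusion $\Spec(B'[X]/\mathfrak m^{n}) \hookrightarrow \Spec(B'[X]/\mathfrak m^{n+1})$ is a square-zero extension, and $f$ restricts to a compatible family $f_n \colon \Spec(B'[X]/\mathfrak m^n) \to X$. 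Using the formal smoothness of the smooth morphism $t$ — concretely the obstruction theory of \autoref{obstruction}, whose obstruction classes vanish when $t$ is smooth — I would inductively lift $g_0$ to maps $g_n \colon \Spec(B'[X]/\mathfrak m^n) \to T$ with $t \circ g_n = f_n$ and $g_{n+1}$ restricting to $g_n$. Since $T$ is affine, $T(B'[\![X]\!]) \simeq \lim_n T(B'[X]/\mathfrak m^n)$, so the compatible system $(g_n)$ assembles into a genuine map $\tilde g \colon V_{B'}^d \to T$.

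The main obstacle, and the place where the hypothesis on $X$ enters, is the final verification that $t \circ \tilde g = f$ as maps $V_{B'}^d \to X$. By construction $t \circ \tilde g$ and $f$ have the same image in $\lim_n X(B'[X]/\mathfrak m^n)$, i.e. they agree after restriction to every infinitesimal thickening of the origin, equivalently on the formal completion $\colim_n \Spec(B'[X]/\mathfrak m^n)$. Since $X$ has algebraisable diagonal (see \autoref{alg-diag}), the comparison relating maps out of $\Spec(\lim_n B'[X]/\mathfrak m^n)$ and maps out of $\colim_n \Spec(B'[X]/\mathfrak m^n)$ is fully faithful, in particular a monomorphism on $B'$-points; hence agreement on all the thickenings forces $t \circ \tilde g = f$. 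Note the asymmetry in how the two algebraizability strengths are used: the affine $T$ is \emph{fully} algebraisable, which is exactly what lets us turn the inverse system $(g_n)$ into the actual map $\tilde g$ in the previous step, whereas for the non-affine $X$ only the weaker fully-faithfulness supplied by the algebraisable diagonal is needed — and it is precisely this that closes the gap between the formal (thickening-by-thickening) data and an honest map into $X$.

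This exhibits the required étale-local lift of the arbitrary $B$-point, and therefore proves that $\kaploop_V^d(T) \to \kaploop_V^d(X)$ is an epimorphism of stacks. I expect the routine parts (the stackification reduction, the existence of the square-zero lifts by formal smoothness, and the reassembly over the affine $T$) to be quick, and the only genuinely delicate point to be the identification of the restriction-to-thickenings comparison with the algebraisable-diagonal condition, so that the constructed $\tilde g$ is checked to lie over $f$ rather than merely over its formal completion.
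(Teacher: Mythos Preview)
Your proof is correct and is essentially the paper's argument: reduce to prestacks, restrict to the origin and lift étale-locally using the smooth atlas, propagate over the tower of square-zero thickenings by formal smoothness, and close up via the algebraisable-diagonal hypothesis. The only cosmetic differences are that the paper packages the last step as extending a single map into the fibre product $\Spec(B[\![X_{1\dots d}]\!]) \times_X T$ (which is algebraisable as a pullback of the diagonal) rather than your two-step ``assemble $\tilde g$ via $T$ affine, then verify $t\circ\tilde g \simeq f$ via the monomorphism coming from algebraisable diagonal'', and that it filters by $(x_1^n,\dots,x_d^n)$ to match its definition of algebraisability rather than by your cofinal $\mathfrak m^n$.
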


\begin{proof}
It suffices to study the map $\kaplooppre_V^d(T) \to \kaplooppre_V^d(X)$.
Let $B$ be a cdga.
Let us consider a $B$-point $x \colon \Spec B \to \kaplooppre_V^d(X)$. It induces a $B$-point of $X$
\[
\Spec B \to \Spec(B[\![X_{1\dots d}]\!]) \to^x X
\]
Because $t$ is an epimorphism, there exists an étale map $f \colon \Spec C \to \Spec B$ and a commutative diagram
\[
\mymatrix{
\Spec C \ar[r]^-c \ar[d]_f & T \ar[d]^t \\ \Spec B \ar[r] & X
}
\]
It corresponds to a $C$-point of $\Spec B \times_X T$.
For any $n \in \N$, let us denote by $C_n$ the cdga 
\[
C_n := \quot{C[\el{x}{d}]}{(\el{x^n}{d})}
\]
and by $S_n$ the spectrum $\Spec C_n$
We also set $B_n = \quot{B[\el{x}{d}]}{(\el{x^n}{d})}$ and $X_n = \Spec B_n$. Finally, we define $T_n$ as the pullback $T \times_X X_n$.
We will also consider the natural fully faithful functor $\Delta^n \simeq \{0,\dots,n\} \to \N$.
We have a natural diagram
\[
\alpha_0 \colon \Lambda^{2,2} \times \N \amalg_{\Lambda^{2,2} \times \Delta^0} \Delta^2 \times \Delta^0 \to \dSt_k
\]
informally drown has a commutative diagram
\[
\mymatrix@R=0pt{
S_0 \ar[d] \ar@/_20pt/[dd] \ar[r] & \dots \ar[r] & S_n \ar[r] \ar[d] & \dots \\
X_0 \ar[r] & \dots \ar[r] & X_n \ar[r] & \dots \\
T_0 \ar[u] \ar[r] & \dots \ar[r] & T_n \ar[r] \ar[u] & \dots
}
\]
Let $n \in \N$ and let us assume we have built a diagram
\[
\alpha_n \colon (\Lambda^{2,2} \times \N) \amalg_{\Lambda^{2,2} \times \Delta^n}  \Delta^2 \times \Delta^n \to \dSt_k
\]
extending $\alpha_{n-1}$.
There is a sub-diagram of $\alpha_n$
\[
\mymatrix{
S_n \ar[r] \ar[d] & S_{n+1} \\ T_n \ar[r] & T_{n+1} \ar[d]^{t_{n+1}} \\ & X_{n+1}
}
\]
Since the map $t_{n+1}$ is smooth (it is a pullback of $t$), we can complete this diagram with a map $S_{n+1} \to T_{n+1}$ and a commutative square. Using the composition in $\dSt_k$, we get a diagram $\alpha_{n+1}$ extending $\alpha_n$.
We get recursively a diagram $\alpha \colon \Delta^2 \times \N \to \dSt_k$. Taking the colimit along $\N$, we get a commutative diagram
\[
\mymatrix{
\Spec C \ar[d]_f \ar[r] & \colim_n \Spec C_n \ar[d] \ar[rr] && T \ar[d]^t \\
\Spec B \ar[r] & \colim_n \Spec B_n \ar[r] & \Spec(B[\![X_{1\dots d}]\!]) \ar[r] & X
}
\]
This defines a map $\phi \colon \colim \Spec(C_n) \to \Spec(B[\![X_{1\dots d}]\!]) \times_X T$.
We have the cartesian diagram
\[
\mymatrix{
\Spec(B[\![X_{1\dots d}]\!]) \times_X T \ar[r] \ar[d] \cart & X \ar[d] \\ \Spec(B[\![X_{1 \dots d}]\!]) \times T \ar[r] & X \times X
}
\]
The diagonal of $X$ is algebraisable and thus so is the stack $\Spec(B[\![X_{1\dots d}]\!]) \times_X T$. The morphism $\phi$ therefore defines the required map
\[
\Spec(C[\![X_{1 \dots d}]\!]) \to \Spec(B[\![X_{1\dots d}]\!]) \times_X T
\]
\end{proof}

\begin{rmq}
Let us remark here that if $X$ is an algebraisable stack, then $\kaplooppre_V^d(X)$ is a stack, hence the natural map is an equivalence
\[
\kaplooppre_V^d(X) \simeq \kaploop_V^d(X)
\]
\end{rmq}

\begin{lem}\label{LU-fetale}
Let $f \colon X \to Y$ be an étale map of derived Artin stacks. 
For any cdga $A \in \cdga_k$ and any nilpotent ideal $I \subset \homol^0(A)$, the induced map
\[
\theta \colon \mymatrix{
\kaplooppre^d_U(X)(A) \ar[r] & \kaplooppre^d_U(X)\left(\quot{\homol^0(A)}{I}\right) \displaystyle \timesunder[\kaplooppre^d_U(Y)\left(\quot{\homol^0(A)}{I}\right)][][-4pt] \kaplooppre^d_U(Y)(A)
}
\]
is an equivalence.
\end{lem}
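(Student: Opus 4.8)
The plan is to reduce the statement to the infinitesimal lifting property of the étale map $f$, applied chart by chart to the punctured neighbourhood. Recall from the description of $U^d_A$ above that $U^d_A = \colim_\sigma \Spec R_\sigma$ is a \emph{finite} colimit of affine derived schemes, where $\sigma$ ranges over a poset depending only on $d$ and $R_\sigma = A[\![X_{1\dots d}]\!][X^{-1}_{i_1\dots i_q}]$; the same diagram computes $U^d_{\bar A}$ for $\bar A = \homol^0(A)/I$, with $R_\sigma$ replaced by $\bar R_\sigma = \bar A[\![X_{1\dots d}]\!][X^{-1}_{i_1\dots i_q}]$, the quotient $A \to \bar A$ inducing $\rho_\sigma \colon R_\sigma \to \bar R_\sigma$. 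Hence $\kaplooppre^d_U(X)(A) \simeq \lim_\sigma X(R_\sigma)$, and likewise for the other three corners. Since fibre products and the target are limits, limits commute, so $\theta \simeq \lim_\sigma \theta_\sigma$, and it suffices to prove that for each $\sigma$ the square
\[
\mymatrix{
X(R_\sigma) \ar[r] \ar[d] & Y(R_\sigma) \ar[d] \\ X(\bar R_\sigma) \ar[r] & Y(\bar R_\sigma)
}
\]
is cartesian, the vertical maps being induced by $\rho_\sigma$.

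The engine is the obstruction theory of \autoref{obstruction} with $F = X$ and $G = Y$. As $f$ is étale we have $\Lcot_{X/Y} \simeq 0$, so for any base $\Spec B$ and any twisted square-zero extension $\Spec(B \oplus_d M[-1])$ the fibre of the comparison map of \autoref{obstruction} is a space of paths in $\Map(x^*\Lcot_{X/Y}, M) \simeq \pt$ and is therefore contractible; thus $X(B \oplus_d M[-1]) \simeq X(B) \times_{Y(B)} Y(B \oplus_d M[-1])$ for every square-zero extension. It remains to present $\rho_\sigma$ as assembled from such extensions, which I do by factoring it as $R_\sigma \to \homol^0(R_\sigma) \to \bar R_\sigma$.

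For the second factor, $\homol^0(R_\sigma) \simeq \homol^0(A)[\![X_{1\dots d}]\!][X^{-1}_{i_1\dots i_q}]$ and its map to $\bar R_\sigma$ is a surjection of discrete rings with kernel $I[\![X_{1\dots d}]\!][X^{-1}_{i_1\dots i_q}]$; this ideal is nilpotent, since $I^N = 0$ forces every $N$-fold product of series with coefficients in $I$ to vanish coefficient-wise, and localisation preserves nilpotence. The finite filtration by its powers exhibits $\homol^0(R_\sigma)$ as a finite tower of square-zero extensions of $\bar R_\sigma$, so iterating the previous paragraph gives the cartesian square for $\homol^0(R_\sigma) \to \bar R_\sigma$. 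For the first factor I use the Postnikov tower $R_\sigma \simeq \lim_n (R_\sigma)_{\leq n}$ with $(R_\sigma)_{\leq 0} = \homol^0(R_\sigma)$ and each $(R_\sigma)_{\leq n} \to (R_\sigma)_{\leq n-1}$ square-zero. Convergence of the mapping spaces is exactly \autoref{coconnective}~(i) applied to the truncated affine schemes $\Spec (R_\sigma)_{\leq n}$, giving $X(R_\sigma) \simeq \lim_n X((R_\sigma)_{\leq n})$ and the same for $Y$; telescoping the square-zero equivalences down to degree $0$ and passing to $\lim_n$, which commutes with the fibre product, yields the cartesian square for $R_\sigma \to \homol^0(R_\sigma)$. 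Pasting the two cartesian squares shows $\theta_\sigma$ is an equivalence, hence so is $\theta$.

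The routine points are the identifications of $\homol^0(R_\sigma)$ and of the kernel as power series, and the matching of the indexing diagrams for $U^d_A$ and $U^d_{\bar A}$. The one genuinely delicate step is the first factor: the Postnikov tower is infinite, so the argument rests on the convergence statement \autoref{coconnective}, available precisely because $X$ and $Y$ are derived Artin stacks and each $\Spec (R_\sigma)_{\leq n}$ is truncated affine. I expect the interchange of $\lim_n$ with the fibre product, together with checking that the telescoped equivalences are compatible over $n$, to be the main thing to verify with care.
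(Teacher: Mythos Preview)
Your proof is correct and follows essentially the same route as the paper: reduce $\theta$ to a finite limit over the affine charts $R_\sigma$, handle the discrete step $\homol^0(R_\sigma) \to \bar R_\sigma$ as a nilpotent extension via étaleness of $f$, climb the Postnikov tower of $R_\sigma$ using the square-zero lifting property, and conclude by convergence. The only cosmetic differences are that the paper invokes nilcompleteness of Artin stacks directly (which is the same content as your appeal to \autoref{coconnective}(i)) and runs the induction with the base fixed at $\bar R_\sigma$ rather than factoring through $\homol^0(R_\sigma)$ first; your worry about commuting $\lim_n$ with the fibre product is unfounded, as limits commute with limits.
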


\begin{proof}
The map $\theta$ is a finite limit of maps
\[
\mu \colon \mymatrix{
X(\xi A) \ar[r] &  X\left(\xi \left(\quot{\homol^0(A)}{I}\right) \right) \displaystyle \timesunder[Y\left(\xi\left(\quot{\homol^0(A)}{I}\right) \right)][][-4pt] Y (\xi A )
}
\]
where $\xi A = A[\![X_{1 \dots d}]\!][X^{-1}_{i_1\dots i_p}]$ is obtained from the cdga of formal power series in $A$ with $d$ variables by inverting the variables $x_{i_j}$. Let also $\xi (\quot{\homol^0(A)}{I})$ be defined similarly.
The natural map $\xi(\homol^0(A)) \to \xi (\quot{\homol^0(A)}{I})$ is also a nilpotent extension. We deduce from the étaleness of $f$ that the map
\[
\mymatrix{
X(\xi(\homol^0(A))) \ar[r] & X\left(\xi \left(\quot{\homol^0(A)}{I}\right) \right) \displaystyle \timesunder[Y\left(\xi\left(\quot{\homol^0(A)}{I}\right) \right)][][-4pt] Y (\xi( \homol^0(A)) )
}
\]
is an equivalence.
Let now $n \in \N$. We assume that the natural map
\[
\mymatrix{
X(\xi(A_{\leq n})) \ar[r] & X\left(\xi \left(\quot{\homol^0(A)}{I}\right) \right) \displaystyle \timesunder[Y\left(\xi\left(\quot{\homol^0(A)}{I}\right) \right)][][-4pt] Y (\xi(A_{\leq n}))
}
\]
is an equivalence. The cdga $\xi(A_{\leq n+1}) \simeq (\xi A)_{\leq n+1}$ is a square zero extension of $\xi(A_{\leq n})$ by $\homol^{-n-1}(\xi A)$. We thus have the equivalence
\[
\mymatrix{
X(\xi(A_{\leq n+1})) \ar[r]^-\sim & X(\xi(A_{\leq n})) \displaystyle \timesunder[Y(\xi(A_{\leq n}))][][-3pt] Y(\xi(A_{\leq n+1}))
}
\]
The natural map 
\[
\mymatrix{
X(\xi(A_{\leq n+1})) \ar[r] & X\left(\xi \left(\quot{\homol^0(A)}{I}\right) \right) \displaystyle \timesunder[Y\left(\xi\left(\quot{\homol^0(A)}{I}\right) \right)][][-4pt] Y (\xi(A_{\leq n+1}))
}
\]
is thus an equivalence too.
The stacks $X$ and $Y$ are nilcomplete, hence $\mu$ is also an equivalence -- recall that a derived stack $X$ is nilcomplete if for any cdga $B$ we have
\[
X(B) \simeq \lim_n X(B_{\leq n})
\]
Also recall that any Artin stack is nilcomplete.
It follows that $\theta$ is an equivalence.
\end{proof}

\begin{cor}\label{L-fetale}
Let $f \colon X \to Y$ be an étale map of derived Artin stacks.
For any cdga $A \in \cdga_k$ and any nilpotent ideal $I \subset \homol^0(A)$, the induced map
\[
\theta \colon
\mymatrix{
\kaplooppre^d(X)(A) \ar[r] & \kaplooppre^d(X)\left(\textstyle \quot{\homol^0(A)}{I}\right) \displaystyle \timesunder[\kaplooppre^d(Y)\left(\quot{\homol^0(A)}{I}\right)][][-4pt] \kaplooppre^d(Y)(A)
}
\]
is an equivalence.
\end{cor}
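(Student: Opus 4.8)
The plan is to obtain this as a formal consequence of \autoref{LU-fetale} together with the formal étaleness of formal completions recorded in the remark following the definition of $\mathrm{FC}$; the genuine work has already been done in the obstruction-theoretic proof of \autoref{LU-fetale}, so here only a rearrangement of fibre products remains. To set up, I would write $A_0 = \quot{\homol^0(A)}{I}$ and abbreviate $P = \kaplooppre^d(X)$, $Q = \kaplooppre^d(Y)$, $P_U = \kaplooppre^d_U(X)$, $Q_U = \kaplooppre^d_U(Y)$. Since $P$ is by construction the formal completion of $\kaplooppre^d_V(X)$ inside $P_U$, the structural map $P \to P_U$ is formally étale, which evaluated at $A$ yields
\[
P(A) \simeq P(A_0) \times_{P_U(A_0)} P_U(A),
\]
and the same holds with $(Q, Q_U)$ in place of $(P, P_U)$. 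The third ingredient is \autoref{LU-fetale} itself, giving $P_U(A) \simeq P_U(A_0) \times_{Q_U(A_0)} Q_U(A)$.

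The computation I would then carry out starts from the codomain of $\theta$ and substitutes the formula for $Q(A)$, cancelling the repeated factor $Q(A_0)$:
\[
P(A_0) \times_{Q(A_0)} Q(A) \simeq P(A_0) \times_{Q(A_0)} \bigl( Q(A_0) \times_{Q_U(A_0)} Q_U(A) \bigr) \simeq P(A_0) \times_{Q_U(A_0)} Q_U(A).
\]
Observing that the map $P(A_0) \to Q_U(A_0)$ factors through $P_U(A_0)$, I would regroup and then invoke \autoref{LU-fetale}:
\[
P(A_0) \times_{Q_U(A_0)} Q_U(A) \simeq P(A_0) \times_{P_U(A_0)} \bigl( P_U(A_0) \times_{Q_U(A_0)} Q_U(A) \bigr) \simeq P(A_0) \times_{P_U(A_0)} P_U(A).
\]
By the formal étaleness of $P \to P_U$ the right-hand side is $P(A)$, so composing these equivalences identifies $P(A)$ with $P(A_0) \times_{Q(A_0)} Q(A)$.

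The one point requiring care --- and the closest thing to an obstacle --- is to check that this chain of canonical identifications is precisely the map $\theta$, rather than merely some abstract equivalence between the two spaces. I expect this to be routine: each equivalence above is natural in $A$ and compatible with the map $f \colon X \to Y$ and with the completion maps $P \to P_U$, $Q \to Q_U$, so the relevant cube of fibre products commutes and the composite is the canonical comparison $\theta$. No finiteness, Artin, or étale hypothesis beyond those already used in \autoref{LU-fetale} is needed at this stage.
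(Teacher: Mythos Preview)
Your proposal is correct and matches the paper's intended argument: the corollary is stated without proof, as an immediate consequence of \autoref{LU-fetale} combined with the formal \'etaleness of the structural map $\kaplooppre^d(-) \to \kaplooppre^d_U(-)$ recorded in the remark after the definition of the formal completion. Your fibre-product manipulation is exactly the unpacking of this implication.
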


\begin{prop}\label{L-epi}
Let $X$ be a derived Deligne-Mumford stack of finite presentation with algebraisable diagonal. Let $t \colon T \to X$ be an étale atlas. The induced map $\kaploop^d(T) \to \kaploop^d(X)$ is an epimorphism of stacks.
\end{prop}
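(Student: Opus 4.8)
The plan is to combine the epimorphism statement already available for the naïve arc space $\kaploop_V^d$ with the formal étaleness of the completion, thereby reducing the whole question to the de Rham (reduced) direction. First I would reduce to the case where $T = \Spec A$ is affine: an étale atlas of a Deligne--Mumford stack may be taken to be a disjoint union of affines, and being an epimorphism of stacks is local on the source, so it suffices to treat each affine chart. Since an étale morphism is in particular smooth and $X$ is Artin of finite presentation with algebraisable diagonal, \autoref{LV-epi} applies verbatim and yields that
\[
\kaploop_V^d(T) \to \kaploop_V^d(X)
\]
is an epimorphism of stacks.

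Next I would exploit \autoref{L-fetale}. Taking the filtered colimit over the nilpotent ideals $I$ in its statement (and using that filtered colimits commute with the finite limit defining the fibre product), the equivalence there says exactly that $\kaploop^d(T) \to \kaploop^d(X)$ is formally étale, i.e. that the square relating it to its de Rham stack is cartesian:
\[
\kaploop^d(T) \simeq \kaploop^d(X) \times_{\left(\kaploop^d(X)\right)_\mathrm{dR}} \left(\kaploop^d(T)\right)_\mathrm{dR}.
\]
Because epimorphisms of stacks are stable under base change, it is then enough to prove that the induced map on de Rham stacks
\[
\left(\kaploop^d(T)\right)_\mathrm{dR} \to \left(\kaploop^d(X)\right)_\mathrm{dR}
\]
is an epimorphism.

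To handle the de Rham direction I would first record the general fact that a formal completion does not change the de Rham stack: for any map $f \colon Z \to W$ of prestacks one has $\left(\hat Z_W\right)_\mathrm{dR} \simeq Z_\mathrm{dR}$, since $Z_\mathrm{dR}$ and $W_\mathrm{dR}$ already invert nilpotent ideals and, by definition, $W_\mathrm{dR}(A) = \colim_J W(\homol^0(A)/J)$. Applied to the map $\kaplooppre_V^d(X) \to \kaplooppre_U^d(X)$ whose completion defines $\kaplooppre^d(X)$, this provides a natural equivalence $\left(\kaploop^d(X)\right)_\mathrm{dR} \simeq \left(\kaploop_V^d(X)\right)_\mathrm{dR}$, compatible with the morphism induced by $t$; the displayed map is thereby identified with $\left(\kaploop_V^d(T)\right)_\mathrm{dR} \to \left(\kaploop_V^d(X)\right)_\mathrm{dR}$. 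Finally, the de Rham stack functor preserves epimorphisms: a point of $G_\mathrm{dR}$ over $A$ is a point of $G$ over a reduced quotient of $\homol^0(A)$, an epimorphism lifts it étale-locally over that reduced ring, and by topological invariance of the small étale site such a cover extends to a cover of $\Spec A$. Combining this with the epimorphism obtained from \autoref{LV-epi} concludes the argument.

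I expect the main obstacle to be the bookkeeping around the de Rham direction: making precise, at the level of stacks rather than prestacks, both the identification $\left(\kaploop^d(-)\right)_\mathrm{dR} \simeq \left(\kaploop_V^d(-)\right)_\mathrm{dR}$ and the assertion that $(-)_\mathrm{dR}$ sends epimorphisms to epimorphisms (one must check that stackification, formal completion, and the de Rham construction interact as expected, and that the étale-local lifts glue). The genuinely geometric content --- lifting families of loops through the atlas --- is entirely contained in \autoref{LV-epi}, so once these formal properties of the completion and of the de Rham stack are in place, the statement follows formally.
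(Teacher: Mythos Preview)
Your proposal is correct and uses the same two ingredients as the paper --- \autoref{LV-epi} for the geometric lifting and the formally étale statement \autoref{L-fetale} (which the paper accesses via \autoref{LU-fetale}) --- but organizes them differently. The paper argues pointwise: given an $A$-point of $\kaplooppre^d(X)$, it unwinds the formal-completion definition to extract a point of $\kaplooppre^d_V(X)$ over some quotient $\homol^0(A)/I$, lifts that étale-locally via \autoref{LV-epi}, and then invokes \autoref{LU-fetale} once to produce the full lift to $\kaplooppre^d(T)$. Your version packages the same steps into structural statements about the de Rham functor: the cartesian square coming from \autoref{L-fetale}, the identification $(\kaplooppre^d(-))_{\mathrm{dR}} \simeq (\kaplooppre^d_V(-))_{\mathrm{dR}}$, and preservation of local epimorphisms under $(-)_{\mathrm{dR}}$. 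This is more conceptual and would transport well to variants; the paper's route is shorter because it avoids verifying those general de Rham facts. As you correctly anticipate, the only care needed is to run your argument at the prestack level (where all three de Rham claims are immediate from the definitions) and stackify only at the end, rather than trying to commute stackification past $(-)_{\mathrm{dR}}$ and the formal completion.
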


\begin{proof}
We can work on the map of prestacks $\kaplooppre^d(T) \to \kaplooppre^d(X)$.
Let $A \in \cdga_k$. Let $x$ be an $A$-point of $\kaplooppre^d(X)$. It corresponds to a vertex in the simplicial set
\[
\mymatrix{
\displaystyle \colim_I \textstyle {\kaplooppre_V^d(X)\left(\quot{\homol^0(A)}{I}\right)} \displaystyle \timesunder[\kaplooppre_U^d(X)\left(\quot{\homol^0(A)}{I}\right)][][-4pt] \kaplooppre_U^d(X)(A)
}
\]
There exists therefore a nilpotent ideal $I$ such that $x$ comes from a commutative diagram
\[
\mymatrix{
U_{\quot{\homol^0(A)}{I}}^d \ar[d] \ar[r] & U_A^d \ar[d] \\ V_{\quot{\homol^0(A)}{I}} \ar[r]_-v & X
}
\]
Using \autoref{LV-epi} we get an étale morphism $\psi \colon A \to B$ such that the map $v$ lifts to a map $u \colon V_{\quot{B}{J}} \to T$ where $J$ is the image of $I$ by $\psi$.
This defines a point in
\[
\textstyle \kaplooppre^d_U(T)\left(\quot{\homol^0(B)}{J} \right) \displaystyle \timesunder[\kaplooppre^d_U(X)\left(\quot{\homol^0(B)}{J}\right)][][-4pt] \kaplooppre^d_U(X)(B)
\]
Because of \autoref{LU-fetale}, we get a point of $\kaplooppre^d(T)(B)$. We now observe that this point is compatible with $x$.
\end{proof}

In the case of dimension $d=1$, \autoref{LU-fetale} can be modified in the following way. Let $f \colon X \to Y$ be a smooth map of derived Artin stacks.
For any cdga $A \in \cdga_k$ and any nilpotent ideal $I \subset \homol^0(A)$, the induced map
\[
\theta \colon \mymatrix{
\kaplooppre^1_U(X)(A) \ar[r] & \kaplooppre^1_U(X)\left(\quot{\homol^0(A)}{I}\right) \displaystyle \timesunder[\kaplooppre^1_U(Y)\left(\quot{\homol^0(A)}{I}\right)][][-4pt] \kaplooppre^1_U(Y)(A)
}
\]
is essentially surjective.
The following proposition follows.

\begin{prop}
Let $X$ be an Artin derived stack of finite presentation and with algebraisable diagonal. Let $t \colon T \to X$ be a smooth atlas. The induced map $\kaploop^1(T) \to \kaploop^1(X)$ is an epimorphism of stacks.
\end{prop}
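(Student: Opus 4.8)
The plan is to mimic the proof of \autoref{L-epi}, the analogous statement for étale atlases on Deligne--Mumford stacks, replacing the use of \autoref{LU-fetale} by its smooth, one-dimensional refinement stated just above. As there, it suffices to show that the map of \emph{prestacks} $\kaplooppre^1(T) \to \kaplooppre^1(X)$ is a local epimorphism, i.e. that every $A$-point of $\kaplooppre^1(X)$ lifts to $\kaplooppre^1(T)$ after an étale cover of $\Spec A$; stackification then produces an epimorphism of stacks.

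First I would unwind the formal completion. Given $A \in \cdga_k$ and an $A$-point $x$ of $\kaplooppre^1(X)$, the description of $\kaplooppre^1 = \mathrm{FC}(\kaplooppre^1_V \to \kaplooppre^1_U)$ shows that $x$ lies in $\colim_I \kaplooppre_V^1(X)(\homol^0(A)/I) \times_{\kaplooppre_U^1(X)(\homol^0(A)/I)} \kaplooppre_U^1(X)(A)$, so there is a nilpotent ideal $I \subset \homol^0(A)$ together with a compatible pair consisting of a $V$-point $v \colon V^1_{\homol^0(A)/I} \to X$ and a $U$-point $U_A^1 \to X$ agreeing over $U^1_{\homol^0(A)/I}$.

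Next I would lift the $V$-part. Since $t$ is a smooth atlas and $X$ has algebraisable diagonal, \autoref{LV-epi} -- which already allows smooth atlases and arbitrary $d$ -- tells us that $\kaploop_V^1(T) \to \kaploop_V^1(X)$ is an epimorphism; hence, after replacing $A$ by an étale cover $A \to B$, the point $v$ lifts to $u \colon V^1_{\homol^0(B)/J} \to T$, where $J$ is the image of $I$ in $\homol^0(B)$. Together with the pulled-back $U$-point of $X$ over $B$, this produces a point of $\kaplooppre^1_U(T)(\homol^0(B)/J) \times_{\kaplooppre^1_U(X)(\homol^0(B)/J)} \kaplooppre^1_U(X)(B)$. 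This is precisely the target of the comparison map $\theta$ appearing in the smooth one-dimensional refinement of \autoref{LU-fetale}; since $t$ is smooth, that refinement asserts $\theta$ is essentially surjective, so the point lifts to a genuine $U$-point $U_B^1 \to T$ over $B$.

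The $V$-lift $u$ and this $U$-lift then assemble into a $B$-point of $\kaplooppre^1(T) = \mathrm{FC}(\kaplooppre^1_V(T) \to \kaplooppre^1_U(T))$, which by construction maps to the restriction of $x$ along $A \to B$; thus $x$ lifts étale-locally and the map is an epimorphism of stacks. The main point -- and the only place where the argument genuinely differs from the Deligne--Mumford case -- is this last step: for an étale atlas \autoref{LU-fetale} makes $\theta$ an equivalence, whereas here we have only essential surjectivity. I would therefore take care to check that essential surjectivity is all that is needed (we must produce \emph{some} lift, not a canonical one) and that the resulting $U$-lift is compatible with $u$ over $\homol^0(B)/J$, so that the two halves do glue to a point of the formal completion lying over $x$; this compatibility of the $U$-datum with the lifted $V$-datum is the delicate bookkeeping to verify.
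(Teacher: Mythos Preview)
Your proposal is correct and is precisely the argument the paper has in mind: the paper gives no detailed proof at all, merely stating the smooth one-dimensional refinement of \autoref{LU-fetale} (essential surjectivity of $\theta$) and then writing ``The following proposition follows.'' Your write-up spells out the intended adaptation of the proof of \autoref{L-epi}, and your flagged concern about compatibility is the right one to note---since $\theta(w)$ is only \emph{equivalent} to the constructed point, the first component of that equivalence supplies the required path between $w|_{\homol^0(B)/J}$ and $u|_U$ in $\kaplooppre^1_U(T)(\homol^0(B)/J)$, which is exactly the gluing datum for the formal completion.
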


\begin{ex}
The proposition above implies for instance that $\kaploop^1(\B G) \simeq \B \kaploop^1(G)$ for any algebraic group $G$ -- where $\B G$ is the classifying stack of $G$-bundles.
\end{ex}

\subsection{Tate structure and determinantal anomaly}\label{determinantalclass}
We saw in \autoref{tatestacks} that to any Tate stack $X$, we can associate a determinantal anomaly. It a class in $\homol^2(X,\Oo_X^{\times})$.
We will prove in this subsection that the stack $\kaploop^d(X)$ is endowed with a structure of Tate stack as soon as $X$ is affine. We will moreover build a determinantal anomaly on $\kaploop^d(X)$ for any quasi-compact and separated scheme $X$.

\begin{lem}\label{L-shy}
For any $B \in \cdga_k$ of finite presentation, the functors 
\[
\kaplooppre^d_U(\Spec B), \kaplooppre^d(\Spec B) \colon \cdga_k \to \sSets
\]
are in the essential image of the fully faithful functor 
\[
\shybounded_k \cap \IP\dAff_k \to \IP\dSt_k \to \dSt_k \to \presh(\dAff)
\]
(see \autoref{indprochamps}).
It follows that $\kaplooppre^d_U(\Spec B) \simeq \kaploop^d_U(\Spec B)$ and $\kaplooppre^d(\Spec B) \simeq \kaploop^d(\Spec B)$.
\end{lem}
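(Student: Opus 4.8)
The plan is to reduce to the case where the target is a (shifted) affine space, and then to read off the ind-pro-structure directly from the Čech complex of the punctured disk. First I would reduce the target $\Spec B$. Both $\Map(U_A^d,-)$ and $\Map(V_A^d,-)$ preserve all limits, and the de Rham endofunctor $(-)_{\mathrm{dR}}$, being pointwise a filtered colimit $X_\mathrm{dR}(A)=\colim_I X(\homol^0(A)/I)$ of evaluations at discrete rings, preserves finite limits; since $\mathrm{FC}$ is built as the fibre product $X_\mathrm{dR}\times_{Y_\mathrm{dR}}Y$, the assignments $Z\mapsto\kaplooppre^d_U(Z)$ and $Z\mapsto\kaplooppre^d(Z)$ preserve finite limits (and retracts) in the variable $Z$. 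As $B$ is finitely presented, $\Spec B$ is a retract of a finite limit of affine spaces $W=\Spec(\Sym_k V)$ with $V$ a finite complex. By \autoref{shydaff-limits} the realization $\shybounded_k\cap\IP\dAff_k\to\dSt_k$ preserves finite limits, and the source is closed under finite limits and retracts; hence it suffices to treat the case $\Spec B=W$.

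Second, I would compute the punctured case for such a $W$. Since $U_A^d$ is a finite colimit of affine schemes, $\Map(U_A^d,W)\simeq\Map_{\cdga_k}(\Sym_k V,\R\Gamma(U_A^d,\Oo))$, so $\kaplooppre^d_U(W)$ is the vector-group prestack attached, functorially in $A$, to the complex $\R\Gamma(U_A^d,\Oo)\otimes_k\dual V$. I would compute $\R\Gamma(U_A^d,\Oo)$ by the Čech complex of the cover of $U_A^d$ by the $d$ opens $\{X_i\neq 0\}$: its entries are the localized power series algebras $A[\![\el X d]\!][X^{-1}_{i_1\cdots i_q}]$, each of which is a filtered colimit over $m$ of copies of $A[\![\el X d]\!]$ (the localisation, giving the ind-direction), while $A[\![\el X d]\!]=\lim_N A[\el X d]/\mathfrak m^N$ exhibits each copy as a cofiltered limit of finite free $A$-modules (the pro-direction). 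The finite totalisation keeps this ind-pro-shape and has cohomology in degrees $0$ and $d-1$, equal to $A[\![\el X d]\!]$ and $(\el X d[])^{-1}A[\el{X^{-1}}{d}]$ respectively. Thus $\kaplooppre^d_U(W)$ is the realization of an explicit $\mathbb U$-small filtered diagram of cofiltered diagrams of finite-dimensional affine spaces, whose vertices are $0$-Artin, of cohomological dimension $0$, and of cotangent tor-amplitude inside a fixed interval $[a,b]$ depending only on $d$ and $V$; so the diagram factors through some $\Cc^{[a,b]}_{0,0}$ and $\kaplooppre^d_U(W)\in\shybounded_k\cap\IP\dAff_k$. (The companion functor $\kaplooppre^d_V(W)$ is the purely pro-affine jet-scheme case $\lim_N\Map(\Spec(A[\el X d]/\mathfrak m^N),W)$, which serves as a sanity check on the pro-direction.)

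Third, the formal loop case. Using the explicit description recalled before \autoref{LV-epi}, an $A$-point of $\kaplooppre^d(W)$ is an $A$-point of $\kaplooppre^d_U(W)$ whose principal (negative-monomial) part has coefficients in a nilpotent ideal of $\homol^0(A)$. On the complex computed above this leaves the pro-part $A[\![\el X d]\!]$ in degree $0$ untouched and replaces, in the ind-part in degree $d-1$, the coefficient functor $A\mapsto A$ by $A\mapsto\colim_I\ker(A\to\homol^0(A)/I)$, that is, the coefficient affine line by its formal completion at the origin $\colim_N\Spec(k[t]/t^N)$, which is ind-affine with $0$-Artin, cohomological-dimension-$0$, finitely presented vertices. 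The resulting object is therefore again an ind-pro-affine scheme with the same numerical bounds, so $\kaplooppre^d(W)\in\shybounded_k\cap\IP\dAff_k$, and the reduction of the first paragraph upgrades this to arbitrary $\Spec B$.

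I expect this last step to be the main obstacle: one must check that the dehydrated-algebra / de Rham mechanism defining the completion interacts with the pre-existing ind-direction so as to remain inside $\IP\dAff_k$ and, crucially, to preserve the uniform boundedness of the three invariants — the delicate point being that $X_\mathrm{dR}$ of an affine scheme is not affine, so the completion has to be understood through its effect on the coefficients of the principal part rather than naively as a fibre product of de Rham stacks. Finally, since by \autoref{ff-realisation} the realization of an object of $\shybounded_k\cap\IP\dAff_k$ is already a derived stack, both $\kaplooppre^d_U(\Spec B)$ and $\kaplooppre^d(\Spec B)$ satisfy étale descent, whence the stated equivalences $\kaplooppre^d_U(\Spec B)\simeq\kaploop^d_U(\Spec B)$ and $\kaplooppre^d(\Spec B)\simeq\kaploop^d(\Spec B)$.
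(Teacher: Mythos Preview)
Your approach is essentially the paper's: reduce the target to the affine line (the paper) or a free affine space (you) using that $\kaplooppre^d_U$ and $\kaplooppre^d$ preserve finite limits and retracts, exhibit each \v Cech piece $A\mapsto A[\![X_{1\dots d}]\!][X^{-1}_{i_1\dots i_q}]$ as an explicit ind-pro-affine scheme in $\Cc^{[a,b]}_{0,0}$, and then pass to the finite limit via \autoref{shydaff-limits}. For $\kaplooppre^d$ you correctly impose nilpotence on the negative-monomial coordinates, which is exactly the paper's $G^d_E \simeq \colim_{n,m}\lim_p \Spec\bigl(k[a_{\alpha}]/J\bigr)$ with $J$ generated by the $a_\alpha^m$ for $\alpha$ with a negative entry.

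One caution: your motivating formula $\Map(U_A^d,W)\simeq\Map_{\cdga_k}(\Sym_k V,\R\Gamma(U_A^d,\Oo))$ does not type-check for $d\geq 2$, since $\R\Gamma(U_A^d,\Oo)$ lies outside $\cdga_k^{\leq 0}$. In fact, as the introduction notes, the \emph{underlying prestacks} of $\kaplooppre^d_U(\A^1)$ and $\kaplooppre^d_V(\A^1)$ already agree for $d\geq 2$; the degree-$(d-1)$ cohomology you compute is invisible at the level of $\dSt_k$ and only manifests through the ind-pro-structure (and later in the cotangent, \autoref{LU-affine-tate}). Your actual argument does not use this formula --- you work directly with the ind-pro-structure on each \v Cech term and appeal to \autoref{shydaff-limits} --- so the proof stands, but the ``vector-group prestack attached to $\R\Gamma\otimes\dual V$'' description should be dropped as a heuristic rather than a literal identification.
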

\begin{proof}
Let us first remark that $\Spec B$ is a retract of a \emph{finite} limit of copies of the affine line $\A^1$.
It follows that the functor $\kaplooppre_U^d(\Spec B)$ is, up to a retract, a finite limit of functors
\[
Z_E^d \colon A \mapsto \Map\left(k[Y], A[\![X_{1 \dots d}]\!][X^{-1}_{i_1\dots i_q}]\right)
\]
where $E = \{ \el{i}{q} \} \subset F = \{1,\dots,d\}$.
The functor $Z^d_E$ is the realisation of an affine ind-pro-scheme
\[ 
Z^d_E \simeq \colim_n \lim_p \Spec\left( k[ a_{\el{\alpha}{d}}, -n \delta_i \leq \alpha_i \leq p] \right)
\]
where $\delta_i = 1$ if $i \in E$ and $\delta_i = 0$ otherwise. The variable $a_{\el{\alpha}{d}}$ corresponds to the coefficient of $X_1^{\alpha_1} \dots X_d^{\alpha_d}$.
The functor $Z^d_E$ is thus in the category $\shybounded \cap \IP\dAff_k$. The result about $\kaplooppre^d_U(\Spec B)$ then follows from \autoref{shydaff-limits}.
The case of $\kaplooppre^d(\Spec B)$ is similar: we decompose it into a finite limit of functors
\[
G_E^d \colon A \mapsto \colim_{I \subset \homol^0(A)} \Map\left(k[Y], A[\![X_{1 \dots d}]\!][X^{-1}_{i_1\dots i_q}]^{\sqrt{I}}\right)
\]
where $I$ is a nilpotent ideal of $\homol^0(A)$.
We then observe that $G_E^d$ is the realisation of the ind-pro-scheme
\[ 
G^d_E \simeq \colim_{n,m} \lim_p \Spec\left( \quot{k[ a_{\el{\alpha}{d}}, -n \delta_i \leq \alpha_i \leq p]}{J} \right)
\]
where $J$ is the ideal generated by the symbols $a_{\el{\alpha}{d}}^m$ with at least one of the $\alpha_i$'s negative.
\end{proof}

\begin{rmq}
Let $n$ and $p$ be integers and let $k(E,n,p)$ denote the number of families $(\el{\alpha}{d})$ such that $-n \delta_i \leq \alpha_i \leq p$ for all $i$. We have 
\[
Z^d_E \simeq \colim_n \lim_p (\A^1)^{k(E,n,p)}
\] 
\end{rmq}

\begin{df}
From \autoref{L-shy}, we get a functor $\underline \kaploop^d \colon \dAff_k^\mathrm{fp} \to \IP\dSt_k$. It follows from \autoref{L-epi} that $\underline \kaploop^d$ is a costack in ind-pro-stacks. We thus define
\[
\underline \kaploop^d \colon \dSt_k^{\mathrm{lfp}} \to \IP\dSt_k
\]
to be its left Kan extension along the inclusion $\dAff_k^\mathrm{fp} \to \dSt_k^\mathrm{lfp}$ -- where $\dSt_k^\mathrm{lfp}$ is $(\infty,1)$-category of derived stacks locally of finite presentation.
This new functor $\underline \kaploop^d$ preserves small colimits by definition.
\end{df}

\begin{prop}\label{L-indpro}
There is a natural transformation $\theta$ from the composite functor
\[
\mymatrix{
\dSt_k^\mathrm{lfp} \ar[r]^-{\underline \kaploop^d} & \IP\dSt_k \ar[r]^{|-|^\IP} & \dSt_k
}
\]
to the functor $\kaploop^d$.
Moreover, the restriction of $\theta$ to derived Deligne-Mumford stacks of finite presentation with algebraisable diagonal is an equivalence.
\end{prop}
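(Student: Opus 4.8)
The plan is to recognise $F:=|-|^\IP\circ\underline\kaploop^d$ as being freely generated from its values on affines, so that $\theta$ is forced by \autoref{L-shy}, and then to establish the equivalence on nice stacks by étale descent, using \autoref{L-epi} for the covering and \autoref{L-fetale} for the identification of the Čech nerve.

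\emph{Construction of $\theta$.} By \autoref{L-shy} the realisation of the ind-pro-scheme $\underline\kaploop^d(\Spec B)$ is $\kaplooppre^d(\Spec B)\simeq\kaploop^d(\Spec B)$, so $F$ and $\kaploop^d$ agree on $\dAff_k^\mathrm{fp}$. Since $\underline\kaploop^d$ is by construction the left Kan extension along $\dAff_k^\mathrm{fp}\hookrightarrow\dSt_k^\mathrm{lfp}$ of its restriction to affines (it is a costack by \autoref{L-epi}), the functor $F$ is determined on a general lfp stack by the values $\kaploop^d(\Spec B)$ over its finitely presented affine points. First I would produce, for each $X$, a natural comparison map $|\underline\kaploop^d(X)|^\IP\to\kaploop^d(X)$ at the level of prestacks: the structure maps of the ind-pro-scheme $\underline\kaploop^d(X)$ towards the punctured-disk mapping functors realise into $\kaploop^d(X)$, and functoriality of $\kaploop^d$ makes these compatible as $\Spec B\to X$ varies. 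These assemble into the transformation $\theta\colon F\to\kaploop^d$, which by \autoref{L-shy} restricts on $\dAff_k^\mathrm{fp}$ to an equivalence.

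\emph{Equivalence on nice stacks.} Let $X$ be a finitely presented derived Deligne--Mumford stack with algebraisable diagonal, choose an étale atlas $t\colon T\to X$ with $T$ a finitely presented affine, and let $T_\bullet$ be its Čech nerve, so that $X\simeq\colim_{[n]\in\Delta\op}T_n$. Because $\underline\kaploop^d$ is a costack we have $F(X)\simeq\colim_{[n]}F(T_n)$. On the other side \autoref{L-epi} gives that $\kaploop^d(t)\colon\kaploop^d(T)\to\kaploop^d(X)$ is an epimorphism of stacks, so $\kaploop^d(X)$ is the geometric realisation of the Čech nerve $C_\bullet$ of $\kaploop^d(t)$, with $C_n=\kaploop^d(T)\times_{\kaploop^d(X)}\dots\times_{\kaploop^d(X)}\kaploop^d(T)$. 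The proof then reduces to showing that the canonical comparison $\kaploop^d(T_n)\to C_n$, induced by the projections $T_n\to T$, is an equivalence for every $n$; granting this, $\theta_X$ is identified with the colimit over $\Delta\op$ of the maps $\theta_{T_n}$ and is therefore an equivalence. Running the same argument on the terms $T_n$ — which are étale over the affine $T$, hence again finitely presented with algebraisable diagonal — reduces each $\theta_{T_n}$ to the affine case of \autoref{L-shy}.

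\emph{The main obstacle.} The crux is the identification $C_n\simeq\kaploop^d(T_n)$, i.e. that $\kaploop^d$ carries pullbacks of the étale map $t$ to fibre products over $\kaploop^d(X)$; equivalently, that the square with vertices $\kaploop^d(T\times_X T)$, the two copies of $\kaploop^d(T)$, and $\kaploop^d(X)$ is cartesian, the higher $C_n$ following formally. This is exactly where étaleness, and not merely the surjectivity of \autoref{L-epi}, is needed. The input is \autoref{L-fetale}: it exhibits $\kaploop^d(t)$ as formally étale, the squares comparing the value of $\kaplooppre^d$ on a cdga $A$ with its value on $\homol^0(A)/I$ being cartesian for every nilpotent ideal $I$. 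Applying this to $t$ and to its étale base change $T\times_X T\to T$, and unwinding the explicit description of $\kaplooppre^d(-)(A)$ as the colimit over nilpotent ideals of the relative formal completion of $\kaplooppre_V^d$ in $\kaplooppre_U^d$, one checks the cartesian square on $A$-points. This formal-étaleness bookkeeping is the one genuinely non-formal step; everything else follows from the costack property of $\underline\kaploop^d$ and agreement on affines.
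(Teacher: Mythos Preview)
Your approach is essentially the paper's: construct $\theta$ from the left Kan extension universal property together with \autoref{L-shy}, then deduce the equivalence on Deligne--Mumford stacks by étale descent via \autoref{L-epi}. The paper's own proof is extremely terse (three sentences), and you are spelling out the descent step it leaves implicit.

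One comment on your ``main obstacle'': the identification $\kaploop^d(T_n)\simeq C_n$ does not actually require \autoref{L-fetale}. The functor $\kaploop^d$ preserves finite limits outright, because each of the pieces from which it is built does: the mapping prestacks $\kaplooppre^d_V$ and $\kaplooppre^d_U$ are representable-target mapping functors and hence preserve fibre products in the source; the de Rham functor $(-)_{\mathrm{dR}}$ preserves fibre products since filtered colimits in $\sSets$ are left exact; the formal completion is itself a fibre product; and stackification is left exact. So the square with vertices $\kaploop^d(T\times_X T)$, $\kaploop^d(T)$, $\kaploop^d(T)$, $\kaploop^d(X)$ is cartesian for any map $T\to X$, étale or not --- étaleness is needed only for the epimorphism statement in \autoref{L-epi}. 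Your route through \autoref{L-fetale} can be made to work (reduce to reduced coefficients where $\kaplooppre^d\simeq\kaplooppre^d_V$), but it is a detour.
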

\begin{proof}
There is by definition a natural transformation
\[
\theta \colon | \underline \kaploop^d(-) |^\IP \to \kaploop^d(-)
\]
Moreover, the restriction of $\theta$ to affine derived scheme of finite presentation is an equivalence -- see \autoref{L-shy}. 
The fact that $\theta_X$ is an equivalence for any Deligne-Mumford stack $X$ follows from \autoref{L-epi}.
\end{proof}

\begin{lem}\label{subsets-colim}
Let $F$ be a non-empty finite set.
For any family $(M_D)$ of complexes over $k$ indexed by subsets $D$ of $F$, we have
\[
\colim_{\emptyset \neq E \subset F} \bigoplus_{\emptyset \neq D \subset E} M_D \simeq M_F[d-1]
\]
where $d$ is the cardinal of $F$ (the maps in the colimit diagram are the canonical projections).
\end{lem}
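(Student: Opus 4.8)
The plan is to split the diagram into a direct sum over the summands $M_D$ and to recognise each resulting homotopy colimit as the homology of a nerve, the shift $[d-1]$ appearing as the reduced homology of the sphere $S^{d-2}$.

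First I would fix notation. Let $P$ be the poset of non-empty subsets of $F$ and, for $E \in P$, set $C_E = \bigoplus_{\emptyset \neq D \subseteq E} M_D$; for an inclusion $E \subseteq E'$ the structure map is the projection $C_{E'} \to C_E$ deleting the summands $M_D$ with $D \not\subseteq E$, so that the colimit of the statement is the homotopy colimit of $E \mapsto C_E$ over this diagram. For each non-empty $D \subseteq F$ let $C^{(D)}$ be the subdiagram equal to $M_D$ on $Q_D = \{E : D \subseteq E\}$ and to $0$ elsewhere, with transition maps the identity of $M_D$ where both values are $M_D$ and zero otherwise. Since the projections never mix distinct summands, $E \mapsto C_E$ is the pointwise direct sum $\bigoplus_{\emptyset \neq D \subseteq F} C^{(D)}$, and because homotopy colimits commute with finite direct sums it suffices to compute $\colim C^{(D)}$ for each $D$. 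Writing $C^{(D)} = M_D \otimes_k \delta_D$, where $\delta_D$ is the $k$-valued diagram equal to $k$ on $Q_D$ and to $0$ on its complement, we are reduced to computing $\colim \delta_D$.

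Next I would obtain $\colim \delta_D$ through a nerve computation. Let $R_D = P \smallsetminus Q_D = \{E \neq \emptyset : D \not\subseteq E\}$ and let $\underline k$ denote the constant diagram with value $k$. Taking the fibre of the evident map $\underline k \to \delta_D$ gives a levelwise cofibre sequence of diagrams
\[
\underline k_{R_D} \to \underline k \to \delta_D,
\]
where $\underline k_{R_D}$ is $k$ on $R_D$ and $0$ on $Q_D$. Applying the exact functor $\colim$ yields a cofibre sequence $\colim \underline k_{R_D} \to \colim \underline k \to \colim \delta_D$. Now $\colim \underline k$ is the $k$-chains $C_\bullet(\lvert \mathrm N(P) \rvert; k)$ of the nerve of $P$, while $R_D$ is closed under the outgoing projection maps, so $\underline k_{R_D}$ is the left Kan extension along $R_D \hookrightarrow P$ of the constant diagram on $R_D$; hence $\colim \underline k_{R_D} \simeq C_\bullet(\lvert \mathrm N(R_D)\rvert ; k)$. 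Since $P$ is the face poset of the full $(d-1)$-simplex, $\lvert \mathrm N(P)\rvert$ is the barycentric subdivision of $\Delta^{d-1}$, hence contractible, so $\colim \underline k \simeq k$ and
\[
\colim \delta_D \simeq \operatorname{cofib}\big(C_\bullet(\lvert \mathrm N(R_D)\rvert ; k) \to k\big),
\]
the map being the augmentation induced by $R_D \hookrightarrow P$.

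It then remains to determine the homotopy type of $\mathrm N(R_D)$. If $D \neq F$, pick $b \in F \smallsetminus D$; the assignment $E \mapsto E \cup \{b\}$ preserves $R_D$ and is joined by comparison maps in $P$ both to the identity and to the constant functor at $\{b\} \in R_D$, so $\lvert \mathrm N(R_D)\rvert$ is contractible, the augmentation is an equivalence, and $\colim \delta_D \simeq 0$. If $D = F$, then $R_F$ is the poset of proper non-empty subsets of $F$, i.e. the face poset of $\partial \Delta^{d-1}$, whence $\lvert \mathrm N(R_F)\rvert \simeq \lvert \partial \Delta^{d-1}\rvert \simeq S^{d-2}$ (for $d = 1$ this is the empty set, and the computation below still gives $k[0]$). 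Therefore
\[
\colim \delta_F \simeq \operatorname{cofib}\big(C_\bullet(S^{d-2}; k) \to k\big) \simeq \widetilde C_\bullet(S^{d-2}; k)[1] \simeq k[d-1],
\]
using $\widetilde{\mathrm H}_\bullet(S^{d-2}; k) = k$ concentrated in degree $d-2$. Summing over $D$ kills every term except $D = F$, so $\colim C \simeq M_F \otimes_k \colim \delta_F \simeq M_F[d-1]$. The only steps requiring real care are the two nerve identifications — recognising $\underline k_{R_D}$ as a left Kan extension along the cosieve $R_D$, and the contractibility of $\mathrm N(R_D)$ for $D \neq F$ — since everything else is then formal from exactness of $\colim$ and the homology of $S^{d-2}$.
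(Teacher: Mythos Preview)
Your proof is correct, and it takes a genuinely different route from the paper's. The paper argues by induction on $d=|F|$: for $d\geq 2$ it writes the indexing poset as a pushout using the dichotomy ``$d\in E$'' versus ``$E\subset F\smallsetminus\{d\}$'', computes the three corners of the resulting cocartesian square via the inductive hypothesis for $F\smallsetminus\{d\}$ applied to the families $(M_D)_{D\subset F\smallsetminus\{d\}}$ and $(M_{D\cup\{d\}})_{D\subset F\smallsetminus\{d\}}$, and reads off the answer. Your argument instead splits the functor as $\bigoplus_D M_D\otimes_k\delta_D$ and computes each $\colim\delta_D$ via nerves, identifying the shift $[d-1]$ with the (shifted) reduced homology of $\lvert\partial\Delta^{d-1}\rvert\simeq S^{d-2}$. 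The paper's approach is more elementary --- it stays entirely within pushout manipulations of complexes and never invokes the identification of $\colim_P\underline k$ with chains on $\lvert N(P)\rvert$ --- whereas yours is more conceptual: it explains \emph{why} only the summand $D=F$ survives (the poset $R_D$ has a cone point for $D\neq F$) and where the shift comes from. Your left Kan extension step and the contraction of $\lvert N(R_D)\rvert$ via $E\mapsto E\cup\{b\}$ are both fine; the cosieve property of $R_D$ you cite is exactly what makes the comma categories over $Q_D$ empty.
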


\begin{proof}
We can and do assume that $F$ is the finite set $\{1, \dots, d\}$ and we proceed recursively on $d$.
The case $d = 1$ is obvious.
Let now $d \geq 2$ and let us assume the statement is true for $F \smallsetminus \{d\}$. Let $(M_D)$ be a family as above. We have a cocartesian diagram
\[
\mymatrix{
\displaystyle \colim_{\{d\} \subsetneq E \subset F} \bigoplus_{\emptyset \neq D \subset E} M_D \ar[r] \ar[d]
& \displaystyle \colim_{ \emptyset \neq E \subset F\smallsetminus\{d\}} \bigoplus_{\emptyset \neq D \subset E} M_D \ar[d] \\
\displaystyle M_{\{d\}} \ar[r]
& \displaystyle \colim_{\emptyset \neq E \subset F} \bigoplus_{\emptyset \neq D \subset E} M_D \cocart
}
\]
We have by assumption
\[
\colim_{ \emptyset \neq E \subset F\smallsetminus\{d\}} \bigoplus_{\emptyset \neq D \subset E} M_D \simeq M_{F \smallsetminus \{d\}} [d-2]
\]
and
\begin{align*}
\colim_{\{d\} \subsetneq E \subset F} \bigoplus_{\emptyset \neq D \subset E} M_D
&\simeq M_{\{d\}} \oplus \left(\colim_{\{d\} \subsetneq E \subset F} \bigoplus_{\{d\} \subsetneq D \subset E} M_D \right) \oplus \left(\colim_{\{d\} \subsetneq E \subset F} \bigoplus_{\emptyset \neq D \subset E \smallsetminus \{d\}} M_D\right) \\
&\simeq M_{\{d\}} \oplus M_F[d-2] \oplus M_{F \smallsetminus \{d\}}[d-2]
\end{align*}
The result follows.
\end{proof}

\begin{lem}\label{LU-affine-tate}
For any $B \in \cdga_k$ of \emph{finite presentation}, the ind-pro-stack $\underline \kaploop^d_U(\Spec B)$ is a Tate stack.
\end{lem}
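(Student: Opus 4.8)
The plan is to reduce the statement to the explicit building blocks produced in \autoref{L-shy} and to prove each of them is a Tate stack, assembling the answer through stability of Tate stacks under finite limits and retracts. Recall from the proof of \autoref{L-shy} that $\Spec B$ is a retract of a finite limit of copies of $\A^1$, and that $\kaplooppre^d_U(\A^1)$ is the limit over the poset of non-empty subsets $E$ of $F = \{1,\dots,d\}$ of the ind-pro-affine schemes
\[
Z_E^d \simeq \colim_n \lim_p \A^{N(E,n,p)} ,
\]
where $N(E,n,p)$ counts the admissible multi-indices. Since $\kaplooppre^d_U(-) = \Map(U^d_{(-)},-)$ sends limits and retracts in its target to limits and retracts, $\kaplooppre^d_U(\Spec B)$ — equivalently $\underline\kaploop^d_U(\Spec B)$ by \autoref{L-shy} — is a retract of a finite limit of the $Z_E^d$. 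As all the objects involved lie in $\shybounded_k \cap \IP\dAff_k$ and the realisation functor of \autoref{shydaff-limits} is fully faithful and preserves the finite limits at hand, this presentation already holds in $\IP\dSt_k$. Granting that each $Z_E^d$ is a Tate stack, \autoref{tate-limits} shows the finite limit is again a Tate stack; and because Tate modules form an idempotent-complete subcategory, the class of Tate stacks is closed under retracts as well, so $\underline\kaploop^d_U(\Spec B)$ is Tate.

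It therefore remains to prove that $Z_E^d$ is a Tate stack. Each $\A^{N(E,n,p)}$ is a smooth affine scheme locally of finite presentation, so $Z_E^d$ is an Artin ind-pro-stack locally of finite presentation and admits a cotangent complex $\Lcot_{Z_E^d} \in \PIPerf(Z_E^d)$ by \autoref{ipcotangent}. The content is that this cotangent is the tautological ind-pro-module assembled from the finite free cotangents $\Lcot_{\A^{N(E,n,p)}} \simeq \Oo^{N(E,n,p)}$. Concretely, using \autoref{cotangent-pdst} in the pro-direction and \autoref{ipcotangent} in the ind-direction, the restriction of $\Lcot_{Z_E^d}$ to the $n$-th pro-stack $\lim_p \A^{N(E,n,p)}$ is the pro-ind-object obtained by taking the limit over $p$ of the modules $\Oo^{N(E,n,p)}$ and the colimit over $n$. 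The transition maps are split: the $p$-maps are the projections forgetting top-degree coefficients and the $n$-maps are the closed immersions adjoining new negative-degree coefficients, with finite-rank associated graded. This is exactly the shape of a Tate object, so $\T_{Z_E^d}$ is the $\Oo$-linear incarnation of an ind-pro-finite-dimensional (hence Tate) vector space, whence $\Lcot_{Z_E^d}$ is a Tate module and $Z_E^d$ is a Tate stack.

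The main obstacle is precisely this last verification: showing that $\Lcot_{Z_E^d}$ genuinely lands in the Tate subcategory $\Tateu U_\IP(Z_E^d)$ rather than in the larger category $\PIPerf(Z_E^d)$ of all pro-ind-perfect complexes. This is where the special form of the diagrams — split projections in the $p$-direction and split immersions in the $n$-direction — is essential; I would carry it out via the pullback characterisation of Tate modules, reducing to the statement that $\lim_p \colim_n \Oo^{N(E,n,p)}$ is a Tate object of $\Tateu U_{\mathbf P}$ at each pro-stage. A more structural alternative avoids this hands-on computation: one identifies $U^d_A \simeq \Spec A \times U^d$, so that $\underline\kaploop^d_U(\Spec B) \simeq \Mapstack(U^d,\Spec B)$ for the punctured formal neighbourhood $U^d = U^d_k$ viewed as a pro-ind-stack, and then it suffices to check that $U^d$ is an $\Oo$-compact $\Oo$-Tate stack in the sense of \autoref{map-cotate} and to invoke \autoref{map-tate}. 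The cost of this route is transferred to verifying the base-change, projection and cokernel conditions of \autoref{map-cotate} for the punctured disk.
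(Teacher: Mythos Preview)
Your reduction to the building blocks $Z_E^d$ is where the argument breaks down, and the obstacle you flag is not a verification but a false expectation. For $d \geq 2$ and any non-empty $E \subset F$, the cotangent of $Z_E^d$ contains summands $\lim_n \colim_p M_D^{p,n}$ with $0 < |D| < d$: when $E \subsetneq F$ take $D = E$, and when $E = F$ take any proper non-empty $D$. Such an $M_D^{p,n}$ has rank depending genuinely on both $n$ and $p$ --- the indices in $D$ contribute a factor growing with $n$, those outside $D$ a factor growing with $p$ --- so the kernel of each transition map in the pro-direction is an infinite-rank ind-object and no lattice is available. These mixed summands are not Tate objects; having split transitions with finite-rank associated graded in each direction separately is a condition satisfied by any reasonable pro-ind-object, not a Tate criterion. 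Thus for $d \geq 2$ none of the $Z_E^d$ are individually Tate stacks.

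The paper bypasses this by refusing to break the limit over $E$ apart. It computes the cotangent of $\underline \kaploop^d_U(\A^1)$ as a whole,
\[
\Lcot_{\underline \kaploop^d_U(\A^1)} \simeq \pi^*\Bigl(\lim_n \colim_p \colim_{\emptyset \neq E \subset F} \bigoplus_{D \subset E} M_D^{p,n}\Bigr),
\]
and then applies the combinatorial \autoref{subsets-colim} to the inner colimit over $E$. That lemma is precisely the cancellation mechanism: all mixed summands $M_D^{p,n}$ with $0 < |D| < d$ disappear, leaving only $M_\emptyset^{p,n}$ (depending only on $p$, hence pure ind) and $M_F^{p,n}[d-1]$ (depending only on $n$, hence pure pro). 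Their sum is manifestly Tate. The passage to general $B$ then goes exactly as you outline, via \autoref{shydaff-limits} and \autoref{tate-limits}. Your alternative through \autoref{map-tate} hits a separate wall: the punctured formal neighbourhood is not known to carry a pro-ind-structure satisfying \autoref{map-cotate}, which is why the paper treats the bubble space and the loop space by different methods.
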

\begin{proof}
Let us first focus on the case of the affine line $\A^1$.
We have to prove that the cotangent complex $\Lcot_{\underline \kaploop^d_U(\A^1)}$ is a Tate module.
For any subset $D \subset F$ we define $M_D^{p,n}$ to be the free $k$-complex generated by the symbols
\[
\{a_{\el{\alpha}{d}}, -n \leq \alpha_i < 0 \text{ if } i \in D, 0 \leq \alpha_i \leq p \text{ otherwise}\}
\]
in degree $0$.
From the proof of \autoref{L-shy}, we have
\[
Z^d_E \simeq \colim_n \lim_p \Spec \left( k\left[ \textstyle \bigoplus_{D \subset E} M_D^{p,n} \right] \right) \text{~~~ and ~~~} \underline \kaploop^d_U(\A^1) \simeq \lim_{\emptyset \neq E \subset F} Z^d_E
\]
where $F = \{1 , \dots , d\}$.
If we denote by $\pi$ the projection $\underline \kaploop_U^d(\A^1) \to \Spec k$, we get
\[
\Lcot_{\underline \kaploop^d_U(\A^1)} \simeq \pi^* \left(\colim_{\emptyset \neq E \subset F} \lim_n \colim_p \bigoplus_{D \subset E} M_D^{p,n}\right)
\simeq \pi^* \left(\lim_n \colim_p \colim_{\emptyset \neq E \subset F} \bigoplus_{D \subset E} M_D^{p,n}\right)
\]
Using \autoref{subsets-colim} we have
\[
\Lcot_{\underline \kaploop^d_U(\A^1)} \simeq \pi^* \left(\lim_n \colim_p M_\emptyset^{p,n} \oplus M_F^{p,n}[d-1] \right)
\]
Moreover, we have $M_\emptyset^{p,n} \simeq M_\emptyset^{p,0}$ and $M_F^{p,n} \simeq M_F^{0,n}$. It follows that $\Lcot_{\underline \kaploop^d_U(\A^1)}$ is a Tate module on the ind-pro-stack $\underline \kaploop^d_U(\A^1)$.
The case of $\underline \kaploop^d_U(\Spec B)$ then follows from \autoref{shydaff-limits} and from \autoref{tate-limits}.
\end{proof}

\begin{lem}\label{LU-ip-fetale}
Let $B \to C$ be an étale map between cdga's of finite presentation. The induced map $f \colon \underline \kaploop^d_U(\Spec C) \to \underline \kaploop^d_U(\Spec B)$ is formally étale -- see \autoref{derivation-ipdst}.
\end{lem}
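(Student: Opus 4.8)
The plan is to verify the cotangent-complex condition of \autoref{derivation-ipdst} levelwise, reducing formal étaleness of $f$ to the vanishing of a relative cotangent complex governed by $g \colon \Spec C \to \Spec B$. I would start from the explicit ind-pro presentation of \autoref{L-shy}: writing $F = \{1, \dots, d\}$, one has
\[
\underline\kaploop^d_U(\Spec B) \simeq \lim_{\emptyset \neq E \subset F} \colim_n \lim_p \Mapstack(\Spec R_{E,n,p}, \Spec B),
\]
where $R_{E,n,p}$ is the \emph{finite free} $k$-algebra spanned by the monomials $X_1^{\alpha_1}\dots X_d^{\alpha_d}$ with $-n\delta_i \leq \alpha_i \leq p$ (so that $\colim_n\lim_p R_{E,n,p} = k[\![X_{1\dots d}]\!][X^{-1}_{i_1\dots i_q}]$). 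In particular each domain $\Spec R_{E,n,p}$ is $\Oo$-compact, each bottom term $W_{E,n,p}(\Spec B) = \Mapstack(\Spec R_{E,n,p}, \Spec B)$ is a finitely presented derived affine scheme, and $f$ is levelwise the postcomposition map $\Mapstack(\Spec R_{E,n,p}, g)$.

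By \autoref{ipcotangent} and \autoref{cotangent-pdst}, the cotangent complex of an Artin ind-pro-stack is assembled from the cotangent complexes of its bottom terms through the finite limit over $E$, the ind-colimit over $n$ and the pro-limit over $p$. Hence the natural transformation $f^*_\PI \Lcot_{\underline\kaploop^d_U(\Spec B)} \to \Lcot_{\underline\kaploop^d_U(\Spec C)}$ is an equivalence in $\PIQcoh$ --- equivalently $\Lcot_{\underline\kaploop^d_U(\Spec C)/\underline\kaploop^d_U(\Spec B)} \simeq 0$ --- as soon as the relative cotangent complex
\[
\Lcot_{W_{E,n,p}(\Spec C)/W_{E,n,p}(\Spec B)}
\]
vanishes for every $E$, $n$ and $p$.

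To treat a bottom term I would invoke the mapping-stack formula $\T_{\Mapstack(\Spec R, Y)} \simeq \pr_* \ev^* \T_Y$ used in the proof of \autoref{map-tate}, legitimate here since $\Spec R_{E,n,p}$ is $\Oo$-compact. Its relative form along $g$ identifies the relative cotangent complex above with $\pr_* \ev^* \Lcot_{\Spec C/\Spec B}$, where $\ev \colon W_{E,n,p}(\Spec C) \times \Spec R_{E,n,p} \to \Spec C$ is the evaluation map. As $g$ is étale we have $\Lcot_{\Spec C/\Spec B} \simeq 0$, so this relative cotangent complex vanishes; propagating the vanishing through the three layers of (co)limits gives $\Lcot_{\underline\kaploop^d_U(\Spec C)/\underline\kaploop^d_U(\Spec B)} \simeq 0$, which is the claim.

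The main obstacle is the bookkeeping of this last propagation: one must commute the pushforwards $\pr_*$ past the ind-colimit over $n$ and the pro-limit over $p$, and check that the base-change and projection formulae used to rewrite $\pr_*\ev^*\Lcot_{\Spec C/\Spec B}$ stay valid at each stage of the diagram. This is exactly the computation already performed in the proof of \autoref{map-tate}, so I would cite that argument rather than redo it, the only new ingredient being the étale vanishing $\Lcot_{\Spec C/\Spec B} \simeq 0$.
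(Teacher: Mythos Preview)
Your approach has a genuine gap at its first step. The objects $R_{E,n,p}$ you introduce are not $k$-algebras: the $k$-linear span of the monomials $X_1^{\alpha_1}\cdots X_d^{\alpha_d}$ with $-n\delta_i \le \alpha_i \le p$ is not closed under multiplication (already $X_i^p \cdot X_i^p = X_i^{2p}$ leaves the range once $p\ge 1$), so $\Spec R_{E,n,p}$ and hence $\Mapstack(\Spec R_{E,n,p},\Spec B)$ are not defined. More fundamentally, you are tacitly assuming that the punctured formal neighbourhood carries a pro-ind-\emph{scheme} structure with levels $\Spec R_{E,n,p}$, so that $\underline\kaploop^d_U(-)$ becomes a levelwise mapping stack and $f$ becomes levelwise postcomposition by $g$. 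The paper flags exactly this as the difficulty (see the introduction): no such source is constructed, and the ind-pro-structure produced in \autoref{L-shy} is \emph{not} of this shape. It is obtained by presenting $\Spec B$ as a retract of a finite limit of copies of $\A^1$ and then using the explicit coefficient schemes $\Spec\bigl(k[a_{\alpha_1,\dots,\alpha_d}]\bigr)$ for $\A^1$; that presentation depends on $B$ and is not a priori compatible with an independently chosen presentation of $\Spec C$. So the levelwise comparison you propose cannot be set up, and the appeal to the mapping-stack tangent formula from \autoref{map-tate} has no footing here.

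The paper's argument avoids this obstacle entirely by working with derivations rather than cotangent complexes, and by passing to realisations. For a pro-affine test object $Z \to \underline\kaploop^d_U(\Spec C)$ and $E \in \IQcoh(Z)^{\le 0}$, the square-zero extension $Z[E]$ is again pro-affine; since both $\underline\kaploop^d_U(\Spec B)$ and $\underline\kaploop^d_U(\Spec C)$ lie in $\shybounded_k$, the full faithfulness of the realisation functor (\autoref{ff-realisation}) identifies the mapping spaces in $\IP\dSt_k$ with mapping spaces of the realisations $|Z|$, $|Z[E]|$ in $\dSt_k$. But $|Z[E]|$ is an ordinary trivial square-zero extension of the affine scheme $|Z|$, and now \autoref{LU-fetale} --- the nilpotent lifting property of $\kaplooppre^d_U$ along étale maps --- applies verbatim. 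The missing idea in your proposal is precisely this reduction to realisations, which trades the unavailable pro-ind source for a computation in ordinary derived stacks.
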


\begin{proof}
Let us denote $X = \Spec B$ and $Y = \Spec C$.
We have to prove that the induced map
\[
j \colon \Map_{\underline \kaploop^d_U(Y)/-}\left(\underline \kaploop^d_U(Y)[-], \underline \kaploop^d_U(Y)\right) \to \Map_{\underline \kaploop^d_U(Y)/-}\left(\underline \kaploop^d_U(Y)[-], \underline \kaploop_U^d(X)\right) 
\]
is an equivalence of functors $\PIQcoh( \underline \kaploop^d(Y) )^{\leq 0} \to \sSets$.
Since $\underline \kaploop^d_U(Y)$ is ind-pro-affine, we can restrict to the study of the morphism
\[
j_Z \colon \Map_{Z/-}\left(Z[-], \underline \kaploop^d_U(Y)\right) \to \Map_{Z/-}\left(Z[-], \underline \kaploop_U^d(X)\right) 
\]
of functors $\IQcoh(Z)^{\leq 0} \to \sSets$, for any pro-affine scheme $Z$ and any map $Z \to \underline \kaploop^d_U(Y)$.
Let us fix $E \in \IQcoh(Z)^{\leq 0}$. The pro-stack $Z[E]$ is in fact an affine pro-scheme.
Recall that both $\underline \kaploop^d_U(Y)$ and $\underline \kaploop^d_U(X)$ belong to $\shybounded_k$.
It follows from the proof of \autoref{ff-realisation} that the morphism $j_Z(E)$ is equivalent to
\[
|j_Z(E)| \colon \Map_{|Z|/-}\left( |Z[E]|, \kaploop^d_U(Y) \right) \to \Map_{|Z|/-}\left( |Z[E]|, \kaploop^d_U(X) \right) 
\]
where $|-|$ is the realisation functor and the mapping spaces are computed in $\dSt_k$.
It now suffices to see that $|Z[E]|$ is a trivial square zero extension of the derived affine scheme $|Z|$ and to use \autoref{LU-fetale}.
\end{proof}

\begin{prop}\label{L-affine-tate}
Let $\Spec B$ be a derived affine scheme of finite presentation. The ind-pro-stack $\underline \kaploop^d(\Spec B)$ admits a cotangent complex. This cotangent complex is moreover a Tate module.
For any étale map $B \to C$ the induced map $f \colon \underline \kaploop^d(\Spec C) \to \underline \kaploop^d(\Spec B)$ is formally étale -- see \autoref{derivation-ipdst}.
\end{prop}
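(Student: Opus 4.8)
The strategy is to transport all three assertions from $\underline\kaploop^d_U(\Spec B)$, where they are already established (\autoref{LU-affine-tate} and \autoref{LU-ip-fetale}), onto $\underline\kaploop^d(\Spec B)$ along the natural morphism $\pi \colon \underline\kaploop^d(\Spec B) \to \underline\kaploop^d_U(\Spec B)$. The crux is therefore to prove that $\pi$ is formally étale in the sense of \autoref{derivation-ipdst}. First I would record, using \autoref{L-shy}, that both $\underline\kaploop^d(\Spec B)$ and $\underline\kaploop^d_U(\Spec B)$ lie in $\shybounded_k \cap \IP\dAff_k$ and that $\pi$ realises under $|-|^\IP$ to the map of prestacks $\kaploop^d(\Spec B) \to \kaploop^d_U(\Spec B)$. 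Since $\kaploop^d(\Spec B)$ is by construction the formal completion of $\kaploop^d_V(\Spec B)$ inside $\kaploop^d_U(\Spec B)$, the remark following the definition of the formal completion shows that this map of prestacks is formally étale: derivations into $\kaploop^d(\Spec B)$ coincide with derivations into $\kaploop^d_U(\Spec B)$ over nilpotent thickenings.

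To promote this to formal étaleness of ind-pro-stacks, I would repeat the reduction used in the proof of \autoref{LU-ip-fetale}. As $\underline\kaploop^d(\Spec B)$ is ind-pro-affine, it suffices to compare the two derivation functors on a pro-affine scheme $Z$ with a map $Z \to \underline\kaploop^d(\Spec B)$, evaluated on a fixed $E \in \IQcoh(Z)^{\leq 0}$. By the full faithfulness of the realisation functor on $\shybounded_k$ (\autoref{ff-realisation}), the relevant comparison map is equivalent to
\[
\Map_{|Z|/-}\left(|Z[E]|, \kaploop^d(\Spec B)\right) \to \Map_{|Z|/-}\left(|Z[E]|, \kaploop^d_U(\Spec B)\right),
\]
and $|Z[E]|$ is a trivial square-zero extension of the derived affine scheme $|Z|$. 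The formal étaleness of the formal completion then forces this map to be an equivalence, so $\pi$ is formally étale. By \autoref{derivation-ipdst} the ind-pro-stack $\underline\kaploop^d(\Spec B)$ consequently admits a cotangent complex, with $\Lcot_{\underline\kaploop^d(\Spec B)} \simeq \pi^*\Lcot_{\underline\kaploop^d_U(\Spec B)}$.

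The Tate property is then immediate. By \autoref{LU-affine-tate} the complex $\Lcot_{\underline\kaploop^d_U(\Spec B)}$ is a Tate module, and Tate modules are stable under pullback: by the characterisation following the definition of $\Tateu U_\IP$, a module is Tate precisely when all its restrictions to pro-stacks land in $\Tateu U_\mathbf P$, a condition visibly inherited by $\pi^*\Lcot_{\underline\kaploop^d_U(\Spec B)}$ through composition with $\pi$. Hence $\Lcot_{\underline\kaploop^d(\Spec B)}$ is a Tate module and $\underline\kaploop^d(\Spec B)$ is a Tate stack.

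For the last assertion, given an étale map $B \to C$ I would consider the commutative square with vertical maps $f \colon \underline\kaploop^d(\Spec C) \to \underline\kaploop^d(\Spec B)$ and $g \colon \underline\kaploop^d_U(\Spec C) \to \underline\kaploop^d_U(\Spec B)$ and horizontal maps $\pi_C$, $\pi_B$, so that $\pi_B \circ f = g \circ \pi_C$. The maps $\pi_B$, $\pi_C$ are formally étale by the argument above and $g$ is formally étale by \autoref{LU-ip-fetale}. Chasing cotangent complexes then yields $f^*\Lcot_{\underline\kaploop^d(\Spec B)} \simeq (\pi_B f)^*\Lcot_{\underline\kaploop^d_U(\Spec B)} = (g\pi_C)^*\Lcot_{\underline\kaploop^d_U(\Spec B)} \simeq \pi_C^*\Lcot_{\underline\kaploop^d_U(\Spec C)} \simeq \Lcot_{\underline\kaploop^d(\Spec C)}$, so $f$ is formally étale. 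The main obstacle is the second step -- lifting formal étaleness from prestacks to ind-pro-stacks via the full faithfulness of the realisation and the identification of $|Z[E]|$ as a trivial square-zero extension -- while the cotangent-complex bookkeeping in the remaining steps is formal.
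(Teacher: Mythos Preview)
Your proposal is correct and follows essentially the same route as the paper: prove that the natural map $\pi \colon \underline\kaploop^d(\Spec B) \to \underline\kaploop^d_U(\Spec B)$ is formally étale by reducing to pro-affine test objects, passing to realisations via full faithfulness on $\shybounded_k$, and using that $|Z[E]|$ is a trivial square-zero extension of $|Z|$; then read off the cotangent, its Tate property, and the étale-invariance from \autoref{LU-affine-tate} and \autoref{LU-ip-fetale}. The only cosmetic difference is that where you invoke the formal-étaleness remark for formal completions, the paper unpacks the defining pullback square and observes that the induced map on under-mapping-spaces is a pullback of $\Map_{|Z|/-}(|Z[E]|,\kaploop^d_V(Y)_\mathrm{dR}) \to \Map_{|Z|/-}(|Z[E]|,\kaploop^d_U(Y)_\mathrm{dR})$, both of whose ends are contractible since de Rham stacks ignore trivial square-zero extensions; this is precisely the content behind the remark you cite.
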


\begin{proof}
Let us write $Y = \Spec B$.
Let us denote by $i \colon \underline \kaploop^d(Y) \to \underline \kaploop^d_U(Y)$ the natural map. 
We will prove that the map $i$ is formally étale, the result will then follow from \autoref{LU-affine-tate} and \autoref{LU-ip-fetale}.
To do so, we consider the natural map
\[
j \colon \Map_{\underline \kaploop^d(Y)/-}\left(\underline \kaploop^d(Y)[-], \underline \kaploop^d(Y)\right) \to \Map_{\underline \kaploop^d(Y)/-}\left(\underline \kaploop^d(Y)[-], \underline \kaploop_U^d(Y)\right) 
\]
of functors $\PIQcoh( \underline \kaploop^d(Y) )^{\leq 0} \to \sSets$.
To prove that $j$ is an equivalence, we can consider for every affine pro-scheme $X \to \underline \kaploop^d(Y)$ the morphism of functors $\IQcoh(X)^{\leq 0} \to \sSets$
\[
j_X \colon \Map_{X/-}\left( X[-], \underline \kaploop^d(Y) \right) \to \Map_{X/-}\left( X[-], \underline \kaploop^d_U(Y) \right) 
\]
Let us fix $E \in \IQcoh(X)^{\leq 0}$. The morphism $j_X(E)$ is equivalent to
\[
|j_X(E)| \colon \Map_{|X|/-}\left( |X[E]|, \kaploop^d(Y) \right) \to \Map_{|X|/-}\left( |X[E]|, \kaploop^d_U(Y) \right) 
\]
where the mapping space are computed in $\dSt_k$. The map $|j_X(E)|$ is a pullback of the map
\[
f \colon \Map_{|X|/-}\left( |X[E]|, \kaploop^d_V(Y)_\mathrm{dR} \right) \to \Map_{|X|/-}\left( |X[E]|, \kaploop^d_U(Y)_\mathrm{dR} \right) 
\]
It now suffices to see that $|X[E]|$ is a trivial square zero extension of the derived affine scheme $|X|$ and thus $f$ is an equivalence (both of its ends are actually contractible).
\end{proof}

Let us recall from \autoref{determinantalanomaly} the determinantal anomaly
\[
[\mathrm{Det}_{\underline \kaploop^d(\Spec A)}] \in \homol^2\left(\kaploop^d(\Spec A), \Oo_{\kaploop^d(\Spec A)}^{\times}\right)
\]
It is associated to the tangent $\T_{\underline \kaploop^d(\Spec A)} \in \Tateu U_\IP(\underline \kaploop^d(\Spec A))$ through the determinant map.
Using \autoref{L-affine-tate}, we see that this construction is functorial in $A$, and from \autoref{L-epi} we get that it satisfies étale descent.
Thus, for any quasi-compact and quasi-separated (derived) scheme (or Deligne-Mumford stack with algebraisable diagonal), we have a well-defined determinantal anomaly
\[
[\mathrm{Det}_{\underline \kaploop^d(X)}] \in \homol^2\left(\kaploop^d(X), \Oo_{\kaploop^d(X)}^{\times}\right)
\]

\begin{rmq}
It is known since \cite{kapranovvasserot:loop4} that in dimension $d=1$, if $[\mathrm{Det}_{\kaploop^1(X)}]$ vanishes, then there are essentially no non-trivial automorphisms of sheaves of chiral differential operators on $X$.
\end{rmq}

\section{Bubble spaces}\label{chapterBubbles}
In this section, we study the bubble space, an object closely related to the formal loop space. We will then prove the bubble space to admit a symplectic structure.

\subsection{Two lemmas}

In this subsection, we will develop two duality results we will need afterwards.

Let $A \in \cdga_k$ be a cdga over a field $k$. Let $(\el{f}{p})$ be points of $A^0$ whose images in $\homol^0(A)$ form a regular sequence.

Let us denote by $A_{n,k}$ the Kozsul complex associated to the regular sequence $(\el{f^n}{k})$ for $k \leq p$. We set $A_{n,0} = A$ and $A_n = A_{n,p}$ for any $n$.
If $k<p$, the multiplication by $f^n_{k+1}$ induces an endomorphism $\varphi^n_{k+1}$ of $A_{n,k}$. Recall that $A_{n,k+1}$ is isomorphic to the cone of $\varphi^n_{k+1}$:
\[
\mymatrix{
A_{n,k} \ar[r]^{\varphi^n_{k+1}} \ar[d] & A_{n,k} \ar[d] \\ 0 \ar[r] & A_{n,k+1} \cocart
}
\]
Let us now remark that for any couple $(n,k)$, the $A$-module $A_{n,k}$ is perfect.
\begin{lem} \label{dual-over-A}
Let $k \leq p$.
The $A$-linear dual $A_{n,k}^{\quot{\vee}{A}} = \RHomint_A(A_{n,k},A)$ of $A_{n,k}$ is equivalent to $A_{n,k}[-k]$;
\end{lem}
\begin{proof}
We will prove the statement recursively on the number $k$.
When $k = 0$, the result is trivial.
Let $k \geq 0$ and let us assume that $A_{n,k}^{\quot{\vee}{A}}$ is equivalent to $A_{n,k}[-k]$. Let us also assume that for any $a \in A$, the diagram induced by multiplication by $a$ commutes
\[
\mymatrix{
A_{n,k}^{\quot{\vee}{A}} \ar@{-}[r]^-\sim \ar[d]_{\dual a} & A_{n,k}[-k] \ar[d]^a \\
A_{n,k}^{\quot{\vee}{A}} \ar@{-}[r]^-\sim & A_{n,k}[-k]
}
\]
We obtain the following equivalence of exact sequences
\[
\mymatrix{
A_{n,k+1}[-k-1] \ar[r] \ar@{-}[d]^\sim & A_{n,k}[-k] \ar@{-}[d]^\sim \ar[r]^{\varphi^n_{k+1}} & A_{n,k}[-k] \ar@{-}[d]^\sim \\
A_{n,k+1}^{\quot{\vee}{A}} \ar[r] & A_{n,k}^{\quot{\vee}{A}} \ar[r]^{\dual{\left(\varphi^n_{k+1}\right)}} & A_{n,k}^{\quot{\vee}{A}}
}
\]
The statement about multiplication is straightforward.
\end{proof}

\begin{lem} \label{dual-over-k}
Let us assume $A$ is a formal series ring over $A_1$:
\[
A = A_1[\![\el{f}{p}]\!]
\]
It follows that for any $n$, the $A_1$-module $A_n$ is free of finite type and that there is map $r_n \colon A_n \to A_1$ mapping $\el{f^n}{p}[]$ to $1$ and any other generator to zero.
We deduce an equivalence 
\[
A_n \to^\sim A_n^{\quot{\vee}{A_1}} = \RHomint_{A_1}(A_n, A_1)
\]
given by the pairing
\[
\mymatrix{
A_n \otimes_{A_1} A_n \ar[r]^-{\times} & A_n \ar[r]^{r_n} & A_1
}
\]
\end{lem}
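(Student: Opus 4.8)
The plan is to reduce this self-duality over $A_1$ to an explicit one-variable computation and then tensor. First I would make $A_n$ explicit. Since $A = A_1[\![\el{f}{p}]\!]$, the sequence $(\el{f^n}{p})$ is the sequence of $n$-th powers of the formal coordinates, hence again regular; the Koszul complex therefore resolves the quotient and
\[
A_n \simeq \quot{A}{(\el{f^n}{p})} \simeq \quot{A_1[\el{f}{p}]}{(\el{f^n}{p})},
\]
the second identification because reducing modulo the $n$-th powers collapses formal series to polynomials of degree $<n$ in each variable. As an $A_1$-module this is free with basis the monomials $f_1^{a_1}\cdots f_p^{a_p}$, $0\le a_i\le n-1$, so it is free of finite type (of rank $n^p$), which is the first assertion, and the residue $r_n$ — the $A_1$-linear projection onto the top monomial — is well defined.

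Next I would use the tensor structure over $A_1$. Setting $B_i = \quot{A_1[f_i]}{(f_i^n)}$, one has $A_n \simeq B_1 \otimes_{A_1}\cdots\otimes_{A_1} B_p$ and $r_n = \rho_1\otimes\cdots\otimes\rho_p$, where $\rho_i\colon B_i\to A_1$ sends $f_i^{n-1}$ to $1$ and the lower powers to $0$. Each $B_i$ is $A_1$-free, so these tensor products are automatically underived and stay free of finite type; in particular $A_n$ is dualizable and $\RHomint_{A_1}(A_n,A_1)$ is the naive $A_1$-dual of a free module. The map to be shown invertible is the adjoint $A_n \to \RHomint_{A_1}(A_n,A_1)$, $x\mapsto(y\mapsto r_n(xy))$, a morphism of free finite-rank $A_1$-dg-modules.

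The heart of the matter is the one-variable case. In the basis $1,f_i,\dots,f_i^{n-1}$ the pairing $(x,y)\mapsto\rho_i(xy)$ has $(a,b)$-entry equal to $\rho_i(f_i^{a+b})$, which is $1$ exactly when $a+b = n-1$ and $0$ otherwise (for $a+b\ge n$ the product already vanishes in $B_i$); this is the anti-diagonal exchange matrix, invertible over $\mathbb Z$ and hence over $A_1$, so $B_i\to^\sim\RHomint_{A_1}(B_i,A_1)$. The pairing on $A_n$ is the tensor product of these, so its matrix is the Kronecker product of exchange matrices — a permutation matrix with entries in $\{0,1\}$, still invertible over $\mathbb Z$ — whence $A_n\to^\sim\RHomint_{A_1}(A_n,A_1)$.

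The main obstacle is keeping the derived bookkeeping honest: $A_1$ is a genuine cdga, not a field, so \emph{free of finite type}, \emph{dualizable} and \emph{perfect pairing} all have to be read in $\dgMod_{A_1}$. The point that makes this painless is that every module in sight is $A_1$-free on an explicit finite monomial basis, so all the tensor products and $\RHomint$'s are computed naively and the whole pairing is encoded by a single integer matrix; its invertibility over $\mathbb Z$ forces the adjoint to be an equivalence over any cdga base, with no flatness or convergence subtleties. The only other thing to verify is the regularity of $(\el{f^n}{p})$, used to identify the Koszul complex with the honest quotient, and this is immediate in the formal-series model.
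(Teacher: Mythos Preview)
Your argument is correct and essentially agrees with the paper. The paper does not give a separate proof of this lemma; it treats it as evident from the explicit monomial basis, and the subsequent remark writes down the inverse map $\alpha \mapsto \sum_{\underline i} \alpha(f^{\underline i}) f^{\,n-1-\underline i}$, which is exactly the inverse of the anti-diagonal exchange you compute. Your tensor-factorisation into the one-variable cases $B_i = A_1[f_i]/(f_i^n)$ is a clean way to organise the same computation, and it makes transparent why the pairing matrix is invertible over $\mathbb Z$ (hence over any base). One cosmetic point: the top monomial picked out by $r_n$ is $f_1^{n-1}\cdots f_p^{n-1}$, as you use and as the inverse formula in the paper confirms; the ``$f_1^n\cdots f_p^n$'' in the displayed statement is a typo, since that element is already zero in $A_n$.
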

\begin{rmq}
Note that we can express the inverse $A_n^{\quot{\vee}{A_1}} \to A_n$ of the equivalence above: it maps a function $\alpha \colon A_n \to A_1$ to the serie
\[
\sum_{\underline i} \alpha(f^{\underline i}) f^{n-1-\underline i}
\]
where $\underline i$ varies through the uplets $(\el{i}{p})$ and where $f^{\underline i} = f_1^{i_1} \dots f_p^{i_p}$.
\end{rmq}

\subsection{Definition and properties}

We define here the bubble space, obtained from the formal loop space. We will prove in the next sections it admits a structure of symplectic Tate stack.

\begin{df}
The formal sphere of dimension $d$ is the pro-ind-stack
\[
\formalsphere^d = \lim_n \colim_{p \geq n} \Spec(A_p \oplus \Homint_A(A_n,A)) \simeq \lim_n \colim_{p \geq n} \Spec(A_p \oplus A_n[-d])
\]
where $A = k[\el{x}{d}]$ and $A_n = \quot{A}{(\el{x^n}{d})}$.
\end{df}

\begin{rmq}
The notation $\Spec(A_p \oplus A_n[-d])$ is slightly abusive. The cdga $A_p \oplus A_n[-d]$ is not concentrated in non positive degrees. In particular, the derived stack $\Spec(A_p \oplus A_n[-d])$ is not a derived affine scheme.
It behaves like one though, regarding its derived category:
\[
\Qcoh(\Spec(A_p \oplus A_n[-d])) \simeq \dgMod_{A_p \oplus A_n[-d]}
\]
\end{rmq}

Let us define the ind-pro-algebra 
\[
\Oo_{\formalsphere^d} = \colim_n \lim_{p \geq n} A_p \oplus A_n[-d]
\]
where $A_p \oplus A_n[-d]$ is the trivial square zero extension of $A_p$ by the module $A_n[-d]$.
For any $m \in \N$, let us denote by $\formalsphere^d_m$ the ind-stack
\[
\formalsphere^d_m = \colim_{p \geq m} \Spec(A_p \oplus A_m[-d])
\]

\begin{df}\label{dfbubble}
Let $T$ be a derived Artin stack. We define the $d$-bubble stack of $T$ as the mapping ind-pro-stack
\[
\bubblestack(T) = \Mapstack(\formalsphere^d, T) \colon \Spec B \mapsto \colim_n \lim_{p \geq n} T \left(B \otimes (A_p \oplus A_n[-d]) \right)
\]
Again, the cdga $A_p \oplus A_n[-d]$ is not concentrated in non positive degree. This notation is thus slightly abusive and by $T(B \otimes (A_p \oplus A_n[-d]))$ we mean
\[
\Map(\Spec(A_p \oplus A_n[-d]) \times \Spec B,T)
\]
We will denote by $\bar \bubblestack(T)$ the diagram $\N \to \Prou U \dSt_k$ of whom $\bubblestack(T)$ is a colimit in $\IP\dSt_k$.
Let us also denote by $\bubblestack_m(T)$ the mapping pro-stack
\[
\bubblestack_m(T) = \Map(\formalsphere^d_m, T) \colon \Spec B \mapsto \lim_{p \geq m} T \left(B \otimes (A_p \oplus A_m[-d]) \right)
\]
and $\bar \bubblestack_m(T) \colon \{ p \in \N | p \geq m\}\op \to \dSt_S$ the corresponding diagram.
In particular
\[
\bubblestack_0(T) = \Map(\formalsphere^d_0, T) \colon \Spec B \mapsto \lim_{p} T \left(B \otimes A_p\right)
\]
Those stacks come with natural maps
\[
\mymatrix{
\bubblestack_0(T) \ar[r]^-{s_0} & \bubblestack(T) \ar[r]^-r & \bubblestack_0(T)
}
\]
\[
\mymatrix{
\bubblestack_m(T) \ar[r]^-{s_m} & \bubblestack(T)
}
\]
\end{df}

\begin{prop}\label{B-and-L-IP}
If $T$ is an affine scheme of finite type, the bubble stack $\bubblestack(T)$ is the product in ind-pro-stacks
\[
\mymatrix{
\bubblestack(T) \ar[r] \ar[d] \cart[3] & {} \underline{\kaploop}_V^d(T) \ar[d] \\ {} \underline \kaploop_V^d(T) \ar[r] & {} \underline \kaploop_U^d(T)
}
\]
\end{prop}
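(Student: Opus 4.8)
The plan is to realise both sides as values of a single mapping construction and to reduce the statement to a pushout identity on the source stacks. Since $\Mapstack(-,T)$ carries colimits in its source to limits, for any pushout $P\amalg_Q P$ one has
\[
\Mapstack(P\amalg_Q P,T)\simeq\Mapstack(P,T)\times_{\Mapstack(Q,T)}\Mapstack(P,T).
\]
Writing $\hat V=\colim_p\Spec A_p$ for the formal neighbourhood of $0$ in $\A^d$ (an ind-scheme) and $\hat U$ for the punctured formal neighbourhood, the ind-pro presentations of \autoref{L-shy} together with the definitions give $\underline\kaploop_V^d(T)\simeq\Mapstack(\hat V,T)$ and $\underline\kaploop_U^d(T)\simeq\Mapstack(\hat U,T)$, the two legs $\underline\kaploop_V^d(T)\to\underline\kaploop_U^d(T)$ both being restriction along the open immersion $\hat U\hookrightarrow\hat V$. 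Thus it suffices to produce an equivalence of pro-ind-stacks
\[
\formalsphere^d\simeq\hat V\amalg_{\hat U}\hat V
\]
(two copies of the formal neighbourhood glued along the punctured one, a Mayer--Vietoris picture of the sphere) and then apply $\Mapstack(-,T)$.

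Everything in sight is pro-ind-affine, so I would prove the source identity one level at a time, with the $\colim_p$ supplying the ind-direction and the $\lim_n$ the pro-direction of \autoref{dfbubble}. On functions a pushout of affines becomes a fibre product of cdga's, so at level $(n,p)$ the claim is the identity
\[
A_p\oplus A_n[-d]\simeq A_p\times_{C_{n,p}}A_p,
\]
where $\Spec C_{n,p}$ is the corresponding finite level of the punctured neighbourhood and both maps $A_p\to C_{n,p}$ are the restriction map. Because the two maps coincide, the diagonal splits the homotopy fibre product and identifies it with the trivial square-zero extension of $A_p$ by $\mathrm{fib}(A_p\to C_{n,p})$. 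That fibre is the local cohomology of $\A^d$ along the origin, which is concentrated in degree $d$; at this finite level it is exactly $\Homint_A(A_n,A)$, and \autoref{dual-over-A} yields $\Homint_A(A_n,A)\simeq A_n[-d]$. Since the trivial square-zero extension by $\Homint_A(A_n,A)$ is precisely the cdga used to build $\formalsphere^d$, reassembling over the $\lim_n\colim_p$ diagram gives $\formalsphere^d\simeq\hat V\amalg_{\hat U}\hat V$.

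It then remains to evaluate on an arbitrary test cdga $B$: tensoring with $B$ is exact and preserves fibre products of cdga's, so $B\otimes(A_p\times_{C_{n,p}}A_p)\simeq(B\otimes A_p)\times_{B\otimes C_{n,p}}(B\otimes A_p)$, which matches the term $T(B\otimes(A_p\oplus A_n[-d]))$ of \autoref{dfbubble} against the corresponding term of the fibre product. The compatibility of $\Mapstack(-,T)$ with the pro-ind structure, and the fact that the relevant finite limit of ind-pro-affine stacks is realised correctly, is supplied by \autoref{shydaff-limits}, using that $T$ is of finite type. I expect the main obstacle to be the level-wise step: pinning down the finite-level punctured ring $C_{n,p}$ from the \v Cech presentation of $\hat U$ and checking that $\mathrm{fib}(A_p\to C_{n,p})$ really is $\Homint_A(A_n,A)$ compatibly in both $n$ and $p$, so that the duality of \autoref{dual-over-A} assembles into the square-zero module defining $\formalsphere^d$. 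Once this comparison is established, the three steps combine to identify $\bubblestack(T)$ with the asserted cartesian square.
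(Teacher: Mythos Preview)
Your strategy is to dualise: exhibit $\formalsphere^d\simeq\hat V\amalg_{\hat U}\hat V$ in pro-ind-stacks and then apply $\Mapstack(-,T)$. The paper does not do this; it stays entirely on the target side. Since both $\bubblestack(-)$ and the fibre product preserve finite limits and retracts in $T$, and any affine of finite type is a retract of a finite limit of copies of $\A^1$, the paper reduces at once to $T=\A^1$. There the explicit ind-pro-affine presentations of $\underline\kaploop_V^d(\A^1)$ and $\underline\kaploop_U^d(\A^1)$ from \autoref{L-shy} are available, and the fibre product is computed directly, level by level, using the combinatorial \autoref{subsets-colim}; the answer $\colim_n\lim_p\Spec\bigl(k[M^{p,0}_\emptyset\oplus M^{0,n}_F[d]]\bigr)$ is then recognised as $\bubblestack(\A^1)$.

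Your route has a real gap that you flag but underweight. The punctured formal neighbourhood $\hat U$ is \emph{not known} to carry a pro-ind-scheme structure---the introduction says so explicitly---so the sentence ``everything in sight is pro-ind-affine'' fails precisely at $\hat U$, and there is no canonical level-$(n,p)$ ring $C_{n,p}$ on which to run your fibre identity $A_p\times_{C_{n,p}}A_p\simeq A_p\oplus A_n[-d]$. One can reverse-engineer a candidate $C_{n,p}\simeq A_p\oplus A_n[1-d]$ by computing the finite \v Cech colimit (this is exactly the content of \autoref{subsets-colim}), but then one must still verify that $\underline\kaploop_U^d(T)$ agrees, \emph{as an ind-pro-stack}, with $\Mapstack\bigl(\lim_n\colim_p\Spec C_{n,p},\,T\bigr)$ for every finite-type affine $T$. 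That verification is not formal and is essentially the paper's reduction to $\A^1$ together with its explicit computation. So your argument, once completed, would absorb the paper's proof as its main technical step rather than replace it; the conceptual picture you sketch is correct, but it does not shortcut the work.
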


\begin{proof}
There is a natural map $V_k^d \to \formalsphere^d$ induced by the morphism
\[
\colim_n \lim_{p \geq n} A_p \oplus A_n[-d] \to \lim_p A_p
\]
Because $T$ is algebraisable, it induces a map $\bubblestack(T) \to \underline \kaploop_V^d(T)$ and thus a diagonal morphism
\[
\delta \colon \bubblestack(T) \to \underline \kaploop_V^d(T) \timesunder[\underline \kaploop_U^d(T)] \underline \kaploop_V^d(T)
\]
We will prove that $\delta$ is an equivalence.
Note that because $T$ is a (retract of a) finite limit of copies of $\A^1$, we can restrict to the case $T = \A^1$.
Let us first compute the fibre product $Z = \underline \kaploop^d_V(\A^1) \times_{\underline \kaploop_U^d(\A^1)} \underline \kaploop_V^d(\A^1)$. It is the pullback of ind-pro-stacks
\[
\mymatrix{
Z \ar[r] \ar[d] \cart &
\displaystyle \lim_p \Spec\left( k[ a_{\el{\alpha}{d}}, 0 \leq \alpha_i \leq p] \right) \ar[d] \\
\displaystyle \lim_p \Spec\left( k[ a_{\el{\alpha}{d}}, 0 \leq \alpha_i \leq p] \right) \ar[r] &
\displaystyle \colim_n \lim_p \lim_{I \subset J} \Spec\left( k[ a_{\el{\alpha}{d}}, -n \delta_{i \in I} \leq \alpha_i \leq p] \right)
}
\]
where $J = \{1, \dots , d\}$ and $\delta_{i \in I} = 1$ if $i \in I$ and $0$ otherwise.
For any subset $K \subset J$ we define $M_K^{p,n}$ to be the free complex generated by the  symbols
\[
\{a_{\el{\alpha}{d}}, -n \leq \alpha_i < 0 \text{ if } i \in K, 0 \leq \alpha_i \leq p \text{ otherwise}\}
\]
We then have the cartesian diagram
\[
\mymatrix{
Z \ar[r] \ar[d] \cart & \lim_p \Spec\left( k[ M^{p,0}_\emptyset ] \right) \ar[d] \\
\lim_p \Spec\left( k[ M^{p,0}_\emptyset ] \right) \ar[r] &
\colim_n \lim_p \lim_{I \subset J} \Spec\left(k\left[ \bigoplus_{K \subset I} M_K^{p,n} \right] \right)
}
\]
Using \autoref{subsets-colim} we get 
\[
Z \simeq \colim_n \lim_p \Spec\left( k\left[M^{p,0}_\emptyset \oplus M^{0,n}_J[d]\right] \right)
\]
\end{proof}

\begin{rmq}
Let us consider the map $\lim_p A_p \to A_0 \simeq k$ mapping a formal serie to its coefficient of degree $0$. The $(\lim A_p)$-ind-module $\colim A_n[-d]$ is endowed with a natural map to $k[-d]$. This induces a morphism $\Oo_{\formalsphere^d} \to k \oplus k[-d]$ and hence a map $\mathrm S^d \to \formalsphere^d$, where $\mathrm S^d$ is the topological sphere of dimension $d$. We then have a rather natural morphism
\[
\bubblestack^d(X) \to \Mapstack(\mathrm S^d,X)
\]
\end{rmq}

\subsection{Its tangent is a Tate module}

We will prove in this subsection that the bubble stack is a Tate stack.
To do so, we could bluntly apply \autoref{map-tate} but we will give here a direct proof of that statement.
We will get another decomposition of its tangent complex that will be needed when proving $\bubblestack^d(T)$ is symplectic.

\begin{prop}\label{prop-formal-tate}
Let us assume that the Artin stack $T$ is locally of finite presentation.
For any $m \in \N$ we have an exact sequence
\[
\mymatrix{
s_m^*r^* \Lcot_{\bubblestack^d(T)_0} \ar[r] & s_m^*\Lcot_{\bubblestack^d(T)} \ar[r] & s_m^*\Lcot_{\bubblestack^d(T)/\bubblestack^d(T)_0}
}
\]
where the left hand side is an ind-perfect module and the right hand side is a pro-perfect module.

In particular, the middle term is a Tate module, and the ind-pro-stack $\bubblestack^d(T)$ is a Tate stack.
\end{prop}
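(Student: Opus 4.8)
The plan is to obtain the stated sequence as the transitivity (cofiber) triangle of the retraction $r \colon \bubblestack^d(T) \to \bubblestack^d(T)_0$, pulled back to each stage, and then to recognise its two outer terms as an ind-perfect and a pro-perfect module; since Tate objects are closed under extensions, the middle term is forced to be Tate. First I would record that $\bubblestack^d(T) = \Mapstack(\formalsphere^d,T)$ is an Artin ind-pro-stack locally of finite presentation, so by \autoref{ipcotangent} it admits a cotangent complex $\Lcot_{\bubblestack^d(T)} \in \PIPerf(\bubblestack^d(T))$. The target $\bubblestack^d(T)_0 \simeq \underline \kaploop_V^d(T)$ (compare \autoref{B-and-L-IP}) is a genuine pro-stack, so by \autoref{cotangent-pdst} its cotangent is ind-perfect. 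Writing the fundamental triangle for $r$ and applying $s_m^*$ gives exactly
\[
s_m^* r^* \Lcot_{\bubblestack^d(T)_0} \to s_m^* \Lcot_{\bubblestack^d(T)} \to s_m^* \Lcot_{\bubblestack^d(T)/\bubblestack^d(T)_0}.
\]

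Next I would identify the two outer terms. The left term is easy: $\Lcot_{\bubblestack^d(T)_0}$ is ind-perfect and pullback functors preserve the ind-perfect part, so $s_m^* r^* \Lcot_{\bubblestack^d(T)_0}$ is an ind-perfect module on the pro-stack $\bubblestack_m(T)$. The right term is the heart of the argument. Using the mapping-stack formula for the relative tangent, $\T_{\Map(X,Y)} \simeq \pr_* \ev^* \T_Y$ (as in the proof of \autoref{map-tate}), the relative cotangent of $r$ is governed by the fibre of the projection $\Oo_{\formalsphere^d} \to \Oo_{\formalsphere^d_0}$, which is the square-zero part $\colim_n A_n[-d]$ of $\Oo_{\formalsphere^d} = \colim_n \lim_{p\geq n} (A_p \oplus A_n[-d])$. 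Restricting to the stage $m$ kills the colimit over $n$ and leaves only the finite free module $A_m[-d]$, while the source $\formalsphere^d_m = \colim_{p \geq m} \Spec(A_p \oplus A_m[-d])$ makes $\pr_*$ a cofiltered limit over $p$. Hence $s_m^* \Lcot_{\bubblestack^d(T)/\bubblestack^d(T)_0}$ is a limit of perfect complexes, i.e. pro-perfect; the self-duality $A_m \simeq A_m^{\vee}$ of \autoref{dual-over-k} (together with \autoref{dual-over-A}) is what makes this identification, and the later comparison with $\T$, clean.

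Finally I would conclude. By the two preceding points, $s_m^* \Lcot_{\bubblestack^d(T)}$ sits in a cofiber sequence between an ind-perfect and a pro-perfect module; since $\Tateu U_\mathbf P(\bubblestack_m(T))$ is stable under extensions inside the pro-ind-perfect complexes, we get $s_m^* \Lcot_{\bubblestack^d(T)} \in \Tateu U_\mathbf P(\bubblestack_m(T))$ for every $m$. Because $\bubblestack^d(T) = \colim_m \bubblestack_m(T)$ and $\Tateu U_\IP$ is defined as the right Kan extension of $\Tateu U_\mathbf P$ along $(\Prou U\dSt_S)\op \to (\IP\dSt_S)\op$, this stagewise condition is precisely membership in $\Tateu U_\IP(\bubblestack^d(T))$; thus $\Lcot_{\bubblestack^d(T)}$ is globally a Tate module, and by the duality of \autoref{ipdst-tate-in-ipp} so is its dual $\T_{\bubblestack^d(T)}$, making $\bubblestack^d(T)$ a Tate stack. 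I expect the main obstacle to be the honest computation of $s_m^* \Lcot_{\bubblestack^d(T)/\bubblestack^d(T)_0}$: one has to push the relative cotangent through the evaluation map while tracking \emph{both} the ind and pro directions simultaneously, and verify that the pro-direction is genuinely pro-perfect rather than being trivialised, which is exactly where the explicit square-zero description of $\Oo_{\formalsphere^d}$ and the duality lemmas are needed.
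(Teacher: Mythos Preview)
Your overall architecture is exactly the paper's: form the transitivity triangle for $r$, observe that the left term is ind-perfect, and argue that the relative cotangent on the right is pro-perfect so the middle is Tate by closure under extensions. The left-hand side and the final assembly are fine.

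The gap is in your identification of the pro-perfect structure on the right-hand term. You write that ``restricting to the stage $m$ kills the colimit over $n$ and leaves only $A_m[-d]$'', and that the pro-direction then comes from $p$ via $\pr_*$. This has the indices backwards. By \autoref{ipcotangent}, the pullback $s_m^*\Lcot_{\bubblestack^d(T)}$ is the pro-object $\lim_{n \geq m} g_{mn}^*\Lcot_{\bubblestack^d(T)_n}$ over $n$; the $n$-direction does \emph{not} collapse to the single term $n=m$. Each $\Lcot_{\bubblestack^d(T)_n}$ is in turn ind-perfect over $p$ (it is the cotangent of a pro-stack, \autoref{cotangent-pdst}). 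So the relative cotangent $s_m^*\Lcot_{\bubblestack^d(T)/\bubblestack^d(T)_0}$ is a priori pro over $n$ and ind over $p$. To show it is pro-perfect you must show the $p$-direction is essentially constant for each fixed $n$: that the transition maps of the diagram $p \mapsto t_{np}^*\Lcot_{\bubblestack^d(T)_{n,p}/\bubblestack^d(T)_{0,p}}$ are equivalences. Knowing each term is perfect is not enough; a filtered colimit of perfect complexes is only ind-perfect in general.

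This constancy in $p$ is precisely the computation the paper carries out: one compares the fibres of $\Oo_{S_{n,q}} \to {\xi_{nq}}_*\Oo_{S_{0,q}}$ and ${\beta_{npq}}_*\Oo_{S_{n,p}} \to {\beta_{npq}}_*{\xi_{np}}_*\Oo_{S_{0,p}}$ after applying base change and projection formulae along the evaluation map, and both fibres are identified with $A_n[-d]$ independently of $p$. Your heuristic ``the relative part is controlled by $A_n[-d]$, which is finite free'' is the right intuition for why this works, but it has to be implemented as a statement about transition maps in $p$, not as a claim that the $p$-direction is where the pro-structure lives. The duality lemmas \autoref{dual-over-A} and \autoref{dual-over-k} are not actually used here; they enter later in the non-degeneracy argument of \autoref{B-symplectic}.
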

\begin{proof}
Throughout this proof, we will write $\bubblestack$ instead of
$\bubblestack^d(T)$ and $\bubblestack_m$ instead of $\bubblestack^d(T)_m$ for any $m$.
Let us first remark that $\bubblestack$ is an Artin ind-pro-stack locally of finite presentation.
It suffices to prove that $s_m^* \Lcot_{\bubblestack}$ is a Tate module on $\bubblestack_m$, for any $m \in \N$. We will actually prove that it is an elementary Tate module.
We consider the map
\[
s_m^* r^* \Lcot_{\bubblestack_0} \to s_m^* \Lcot_{\bubblestack}
\]
It is by definition equivalent to the natural map
\[
\cotangent^{\Pro}_{\bubblestack_m}(\bubblestack_0) \to^f \lim \cotangent^{\Pro}_{\bubblestack_m}(\bar \bubblestack_{\geq m}(T))
\]
where $\bar \bubblestack_{\geq m}(T)$ is the restriction of $\bar \bubblestack(T)$ to $\{ n \geq m \} \subset \N$.
Let $\phi$ denote the diagram
\[
\phi \colon \{ n \in \N | n \geq m\}\op \to \IPerf(\bubblestack_m(T))
\]
obtained as the cokernel of $f$. It is now enough to prove that $\phi$ factors through $\Perf(\bubblestack_m(T))$. Let $n \geq m$ be an integer and let $g_{mn}$ denote the induced map $\bubblestack_m(T) \to \bubblestack_n(T)$. We have  an exact sequence
\[
s_m^* r^* \Lcot_{\bubblestack_0(T)} \simeq g_{mn}^* s_n^* r^* \Lcot_{\bubblestack_0(T)} \to g_{m,n}^* \Lcot_{\bubblestack_n(T)} \to \phi(n)
\]
Let us denote by $\psi(n)$ the cofiber
\[
s_n^* r^* \Lcot_{\bubblestack_0(T)} \to \Lcot_{\bubblestack_n(T)} \to \psi(n)
\]
so that $\phi(n) \simeq g_{mn}^* \psi(n)$.
This sequence is equivalent to the colimit (in $\IPerf(\bubblestack_n(T))$) of a cofiber sequence of diagrams $\{ p \in \N | p \geq n \}\op \to \Perf(\bubblestack_n(T))$
\[
\lambda^{\Pro}_{\bubblestack_n(T)}(\bar \bubblestack_0(T)) \to \lambda^{\Pro}_{\bubblestack_n(T)}(\bar \bubblestack_n(T) ) \to \bar \psi(n)
\]
It suffices to prove that the diagram $\bar \psi(n) \colon \{ p \in \N | p \geq n \}\op \to \Perf(\bubblestack_n(T))$ is (essentially) constant.
Let $p \in \N$, $p \geq n$. The perfect complex $\bar \psi(n)(p)$ fits in the exact sequence
\[
t_{np}^* \varepsilon_{np}^* \Lcot_{\bubblestack_{0,p}(T)} \to \pi_{n,p}^* \Lcot_{\bubblestack_{n,p}(T)} \to \bar \psi(n)(p)
\]
where $t_{np} \colon \bubblestack_n(T) \to \bubblestack_{n,p}(T)$ is the canonical projection and $\varepsilon_{np} \colon \bubblestack_{n,p}(T) \to \bubblestack_{0,p}(T)$ is induced by the augmentation $\Oo_{S_{n,p}} \to \Oo_{S_{0,p}}$.
It follows that $\bar \psi(n)(p)$ is equivalent to
\[
t_{np}^* \Lcot_{\bubblestack_{n,p}(T)/\bubblestack_{0,p}(T)}
\]
Moreover, for any $q \geq p \geq n$, the induced map $\bar \psi(n)(p) \to \bar \psi(n)(q)$ is obtained (through $t_{nq}^*$) from the cofiber, in $\Perf(\bubblestack_{n,q}(T))$
\[
\mymatrix@R=2mm{
\alpha_{npq}^* \varepsilon_{np}^* \Lcot_{\bubblestack_{0,p}(T)} \ar[r] \ar@{}[rdddd]|*{(\sigma)} & \alpha_{npq}^* \Lcot_{\bubblestack_{n,p}(T)} \ar[dddd] \ar[r] & \alpha_{npq}^* \Lcot_{\bubblestack_{n,p}(T)/\bubblestack_{0,p}(T)} \ar[dddd] \\
\varepsilon_{nq}^* \alpha_{0pq}^* \Lcot_{\bubblestack_{0,p}(T)} \ar@{=}[u] \ar[ddd] \\ \\ \\
\varepsilon_{nq}^* \Lcot_{\bubblestack_{0,q}(T)} \ar[r] & \Lcot_{\bubblestack_{n,q}(T)} \ar[r] & \Lcot_{\bubblestack_{n,q}(T)/\bubblestack_{0,q}(T)}
}
\]
where $\alpha_{npq}$ is the map $\bubblestack_{n,q}(T) \to \bubblestack_{n,p}(T)$.
Let us denote by $(\sigma)$ the square on the left hand side above.
Let us fix a few more notations
\[
\mymatrix@R=7mm@C=5mm{
&
\bubblestack_{n,p}(T) \times S_{0,p} \ar[dd]_{\varphi_{np}} \ar[dl]_(0.6){a_{0p}} & &
\bubblestack_{n,q}(T) \times S_{0,p} \ar[dd]_{\psi_{npq}} \ar[ll] \ar[rr] & &
\bubblestack_{n,q}(T) \times S_{0,q} \ar[dd]^{\varphi_{nq}} \\
S_{0,p} \ar[dd]_{\xi_{np}} \\ &
\bubblestack_{n,p}(T) \times S_{n,p} \ar[dl]^{a_{np}} \ar@{-}[d] & &
\bubblestack_{n,q}(T) \times S_{n,p} \ar[ll] \ar[rr]^{b_{npq}} \ar@{-}[d] \ar[ld] & &
\bubblestack_{n,q}(T) \times S_{n,q} \ar[dl]^{a_{nq}} \ar[dd]^{\varpi_{nq}} \ar[dr]^(0.7){\ev_{nq}} \\
S_{n,p} & \ar[d]^(0.35){\varpi_{np}} & S_{n,p} \ar@{-}[ll]_(0.4)= & \ar[d] & S_{n,q} \ar[ll]_(0.3){\beta_{npq}} & & T\\ &
\bubblestack_{n,p}(T) & & \bubblestack_{n,q}(T) \ar[ll]_{\alpha_{npq}} \ar@{-}[rr]_{=} & & \bubblestack_{n,q}(T)
}
\]
The diagram $(\sigma)$ is then dual to the diagram
\[
\mymatrix{
\alpha_{npq}^* \varepsilon_{np}^* {\varpi_{0p}}_* \ev_{0p}^* \T_T &
\alpha_{npq}^* {\varpi_{np}}_*  \ev_{np}^* \T_T \ar[l] \\
\varepsilon_{nq}^* {\varpi_{0q}}_* \ev_{0q}^* \T_T \ar[u] &
{\varpi_{nq}}_* \ev_{nq}^* \T_T \ar[l] \ar[u]
}
\]
Moreover, the functor $\varpi_{np}$ (for any $n$ and $p$) satisfies the base change formula. This square is thus equivalent to the image by ${\varpi_{nq}}_*$ of the square
\[
\mymatrix{
{\psi_{npq}}_* {b_{npq}}_* b_{npq}^* \psi_{npq}^* \ev_{nq}^* \T_T &
{b_{npq}}_* b_{npq}^* \ev_{nq}^* \T_T \ar[l] \\
{\varphi_{nq}}_* \varphi_{nq}^* \ev_{nq}^* \T_T \ar[u] &
\ev_{nq}^* \T_T \ar[l] \ar[u]
}
\]
Using now the projection and base change formulae along the morphisms $\varphi_{nq}$, $b_{npq}$ and $\psi_{npq}$, we see that this last square is again equivalent to
\[
\mymatrix{
(a_{nq}^* {\beta_{npq}}_* {\xi_{np}}_* \Oo_{S_{0,p}}) \otimes (\ev_{nq}^* \T_T) &
(a_{nq}^* {\beta_{npq}}_* \Oo_{S_{n,p}}) \otimes (\ev_{nq}^* \T_T) \ar[l]
\\
(a_{nq}^* {\xi_{nq}}_* \Oo_{S_{0,q}}) \otimes (\ev_{nq}^* \T_T) \ar[u] &
(a_{nq}^* \Oo_{S_{n,q}}) \otimes (\ev_{nq}^* \T_T) \ar[l] \ar[u]
}
\]
We therefore focus on the diagram
\[
\mymatrix{
\Oo_{S_{n,q}} \ar[r] \ar[d] & {\xi_{nq}}_* \Oo_{S_{0,q}} \ar[d] \\
{\beta_{npq}}_* \Oo_{S_{n,p}} \ar[r] & {\beta_{npq}}_* {\xi_{np}}_* \Oo_{S_{0,p}}
}
\]
By definition, the fibres of the horizontal maps are both equivalent to 
$A_n[-d]$
and the map induced by the diagram above is an equivalence.
We have proven that for any $q \geq p \geq n$ the induced map $\bar \psi(n)(p) \to \bar \psi(n)(q)$ is an equivalence.
It implies that $\Lcot_{\bubblestack(T)}$ is a Tate module.
\end{proof}

\subsection{A symplectic structure (shifted by \texorpdfstring{$d$}{d})}
In this subsection, we will prove the following
\begin{thm}\label{B-symplectic}
Assume $T$ is $q$-shifted symplectic.
The ind-pro-stack $\bubblestack^d(T)$ admits a symplectic Tate structure shifted by $q-d$.
Moreover, for any $m \in \N$ we have an exact sequence
\[
s_m^* r^* \Lcot_{\bubblestack^d(T)_0} \to s_m^* \Lcot_{\bubblestack^d(T)} \to s_m^* r^* \T_{\bubblestack^d(T)_0}[q-d]
\]
\end{thm}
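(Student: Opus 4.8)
The plan is to produce the symplectic form by integrating the pullback of $\omega_T$ along the \emph{oriented} formal sphere, following the strategy of \cite{ptvv:dersymp} as adapted in \autoref{ipdst-form}, and then to verify non-degeneracy by a level-wise comparison resting on the two duality lemmas \autoref{dual-over-A} and \autoref{dual-over-k}. Recall from \autoref{dfbubble} that $\bubblestack^d(T) = \Mapstack(\formalsphere^d, T)$; since the Tate structure is already granted by \autoref{prop-formal-tate}, the entire content is the construction of a \emph{non-degenerate} closed $2$-form of the correct shift. Throughout I abbreviate $\bubblestack = \bubblestack^d(T)$ and $\bubblestack_0 = \bubblestack^d(T)_0$, as in the proof of \autoref{prop-formal-tate}.

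First I would orient the formal sphere. Each $\Spec(A_p \oplus A_n[-d])$ behaves like a derived affine scheme for quasi-coherent sheaves, so $\formalsphere^d$ is an $\Oo$-compact pro-ind-stack with $\pi_* \Oo_{\formalsphere^d} \simeq \colim_n \lim_{p\geq n}(A_p \oplus A_n[-d])$. Applying \autoref{dual-over-A} to the regular sequence $(\el{x}{d})$ identifies $A_n[-d]$ with the $A$-linear dual of $A_n$, while the residue pairings of \autoref{dual-over-k} (here $A_1 = k$) are compatible with the ind-maps $A_n[-d] \to A_{n+1}[-d]$ and assemble into an orientation $\eta \colon \pi_* \Oo_{\formalsphere^d} \to k[-d]$. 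Feeding $\omega_T \colon k[-q] \to \closedforms 2(T)$ and $\eta$ into \autoref{ipdst-form} then produces a $(q-d)$-shifted closed $2$-form $\omega = \int_\eta \ev^* \omega_T$ on $\bubblestack$.

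For non-degeneracy I would use the description of $\omega$ from \autoref{describe-form}: level-wise it is the \cite{ptvv:dersymp} pairing obtained from $\eta$, namely the fibrewise symplectic pairing $\ev^*(\T_T \otimes \T_T \to \Oo_T[q])$ followed by integration over the sphere. Writing $\T_{\bubblestack} \simeq \pr_* \ev^* \T_T$ and using that the residue pairing $\Oo_{\formalsphere^d} \otimes \Oo_{\formalsphere^d} \to k[-d]$ is perfect — so that $\Oo_{\formalsphere^d} \simeq \dual{\Oo_{\formalsphere^d}}[-d]$ by \autoref{dual-over-A} and \autoref{dual-over-k} — the non-degeneracy $\T_T \simeq \Lcot_T[q]$ propagates to an equivalence $\underline\omega \colon \T_{\bubblestack} \to \Lcot_{\bubblestack}[q-d]$. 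Concretely I would check this after each $s_m^*$: dualising the fibre sequence of \autoref{prop-formal-tate} gives $s_m^* \T_{\bubblestack/\bubblestack_0} \to s_m^* \T_{\bubblestack} \to s_m^* r^* \T_{\bubblestack_0}$, and $\underline\omega$ maps it into the $(q-d)$-shift of the sequence of \autoref{prop-formal-tate}. On the ind-perfect sub it restricts to the equivalence $s_m^* \T_{\bubblestack/\bubblestack_0} \simeq s_m^* r^* \Lcot_{\bubblestack_0}[q-d]$ (the tensor of $\T_T \simeq \Lcot_T[q]$ with the self-duality $\dual{A_n} \simeq A_n$), and on the pro-perfect quotient it produces a matching equivalence between $s_m^* r^* \T_{\bubblestack_0}$ and $s_m^* \Lcot_{\bubblestack/\bubblestack_0}$ up to the shift $q-d$; both comparison maps being equivalences, so is the middle map $s_m^* \underline\omega$ for every $m$, whence $\underline\omega$ is an equivalence of Tate modules and $\omega$ is symplectic.

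Finally, the displayed exact sequence is read off from the quotient identification just obtained, $s_m^* \Lcot_{\bubblestack/\bubblestack_0} \simeq s_m^* r^* \T_{\bubblestack_0}[q-d]$, by substituting it for the third term of the fibre sequence of \autoref{prop-formal-tate}, giving
\[
s_m^* r^* \Lcot_{\bubblestack^d(T)_0} \to s_m^* \Lcot_{\bubblestack^d(T)} \to s_m^* r^* \T_{\bubblestack^d(T)_0}[q-d].
\]
The main obstacle is not any single equivalence but the ind-pro bookkeeping: one must check that the level-wise perfect pairings are compatible with the transition maps of the defining diagram, so that they glue to an equivalence of \emph{Tate} modules rather than a mere family of equivalences. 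This is precisely where the essentially-constant behaviour established in the proof of \autoref{prop-formal-tate} must be matched up with the self-duality of \autoref{dual-over-A} and the residue of \autoref{dual-over-k}.
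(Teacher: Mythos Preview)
Your overall strategy matches the paper's --- build the form via \autoref{ipdst-form} with the residue orientation, then verify non-degeneracy against the Tate filtration from \autoref{prop-formal-tate} --- but there is a genuine gap at the step where you claim that ``$\underline\omega$ maps [the tangent sequence] into the $(q-d)$-shift of the sequence of \autoref{prop-formal-tate}.'' A priori, $\underline\omega \colon \T_{\bubblestack} \to \Lcot_{\bubblestack}[q-d]$ has no reason to preserve these filtrations. To obtain a map of fibre sequences you must show that the composite
\[
s_m^* \T_{\bubblestack/\bubblestack_0} \to s_m^* \T_{\bubblestack} \to s_m^* \Lcot_{\bubblestack}[q-d] \to s_m^* \Lcot_{\bubblestack/\bubblestack_0}[q-d]
\]
is null, or equivalently that the restricted pairing $\theta$ on $s_m^* \T_{\bubblestack/\bubblestack_0} \otimes s_m^* \T_{\bubblestack/\bubblestack_0}$ vanishes. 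This is \emph{the} non-formal step in the paper's proof: using \autoref{describe-form} together with the identification of $\psi(n)$ in \autoref{prop-formal-tate}, the pairing $\theta$ is seen level-wise to factor through the multiplication map on the augmentation ideal of the trivial square-zero extension $\Oo_{S_{n,p}} = A_p \oplus A_n[-d]$, which is zero by definition. Only after this vanishing does the factorisation through $s_m^* r^* \Lcot_{\bubblestack_0}[q-d]$ exist, and only then can you even speak of ``the restriction to the ind-perfect sub.''

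Once that vanishing is in place, the paper's argument is slightly tighter than yours: it observes that the two outer maps in the resulting map of fibre sequences are \emph{dual to one another}, so checking a single equivalence $\tau_m \colon s_m^* \T_{\bubblestack/\bubblestack_0} \to s_m^* r^* \Lcot_{\bubblestack_0}[q-d]$ (done level-wise via the residue pairing of \autoref{dual-over-k}) suffices. Your appeal to a global self-duality ``$\Oo_{\formalsphere^d} \simeq \dual{\Oo_{\formalsphere^d}}[-d]$'' is not quite the right object here and in any case would not by itself furnish the filtration compatibility. The ind-pro bookkeeping you flag at the end is a real concern, but it is secondary to the square-zero vanishing you have omitted.
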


\begin{proof}
Let us start with the following remark: the residue map $r_n \colon A_n \to k = A_1$ defined in \autoref{dual-over-k} defines a map $\Oo_{\formalsphere^d} \to k[-d]$.
From \autoref{ipdst-form}, we have a $(q-d)$-shifted closed $2$-form on $\bubblestack^d(T)$. We have a morphism from \autoref{prop-formsareforms}
\[
\Oo_{\bubblestack^d(T)}[q-d] \to \Lcot_{\bubblestack^d(T)} \otimes \Lcot_{\bubblestack^d(T)}
\]
in $\PIPerf(\bubblestack^d(T))$.
Let $m \in \N$. We get a map
\[
\Oo_{\bubblestack^d(T)_m}[q-d] \to s_m^* \Lcot_{\bubblestack^d(T)} \otimes s_m^* \Lcot_{\bubblestack^d(T)}
\]
and then
\[
s_m^* \T_{\bubblestack^d(T)} \otimes s_m^* \T_{\bubblestack^d(T)} \to \Oo_{\bubblestack^d(T)_m}[q-d]
\]
in $\IPPerf(\bubblestack^d(T)_m)$. We consider the composite map
\[
\theta \colon s_m^* \T_{\bubblestack^d(T)/\bubblestack^d(T)_0} \otimes s_m^* \T_{\bubblestack^d(T)/\bubblestack^d(T)_0} \to
s_m^* \T_{\bubblestack^d(T)} \otimes s_m^* \T_{\bubblestack^d(T)} \to \Oo_{\bubblestack^d(T)_m}[q-d]
\]
Using the \autoref{describe-form} and the proof of \autoref{prop-formal-tate} we see that $\theta$ is induced by the morphisms (varying $n$ and $p$)
\[
\mymatrix{
{\varpi_{np}}_* \left( E \otimes E \otimes \ev_{np}^* \left( \T_T \otimes \T_T \right) \right) \ar[r]^-A & {\varpi_{np}}_* \left( E \otimes E [q] \right) \ar[r]^-B & {\varpi_{np}}_* \left( \Oo_{\bubblestack^d(T)_{np} \times S_{n,p}} [q] \right)
}
\]
where $E = a_{np}^* {\xi_{np}}_* {h_{np}}_* \gamma_n^! \Oo_{\A^d}$ and the map $A$ is induced by the symplectic form on $T$. The map $B$ is induced by the multiplication in $\Oo_{S_{n,p}}$.
This sheaf of functions is a trivial square zero extension of augmentation ideal ${\xi_{np}}_* {h_{np}}_* \gamma_n^! \Oo_{\A^d}$ and $B$ therefore vanishes.
It follows that the morphism
\[
s_m^* \T_{\bubblestack^d(T)} \otimes s_m^* \T_{\bubblestack^d(T)/\bubblestack^d(T)_0} \to
s_m^* \T_{\bubblestack^d(T)} \otimes s_m^* \T_{\bubblestack^d(T)} \to \Oo_{\bubblestack^d(T)_m}[q-d]
\]
factors through $s_m^* \T_{\bubblestack^d(T)_0} \otimes s_m^* \T_{\bubblestack^d(T)/\bubblestack^d(T)_0}$.
Now using \autoref{prop-formal-tate} we get a map of exact sequences in the category of Tate modules over $\bubblestack^d(T)_m$
\[
\mymatrix{
s_m^* \T_{\bubblestack^d(T)/\bubblestack^d(T)_0} \ar[r] \ar[d]_{\tau_m} &
s_m^* \T_{\bubblestack^d(T)} \ar[r] \ar[d] & s_m^* r^* \T_{\bubblestack^d(T)_0} \ar[d] \\
s_m^* r^* \Lcot_{\bubblestack^d(T)_0}[d-q] \ar[r] & s_m^* \Lcot_{\bubblestack^d(T)}[d-q] \ar[r] & s_m^* \Lcot_{\bubblestack^d(T)/\bubblestack^d(T)_0}[d-q]
}
\]
where the maps on the sides are dual one to another.
It therefore suffices to see that the map $\tau_m \colon s_m^* \T_{\bubblestack^d(T)/\bubblestack^d(T)_0} \to s_m^* r^* \Lcot_{\bubblestack^d(T)_0}[d-q]$ is an equivalence.
We now observe that $\tau_m$ is a colimit indexed by $p \geq m$ of maps
\[
g_{pm}^* t_{pp}^* \left( \varepsilon_{pp}^* \Lcot_{\bubblestack^d(T)_{0p}} \to \T_{\bubblestack^d(T)_{pp}/\bubblestack^d(T)_{0p}} \right)
\]
Let us fix $p \geq m$ and $G = a_{pp}^* {\xi_{pp}}_* \Oo_{S_{0p}}$. The map $
F_p \colon \T_{\bubblestack^d(T)_{pp}/\bubblestack^d(T)_{0p}} \to \varepsilon_{pp}^* \Lcot_{\bubblestack^d(T)_{0p}}$ at hand is induced by the pairing
\[
\T_{\bubblestack^d(T)_{pp}/\bubblestack^d(T)_{0p}} \otimes \varepsilon_{pp}^* \T_{\bubblestack^d(T)_{0p}} \simeq
\mymatrix{
{\varpi_{pp}}_* \left(E \otimes \ev_{pp}^* \T_T \right) \otimes {\varpi_{pp}}_* \left( G \otimes \ev_{pp}^* \T_T \right) \ar[d] \\
{\varpi_{pp}}_* \left( E \otimes \ev_{pp}^* \T_T \otimes G \otimes \ev_{pp}^* \T_T \right) \ar[d] \\
{\varpi_{pp}}_* \left( E \otimes G \right)[q] \ar[d] \\
{\varpi_{pp}}_* \left( \Oo_{\bubblestack^d(T)_{pp} \times S_{pp}} \right) [q] \ar[d] \\
\Oo_{\bubblestack^d(T)_{pp}} [q-d]
}
\]
We can now conclude using \autoref{dual-over-k}.
\end{proof}


\phantomsection
\addcontentsline{toc}{section}{References}


\begin{thebibliography}{BGW2}
\expandafter\ifx\csname fonteauteurs\endcsname\relax
\def\fonteauteurs{\scshape}\fi

\bibitem[Ber]{bergner:oneinfty}
Julia~E. \bgroup\fonteauteurs\bgroup Bergner\egroup\egroup{}:
\newblock A survey of $\infty$-categories.
\newblock IMA volumes in Math. and Appl., pages 69--83. 2010.

\bibitem[BGW1]{bgw:contoucarrere}
Oliver \bgroup\fonteauteurs\bgroup Braunling\egroup\egroup{}, Michael
  \bgroup\fonteauteurs\bgroup Groechenig\egroup\egroup{} and Jesse
  \bgroup\fonteauteurs\bgroup Wolfson\egroup\egroup{}:
\newblock A Generalized Contou-Carr{\`e}re Symbol and its Reciprocity Laws in
  Higher Dimensions, 2014.

\bibitem[BGW2]{bgw:tate}
Oliver \bgroup\fonteauteurs\bgroup Braunling\egroup\egroup{}, Michael
  \bgroup\fonteauteurs\bgroup Groechenig\egroup\egroup{} and Jesse
  \bgroup\fonteauteurs\bgroup Wolfson\egroup\egroup{}:
\newblock Tate Objects in Exact Categories (with appendix by Jan {\v S}{\v
  t}ov{\'\i}{\v c}ek and Jan Trlifaj), 2014.

\bibitem[Bha]{bhatt:algebraisable}
Bhargav \bgroup\fonteauteurs\bgroup Bhatt\egroup\egroup{}:
\newblock Algebraization and Tannaka duality.
\newblock {\em Camb. J. Math.}, 4(4) pp.403--461, 2016.

\bibitem[BK]{bousfieldkan:holim}
Aldridge \bgroup\fonteauteurs\bgroup Bousfield\egroup\egroup{} and Daniel~M.
  \bgroup\fonteauteurs\bgroup Kan\egroup\egroup{}:
\newblock {\em Homotopy limits, completions and localizations}, volume 304 of
  {\em Lecture Notes in Mathematics}.
\newblock Springer-Verlag, Berlin-New York, 1972.

\bibitem[CC]{contoucarrere:jacobienne}
Carlos \bgroup\fonteauteurs\bgroup Contou-Carr{\`e}re\egroup\egroup{}:
\newblock Jacobienne locale, groupe de bivecteurs de Witt universel et symbole
  mod{\'e}r{\'e}.
\newblock {\em C. R. Acad. Sci. Paris S{\'e}r. I Math.}, 318(8) pp.743--746,
  1994.

\bibitem[DL]{denefloeser:germs}
Jan \bgroup\fonteauteurs\bgroup Denef\egroup\egroup{} and Franois
  \bgroup\fonteauteurs\bgroup Loeser\egroup\egroup{}:
\newblock Germs of arcs on singular algebraic varieties and motivic
  integration.
\newblock {\em Inventiones Mathematica}, 135 pp.201--232, 1999.

\bibitem[GR]{gaitsgoryrozenblyum:dgindschemes}
Dennis \bgroup\fonteauteurs\bgroup Gaitsgory\egroup\egroup{} and Nick
  \bgroup\fonteauteurs\bgroup Rozenblyum\egroup\egroup{}:
\newblock DG Indschemes.
\newblock \emph{In} {\em Perspectives in representation theory}, volume 610 of
  {\em Contemp. Math.}, pages 139--251. AMS, 2014.

\bibitem[HAlg]{lurie:halg}
Jacob \bgroup\fonteauteurs\bgroup Lurie\egroup\egroup{}:
\newblock Higher algebra.
\newblock 2016.

\bibitem[Hen1]{hennion:these}
Benjamin \bgroup\fonteauteurs\bgroup Hennion\egroup\egroup{}:
\newblock {\em Formal loop spaces and tangent Lie algebras}.
\newblock PhD thesis, 2015.

\bibitem[Hen2]{hennion:tate}
Benjamin \bgroup\fonteauteurs\bgroup Hennion\egroup\egroup{}:
\newblock Tate objects in stable ($\infty$,1)-categories.
\newblock {\em Homology, Homotopy and Applications}, 19(2) pp.373--395, 2017.

\bibitem[Hir]{hirschhorn:limfibrations}
Philip~S. \bgroup\fonteauteurs\bgroup Hirschhorn\egroup\egroup{}:
\newblock The homotopy groups of the inverse limit of a tower of fibrations.
\newblock 2014.

\bibitem[HLP]{halpernleistern-preygel:properness}
Daniel \bgroup\fonteauteurs\bgroup Halpern-Leistner\egroup\egroup{} and Anatoly
  \bgroup\fonteauteurs\bgroup Preygel\egroup\egroup{}:
\newblock Mapping stacks and categorical notions of properness, 2014.

\bibitem[Hov]{hovey:modcats}
Mark \bgroup\fonteauteurs\bgroup Hovey\egroup\egroup{}:
\newblock {\em Model categories}, volume~63 of {\em Mathematical Surveys and
  Monographs}.
\newblock American Mathematical Society, Providence, RI, 1999.

\bibitem[HTT]{lurie:htt}
Jacob \bgroup\fonteauteurs\bgroup Lurie\egroup\egroup{}:
\newblock {\em Higher topos theory}, volume 170 of {\em Annals of Mathematics
  Studies}.
\newblock Princeton University Press, Princeton, NJ, 2009.

\bibitem[KV1]{kapranovvasserot:loop1}
Mikhail \bgroup\fonteauteurs\bgroup Kapranov\egroup\egroup{} and {\'E}ric
  \bgroup\fonteauteurs\bgroup Vasserot\egroup\egroup{}:
\newblock Vertex algebras and the formal loop space.
\newblock {\em Publ. Math. Inst. Hautes {\'E}tudes Sci}, 100 pp.209--269, 2004.

\bibitem[KV2]{kapranovvasserot:loop2}
Mikhail \bgroup\fonteauteurs\bgroup Kapranov\egroup\egroup{} and {\'E}ric
  \bgroup\fonteauteurs\bgroup Vasserot\egroup\egroup{}:
\newblock Formal loops II: A local Riemann-Roch theorem for determinantal
  gerbes.
\newblock {\em Ann. Sci. ENS}, 40 pp.113--133, 2007.

\bibitem[KV3]{kapranovvasserot:loop4}
Mikhail \bgroup\fonteauteurs\bgroup Kapranov\egroup\egroup{} and {\'E}ric
  \bgroup\fonteauteurs\bgroup Vasserot\egroup\egroup{}:
\newblock Formal loops IV: Chiral differential operators, 2008.

\bibitem[Lur]{lurie:dagxii}
Jacob \bgroup\fonteauteurs\bgroup Lurie\egroup\egroup{}:
\newblock Derived algebraic geometry XII: Proper morphisms, completions and the
  Grothendieck existence theorem.
\newblock 2011, available at
  [\url{http://www.math.harvard.edu/~lurie/papers/DAG-XII.pdf}].

\bibitem[MP]{munozpresas:symp}
Vincente \bgroup\fonteauteurs\bgroup Mu{\~n}oz\egroup\egroup{} and Francisco
  \bgroup\fonteauteurs\bgroup Presas\egroup\egroup{}:
\newblock Geometric structures on loop and path spaces.
\newblock {\em Proc. Indian Acad. Sci.}, 120(4) pp.417--428, 2010.

\bibitem[OZ]{osipovzhu:contoucarrere}
Denis \bgroup\fonteauteurs\bgroup Osipov\egroup\egroup{} and Xiewen
  \bgroup\fonteauteurs\bgroup Zhu\egroup\egroup{}:
\newblock The two-dimensional Contou-Carr{\`e}re symbol and reciprocity laws.
\newblock {\em to appear in Journal of Alg. geometry}, 2013.

\bibitem[PTVV]{ptvv:dersymp}
Tony \bgroup\fonteauteurs\bgroup Pantev\egroup\egroup{}, Bertrand
  \bgroup\fonteauteurs\bgroup To{\"e}n\egroup\egroup{}, Michel
  \bgroup\fonteauteurs\bgroup Vaqui{\'e}\egroup\egroup{} and Gabriele
  \bgroup\fonteauteurs\bgroup Vezzosi\egroup\egroup{}:
\newblock Shifted symplectic structures.
\newblock {\em Publication de l'IHES}, 2011.

\bibitem[To{\"e}1]{toenmoerdijk:crm}
Bertrand \bgroup\fonteauteurs\bgroup To{\"e}n\egroup\egroup{}:
\newblock Simplicial Presheaves and Derived Algebraic Geometry.
\newblock Advanced Courses in Math. Birkh{\"a}user, 2010.

\bibitem[To{\"e}2]{toen:dagems}
Bertrand \bgroup\fonteauteurs\bgroup To{\"e}n\egroup\egroup{}:
\newblock Derived algebraic geometry.
\newblock {\em EMS Surv. Math. Sci.}, 1(2) pp.153--240, 2014.

\bibitem[TV]{toen:hagii}
Bertrand \bgroup\fonteauteurs\bgroup To{\"e}n\egroup\egroup{} and Gabriele
  \bgroup\fonteauteurs\bgroup Vezzosi\egroup\egroup{}:
\newblock Homotopical Algebraic Geometry II: geometric stacks and applications.
\newblock {\em Memoirs of the AMS}, 193(902) pp.257--372, 2008.

\end{thebibliography}
\end{document}